\newtheorem{theorem}{Theorem}[section]
\newtheorem{lemma}[theorem]{Lemma}
\newtheorem{proposition}[theorem]{Proposition}
\newtheorem{corollary}[theorem]{Corollary}
\theoremstyle{definition}
\newtheorem{definition}[theorem]{Definition}
\newtheorem{definition-lemma}[theorem]{Definition/Lemma}
\newtheorem{remark}[theorem]{Remark}
\newcommand{\op}[1]{\operatorname{#1}}
\newcommand{\newterm}{\textsf}
\newcommand{\dbcoh}[1]{\operatorname{D}^{\operatorname{b}}(\operatorname{coh }#1)}
\newcommand{\dbqcoh}[1]{\operatorname{D}^{\operatorname{b}}(\operatorname{Qcoh }#1)}
\newcommand{\dqcoh}[1]{\operatorname{D}(\operatorname{Qcoh }#1)}
\newcommand{\dbcohG}[2]{\operatorname{D}^{\operatorname{b}}(\operatorname{coh }_{#1} #2)}
\newcommand{\dbqcohG}[2]{\operatorname{D}^{\operatorname{b}}(\operatorname{Qcoh }_{#1} #2)}
\newcommand{\dZbcohG}[3]{\operatorname{D}_{#3}^{\operatorname{b}}(\operatorname{coh }_{#1} #2)}
\newcommand{\dZbqcohG}[3]{\operatorname{D}_{#3}^{\operatorname{b}}(\operatorname{Qcoh }_{#1} #2)}
\newcommand{\dabs}{\op{D}^{\op{abs}}}
\def\Z{\mathbb{Z}}
\def\C{\mathbb{C}}
\def\Q{\mathop{\mathbb{Q}}}
\def\O{\mathcal{O}}
\def\P{\mathbb{P}}
\def\gm{\mathbb{G}_m}
\def\cL{\mathcal{L}}
\def\L{\mathop{\mathcal{L}}}
\def\ok{\otimes_k}
\def\hodgcat{\op{Ho}(\op{dg-cat}_k)}
\title[Kernels for equivariant factorizations and Hodge theory]{A category of kernels for equivariant factorizations and its implications for Hodge theory}
\author[Ballard]{Matthew Ballard}
\address{
  \begin{tabular}{l}
   Matthew Ballard  \\ 
   \hspace{.1in} University of South Carolina, Department of Mathematics, Columbia, SC, USA \\
   \hspace{.1in} Email: {\bf ballard@math.sc.edu} \\
  \end{tabular}
}
\author[Favero]{David Favero}
\address{
  \begin{tabular}{l}
   David Favero \\
   \hspace{.1in} University of Alberta, Department of Mathematics, Edmonton, AB, Canada \\
   \hspace{.1in} Email: {\bf favero@gmail.com} \\
  \end{tabular}
}
\author[Katzarkov]{Ludmil Katzarkov}
\address{
  \begin{tabular}{l}
   Ludmil Katzarkov \\
   \hspace{.1in} University of Miami, Department of Mathematics, Coral Gables, FL, USA \\ 
   \hspace{.1in} Universit\"at von Wien, Fakult\"at f\"ur Mathematik,  Wien, \"Osterreich \\
   \hspace{.1in} Email: {\bf lkatzark@math.uci.edu} \\
  \end{tabular}
}
\numberwithin{equation}{section}
\begin{document}
\renewcommand{\labelenumi}{\emph{\alph{enumi})}}

\begin{abstract}
 We provide a factorization model for the continuous internal Hom, in the homotopy category of $k$-linear dg-categories, between dg-categories of equivariant factorizations. This motivates a notion, similar to that of Kuznetsov, which we call the extended Hochschild cohomology algebra of the category of equivariant factorizations. In some cases of geometric interest, extended Hochschild cohomology contains Hochschild cohomology as a subalgebra and Hochschild homology as a homogeneous component. We use our factorization model for the internal Hom to calculate the extended Hochschild cohomology for equivariant factorizations on affine space.
 
 Combining the computation of extended Hochschild cohomology with the Hochschild-Kostant-Rosenberg isomorphism and a theorem of Orlov recovers and extends Griffiths' classical description of the primitive cohomology of a smooth, complex projective hypersurface in terms of homogeneous pieces of the Jacobian algebra. In the process, the primitive cohomology is identified with the fixed subspace of the cohomological endomorphism associated to an interesting endofunctor of the bounded derived category of coherent sheaves on the hypersurface. We also demonstrate how to understand the whole Jacobian algebra as morphisms between kernels of endofunctors of the derived category. 
 
 Finally, we present a bootstrap method for producing algebraic cycles in categories of equivariant factorizations. As proof of concept, we show how this reproves the Hodge conjecture for all self-products of a particular K3 surface closely related to the Fermat cubic fourfold.
\end{abstract}

\maketitle

\section{Introduction}

The subject of matrix factorizations has, in recent years, found itself at the crossroads between commutative algebra, homological algebra, theoretical physics, and algebraic geometry.  One of the deepest manifestations of this junction is D. Orlov's $\sigma$-model/Landau-Ginzburg correspondence \cite{Orl09} which intimately links projective varieties to equivariant factorization categories.  With Orlov's work as inspiration, this paper provides a thorough investigation of equivariant factorizations in broad generality.  The central technical result is a factorization model for B. T\"oen's internal Hom dg-category \cite{Toe} between these dg-categories. The novelty lies in the range of applications, including those to classical problems in algebraic geometry and Hodge theory.

In this article, we will examine some of the more immediate consequences of the main result, such as some special cases of the Hodge conjecture and a new proof of  Griffith's classical result \cite{Gri} relating the Dolbeault cohomology of a complex projective hypersurface to the Jacobian algebra of its defining polynomial.  In the sequel to this article \cite{BFKthesequel}, we will construct categorical coverings, calculate Rouquier dimension, investigate Orlov spectra, and connect our work to Homological Mirror Symmetry, all as applications of the central theorem presented here.  Now, before we delve into detailed statements, let us try to provide some context for the results.

Perhaps the simplest class of singular rings is that of hypersurface rings, i.e.\ rings which are the quotient of a regular ring by a single element (also called hypersurface singularities). In the foundational paper, \cite{EisMF}, D. Eisenbud introduced matrix factorizations and demonstrated their precise relationship with maximal Cohen-Macaulay (MCM) modules over a hypersurface singularity.  Building on Eisenbud's description, R.-O. Buchweitz introduced the proper categorical framework in \cite{Buc86}. Buchweitz showed that the homotopy category of matrix factorizations, the stable category of MCM modules over the associated hypersurface ring, and the stable derived category of the associated hypersurface ring are all equivalent descriptions of the same triangulated category. 

Outside of commutative algebra, interest in matrix factorizations grew due to intimate connections with physics; physical theories with potentials, called Landau-Ginzburg models, are ubiquitous. Building on the large body of work on Landau-Ginzburg models without boundary, (see, for example, C. Vafa's computation, \cite{Vafa}, of the closed string topological sector as the Jacobian algebra of the potential), M. Kontsevich proposed matrix factorizations as the appropriate category of D-branes for the topological B-model in the presence of a potential \cite[Section 7.1]{KL1}.

In physics, A. Kapustin and Y. Li confirmed Kontsevich's prediction and gave a mathematically conjectural description of the Chern character map and the pairing on Hochschild homology for the category of matrix factorizations, \cite{KL1} \cite{KL2}.

In mathematics, several foundational papers by Orlov soon followed: \cite{Orl04,Orl06,Orl09}. In particular, Orlov gave a global model for the stable bounded derived category of a Noetherian scheme possessing enough locally-free sheaves. He called this the category of singularities. Orlov also proved that the category of B-branes for an LG-model is equivalent to the coproduct of the categories of singularities of the fibers, and, to reiterate, the main inspiration for this work was the tight relationship he provided between the bounded derived categories of coherent sheaves on a projective hypersurface and the equivariant factorization category of affine space together with the defining polynomial.

In another early development, signaling the fertility of the marriage of physical inspiration to matrix factorizations, M. Khovanov and L. Rozansky categorified the HOMFLY polynomial using matrix factorizations, \cite{KR1,KR2}. In the process, Khovanov and Rozansky also introduced several important ideas to the study of matrix factorizations.  Central to their work is a construction which associates functors between categories of matrix factorizations to matrix factorizations of the difference potential. A strong, and precise, analogy exists between Khovanov and Rozansky's construction and the calculus of kernels of integral transforms between derived categories of coherent sheaves on algebraic varieties. Through this analogy, factorizations of the difference potential can be viewed as categorified correspondences for factorization categories.

Numerous further articles have elucidated the connection between factorization categories and Hodge theory. In \cite{KKP}, the third author, Kontsevich, and T. Pantev give explicit constructions describing the Hodge theory associated to the category of matrix factorizations. For the case of an isolated local hypersurface singularity, T. Dyckerhoff proved, in \cite{Dyc}, that the category of kernels introduced in \cite{KR1} is the correct one from the perspective of \cite{Toe}. More precisely, the dg-category of kernels from \cite{KR1} and \cite{Dyc} is quasi-equivalent to the internal homomorphism dg-category in the homotopy category of dg-categories. Using this result, Dyckerhoff rigorously established Kapustin and Li's description of the Hochschild homology of the dg-category of matrix factorizations. D. Murfet gave a mathematical derivation of the Kapustin-Li pairing \cite{Mur} which subsequently was expanded in \cite{DM}. In addition, E. Segal gave a description of the Kapustin-Li package in \cite{Seg1}.

Following this lead, several groups of authors extended Dyckerhoff's results. For a finite group, $G$, A. Polishchuk and A. Vaintrob gave a description of the Chern character, the bulk-boundary map, and proved an analog of Hirzebruch-Riemann-Roch in the case of the $G$-equivariant category of singularities of a local isolated hypersurface ring \cite{PV}. Orlov defined a category of matrix factorizations for a non-affine scheme with a global regular function and proved it is equivalent to the category of singularities of the associated hypersurface in the case when the ambient scheme is regular \cite{OrlMF}. K. Lin and D. Pomerleano also tackled non-affine matrix factorizations \cite{LP}. Contemporaneously, A. Preygel, using genuinely new ideas rooted in derived algebraic geometry, handled matrix factorizations on derived schemes, \cite{Pre}. 

Extending Dyckerhoff's results from the case of a local hypersurface to a global hypersurface, i.e.\ using a section of a line bundle instead of a global regular function, was also vigorously pursued. The first such results were obtained by A. C\u{a}ld\u{a}raru and J. Tu. in \cite{CT}. C\u{a}ld\u{a}raru and Tu defined a curved $A_{\infty}$-algebra associated to a hypersurface in projective space and computed the Borel-Moore homology of the curved algebra. Furthermore, in \cite{Tu}, Tu clarified the relationship between Borel-Moore homology and Hochschild homology. In \cite{PV2}, Polishchuk and Vaintrob gave a definition of a category of matrix factorizations on a stack satisfying appropriate conditions and proved that their category of matrix factorizations coincided with the category of singularities of the underlying hypersurface. In \cite{Pos1}, L. Positselski, using his work on co- and contra-derived categories of curved dg-modules over a curved dg-algebra, defined an enlargement of the category of matrix factorizations in 
the case of a section of line bundle. He also defined in \cite{Pos2}, a relative singularity category for an embedding of $Y$ in $X$ and proved that the relative singularity category of the hypersurface defined by a section of a line bundle coincides with his category of factorizations even if the ambient scheme is not regular. 

Continuing in this direction, this paper completely handles the case of a global hypersurface. Moreover, it also allows for an action of an affine algebraic group. Thus, in particular, it handles factorizations on any smooth algebraic stack with enough locally-free sheaves \cite{Totaro}. The first main result of our paper provides an internal description of the functor category between categories of equivariant factorizations i.e.\ as another category of equivariant factorizations. To state it appropriately, let us recall some work of T\"oen, with the simplifying assumption that $k$ is a field.

In \cite{Toe}, T\"oen studies the structure of the localization of the category of dg-categories over a field, $\op{dg-cat}_k$, by the class of quasi-equivalences. T\"oen calls this localization, the homotopy category of dg-categories, and denotes it as $\op{Ho}(\op{dg-cat}_k)$. For two dg-categories, $\mathsf C$ and $\mathsf D$, T\"oen then defines a dg-category, denoted $\mathbf{R}\! \op{Hom}(\mathsf C,\mathsf D)$, which is the internal Hom dg-category in $\op{Ho}(\op{dg-cat}_k)$. T\"oen defines $\mathbf{R}\! \op{Hom}_c(\mathsf C,\mathsf D)$ to be the full dg-subcategory of $\mathbf{R}\! \op{Hom}(\mathsf C,\mathsf D)$ whose objects induce coproduct-preserving functors between the homotopy categories. He calls such functors continuous. 

The category, $\mathbf{R}\! \op{Hom}_c(\mathsf C,\mathsf D)$, lies at the heart of T\"oen's derived Morita result of \cite{Toe}. Indeed, it seems to be a robust and general prescription for picking out the ``geometrically correct'' functor category for familiar dg/triangulated categories. Let us give attention to an important example: derived categories of sheaves on varieties, $X$ and $Y$. 

An object, $\mathcal K \in \dqcoh{X \times Y}$, gives a coproduct-preserving, exact functor,
\begin{displaymath}
 \mathbf{R}q_*(\mathcal K \overset{\mathbf{L}}{\otimes}_{\mathcal O_{X \times Y}} \mathbf{L}p^* \bullet) : \dqcoh{X} \to \dqcoh{Y},
\end{displaymath}
where $p: X \times Y \to X$ and $q: X \times Y \to Y$ are the projections. However, it is well-known that the category of exact, coproduct-preserving functors from $\dqcoh{X}$ to $\dqcoh{Y}$ is \textit{not} equivalent to $\dqcoh{X \times Y}$, see \cite{CS10} for an example. Passage from the category of chain complexes to triangulated categories is too brutal, we need to remember a bit more information. In \cite{Toe}, To\"en proves that, in $\op{Ho}(\op{dg-cat}_k)$, there is an isomorphism,
\begin{displaymath}
 \mathbf{R}\! \op{Hom}_c(\mathsf{Inj}(X),\mathsf{Inj}(Y)) \cong \mathsf{Inj}(X \times Y)
\end{displaymath}
where $\mathsf{Inj}(Z)$ is a particular dg-enhancement of $\dqcoh{Z}$. Similar work for varieties and other higher objects was carried out in \cite{BFN}.

Hence, the failure of a Morita-type result for derived categories is remedied by lifting to dg-categories and working in $\op{Ho}(\op{dg-cat}_k)$. This makes $\mathbf{R}\! \op{Hom}_c$ the correct functor category to study. However, in general, if two dg-categories, $\mathcal C$ and $\mathcal D$, come from some geometric framework, such as derived categories of sheaves, it is not clear a priori from T\"oen's definition of the internal Hom how $\mathbf{R}\! \op{Hom}_c(\mathcal C,\mathcal D)$ reflects the underlying geometry. One must identify $\mathbf{R}\! \op{Hom}_c(\mathcal C,\mathcal D)$ geometrically. This is the first goal of the paper.
 
Let us define our dg-categories of matrix factorizations. Let $k$ be an algebraically closed field of characteristic zero and let $G$ and $H$ be affine algebraic groups. Let $X$ and $Y$ be smooth varieties. Assume that $G$ acts on $X$ and $H$ acts on $Y$. Let $\mathcal L$ be an invertible $G$-equivariant sheaf on $X$ and let $w \in \op{H}^0(X,\mathcal L)^G$. Similarly, let $\mathcal L'$ be an invertible $H$-equivariant sheaf on $X$ and let $v \in \op{H}^0(Y,\mathcal L')^H$. Let $\mathsf{Inj}(X,G,w)$ and $\mathsf{Inj}(Y,H,v)$ be the dg-categories of equivariant factorizations with injective components. Let $\op{U}(\mathcal L)$ be the geometric vector bundle corresponding to $\mathcal L$ with the zero section removed and denote the regular function induced by $w$ on $\op{U}(\mathcal L)$ by $f_w$. Similarly, let $\op{U}(\mathcal L')$ be the geometric vector bundle corresponding to $\mathcal L'$ with the zero section removed and denote the regular function induced by $v$ on 
$\op{U}(\mathcal L')$ by $f_v$. Equip $\op{U}(\mathcal L) \times \op{U}(\mathcal L')$ with the 
natural $G \times H$-action and allow $\mathbb{G}_m$ to scale the fibers of $\op{U}(\mathcal L) \times \op{U}(\mathcal L')$ diagonally. Let
\begin{displaymath}
 (-f_w) \boxplus f_v := -f_w \ok 1 + 1 \ok f_v.
\end{displaymath}

The following is the main result of Section~\ref{sec: bimod and functor categories for graded MFs}.

\begin{theorem} \label{thm: intro thm}
 In the homotopy category of $k$-linear dg-categories, there is an equivalence,
\begin{displaymath}
 \mathbf{R}\! \op{Hom}_c(\mathsf{Inj}(X,G,w),\mathsf{Inj}(Y,H,v)) \cong \mathsf{Inj}(\op{U}(\mathcal L) \times \op{U}(\mathcal L'),G \times H \times \mathbb{G}_m, (-f_w) \boxplus f_v).
\end{displaymath}
\end{theorem}

This result follows work in the ungraded case by Dyckerhoff, \cite{Dyc}. Our methods in proving Theorem~\ref{thm: intro thm} are in line with \cite{LP} as we rely on generation statements for singularity categories and use Positselski's absolute derived category, \cite{Pos1,Pos2} as the model for our ``large'' triangulated category whose compact objects are (up to summands) coherent factorizations.

In contemporaneous and independent work, \cite{PVnew}, Polishchuk and Vaintrob prove Theorem~\ref{thm: intro thm} in the case $X$ and $Y$ are affine, $G$ and $H$ are finite extensions of $\mathbb{G}_m$, and both $w$ and $v$ have an isolated critical locus. Polishchuk and Vaintrob also give a computation of the Hochschild homology of the category of equivariant matrix factorizations in this case.  Despite the overlap in these foundational results, their inspiration and focus are ultimately distinct from the work here. They provide a purely algebraic version of FJRW-theory \cite{FRJ} by way of matrix factorizations.  The authors find this to be a beautiful illustration of the range and magnitude of this subject of study. 

One significant advantage of a geometric description of the internal Hom category is greater computational power. As defined by T\"oen, Hochschild cohomology of a dg-category is the cohomology of the dg-algebra of endomorphisms of the identity, viewed as an object of the internal Hom dg-category in $\op{Ho}(\op{dg-cat}_k)$. In the setting of $G$-equivariant factorizations, there is a natural extension, which we call extended Hochschild cohomology. For a dg-category, $\mathsf C$, we denote its homotopy category by $[\mathsf C]$. Let $\widehat{G}$ be the group of characters of $G$. The extended Hochschild cohomology is defined as
\begin{displaymath}
 \op{HH}_e^{(\chi,t)}(X,G,w) := \bigoplus_{\chi \in \widehat{G}, t \in \Z} \op{Hom}_{[\mathbf{R}\!\op{Hom}_c(\mathsf{Inj}(X,G,w),\mathsf{Inj}(X,G,w))]}(\op{Id},(\chi)[t]).
\end{displaymath}
Under certain assumptions on $X,G$, and $w$, the Hochschild homology of is a homogeneous component of $\op{HH}_e^{\bullet}(X,G,w)$.

We use Theorem~\ref{thm: intro thm} to compute the extended Hochschild cohomology of $(X,G,w)$ when $X$ is affine, $G$ is a finite extension of $\mathbb{G}_m$, and $w$ is semi-homogeneous regular function of non-torsion degree. The computation is along the lines of \cite{PV}. 

\begin{theorem} \label{theorem: intro thm HH}
 Let $G$ act linearly on $\mathbb{A}^n$ and let $w \in \Gamma(\mathbb{A}^n, \mathcal O_{\mathbb{A}^n}(\chi))^{G}$. Assume that the kernel of $\chi$, $K_{\chi}$, is finite and $\chi: G \to \mathbb{G}_m$ is surjective. Assume that the singular locus of the zero set, $Z_{(-w) \boxplus w}$, is contained in the product of the zero sets, $Z_w \times Z_w$.
 
 Then,
 \begin{gather*}
  \op{HH}^{(\rho,t)}_e(\mathbb{A}^n,G,w) \cong \\ \left( \bigoplus_{\substack{g \in K_{\chi}, l \geq 0 \\ t - \op{rk} W_g = 2u }} \op{H}^{2l}(\op{d} \! w_g)(\rho-\kappa_g+(u-l)\chi) \oplus \bigoplus_{\substack{g \in K_{\chi}, l \geq 0 \\ t - \op{rk} W_g = 2u+1 }} \op{H}^{2l+1}(\op{d} \! w_g)(\rho-\kappa_g+(u-l)\chi) \right)^G
 \end{gather*}
 where $\op{H}^{\bullet}(\op{d} \! w_g)$ denotes the Koszul cohomology of the Jacobian ideal of $w_g:= w|_{(\mathbb{A}^n)^g}$, $W_g$ is the conormal sheaf of $(\mathbb{A}^n)^g$, and $\kappa_g$ is the character of $G$ corresponding to $\Lambda^{\op{rk} W_g} W_g$.
 
 If, additionally, we assume the support of the Jacobian ideal $(\op{d} \! w)$ is $\{0\}$, then we have
 \begin{displaymath}
  \op{HH}^{(\rho,t)}_e(\mathbb{A}^n,G,w) \cong \left( \bigoplus_{\substack{g \in K_{\chi} \\  t - \op{rk} W_g = 2u }} \op{Jac}(w_g)(\rho-\kappa_g+u\chi) \oplus \bigoplus_{\substack{g \in K_{\chi} \\  t- \op{rk} W_g = 2u+1 }} \op{Jac}(w_g)(\rho-\kappa_g+u\chi) \right)^G.
 \end{displaymath}
 where $\op{Jac}(w)$ denotes the Jacobian algebra of $w$.
\end{theorem}

After building these foundations, we apply our results to Hodge theory. The primary observation is that Orlov's relationship between graded categories of singularities and derived categories of coherent sheaves \cite{Orl09} has some very interesting geometric consequences when combined with Theorem~\ref{thm: intro thm}. 

Let $\mathsf C$ be a saturated dg-category over $k$. The Hochschild homology of $\mathsf C$, $\op{HH}_*(\mathsf C)$, is an invariant that plays an important role in the noncommutative Hodge theory of $\mathsf C$, \cite{KKP}. When $X$ is a smooth proper algebraic variety over $k$, one can let $\mathsf C = \mathsf{Inj}_{\op{coh}}(X)$ be the dg-category of bounded below complexes of injective $\O_X$-modules with bounded and coherent cohomology. There is a Hochschild-Kostant-Rosenberg isomorphism, see \cite{HKR,Swan2,Kon}
\begin{displaymath}
 \phi_{\op{HKR}}: \op{HH}_i(\mathsf{Inj}_{\op{coh}}(X)) =: \op{HH}_i(X) \to \bigoplus_{q-p=i}\op{H}^p(X,\Omega^q_X).
\end{displaymath}
The HKR isomorphism allows one to study questions of Hodge theory by means of category theory. In Section~\ref{section: Griffiths}, we combine Orlov's theorem, the HKR isomorphism, and the computations of Theorem~\ref{theorem: intro thm HH} to reproduce a classic result of Griffiths \cite{Gri} describing the primitive cohomology of a projective hypersurface.

\begin{theorem} \label{theorem: Griffiths from Orlov}
 Let $Z$ be a smooth, complex projective hypersurface defined by a homogeneous polynomial $w \in \C[x_1,\ldots,x_n]$ of degree $d$. For each $0 \leq p \leq n/2-1$, Orlov's theorem and the HKR isomorphism induce an isomorphism,
 \begin{displaymath}
  \op{H}^{p,n-2-p}_{\op{prim}}(Z) \cong \op{Jac}(w)_{d(n-1-p)-n}.
 \end{displaymath}
\end{theorem}

In the process, we show that the primitive cohomology of $Z$ is exactly the fixed locus of the action of the endofunctor
\begin{align*}
 \{1\} := L_{\mathcal O_Z} \circ T_{\mathcal O_Z(1)} : \dbcoh{Z} & \to \dbcoh{Z} \\
 \mathcal E & \mapsto \op{Cone}\left( \oplus_{i \in \Z} \op{Hom}_{\dbcoh{Z}}(\mathcal O_Z,\mathcal E(i)[i]) \otimes_k \mathcal O_Z[-i] \overset{ev}{\to} \mathcal E(1)\right)
\end{align*}
on Hochschild homology, $\op{HH}_{\bullet}(Z)$. Furthermore, when $Z$ is Calabi-Yau, for the kernel, $\mathcal K \in \dbcoh{Z \times Z}$, of $\{1\}$, we have an injective homomorphism of graded rings, 
\begin{displaymath}
 \op{Jac}(w) \to \bigoplus_{i \geq 0} \op{Hom}_{\dbcoh{Z \times Z}}(\Delta_*\mathcal O_Z, \mathcal K^{\ast i})
\end{displaymath}
whose appropriate graded pieces are the isomorphisms of Theorem~\ref{theorem: Griffiths from Orlov}, at least after application of the HKR isomorphism. Thus, we have a categorical realization of Griffiths' fundamental result that sees the entire Jacobian algebra.

Following this categorical path further, we study algebraic cycles by understanding the image of the Chern character map in Hochschild homology. In Section~\ref{section: cycles}, we prove a result that allows one to bootstrap, via group homomorphisms, the Hodge conjecture for categories of equivariant matrix factorizations. We give one application of this procedure to the Hodge conjecture for varieties: we apply the results of Orlov in \cite{Orl09} and work of Kuznetsov \cite{Kuz09a, Kuz09b} to reprove the Hodge conjecture for $n$-fold products a certain $K3$ surface associated to a Fermat cubic fourfold. This case of the Hodge conjecture was originally handled in \cite{RM08}. We thank P. Stellari for pointing out the reference, \cite{RM08}.

\vspace{2.5mm}
\noindent \textbf{Acknowledgments:}
 The authors are greatly appreciative of the valuable insight gained from conversations and correspondence with Mohammed Abouzaid, Denis Auroux, Andrei C\u{a}ld\u{a}raru, Dragos Deliu, Colin Diemer, Tobias Dyckerhoff, Manfred Herbst, M. Umut Isik, Gabriel Kerr, Maxim Kontsevich, Alexander Kuznetsov, Jacob Lewis, Dmitri Orlov, Pranav Pandit, Tony Pantev, Anatoly Preygel, Victor Przyjalkowski, Ed Segal, Paul Seidel, and Paolo Stellari and would like to thank them for their time and patience.  Furthermore, the authors are deeply grateful to Alexander Polishchuk and Arkady Vaintrob for providing us with a preliminary version of their work \cite{PVnew} and for allowing us time to prepare the original version of this paper in order to synchronize posting of the articles due to the overlap. The first named author was funded by NSF DMS 0636606 RTG, NSF DMS 0838210 RTG, and NSF DMS 0854977 FRG. The second and third named authors were funded by NSF DMS 0854977 FRG, NSF DMS 0600800, NSF DMS 0652633 FRG, NSF DMS 0854977, NSF DMS 0901330, FWF P 24572 N25, by FWF P20778 and by an ERC Grant.
\vspace{2.5mm}

\section{Background on equivariant sheaves} \label{sec:graded rings}

For the entirety of this paper, $k$ will denote an algebraically-closed field of characteristic zero.

In this section, we recall some facts about quasi-coherent equivariant sheaves on separated, schemes/algebraic spaces of finite type following \cite{MFK}. A nice reference for basic facts, with a full set of details, is \cite[Chapter 3]{Blume}. The results here will be used in later sections. Let $X$ be a separated scheme of finite type over $k$ and $G$ be an affine algebraic group over $k$ acting on $X$. Denote by $m: G \times G \to G$, $i: G \to G$, and $e: \op{Spec} k \to G$, the group action, the inversion and the identity, respectively. Let $\sigma: G \times X \to X$ denote the $G$-action and $\pi: G \times X \to X$ the projection onto $X$. 

\begin{definition}
 A quasi-coherent \newterm{$G$-equivariant sheaf} on $X$ is a quasi-coherent sheaf, $\mathcal F$, on $X$ together with an isomorphism, $\theta: \sigma^* \mathcal F \to \pi^* \mathcal F$, satisfying, 
\begin{displaymath}
 \left((1_G \times \sigma) \circ (\tau \times 1_X \right))^*\theta \circ \left(1_G \times \pi\right)^*\theta = \left(m \times 1_X\right)^*\theta,
\end{displaymath}
on $G \times G \times X$ where $\tau: G \times G \times X \to G \times G \times X$ switches the two factors of $G$, and,
\begin{displaymath}
 s^{*} \theta = 1_{\mathcal F},
\end{displaymath}
 where $s: X \to G \times X$ is induced by $e$. If $\mathcal F$ is a coherent, respectively locally-free, sheaf on $X$, then we say the equivariant sheaf, $(\mathcal F, \theta)$, is coherent, respectively locally-free. 
 
 The isomorphism, $\theta$, is called the \newterm{equivariant structure}. We often refer to a quasi-coherent $G$-equivariant sheaf simply as $\mathcal E$. If the context is ambiguous, we denote the equivariant structure of $\mathcal E$ by $\theta^{\mathcal E}$.
\end{definition}

\begin{remark}
 For each closed point $g \in G$, we get an automorphism
 \begin{displaymath}
  \sigma_g := \sigma(g,\bullet) : X \to X.
 \end{displaymath}
 These satisfy $\sigma_{g_1} \circ \sigma_{g_2} = \sigma_{g_1g_2}$. If $\mathcal E$ is a quasi-coherent $G$-equivariant sheaf, then $\theta$ gives isomorphisms
 \begin{displaymath}
  \theta_g := \theta|_{\{g\} \times X} : \sigma_g^* \mathcal E \to \mathcal E.
 \end{displaymath}
 for each $g \in G$ with $\theta_{g_2g_1} = \theta_{g_1} \circ \sigma_{g_1}^*\theta_{g_2}$. Checking a subsheaf $\mathcal F$ of $\mathcal E$ inherits the equivariant structure, i.e. $\theta(\sigma^*\mathcal F) \subseteq \pi^*\mathcal F$, boils down to checking that it is preserved by each $\theta_g$. 
\end{remark}

\begin{definition}
 Let $\op{Qcoh}_G X$ be the Abelian category of quasi-coherent $G$-equivariant sheaves on $X$. Analogously, we let $\op{coh}_G X$ be the Abelian category of coherent $G$-equivariant sheaves.
\end{definition}

\begin{definition}
 Let $\mathcal E$ and $\mathcal F$ be quasi-coherent $G$-equivariant sheaves on $X$. The \newterm{tensor product} of $\mathcal E$ and $\mathcal F$ is the quasi-coherent sheaf $\mathcal E \otimes_{\mathcal O_X} \mathcal F$ together with the equivariant structure, $\theta^{\mathcal E} \otimes_{\mathcal O_{G \times X}} \theta^{\mathcal F}$. 
 
 The \newterm{sheaf of homomorphisms} from $\mathcal E$ to $\mathcal F$ is the quasi-coherent sheaf $\mathcal Hom_X(\mathcal E, \mathcal F)$ together with the equivariant structure $\theta^{\mathcal F} \circ (\bullet) \circ (\theta^{\mathcal E})^{-1}$. 
\end{definition}

\begin{definition}
 Let $X$ and $Y$ be separated, finite-type schemes equipped with actions, $\sigma_X$ and $\sigma_Y$, of $G$ and projections $\pi_X,\pi_Y$. A morphism of schemes, $f: X \to Y$, is \newterm{$G$-equivariant} if the diagram
 \begin{center}
 \begin{tikzpicture}[description/.style={fill=white,inner sep=2pt}]
  \matrix (m) [matrix of math nodes, row sep=3em, column sep=3em, text height=1.5ex, text depth=0.25ex]
  { G \times X & G \times Y \\ 
    X & Y \\ };
  \path[->,font=\scriptsize]
  (m-1-1) edge node[above] {$1 \times f$} (m-1-2)
  (m-1-1) edge node[left] {$\sigma_X$} (m-2-1)
  (m-1-2) edge node[right] {$\sigma_Y$} (m-2-2)
  (m-2-1) edge node[above] {$f$} (m-2-2)
  ;
 \end{tikzpicture}
 \end{center} 
 commutes. Given such an $f$, we get an adjoint pair of functors,
 \begin{align*}
  f^* : \op{Qcoh}_G Y & \to \op{Qcoh}_G X \\
  (\mathcal F, \theta) & \mapsto (f^*\mathcal F, (1 \times f)^*\theta), \\
  f_* : \op{Qcoh}_G X & \to \op{Qcoh}_G Y \\
  (\mathcal F, \theta) & \mapsto (f_*\mathcal F, (1 \times f)_*\theta).
 \end{align*}
\end{definition}

\begin{remark}
 The definition of $f_*$ and $f^*$ are sensible (as interpreted through natural isomorphisms) as $\sigma_X, \pi_X$ are flat and the squares
 \begin{center}
 \begin{tikzpicture}[description/.style={fill=white,inner sep=2pt}]
  \matrix (m) [matrix of math nodes, row sep=3em, column sep=3em, text height=1.5ex, text depth=0.25ex]
  { G \times X & G \times Y \\ 
    X & Y \\ };
  \path[->,font=\scriptsize]
  (m-1-1) edge node[above] {$1 \times f$} (m-1-2)
  (m-1-1) edge node[left] {$\sigma_X$} (m-2-1)
  (m-1-2) edge node[right] {$\sigma_Y$} (m-2-2)
  (m-2-1) edge node[above] {$f$} (m-2-2)
  ;
 \end{tikzpicture}
  \begin{tikzpicture}
    \matrix (m) [matrix of math nodes, row sep=3em, column sep=3em, text height=1.5ex, text depth=0.25ex]
  { G \times X & G \times Y \\ 
    X & Y \\ };
  \path[->,font=\scriptsize]
  (m-1-1) edge node[above] {$1 \times f$} (m-1-2)
  (m-1-1) edge node[left] {$\pi_X$} (m-2-1)
  (m-1-2) edge node[right] {$\pi_Y$} (m-2-2)
  (m-2-1) edge node[above] {$f$} (m-2-2)
  ;
 \end{tikzpicture}

 \end{center} 
 are Cartesian.
\end{remark}

\begin{definition}
 Given an affine algebraic group, $G$, we let 
 \begin{displaymath}
  \widehat{G} := \op{Hom}_{\op{alg \ grp}}(G,\mathbb{G}_m).
 \end{displaymath}
 The finitely-generated Abelian group, $\widehat{G}$, is called the \newterm{group of characters} of $G$. As $\widehat{G}$ is Abelian, we shall use additive notation for group structure on $\widehat{G}$.

 For a character, $\chi \in \widehat{G}$, we let $K_{\chi}$ denote the kernel of $\chi$. We also get an auto-equivalence
 \begin{align*}
  (\chi) : \op{Qcoh}_G X & \to \op{Qcoh}_G X \\
  \mathcal E & \mapsto \mathcal E \otimes_{\mathcal O_X} p^*\mathcal L_{\chi}
 \end{align*}
 where $p: X \to \op{Spec} k$ is the structure map and $\mathcal L_{\chi}$ is the object of $\op{Qcoh}_G (\op{Spec} k)$ corresponding to $\chi$. 
\end{definition}

\begin{lemma} \label{lemma: projection formula for equivariant pushforward}
 Let $G$ act on $X$ and $Y$. Assume we have an equivariant morphism, $f: X \to Y$. For $\mathcal E \in \op{Qcoh}_G Y$ locally-free and $\mathcal F \in \op{Qcoh}_G X$, there is a natural isomorphism
 \begin{displaymath}
  f_* \mathcal F \otimes_{\mathcal O_X} \mathcal E \cong f_* ( \mathcal F \otimes_{\mathcal O_X} f^* \mathcal E).
 \end{displaymath}
\end{lemma}

\begin{proof}
 This follows from the usual projection formula applied both to $\mathcal E$ and $\theta$.
\end{proof}

We will also need a more general pull-back functor. 

\begin{definition}
 Let $H$ and $G$ be affine algebraic groups and let $X$ and $Y$ be separated schemes of finite type equipped with actions, $\sigma_{H,X}: H \times X \to X$ and $\sigma_{G,Y}: G \times Y \to Y$. Let $\psi: H \to G$ be a homomorphism of algebraic groups. A \newterm{$\psi$-equivariant morphism}, or a \newterm{morphism equivariant with respect to} $\psi$, is a morphism of schemes, $f: X \to Y$, such that diagram
 \begin{center}
 \begin{tikzpicture}[description/.style={fill=white,inner sep=2pt}]
  \matrix (m) [matrix of math nodes, row sep=3em, column sep=3em, text height=1.5ex, text depth=0.25ex]
  { H \times X & G \times Y \\ 
    X & Y \\ };
  \path[->,font=\scriptsize]
  (m-1-1) edge node[above] {$\psi \times f$} (m-1-2)
  (m-1-1) edge node[left] {$\sigma_{H,X}$} (m-2-1)
  (m-1-2) edge node[right] {$\sigma_{G,Y}$} (m-2-2)
  (m-2-1) edge node[above] {$f$} (m-2-2)
  ;
 \end{tikzpicture}
 \end{center} 
 commutes. Given a $\psi$-equivariant morphism, $f$, we can define the pull-back functor,
 \begin{align*}
  f^*: \op{Qcoh}_G Y & \to \op{Qcoh}_H X \\
  (\mathcal F, \theta) & \mapsto (f^*\mathcal F, (\psi \times f)^* \theta).
 \end{align*}
 In the case that $X = Y$, we denote this functor by $\op{Res}_{\psi}$. If, in addition, $\psi: H \to G$ is a closed subgroup, the pull-back is called the \newterm{restriction functor} and denoted by $\op{Res}^G_H$. 
\end{definition}

\begin{remark}
While there is a bit of notational conflict here, we will always try to eliminate this confusion with exposition.
\end{remark}

\begin{definition}
 Let $G$ and $H$ be affine algebraic groups, $X$ and $Y$ separated schemes of finite type equipped with actions $G \times X \to X$ and $H \times Y \to Y$. Let $\pi_1: X \times Y \to X$ and $\pi_2 : X \times Y \to Y$ be the two projections. The projection, $\pi_1$, is equivariant with respect to the projection $G \times H \to G$ while $\pi_2$ is equivariant with respect to the projection $G \times H \to H$. Let $\mathcal E \in \op{Qcoh}_G X$ and $\mathcal F \in \op{Qcoh}_H Y$. The \newterm{exterior product} of $\mathcal E$ and $\mathcal F$ is the quasi-coherent $G \times H$-equivariant sheaf
 \begin{displaymath}
  \mathcal E \boxtimes \mathcal F := \pi^*_1 \mathcal E \otimes_{\mathcal O_{X \times Y}} \pi^*_2 \mathcal F.
 \end{displaymath}
\end{definition}

Let $H$ be a closed subgroup of $G$ and let $\sigma: H \times X \to X$ be an action of $G$ on $X$. The product, $G \times X$, carries an action of $H$ defined by
\begin{align*}
 \tau: H \times G \times X & \to G \times X \\
 (h,g,x) & \mapsto (m(g,i(h)),\sigma(h,x)). 
\end{align*}

\begin{lemma} 
 The fppf quotient of $G \times X$ by $H$ exists as a separated algebraic space of finite type over $k$. It is denoted by $G \overset{H}{\times} X$. 
\end{lemma}

\begin{proof}
 By Artin's Theorem, see \cite[Theorem 3.1.1]{Ana}, $G \overset{H}{\times} X$ exists as a separated algebraic space of finite type.  
\end{proof}

Let $\iota: X \to G \overset{H}{\times} X$ be the inclusion, $x \mapsto (e,x)$. This is equivariant with respect to the inclusion of $H$ in $G$. 

\begin{lemma} \label{lem: equivalence pullback}
 The pull-back functor, $\iota^*: \op{Qcoh}_G (G \overset{H}{\times} X) \to \op{Qcoh}_H X$, is an equivalence. Moreover, it induces an equivalence between the subcategories of coherent equivariant sheaves and an equivalence between the subcategories of locally-free equivariant sheaves. 
\end{lemma}

\begin{proof}
 This is an immediate consequence of faithfully-flat descent, see \cite[Lemma 1.3]{Tho2}.
\end{proof}

\begin{definition}
 Let $H$ be a closed subgroup of $G$ and assume we have an action, $\sigma: G \times X \to X$. The action, $\sigma$, descends to a $G$-equivariant morphism, $\alpha: G \overset{H}{\times} X \to X$. The \newterm{induction functor},
 \begin{displaymath}
  \op{Ind}^G_H: \op{Qcoh}_H X \to \op{Qcoh}_G X 
 \end{displaymath}
 is defined to be the composition, $\alpha_* \circ (\iota^*)^{-1}$. 
\end{definition}

\begin{lemma} \label{lemma: adjointness restriction induction}
 Let $H$ be a closed subgroup of $G$ and assume we have an action, $\sigma: G \times X \to X$. The functor, $\op{Ind}^G_H$, is right adjoint to the restriction, $\op{Res}^G_H$, and
 \[
 \op{Res}^G_H \cong \iota^* \circ \alpha^*.
 \]
\end{lemma}

\begin{proof}
 Note that the identity map on $X$ can be factored as
 \begin{displaymath}
 X \overset{\iota}{\to} G \overset{H}{\times} X \overset{\alpha}{\to} X.
 \end{displaymath}
 Thus, $\op{Res}^G_H = \iota^* \circ \alpha^*$ which is left adjoint to $\alpha_* \circ (\iota^*)^{-1}$.
\end{proof}

\begin{lemma} \label{lemma: facts about restriction induction}
 Let $H$ be a closed subgroup of $G$ and let $X$ be a separated scheme of finite type equipped with an action, $\sigma: G \times X \to X$. Let $p: G/H \times X \to X$ be the projection onto $X$.
 \begin{enumerate}
  \item The $H$-crossed product, $G \overset{H}{\times} X$, is a scheme, $G$-equivariantly isomorphic to $G/H \times X$, with the diagonal $G$-action.
  \item The functor, $\op{Res}^G_H$, is exact.
  \item For $\mathcal E \in \op{Qcoh}_H X$ and $\mathcal F \in \op{Qcoh}_G X$ locally-free, there is the following projection formula, i.e.\ a natural isomorphism,
  \begin{displaymath}
   \op{Ind}^G_H (\mathcal E \otimes_{\mathcal O_X} \op{Res}^G_H \mathcal F) \cong \op{Ind}^G_H \mathcal E \otimes_{\mathcal O_X} \mathcal F.
  \end{displaymath}
  \item There is a natural isomorphism
   \begin{displaymath}
    \op{Ind}^G_H \circ \op{Res}^G_H \cong p_*p^*
   \end{displaymath}
   of functors.
 \item If we, additionally, assume that $G/H$ is affine, then $\op{Ind}^G_H$ is exact. In particular, if $H$ is normal, then $\op{Ind}^G_H$ is exact. 
 \end{enumerate}
\end{lemma}

\begin{proof}
For a), as we are over $k$, the quotient of $G$ by $H$, as a fppf sheaf, exists as a quasi-projective scheme. By \cite[Theorem 16.1]{Waterhouse}, one can find a $G$-representation, $V$, with a subspace, $W$, whose stabilizer is exactly $H$. Let $n= \op{dim} W$. Passing to the Grassmannian, $G(n,V)$, $H$ is exhibited as the stabilizer of a closed point and by \cite[III,\textsection 3, Proposition 5.2]{DG} is representable by scheme with a locally-closed embedding into $G(n,V)$. 
 Now, the $H$-crossed product, $G \overset{H}{\times} X$, is $G$-equivariantly isomorphic to the product, $G/H \times X$, with the diagonal $G$-action, via the isomorphism
 \begin{align*}
  \Phi: G \overset{H}{\times} X & \to G/H \times X \\
  (g,x) & \mapsto (gH,gx).
 \end{align*}
 For $\alpha: G \overset{H}{\times} X \to X$, we have $\alpha = p \circ \Phi$. 

For b), recall that $\op{Res}^G_H \cong \iota^* \circ \alpha^*$. Then, 
 \begin{displaymath}
  \op{Res}^G_H \cong \iota^* \circ \Phi^* \circ p^*.
 \end{displaymath}
 Both $\iota^*$ and $\Phi^*$ are equivalences so both are exact while $p^*$ is exact as $G/H$ is flat over $k$. 
 
 For c), let $\mathcal E \in \op{Qcoh}_H X$ and $\mathcal F \in \op{Qcoh}_G X$ with $\mathcal F$ locally-free. Since $\iota^*$ is an equivalence, we can write $\mathcal E = \iota^* \mathcal E'$ for $\mathcal E' \in \op{Qcoh}_G G \overset{H}{\times} X$,
 \begin{align*}
  \op{Ind}^G_H ( \mathcal E \otimes_{\mathcal O_X} \op{Res}^G_H \mathcal F ) & \cong \alpha_* (\iota^*)^{-1} ( \mathcal E \otimes_{\mathcal O_X} \iota^* \alpha^* \mathcal F ) \\
  & \cong \alpha_* (\iota^*)^{-1} ( \iota^* \mathcal E' \otimes_{\mathcal O_X} \iota^* \alpha^* \mathcal F ) \\
  & \cong \alpha_* (\mathcal E' \otimes_{\mathcal O_{G \overset{H}{\times} X}} \alpha^* \mathcal F ) \\
  & \cong \alpha_* \mathcal E' \otimes_{\mathcal O_X} \mathcal F \\
  & \cong \op{Ind}^G_H \mathcal E \otimes_{\mathcal O_X} \mathcal F
 \end{align*}
 where we used the projection formula for $\alpha$ and the fact the $\iota^*$ is a monoidal functor.

 For d), we have isomorphisms
 \begin{align*}
  \op{Ind}^G_H \circ \op{Res}^G_H & \cong \alpha_* \circ (\iota^*)^{-1} \circ \iota^* \circ \alpha^* \\
  & \cong \alpha_* \circ \alpha^* \\
  & \cong p_* \circ \Phi_* \circ \Phi^* \circ p^* \\
  & \cong p_* \circ p^*.
 \end{align*}
 We used the fact that $\Phi_* \cong (\Phi^*)^{-1}$ as $\Phi$ is an isomorphism.

For e), the map $p$ is affine so $p_*$ is exact. Consequently, $\op{Ind}^G_H \cong \alpha_* \circ (\iota^*)^{-1} \cong p_* \circ \Phi_* \circ (\iota^*)^{-1}$ is a composition of exact functors. If $H$ is normal, then $G/H$ is an affine algebraic group, \cite[Theorem 16.3]{Waterhouse}.
 
\end{proof}

\begin{remark}
 Notice that when $H$ is not normal we may only consider $G/H$ as a scheme with an action of $G$ and not as an affine algebraic group. Furthermore, $G/H$ possesses a transitive $G$-action and, since the base field has characteristic zero, is generically smooth. Consequently, $G/H$ is a smooth variety. 
\end{remark}

\begin{lemma} \label{lemma: Res-Ind Abelian quotient}
 Let  $H$ be a closed normal subgroup of $G$. Assume that $G/H$ is Abelian. Then, there is a natural isomorphism
 \begin{displaymath}
  \op{Ind}^G_H \circ \op{Res}^G_H \mathcal E \cong \bigoplus_{\chi \in \widehat{G/H}} \mathcal E (\chi)
 \end{displaymath}
 where we view $\chi$ as a character of $G$ via the homomorphism $G \to G/H$. 
\end{lemma}

\begin{proof}
 By Lemma~\ref{lemma: facts about restriction induction}, we have an isomorphism
 \begin{displaymath}
  \op{Ind}^G_H \circ \op{Res}^G_H \cong p_*p^*
 \end{displaymath}
 where $p: G/H \times X \to X$ is the projection. Thus,
 \begin{displaymath}
  \op{Ind}^G_H \circ \op{Res}^G_H \mathcal E \cong p_*p^* \mathcal E \cong  \Gamma(G/H,\mathcal O_{G/H}) \otimes_k \mathcal E.
 \end{displaymath}
 Since $G/H$ is Abelian, $\Gamma(G/H,\mathcal O_{G/H}) \cong k[\widehat{G/H}]$ and 
 \begin{displaymath}
  \Gamma(G/H,\mathcal O_{G/H}) \otimes_k \mathcal E \cong \bigoplus_{\chi \in \widehat{G/H}} \mathcal E (\chi).
 \end{displaymath}
\end{proof}

\begin{lemma} \label{lemma: flat base change for equivariant sheaves}
 Let $\psi: G \to H$ be a flat homomorphism of affine algebraic groups. Let $G$ act on the algebraic varieties $Z$ and $X$ and $H$ act on the algebraic varieties $Y$ and $W$.  Assume we have a Cartesian square 
 \begin{center}
 \begin{tikzpicture}[description/.style={fill=white,inner sep=2pt}]
  \matrix (m) [matrix of math nodes, row sep=2em, column sep=2em, text height=1.5ex, text depth=0.25ex]
  { Z  &  Y  \\ 
    X  &  W \\ };
  \path[->,font=\scriptsize]
  (m-1-1) edge node[above] {$u'$} (m-1-2)
  (m-1-1) edge node[left] {$v'$} (m-2-1)
  (m-1-2) edge node[right] {$v$} (m-2-2)
  (m-2-1) edge node[above] {$u$} (m-2-2)
  ;
 \end{tikzpicture}
 \end{center}
where  $u'$ and $u$ are $\psi$-equivariant while $v'$ is $G$-equivariant and $v$ is $H$-equivariant.  Assume that $u$ is flat.  Then, we have a natural isomorphism of functors
 \begin{displaymath}
  u^* \circ v_* \cong v'_* \circ u'^* : \op{Qcoh}_H Y \to \op{Qcoh}_G X.
 \end{displaymath}
\end{lemma}

\begin{proof}
 For a $H$-equivariant quasi-coherent sheaf, $(\mathcal E, \theta)$, we have $u^* v_* \mathcal E \cong v'_* u'^* \mathcal E$ via flat base change. We also have a Cartesian diagram
 \begin{center}
 \begin{tikzpicture}[description/.style={fill=white,inner sep=2pt}]
  \matrix (m) [matrix of math nodes, row sep=2em, column sep=2em, text height=1.5ex, text depth=0.25ex]
  { G \times Z  &  H \times Y  \\ 
    G \times X  &  H \times W \\ };
  \path[->,font=\scriptsize]
  (m-1-1) edge node[above] {$\psi \times u'$} (m-1-2)
  (m-1-1) edge node[left] {$1_G \times v'$} (m-2-1)
  (m-1-2) edge node[right] {$1_H \times v$} (m-2-2)
  (m-2-1) edge node[above] {$\psi \times u$} (m-2-2)
  ;
 \end{tikzpicture}
 \end{center}
 and $\psi \times u$ is flat. So $(\psi \times u)^* (1 \times v)_* \cong (1 \times v')_* (\psi \times u)^*$ via flat base change, again. Using this fact on $\theta$, we get an equivariant isomorphism between $u^* v_* \mathcal E$ and $v'_* u'^* \mathcal E$. 
\end{proof}

\begin{definition}
 Let $X$ be a separated scheme of finite type over $k$. Let $\sigma: G \times X \to X$ act on $X$ and $N$ be a closed normal subgroup of $G$ such that $\sigma|_{N \times X} : N \times X \to X$ is the trivial action. Consider a quasi-coherent $G$-equivariant sheaf $(\mathcal E, \theta)$ and the restriction of $\theta$ to $N$
 \begin{displaymath}
  \theta|_{N \times X} : \sigma|_{N \times X}^* \mathcal E \cong \pi^* \mathcal E \to \pi^* \mathcal E.
 \end{displaymath}
 Via adjunction, we have a morphism,
 \begin{displaymath}
  \mathcal E \overset{u_{\mathcal E}}{\to} \Gamma(N, \mathcal O_N) \otimes_k \mathcal E \overset{1_{\Gamma(N, \mathcal O_N) \otimes_k \mathcal E} -\pi_* \theta|_{N \times X}}{\to} \Gamma(N, \mathcal O_N) \otimes_k \mathcal E.
 \end{displaymath}
 where $u: \op{Id} \to \pi_* \pi^*$ is the unit of adjunction. Let $\mathcal E^N$ be the kernel of this total morphism. Then, $\theta$ preserves $\mathcal E^N$ and the pair $(\mathcal E^N, \theta|_{\sigma^* \mathcal E^N})$ is a $G$-equivariant sheaf that naturally descends to a quasi-coherent $G/N$-equivariant sheaf on $X$. Denote the functor by 
 \begin{displaymath}
  (\bullet)^N : \op{Qcoh}_G X \to \op{Qcoh}_{G/N} X. 
 \end{displaymath}
 We shall often, interchangeably, view $\mathcal E^N$ as a $G$-equivariant sheaf or as a $G/N$-equivariant sheaf without additional notational adornment.
\end{definition}

\begin{remark} \label{rem: invariants}
 The local sections of the sheaf $\mathcal F^N$ over an open subset $U \subseteq X$ are 
 \begin{displaymath}
  \mathcal F^N(U) = \{ f \in \mathcal F(U) \mid \theta_n^{\mathcal F}(f) = f, \forall n \in N \}.
 \end{displaymath}
 In fact, this description can be taken as a definition of $\mathcal F^N$.
\end{remark}

\begin{lemma} \label{lemma: left adjoint to invariants}
 The functor $(\bullet)^N$ is right adjoint to $\op{Res}_{\pi}$ for the quotient homomorphism, $\pi: G \to G/N$.
\end{lemma}

\begin{proof}
 Let $\phi: \op{Res}_{\pi} \mathcal E \to \mathcal F$ be a $G$-equivariant morphism. Since $\theta_n^{\op{Res}_{\pi} \mathcal E} = \theta_{\pi(n)}^{\mathcal E} = 1_{\mathcal E}$, $N$ acts trivially on $\op{Res}_{\pi}$. As $\phi$ is $G$-equivariant, we have
 \begin{displaymath}
  \theta_n^{\mathcal F} \circ \phi = \phi \circ \theta_n^{\op{Res}_{\pi} \mathcal E} = \phi
 \end{displaymath}
 for all $n \in N$, and the image of $\op{Res}_{\pi} \mathcal E$ under $\phi$ must lie in $\mathcal F^N$. So any $G$-equivariant morphism from $\op{Res}_{\pi} \mathcal E$ factors through $\mathcal F^N$ uniquely. Of course, any $G$-equivariant morphism, $\op{Res}_{\pi} \mathcal E \to \mathcal F^N$, induces a $G$-equivariant morphism, $\op{Res}_{\pi} \mathcal E \to \mathcal F$, via composition with the inclusion, $\mathcal F^N \to \mathcal F$. Hence, we have an isomorphism
 \begin{displaymath}
  \op{Hom}_{\op{Qcoh}_G X}(\op{Res}_{\pi} \mathcal E, \mathcal F) \cong \op{Hom}_{\op{Qcoh}_G X}(\op{Res}_{\pi} \mathcal E, \mathcal F^N).
 \end{displaymath}
 As both $\op{Res}_{\pi} \mathcal E$ and $\mathcal F^N$ are $N$-invariant, any $G$-equivariant morphism, $\op{Res}_{\pi} \mathcal E \to \mathcal F^N$, uniquely descends to a  $G/N$-equivariant morphism. So,
 \begin{displaymath}
  \op{Hom}_{\op{Qcoh}_G X}(\op{Res}_{\pi} \mathcal E, \mathcal F^N) \cong \op{Hom}_{\op{Qcoh}_{G/N} X}(\mathcal E, \mathcal F^N).
 \end{displaymath}
\end{proof}

\begin{lemma} \label{lemma: projection formula for invariants}
 For any $\mathcal F_1 \in \op{Qcoh}_{G/N} X$ and $\mathcal F_2 \in \op{Qcoh}_G X$, there is a natural isomorphism of $G$-equivariant sheaves 
 \begin{displaymath}
  (\op{Res}_{\pi} \mathcal F_1 \otimes_{\mathcal O_X} \mathcal F_2)^N \cong \mathcal F_1 \otimes_{\mathcal O_X} \mathcal F_2^N.
 \end{displaymath}
\end{lemma}

\begin{proof}
 Since $\op{Res}_{\pi} \mathcal F_1$ is completely $N$-invariant, we have an isomorphism 
 \begin{displaymath}
  \theta_n^{\op{Res}_{\pi} \mathcal F_1 \otimes_{\mathcal O_X} \mathcal F_2} : = \theta_n^{\op{Res}_{\pi} \mathcal F_1} \otimes_{\mathcal O_X} \theta_n^{\mathcal F_2} \cong 1_{\mathcal F_1} \otimes \theta_n^{\mathcal F_2}.
 \end{displaymath}
 for all $n \in N$. Thus, $\theta_n^{\op{Res}_{\pi} \mathcal F_1 \otimes_{\mathcal O_X} \mathcal F_2}$ is the identity on a local section $f_1 \otimes f_2$ if and only if $\theta_n^{\mathcal F_2}$ is the identity on $f_2$.  The result follows from Remark~\ref{rem: invariants}.
\end{proof}

\begin{corollary} \label{corollary: right adjoint to full pullback}
 Let $N$ be a closed normal subgroup of an affine algebraic subgroup $G$. Assume that $G$ acts on $X$ and $G/N$ acts on $Y$. Let $f: X \to Y$ be a morphism equivariant with respect to the quotient homomorphism $\pi: G \to G/N$. We have the pullback $f^*: \op{Qcoh}_{G/N} Y \to \op{Qcoh}_G X$. Consider $Y$ with the induced $G$ action to have the pushforward $f_*: \op{Qcoh}_G X \to \op{Qcoh}_G Y$. The composition, $(f_*)^N$, is right adjoint to $f^*$.
\end{corollary}

\begin{proof}
 The functor $f^*$ is the composition of $f^*: \op{Qcoh}_{G} Y \to \op{Qcoh}_G X$ and $\op{Res}_{\pi}$. As we have adjunctions, $f^* \dashv f_*$ and $\op{Res}^N_G \dashv (\bullet)^N$, the latter by Lemma~\ref{lemma: left adjoint to invariants}, we get the desired statement.
\end{proof}

\begin{lemma} \label{lemma: commuting invariants with pushforward}
 Let $G$ act on $X$ and $Y$. Let $N$ be a closed normal subgroup which acts trivially on $X$ and $Y$ and let $f: X \to Y$ be a $G$-equivariant morphism. For any $\mathcal E \in \op{Qcoh}_G X$, there is a natural isomorphism
 \begin{displaymath}
  (f_* \mathcal E)^N \cong f_* \mathcal E^N.
 \end{displaymath}
\end{lemma}

\begin{proof}
 By definition, $(f_*\mathcal E)^N$ is the kernel of the composition
  \begin{displaymath}
 f_* \mathcal E \to \Gamma(N, \mathcal O_N) \otimes_k f_* \mathcal E \overset{1_{\Gamma(N, \mathcal O_N) \otimes_k f_*\mathcal E} - \pi_{Y*} (1_G \times f)_*\theta|_{N \times X}}{\to} \Gamma(N, \mathcal O_N) \otimes_k f_* \mathcal E
 \end{displaymath}
 where $\pi_Y: G \times Y \to Y$ is the projection.  The above is 
  \begin{displaymath}
 f_* \left( \mathcal E \to \Gamma(N, \mathcal O_N) \otimes_k \mathcal E \overset{1_{\Gamma(N, \mathcal O_N) \otimes_k \mathcal E} - \pi_{X*} \theta|_{N \times X}}{\to} \Gamma(N, \mathcal O_N) \otimes_k \mathcal E \right)
 \end{displaymath}
 where $\pi_X: G \times X \to X$ is the projection. This is the definition of $ f_*\mathcal E^N$.
\end{proof}

\begin{lemma} \label{lemma: base change for invariants}
 Let $N$ be a closed normal subgroup of $G$. Let $G$ act on $X$ and $Y$ with $N$ acting trivially on both $X$ and $Y$.  Let $f: X \to Y$ be a flat $G$-equivariant morphism. For each $\mathcal E \in \op{Qcoh}_G Y$, there is a natural isomorphism of $G$-equivariant sheaves
 \begin{displaymath}
  f^* \mathcal E^N \cong (f^* \mathcal E)^N.
 \end{displaymath}
\end{lemma}

\begin{proof}
 By definition, $(f^*\mathcal E)^N$ is the kernel of the composition
  \begin{displaymath}
 f^* \mathcal E \to \Gamma(N, \mathcal O_N) \otimes_k f^* \mathcal E \overset{1_{\Gamma(N, \mathcal O_N) \otimes_k f^*\mathcal E} - \pi_{X*} (1_G \times f)^*\theta|_{N \times X}}{\to} \Gamma(N, \mathcal O_N) \otimes_k f^* \mathcal E
 \end{displaymath}
 where $\pi_X: G \times X \to X$ is the projection.  Therefore, by flat base change this is equal to the kernel of the composition
 \begin{displaymath}
  f^* \mathcal E \to \Gamma(N, \mathcal O_N) \otimes_k f^* \mathcal E \overset{1_{\Gamma(N, \mathcal O_N) \otimes_k f^*\mathcal E} -f^* \pi_{Y*} \theta|_{N \times X}}{\to} \Gamma(N, \mathcal O_N) \otimes_k f^* \mathcal E
 \end{displaymath}
 where $\pi_Y: G \times Y \to Y$ is the projection.
 
 Since $f$ is flat, this is isomorphic to $f^*$ applied to the kernel of the composition
 \begin{displaymath}
  \mathcal E \to \Gamma(N, \mathcal O_N) \otimes_k  \mathcal E \overset{1_{\Gamma(N, \mathcal O_N) \otimes_k \mathcal E} - \pi_{Y*} \theta|_{N \times X}}{\to} \Gamma(N, \mathcal O_N) \otimes_k  \mathcal E.
 \end{displaymath}
 This kernel is the definition of $\mathcal E^N$.
\end{proof}

\begin{definition}
 Let $f: X \to Y$ be a morphism of separated schemes of finite type. We say that $X$ possesses an \newterm{$f$-ample family of line bundles} if there is a set of invertible sheaves, $\mathcal L_{\alpha}$, $\alpha \in A$, such that for any quasi-coherent sheaf, $\mathcal E$, the natural morphism 
 \begin{displaymath}
  \bigoplus_{\alpha \in A} \mathcal L_{\alpha} \otimes_{\mathcal O_X} f^*f_*(\mathcal L_{\alpha}^{\vee} \otimes_{\mathcal O_X} \mathcal E) \to \mathcal E
 \end{displaymath}
 is an epimorphism. If $f: X \to \op{Spec} k$ is the structure morphism, we shall simply refer to the set $\mathcal L_{\alpha}, \alpha \in A$ as an \newterm{ample family}.  When $X$ possess an ample family it is called \newterm{divisorial}. If $X$ and $Y$ possess an action of $G$, $f$ is $G$-equivariant, and each $\mathcal L_{\alpha}$ admits an equivariant structure, then we will say that the $f$-ample family is equivariant.
\end{definition}

\begin{remark}
 This is one of the multitude of equivalent definitions of an $f$-ample family \cite[Proposition 2.2.3]{SGA6II}.
\end{remark}

Let us recall the following result of Thomason.

\begin{theorem} \label{theorem: Thomason}
 Let $X$ be a normal scheme of finite type acted on by an affine algebraic group $G$. Assume that $X$ is divisorial. Then, $X$ possesses an equivariant ample family. In particular, for any coherent $G$-equivariant sheaf, $\mathcal E$, there exists a locally-free $G$-equivariant sheaf of finite rank, $\mathcal V$, and an epimorphism, $\mathcal V \to \mathcal E$. 
\end{theorem}

\begin{proof}
 The conclusion is true replacing $G$ by the connected component of the identity, $G_0$, by \cite[Lemma 2.10]{Tho2}. Applying \cite[Lemma 2.14]{Tho2} shows it is also true for $G$.
\end{proof}

\begin{remark}
In what follows, we often assume that a scheme is divisorial and implicitly use the theorem above to obtain an equivariant ample family.
\end{remark}

We finish the section by recalling a simple fact about the global dimensions of categories of equivariant sheaves. Let $G$ be an affine algebraic group and let $X$ be a separated scheme of finite type.  

\begin{definition}
 Recall that the \newterm{global dimension} of an Abelian category, $\mathcal A$, is the maximal $n$ such that $\op{Ext}^n_{\mathcal A}(A,B)$ is nonzero for some pair of objects, $A$ and $B$, of $\mathcal A$. Let $\op{gldim} \mathcal A$ denote the global dimension of $\mathcal A$.
 
 Let $\mathcal A$ be an Abelian category and let $A$ be an object. The \newterm{projective dimension} of $A$ is 
 \begin{displaymath}
  \op{pdim} A := \op{min} \{ s \mid \op{Ext}^s_{\mathcal A}(A,\bullet) = 0\}.
 \end{displaymath}
 It is defined to be infinite if no such $s$ exists. The object, $A$, is said to have \newterm{locally-finite projective dimension} if for each $A' \in \mathcal A$, there exists an $s_0$ such that
 \begin{displaymath}
  \op{Ext}_{\mathcal A}^s(A,A') = 0
 \end{displaymath}
 for all $s \geq s_0$. 
 
 Note that the global dimension of $\mathcal A$ is
 \begin{displaymath}
  \op{sup}_A \op{pdim} A.
 \end{displaymath}
\end{definition}

\begin{lemma} \label{lem:graded gldim = gldim}
 Let $\mathcal E$ be a quasi-coherent $G$-equivariant sheaf. If $\mathcal E$ has locally-finite projective dimension as an object of $\op{Qcoh} X$, then it has locally-finite projective dimension as an object of $\op{Qcoh}_G X$. Moreover, we have the following inequalities,
\begin{align*}
 \op{pdim} (\mathcal E,\theta) & \leq \op{pdim} \mathcal E + \op{gldim} \op{Qcoh}_G\op{Spec} k \\
 \op{gldim} \op{Qcoh}_G X & \leq \op{gldim} \op{Qcoh}X + \op{gldim} \op{Qcoh}_G\op{Spec} k.
\end{align*}
 In particular, if $X$ is smooth, then $\op{gldim} \op{Qcoh}_G X$ is finite.
\end{lemma}

\begin{proof}
 Since, by definition,
 \begin{displaymath}
  \op{Hom}_{\op{Qcoh}_G X}(E,F) = \op{Hom}_{\op{Qcoh} X}(E,F)^G,
 \end{displaymath}
 there is a spectral sequence
\begin{displaymath}
E_2^{p,q}: \op{Ext}_{\op{Qcoh}X}^p(E,F)^{\mathbf{R}^qG}  \Rightarrow  \op{Ext}^{p+q}_{\op{Qcoh}_G X}(E,F).
\end{displaymath}
 Let
 \begin{displaymath}
  p_0 := \op{sup} \{ p \mid \op{Ext}^p_{\op{Qcoh} X}(E,F) \not = 0 \}.
 \end{displaymath}
 As $\op{Ext}^q_{\op{Qcoh}_G \op{Spec} k }(k,M) = M^{\mathbf{R}^qG}$, we see that $\op{Ext}^r_{\op{Qcoh}_G X}(E,F)$ vanishes for
 \begin{displaymath}
  r > \op{gldim} \op{Qcoh} X + \op{gldim} \op{Qcoh}_G \op{Spec} k \geq p_0 + \op{gldim} \op{Qcoh}_G \op{Spec} k .
 \end{displaymath}
 This gives the stated inequality.
 
 Choose a closed embedding of $G \subset \op{GL}_n$. Then, $M^G \cong (\op{Ind}_G^{\op{GL}_n}M)^{\op{GL}_n}$ and the functor of $\op{GL}_n$-invariants is exact. Thus, $M^{\mathbf{R}^qG} = 0$ for $q > \op{dim} \op{GL}_n/G$ as $\op{Ind}_G^{\op{GL}_n}$ is the composition of $(\iota^*)^{-1}$ and the pushforward of $\op{GL}_n/G \to \op{Spec} k$. Since
 \begin{displaymath}
  \op{Ext}^s_{\op{Qcoh}_G \op{Spec} k}(V,W) \cong \op{Ext}^s_{\op{Qcoh}_G \op{Spec} k}(k,\op{Hom}_k(V,W)) \cong \op{Hom}_k(V,W)^{\mathbf{R}^qG}
 \end{displaymath}
 the global dimension of $\op{Qcoh}_G \op{Spec} k$ is finite.
 
 Thus, if $\mathcal E$ has locally-finite projective dimension as an object of $\op{Qcoh} X$, then it has locally-finite projective dimension as an object of $\op{Qcoh}_G X$. 
 
 If $X$ is smooth, it is well-known that
 \begin{displaymath}
  \op{gldim} \op{Qcoh} X = \op{dim} X.
 \end{displaymath}
\end{proof}

\begin{remark}
 In general, the global dimension of $\op{Qcoh}_G X$ can be strictly smaller than the global dimension of $\op{Qcoh} X$. Indeed, $\op{Qcoh}_G G$, with the left action of $G$ on itself, is equivalent to $\op{Qcoh} \op{Spec }k$ and, therefore, must have global dimension zero. We thank Kuznetsov for pointing this out. 
\end{remark}

\section{Equivariant factorizations} \label{sec:graded MFs}

Let $G$ be an affine algebraic group and let $X$ be a smooth variety equipped with an action $\sigma: G \times X \to X$. Let $w \in \Gamma(X,\mathcal L)^G$ be a $G$-invariant section of an invertible equivariant sheaf, $\mathcal L$. 
\begin{definition} \label{defn: big Fact}
 The \newterm{dg-category of factorizations} of $w$, is denoted by $\mathsf{Fact}(X,G,w)$. The objects of $\mathsf{Fact}(X,G,w)$ are pairs, 
 \begin{center}
 \begin{tikzpicture}[description/.style={fill=white,inner sep=2pt}]
  \matrix (m) [matrix of math nodes, row sep=3em, column sep=3em, text height=1.5ex, text depth=0.25ex]
  {  \mathcal E_{-1} & \mathcal E_0 & \mathcal E_{-1} \otimes_{\mathcal O_X} \mathcal L \\ };
  \path[->,font=\scriptsize]
  (m-1-1) edge node[above] {$\phi^{\mathcal E}_0$} (m-1-2)
  (m-1-2) edge node[above] {$\phi^{\mathcal E}_{-1}$} (m-1-3);
 \end{tikzpicture}
 \end{center}
 of morphisms in $\op{Qcoh}_G X$, satisfying 
 \begin{align*}
  \phi^{\mathcal E}_{-1} \circ \phi^{\mathcal E}_0 & = w \\
  (\phi^{\mathcal E}_0 \otimes \mathcal L) \circ \phi^{\mathcal E}_{-1} & = w.
 \end{align*}
 We denote such an object by $(\mathcal E_{-1},\mathcal E_0,\phi^{\mathcal E}_{-1},\phi^{\mathcal E}_0)$ or simply by $\mathcal E$ when there is no confusion. The morphism complex between two objects, $\mathcal E$ and $\mathcal F$, as a graded vector space, can be described as follows. For $n=2l$, we have 
\begin{displaymath}
 \op{Hom}^n_{\mathsf{Fact}(X,G,w)}(\mathcal E,\mathcal F) = \op{Hom}_{\op{Qcoh}_G X}(\mathcal E_{-1}, \mathcal F_{-1}\otimes_{\mathcal O_X} \mathcal L^l) \oplus \op{Hom}_{\op{Qcoh}_G X}(\mathcal E_0, \mathcal F_0\otimes_{\mathcal O_X} \mathcal L^l)
\end{displaymath}
and for $n=2l+1$, we have
\begin{displaymath}
 \op{Hom}^n_{\mathsf{Fact}(X,G,w)}(\mathcal E,\mathcal F) = \op{Hom}_{\op{Qcoh}_G X}(\mathcal E_{0},\mathcal F_{-1}\otimes_{\mathcal O_X} \mathcal L^{l+1}) \oplus \op{Hom}_{\op{Qcoh}_G X}(\mathcal E_{-1}, \mathcal F_0 \otimes_{\mathcal O_X} \mathcal L^l)
\end{displaymath}
 The differential applied to $(f_{-1},f_0) \in \op{Hom}^n_{\mathsf{Fact}(X,G,w)}(\mathcal E,\mathcal F)$ 
\begin{displaymath}
 = \begin{cases}
  \left((f_0 \circ \phi_0^{\mathcal E} - (\phi^{\mathcal F}_0\otimes_{\mathcal O_X} \mathcal L^l) \circ f_{-1}, \left( f_{-1} \otimes_{\mathcal O_X} \mathcal L \right) \circ \phi^{\mathcal E}_{-1} - ( \phi^{\mathcal F}_{-1}\otimes_{\mathcal O_X} \mathcal L^{l} ) \circ f_0\right) & \text{if }n=2l \\
  \left((f_0 \circ \phi_0^{\mathcal E} + (\phi^{\mathcal F}_{-1} \otimes_{\mathcal O_X} \mathcal L^{l}) \circ f_{-1}, \left(f_{-1} \otimes_{\mathcal O_X} \mathcal L\right) \circ \phi^{\mathcal E}_{-1} + (\phi^{\mathcal F}_0 \otimes_{\mathcal O_X} \mathcal L^{l+1}) \circ f_0\right) & \text{if }n=2l+1. \\
 \end{cases}
\end{displaymath}
\end{definition}

Given an additive subcategory of $\op{Qcoh}_G X$, we can form a corresponding dg-subcategory of $\mathsf{Fact}(X,G,w)$ by requiring the components, $\mathcal E_{-1}$ and $\mathcal E_0$, to be objects from that additive subcategory.

\begin{definition} \label{defn: multi facts}
 Denote by $\mathsf{fact}(X,G,w)$, $\mathsf{Vect}(X,G,w)$, $\mathsf{vect}(X,G,w)$, and $\mathsf{Inj}(X,G,w)$, respectively, the full dg-subcategory of $\mathsf{Fact}(X,G,w)$ whose components, respectively, are coherent, locally-free, locally-free of finite rank, and injective as quasi-coherent $G$-equivariant sheaves.
\end{definition}

\begin{remark}
 Categories of projective factorizations only prove useful when $X$ is affine and $G$ is reductive. Then, any locally-free $G$-equivariant sheaf of finite rank is projective.
\end{remark}

\begin{definition}
 The \newterm{shift}, denoted by $[1]$, sends a factorization, $\mathcal E$, to the factorization, 
 \begin{displaymath}
  \mathcal E[1] := (\mathcal E_0,\mathcal E_{-1}\otimes_{\mathcal O_X} \mathcal L,-\phi^{\mathcal E}_0,-\phi^{\mathcal E}_{-1} \otimes_{\mathcal O_X} \mathcal L).
 \end{displaymath}
\end{definition}

\begin{lemma}
 We have an equality
 \begin{displaymath}
  \op{Hom}^n_{\mathsf{Fact}(X,G,w)}(\mathcal E, \mathcal F) = \op{Hom}^0_{\mathsf{Fact}(X,G,w)}(\mathcal E, \mathcal F[n]).
 \end{displaymath}
\end{lemma}

\begin{proof}
 This is a straightforward check and is suppressed.
\end{proof}

One can pass to an associated Abelian category. It has the same objects as $\mathsf{Fact}(X,G,w)$, but morphisms between $\mathcal E$ and $\mathcal F$ are closed degree-zero morphisms in $\op{Hom}_{\mathsf{Fact}(X,G,w)}(\mathcal E, \mathcal F)$. Denote this category by $Z^0\mathsf{Fact}(X,G,w)$. The category, $Z^0\mathsf{Fact}(X,G,w)$, with component-wise kernels and cokernels is an Abelian category.

\begin{definition}
 Given a complex of objects from $Z^0\mathsf{Fact}(X,G,w)$,
\begin{displaymath}
 \cdots \to \mathcal E^b \overset{f^b}{\to} \mathcal E^{b+1} \overset{f^{b+1}}{\to} \cdots \overset{f^{t-1}}{\to} \mathcal E^{t} \to \cdots ,
\end{displaymath}
 the \newterm{totalization}, $\mathcal T$, is the factorization
\begin{align*}
 \mathcal T_{-1} & := \bigoplus_{i=2l} \mathcal E_{-1}^i \otimes_{\O_X} \cL^{-l} \oplus \bigoplus_{i=2l-1} \mathcal E_0^i \otimes_{\O_X} \cL^{-l} \\
 \mathcal T_0 & := \bigoplus_{i=2l} \mathcal E_{0}^i \otimes_{\O_X} \cL^{-l} \oplus \bigoplus_{i=2l+1} \mathcal E_{-1}^i \otimes_{\O_X} \cL^{-l} \\
 \phi^{\mathcal T}_0 & := \begin{pmatrix} \ddots & 0 & 0 & 0 & 0 \\ \ddots & -\phi_{-1}^{\mathcal E^{-1}} & 0 & 0 & 0 \\ 0 & f_0^{-1} & \phi_0^{\mathcal E^{0}} & 0 & 0 \\ 0 & 0 & f_{-1}^{0} & -\phi_{-1}^{\mathcal E^1} \otimes \mathcal L^{-1} & 0 \\ 0 & 0 & 0 & \ddots & \ddots \end{pmatrix} \\
 \phi^{\mathcal T}_{-1} & := \begin{pmatrix} \ddots & 0 & 0 & 0 & 0 \\ \ddots & -\phi_{0}^{\mathcal E^{-1}} \otimes \mathcal L & 0 & 0 & 0 \\ 0 & f_{-1}^{-1} \otimes \mathcal L & \phi_{-1}^{\mathcal E^{0}} & 0 & 0\\ 0 & 0 & f_0^{0} & -\phi_{0}^{\mathcal E^1} & 0 \\ 0 & 0 & 0 & \ddots & \ddots \end{pmatrix}
\end{align*}
 For any closed morphism of cohomological degree zero, $f:\mathcal E \to \mathcal F$, in $\mathsf{Fact}(X,G,w)$, we can form the cone factorization, $C(f)$, as the totalization of the complex
 \begin{displaymath}
  \mathcal E \overset{f}{\to} \mathcal F
 \end{displaymath}
 where $\mathcal F$ is in degree zero.
\end{definition}

\begin{proposition} \label{prop: facts are triangulated}
 The homotopy category, $[\mathsf{Fact}(X,G,w)]$, is a triangulated category.
\end{proposition}

\begin{proof}
 The translation is $[1]$ and the class of triangles is given by sequences of morphisms
 \begin{displaymath}
  \mathcal E \overset{f}{\to} \mathcal F \to C(f) \to \mathcal E[1].
 \end{displaymath}
 The proof now runs completely analogously to proving that the homotopy category of chain complexes of an Abelian category is triangulated. It is therefore suppressed.
\end{proof}

\begin{definition}{(Positselski)} 
 Let $\mathsf{Acyc}(X,G,w)$ denote the full subcategory of objects of $\mathsf{Fact}(X,G,w)$ consisting of totalizations of bounded exact complexes from $Z^0\mathsf{Fact}(X,G,w)$. Objects of $\mathsf{Acyc}(X,G,w)$ are called \newterm{acyclic}. Similarly, let $\mathsf{acyc}(X,G,w)$ denote the subcategory of totalizations of bounded exact complexes of coherent factorizations. 

 We will also need the analogs for factorizations with locally-free components. The full subcategory of objects of $\mathsf{Vect}(X,G,w)$ consisting of totalizations of bounded exact complexes from $Z^0\mathsf{Vect}(X,G,w)$ is denoted by $\mathsf{AcycVect}(X,G,w)$. Similarly, let $\mathsf{acycvect}(X,G,w)$ denote the subcategory of totalizations of bounded exact complexes of coherent locally-free factorizations.
\end{definition}

\begin{definition}{(Positselski)} \label{defn: abs der}
 The \newterm{absolute derived category} of $[\mathsf{Fact}(X,G,w)]$ is the Verdier quotient of $[\mathsf{Fact}(X,G,w)]$ by $[\mathsf{Acyc}(X,G,w)]$,
 \begin{displaymath}
  \op{D}^{\op{abs}}[\mathsf{Fact}(X,G,w)] := [\mathsf{Fact}(X,G,w)]/[\mathsf{Acyc}(X,G,w)].
 \end{displaymath}
 The \newterm{absolute derived category} of $[\mathsf{fact}(X,G,w)]$ is the Verdier quotient of $[\mathsf{fact}(X,G,w)]$ by $[\mathsf{acyc}(X,G,w)]$,
 \begin{displaymath}
  \op{D}^{\op{abs}}[\mathsf{fact}(X,G,w)] := [\mathsf{fact}(X,G,w)]/[\mathsf{acyc}(X,G,w)].
 \end{displaymath}
 
 The \newterm{absolute derived category} of $[\mathsf{Vect}(X,G,w)]$ is the Verdier quotient of $[\mathsf{Vect}(X,G,w)]$ by $[\mathsf{AcycVect}(X,G,w)]$
 \begin{displaymath}
  \op{D}^{\op{abs}}[\mathsf{Vect}(X,G,w)] := [\mathsf{Vect}(X,G,w)]/[\mathsf{AcycVect}(X,G,w)].
 \end{displaymath}
 The \newterm{absolute derived category} of $[\mathsf{vect}(X,G,w)]$ is the Verdier quotient of $[\mathsf{vect}(X,G,w)]$ by $[\mathsf{acycvect}(X,G,w)]$,
 \begin{displaymath}
  \op{D}^{\op{abs}}[\mathsf{vect}(X,G,w)] := [\mathsf{vect}(X,G,w)]/[\mathsf{acycvect}(X,G,w)].
 \end{displaymath}

 We say that two factorizations are \newterm{quasi-isomorphic} if they are isomorphic in the appropriate absolute derived category.
\end{definition}

We will also use versions of these categories with support conditions. Let $Z$ be a closed $G$-invariant subset of $X$ and set $U := X \setminus Z$. Let $j: U \to X$ be the inclusion. 

\begin{definition}
 The category, $\dabs_Z [\mathsf{Fact}(X,G,w)]$, is the kernel of the functor, 
 \begin{displaymath}
  j^*: \dabs_Z [\mathsf{Fact}(X,G,w)] \to \dabs_Z [\mathsf{Fact}(U,G,w|_U)].
 \end{displaymath}
 Define $\dabs_Z [\mathsf{fact}(X,G,w)]$, $\dabs_Z [\mathsf{Vect}(X,G,w)]$, $\dabs_Z [\mathsf{vect}(X,G,w)]$ analogously.
\end{definition}

Let us recall some useful facts, due essentially to Positselski, about $\op{D}^{\op{abs}}[\mathsf{Fact}(X,G,w)]$.

\begin{proposition} \label{prop: injective enhancement}
 Factorizations with injective components are right orthogonal to acyclic complexes in $[\mathsf{Fact}(X,G,w)]$. Moreover, the composition,
 \begin{displaymath}
  [\mathsf{Inj}(X,G,w)] \to [\mathsf{Fact}(X,G,w)] \to \op{D}^{\op{abs}}[\mathsf{Fact}(X,G,w)]
 \end{displaymath}
 is an equivalence.
\end{proposition}

\begin{proof}
 This is a version of \cite[Theorem 3.6]{Pos1} of Positselski. In this generality, it is a special case of \cite[Lemma 2.22 and Corollary 2.23]{BDFIK}.
\end{proof}

\begin{definition}
 We let $\mathsf{Inj}_{\op{coh}}(X,G,w)$ be the full dg-category of $\mathsf{Fact}(X,G,w)$ consisting of factorizations that have injective components and that are quasi-isomorphic to a factorization with coherent components.
\end{definition}

\begin{corollary} \label{cor: injective enhancement}
 The composition,
 \begin{displaymath}
 [\mathsf{Inj}_{\op{coh}}(X,G,w)] \to [\mathsf{Fact}(X,G,w)] \to \op{D}^{\op{abs}}[\mathsf{fact}(X,G,w)]
 \end{displaymath}
 is an equivalence.
\end{corollary}

\begin{proof}
 This is an immediate corollary of Proposition~\ref{prop: injective enhancement}.
\end{proof}

\begin{proposition} \label{prop: projective enhancement}
 The natural functor,
 \begin{displaymath}
  \op{D}^{\op{abs}}[\mathsf{Vect}(X,G,w)] \to \op{D}^{\op{abs}}[\mathsf{Fact}(X,G,w)],
 \end{displaymath}
 is an equivalence as is the natural functor,
 \begin{displaymath}
  \op{D}^{\op{abs}}[\mathsf{vect}(X,G,w)] \to \op{D}^{\op{abs}}[\mathsf{fact}(X,G,w)].
 \end{displaymath}
 
 Moreover, if $X$ is affine and $G$ is reductive, factorizations with locally-free components are left orthogonal to acyclic complexes in $[\mathsf{Fact}(X,G,w)]$ and the compositions
 \begin{align*}
  [\mathsf{Vect}(X,G,w)] & \to [\mathsf{Fact}(X,G,w)] \to \op{D}^{\op{abs}}[\mathsf{Fact}(X,G,w)] \\
  [\mathsf{vect}(X,G,w)] & \to [\mathsf{fact}(X,G,w)] \to \op{D}^{\op{abs}}[\mathsf{fact}(X,G,w)]
 \end{align*}
 are equivalences.
\end{proposition}

\begin{proof}
 We first check that any factorization is quasi-isomorphic to a locally-free factorization. Moreover, if the original factorization is coherent, then the locally-free factorization can be chosen to have finite rank. The argument is contained in the proof of \cite[Theorem 3.6]{Pos1}. Let $\mathcal E$ be a factorization. By Theorem \ref{theorem: Thomason}, we can find locally-free $G$-equivariant sheaves, $\mathcal V_{-1}$ and $\mathcal V_0$ and epimorphisms
 \begin{align*}
  \mathcal V_{-1} & \overset{f_{-1}}{\to} \mathcal E_{-1} \\
  \mathcal V_0 & \overset{f_0}{\to} \mathcal E_0.
 \end{align*}
 Form the factorization, $G^+(\mathcal V)$,
 \begin{displaymath}
  \mathcal V_0\otimes_{\mathcal O_X} \mathcal L^{-1} \oplus \mathcal V_{-1} \overset{\begin{pmatrix} 0 & 1 \\ w & 0 \end{pmatrix}}{\to} \mathcal V_{-1} \oplus \mathcal V_0 \overset{\begin{pmatrix} 0 & w \\ 1 & 0 \end{pmatrix}}{\to} \mathcal V_0 \oplus \mathcal V_{-1}\otimes_{\mathcal O_X} \mathcal L.
 \end{displaymath}
 The maps,
 \begin{align*}
  \mathcal V_0 \otimes_{\mathcal O_X} \mathcal L^{-1} \oplus \mathcal V_{-1} & \overset{( 0 \ f_{-1} )}{\to} \mathcal E_{-1} \\
  \mathcal V_{-1} \oplus \mathcal V_0 & \overset{( 0 \ f_{0} )}{\to} \mathcal E_{0},
 \end{align*}
 give an epimorphism in $Z^0\mathsf{Fact}(X,G,w)$. Thus, for any factorization, there exists a factorization with locally-free components mapping epimorphically onto it. We can construct an exact complex of objects of $Z^0\mathsf{Fact}(X,G,w)$
 \begin{displaymath}
  \cdots \to \mathcal V^s \to \cdots \to \mathcal V^1 \to \mathcal E \to 0
 \end{displaymath}
 where each $\mathcal V^j$ is a factorization with locally-free components. Let $\mathcal K^s$ be the kernel of $\mathcal V^s \to \mathcal V^{s-1}$ for $s > \op{dim} X$. Since $X$ is smooth, the components of $\mathcal K^s$ are locally-free. Thus, we have an exact sequence
 \begin{displaymath}
  0 \to \mathcal K^s \to \mathcal V^s \to \cdots \to \mathcal V^1 \to \mathcal E \to 0.
 \end{displaymath}
 In $\op{D}^{\op{abs}}[\mathsf{fact}(X,G,w)]$, we have an isomorphism 
 \begin{displaymath}
  \mathcal T \to \mathcal E
 \end{displaymath}
 where $\mathcal T$ is the totalization of $\mathcal K^s \to \mathcal V^s \to \cdots \to \mathcal V^1$. The factorization, $\mathcal T$, has locally-free components.
 
 Thus, the natural functors,
 \begin{align*}
  \op{D}^{\op{abs}}[\mathsf{Vect}(X,G,w)] & \to \op{D}^{\op{abs}}[\mathsf{Fact}(X,G,w)] \\
  \op{D}^{\op{abs}}[\mathsf{vect}(X,G,w)] & \to \op{D}^{\op{abs}}[\mathsf{fact}(X,G,w)],
 \end{align*}
 are essentially surjective. We next check fully-faithfulness. 
 
For fully-faithfulness, it suffices to show that given a short exact sequence
 \begin{equation} \label{equation: fact SES}
  0 \to \mathcal E^3 \to \mathcal E^2 \to \mathcal E^1 \to 0
 \end{equation}
 there exists a factorization, $\mathcal S \in \mathsf{AcycVect}(X,G,w)$, that is isomorphic to the totalization, $\mathcal T$, of \eqref{equation: fact SES} in $\op{D}^{\op{abs}}[\mathsf{Fact}(X,G,w)]$. Moreover, if $\mathcal E_i$ are all coherent, then $\mathcal S$ can be taken to have finite rank. 
 
 Using what we have already proven, we can find a locally-free factorization $\mathcal V^1_1$ and an epimorphism 
 \begin{displaymath}
  \mathcal V^1_1 \to \mathcal E^1.
 \end{displaymath}
 Next choose a locally-free factorization $\mathcal V^2_1$ and an epimorphism onto the fiber product
 \begin{displaymath}
  \mathcal V^2_1 \to \mathcal E^2 \times_{\mathcal E^1} \mathcal V^1_1.
 \end{displaymath}
 Let $\mathcal V^3_1$ be the kernel of the map $\mathcal V^2_1 \to \mathcal E^2 \times_{\mathcal E^1} \mathcal V^1_1 \to \mathcal V^1_1$. There is a commutative diagram 
 \begin{center}
 \begin{tikzpicture}[description/.style={fill=white,inner sep=2pt}]
  \matrix (m) [matrix of math nodes, row sep=1em, column sep=3em, text height=1.5ex, text depth=0.25ex]
  {  0 & \mathcal E^3 & \mathcal E^2 & \mathcal E^1 & 0 \\
     0 & \mathcal V_1^3 & \mathcal V_1^2 & \mathcal V_1^1 & 0 \\
  };
  \path[->,font=\scriptsize]
  (m-1-1) edge (m-1-2)
  (m-1-2) edge (m-1-3)
  (m-1-3) edge (m-1-4)
  (m-1-4) edge (m-1-5)
  (m-2-1) edge (m-2-2)
  (m-2-2) edge (m-2-3)
  (m-2-3) edge (m-2-4)
  (m-2-4) edge (m-2-5)
  (m-2-2) edge (m-1-2)
  (m-2-3) edge (m-1-3)
  (m-2-4) edge (m-1-4)
  ;
 \end{tikzpicture}
 \end{center}
 with the vertical morphisms being epimorphisms. Replacing \eqref{equation: fact SES} the kernels of the vertical morphisms, repeating the argument, and iterating, we get an exact sequence of short exact sequences
 \begin{center}
 \begin{tikzpicture}[description/.style={fill=white,inner sep=2pt}]
  \matrix (m) [matrix of math nodes, row sep=1em, column sep=3em, text height=1.5ex, text depth=0.25ex]
  {   & 0 & 0 & 0 &  \\
     0 & \mathcal E^3 & \mathcal E^2 & \mathcal E^1 & 0 \\
     0 & \mathcal V_1^3 & \mathcal V_1^2 & \mathcal V_1^1 & 0 \\
      & \vdots & \vdots & \vdots &  \\
     0 & \mathcal V_s^3 & \mathcal V_s^2 & \mathcal V_s^1 & 0 \\
      & 0 & 0 & 0 & \\
  };
  \path[->,font=\scriptsize]
  (m-2-1) edge (m-2-2)
  (m-2-2) edge (m-2-3)
  (m-2-3) edge (m-2-4)
  (m-2-4) edge (m-2-5)
  (m-3-1) edge (m-3-2)
  (m-3-2) edge (m-3-3)
  (m-3-3) edge (m-3-4)
  (m-3-4) edge (m-3-5)
  (m-5-1) edge (m-5-2)
  (m-5-2) edge (m-5-3)
  (m-5-3) edge (m-5-4)
  (m-5-4) edge (m-5-5)
  
  (m-2-2) edge (m-1-2)
  (m-2-3) edge (m-1-3)
  (m-2-4) edge (m-1-4)
  
  (m-3-2) edge (m-2-2)
  (m-3-3) edge (m-2-3)
  (m-3-4) edge (m-2-4)
  
  (m-4-2) edge (m-3-2)
  (m-4-3) edge (m-3-3)
  (m-4-4) edge (m-3-4)
  
  (m-5-2) edge (m-4-2)
  (m-5-3) edge (m-4-3)
  (m-5-4) edge (m-4-4)
  
  (m-6-2) edge (m-5-2)
  (m-6-3) edge (m-5-3)
  (m-6-4) edge (m-5-4)
  ;
 \end{tikzpicture}
 \end{center}
 where each $\mathcal V^i_j$ is locally-free, and of finite rank if each $\mathcal E^i$ is coherent. The long exact sequence of short exact sequences gives rise to a long exact sequence of the totalizations of these short exact sequences
 \begin{displaymath}
  0 \to \mathcal T_s \to \mathcal T_{s-1} \to \cdots \to \mathcal T_1 \to \mathcal T \to 0.
 \end{displaymath}
 Each $\mathcal T_j$ lies in $\mathsf{AcycVect}(X,G,w)$, or in $\mathsf{acycvect}(X,G,w)$ if each $\mathcal E^i$ is coherent. Thus, the totalization of $\mathcal T_s \to \mathcal T_{s-1} \to \cdots \to \mathcal T_1$ lies in $\mathsf{AcycVect}(X,G,w)$, or in $\mathsf{acycvect}(X,G,w)$ if each $\mathcal E^i$ is coherent, and is isomorphic to $\mathcal T$ in $\op{D}^{\op{abs}}[\mathsf{Fact}(X,G,w)]$.
 
 If we assume that $X$ is affine and $G$ is reductive, then any $G$-equivariant locally-free sheaf is projective.  The result in this case is a version of \cite[Theorem 3.6]{Pos1} of Positselski. For this generality, we argue as follows. By \cite[Lemma 2.22]{BDFIK}, factorizations with projective components are left orthogonal to acyclic factorizations. Thus, the compositions 
 \begin{align*}
  [\mathsf{Vect}(X,G,w)] & \to [\mathsf{Fact}(X,G,w)] \to \op{D}^{\op{abs}}[\mathsf{Fact}(X,G,w)] \\
  [\mathsf{vect}(X,G,w)] & \to [\mathsf{fact}(X,G,w)] \to \op{D}^{\op{abs}}[\mathsf{fact}(X,G,w)]
 \end{align*}
 are fully-faithful. As we have already seen they are essentially surjective, they must both be equivalences.
\end{proof}

For a definition of a compactly-generated triangulated category and compact generators, refer to Section~\ref{sec:generation of graded sing cat}.

\begin{proposition} \label{prop: we have a compactly-gen triangulated cat}
 The triangulated category, $\op{D}^{\op{abs}}[\mathsf{Fact}(X,G,w)]$, is compactly-generated. The objects of $\op{D}^{\op{abs}}[\mathsf{fact}(X,G,w)]$ are a set of compact generators. 
\end{proposition}

\begin{proof}
 The proof of this fact is a repetition of the argument of \cite[Theorem 3.11.2]{Pos1} using the fact that any quasi-coherent $G$-equivariant sheaf on $X$, hence any factorization, is a union of its coherent subsheaves \cite[Lemma 1.4]{Tho2}.  More precisely, one can use Lemma~\ref{lem: colimit gen up inf coprod} (which is a consequence of Thomason's result) and follow Positselski's argument verbatim.
\end{proof}

\begin{remark}
 It is a subtle problem to determine whether or not all compact objects of $\dabs[\mathsf{Fact}(X,G,w)]$ are isomorphic to objects of $\dabs[\mathsf{fact}(X,G,w)]$. By Proposition~\ref{prop: we have a compactly-gen triangulated cat} and \cite[Theorem 2.1]{Nee2}, every compact object is a summand of an object of $\dabs[\mathsf{fact}(X,G,w)]$ under a splitting in $\dabs[\mathsf{Fact}(X,G,w)]$. However, those summands may not be representable by coherent factorizations. See \cite{OrlFC} for an investigation of the relationship with completions of $X$. 
\end{remark}

To handle the possible idempotent incompleteness of our factorizations categories, we make the following definitions.

\begin{definition} \label{defn: idemp compl}
 Let $\overline{\mathsf{Inj}}_{\op{coh}}(X,G,w)$ be the full dg-subcategory of $\mathsf{Inj}(X,G,w)$ consisting of factorizations which are compact in $[\mathsf{Inj}(X,G,w)] \cong \dabs[\mathsf{Fact}(X,G,w)]$. 

 Let $\overline{\mathsf{vect}}(X,G,w)$ be the full dg-subcategory of $\mathsf{Vect}(X,G,w)$ consisting of factorizations which are compact in $\dabs[\mathsf{Vect}(X,G,w)] \cong \dabs[\mathsf{Fact}(X,G,w)]$. 

 Let $\overline{\dabs[\mathsf{fact}(X,G,w)]}$ denote the idempotent-completion of $\dabs[\mathsf{fact}(X,G,w)]$. Note that by Proposition~\ref{prop: injective enhancement}, we have
 \begin{displaymath}
  \left[\overline{\mathsf{Inj}}_{\op{coh}}(X,G,w)\right] \cong \overline{\dabs[\mathsf{fact}(X,G,w)]}.
 \end{displaymath}
 If $X$ is affine and $G$ is reductive, by Proposition~\ref{prop: projective enhancement}, we have
 \begin{displaymath}
  \left[\overline{\mathsf{vect}}(X,G,w)\right] \cong \overline{\dabs[\mathsf{fact}(X,G,w)]} 
 \end{displaymath}
\end{definition}

From a complex on the zero locus of $w$, one can form a factorization.

\begin{definition}
 Let $Y$ be the zero locus of $w$ in $X$. Denote by $\mathsf{Qcoh}_G Y$ the dg-category of chain complexes of quasi-coherent $G$-equivariant sheaves on $Y$.
 
 We have the dg-functor, see \cite[Section 3.7]{Pos2},
\begin{align*}
 \Upsilon: \mathsf{Qcoh}_G Y & \to \mathsf{Fact}(X,G,w) \\
  \mathcal C & \mapsto (\bigoplus_{l \in \Z} i_*\mathcal C^{2l-1}\otimes_{\mathcal O_X} \mathcal L^{-l}, \bigoplus_{l \in \Z} i_*\mathcal C^{2l} \otimes_{\mathcal O_X} \mathcal L^{-l}, \oplus_{l \in \Z} i_*d_{\mathcal C}^{2l-1}\otimes_{\mathcal O_X} \mathcal L^{-l}, \oplus_{l \in \Z} i_*d_{\mathcal C}^{2l} \otimes_{\mathcal O_X} \mathcal L^{-l}),
\end{align*}
 In the case that $\mathcal C$ is a coherent $G$-equivariant sheaf and the context allows, we will denote $\Upsilon \mathcal C$ simply by $\mathcal C$
\end{definition}

Note that $\Upsilon \mathcal C$ is the totalization of the chain complex 
\begin{displaymath}
 \cdots \to \Upsilon \mathcal C^b \to \cdots \to \Upsilon \mathcal C^t \to \cdots.
\end{displaymath}
It is clear that $\Upsilon$ takes bounded acyclic chain complexes in $\mathsf{Qcoh}_G Y$ to acyclic chain complexes on $[\mathsf{Fact}(X,G,w)]$. Thus, $\Upsilon$ descends to a functor 
\begin{displaymath}
 \Upsilon: \dbqcohG{G}{Y} \to \op{D}^{\op{abs}}[\mathsf{Fact}(X,G,w)].
\end{displaymath}
Moreover, $\Upsilon$ takes bounded complexes of coherent sheaves to coherent factorizations so it induces a functor
\begin{displaymath}
 \Upsilon: \dbcohG{G}{Y} \to \op{D}^{\op{abs}}[\mathsf{fact}(X,G,w)].
\end{displaymath}

Now, we give an explicit construction, due essentially to Eisenbud \cite[Section 7]{EisMF}, of a factorization associated to certain invariant closed subschemes in the zero locus of $w$. Consider an equivariant morphism
\begin{displaymath}
\mathcal E \overset{s}{\to} \mathcal O_X 
\end{displaymath}
with $\mathcal E$ locally-free of finite rank. We notationally identify $s$ with the corresponding global section of $\mathcal E^{\vee}$. Further, assume there exists an equivariant morphism $t: \mathcal O_X \to \mathcal E \otimes_{\mathcal O_X} \mathcal L$ making the diagram
\begin{center}
\begin{tikzpicture}[description/.style={fill=white,inner sep=2pt}]
\matrix (m) [matrix of math nodes, row sep=3em, column sep=3em, text height=1.5ex, text depth=0.25ex]
{  \mathcal E & \mathcal O_X \\
   \mathcal E\otimes_{\mathcal O_X} \mathcal L & \mathcal O_X\otimes_{\mathcal O_X} \mathcal L \\
};
\path[->,font=\scriptsize]
(m-1-1) edge node[above] {$s$} (m-1-2)
(m-1-2) edge node[above] {$t$} (m-2-1)
(m-2-1) edge node[above] {$s \otimes_{\mathcal O_X} \mathcal L$} (m-2-2)
(m-1-1) edge node[left] {$w$} (m-2-1)
(m-1-2) edge node[left] {$w$} (m-2-2)
;
\end{tikzpicture}
\end{center}
commute. 

\begin{definition} \label{definition: Koszul factorization}
 The \newterm{Koszul factorization} associated to the data $(\mathcal E, s, t)$ is defined as 
 \begin{align*}
  \mathcal K_{-1}(s,t) & := \bigoplus_{l \geq 0} (\Lambda^{2l+1} \mathcal E) \otimes_{\mathcal O_X} \mathcal L^l \\ 
  \mathcal K_0(s,t) & := \bigoplus_{l \geq 0} (\Lambda^{2l} \mathcal E)\otimes_{\mathcal O_X} \mathcal L^l \\
  \phi^{\mathcal K}_0, \phi^{\mathcal K}_{-1} & := \bullet \ \lrcorner \ s + \bullet \wedge t.
 \end{align*}
\end{definition}

\begin{proposition} \label{prop: Eisenbud stabilization}
 Assume that $(\mathcal E, s, t)$ as above exist. Let $\mathcal O_{Z_s}$ be the cokernel of $s$. If $\op{rank} \mathcal E = \op{codim} Z_s$, then $\mathcal K(s,t)$ is quasi-isomorphic to the factorization, $\Upsilon \mathcal O_{Z_s}$.
 
 Let $\mathcal O_{Z_{t^{\vee}}}$ be the cokernel of $\mathcal E^{\vee} \otimes_{\mathcal O_X} \mathcal L^{\vee} \overset{t^{\vee}}{\to} \mathcal O_{X}$. If $\op{rank} \mathcal E = \op{codim} Z_{t^{\vee}}$, then $\mathcal K(s,t)$ is quasi-isomorphic to the factorization $\Upsilon \left( \mathcal O_{Z_{t^{\vee}}} \otimes_{\mathcal O_X} \Lambda^{\op{rk} \mathcal E} \mathcal E [-\op{rk} \mathcal E]\right)$.
\end{proposition}

\begin{proof}
 Each is a straightforward application of \cite[Lemma 3.4]{BDFIK}, see also \cite[Section 3.2]{Becker}.
\end{proof}

\begin{lemma} \label{lemma: duality of Koszul factorizations}
 We have an isomorphism of factorizations,
 \begin{displaymath}
  \mathcal K(s,t)^{\vee} \cong \mathcal K(t^{\vee},s^{\vee}).
 \end{displaymath}
\end{lemma}

\begin{proof}
 This is immediate from the definitions.
\end{proof}

We describe some functors associated with natural operations on factorizations, mirroring those discussed in Section~\ref{sec:graded rings}.

\begin{definition}
 Let $X$ be a smooth variety equipped with an action of $G$. Assume we have $w, v \in \Gamma(X,\mathcal L)^G$. 
 We define a dg-functor,
 \begin{displaymath}
  \otimes_{\mathcal O_X} : \mathsf{Fact}(X,G,w) \otimes_k \mathsf{Fact}(X,G,v) \to \mathsf{Fact}(X,G,w+v),
 \end{displaymath}
 by setting
 \begin{align*}
 \left(\mathcal E \otimes_{\mathcal O_X} \mathcal F \right)_{-1} & := \mathcal E_{-1} \otimes_{\mathcal O_X} \mathcal F_0 \oplus \mathcal E_{0} \otimes_{\mathcal O_X} \mathcal F_{-1} \\
 \left(\mathcal E \otimes_{\mathcal O_X} \mathcal F \right)_0 & := \mathcal E_0 \otimes_{\mathcal O_X} \mathcal F_0 \oplus \mathcal E_{-1} \otimes_{\mathcal O_X} \mathcal F_{-1}\otimes_{\mathcal O_X} \mathcal L \\
 \phi^{\mathcal E \otimes_{\mathcal O_X} \mathcal F}_{0} & := \begin{pmatrix} \phi^{\mathcal E}_{0} \otimes_{\mathcal O_X} 1_{\mathcal F_{0}} & 1_{\mathcal E_0} \otimes_{\mathcal O_X} \phi^{\mathcal F}_{0} \\ - 1_{\mathcal E_{-1}} \otimes_{\mathcal O_X} \phi^{\mathcal F}_{-1}  & \phi^{\mathcal E}_{-1} \otimes_{\mathcal O_X} 1_{\mathcal F_{-1}} \end{pmatrix} \\
 \phi^{\mathcal E \otimes_{\mathcal O_X} \mathcal F}_{-1} & := \begin{pmatrix} \phi^{\mathcal E}_{-1} \otimes_{\mathcal O_X} 1_{\mathcal F_{0}} & - 1_{\mathcal E_{-1}} \otimes_{\mathcal O_X} \phi^{\mathcal F}_{0}\otimes_{\mathcal O_X} \mathcal L \\ 1_{\mathcal E_{0}} \otimes_{\mathcal O_X} \phi^{\mathcal F}_{-1}  & \phi^{\mathcal E}_{0}\otimes_{\mathcal O_X} \mathcal L \otimes_{\mathcal O_X} 1_{\mathcal F_{-1}} \end{pmatrix}
 \end{align*}
 Given $\alpha: \mathcal E \to \mathcal E'[r]$ and $\beta: \mathcal F  \to \mathcal F'[s]$, one has
 \begin{displaymath}
  \alpha \otimes_{\mathcal O_X} \beta : \mathcal E \otimes_{\mathcal O_X} \mathcal E' \to \mathcal F \otimes_{\mathcal O_X} \mathcal F'[r+s]
 \end{displaymath}
 defined by
 \begin{displaymath}
  \alpha \otimes_{\mathcal O_X} \beta = \begin{cases} 
                                         \left( \begin{pmatrix} \alpha_{-1} \otimes \beta_0 & 0 \\ 0 & \alpha_0 \otimes \beta_{-1} \end{pmatrix}, \begin{pmatrix} \alpha_0 \otimes \beta_0 & 0 \\ 0 & \alpha_{-1} \otimes \beta_{-1} \otimes \mathcal L \end{pmatrix} \right) & r,s \text{ even} \\
                                         \left( \begin{pmatrix} 0 & \alpha_{0} \otimes \beta_{-1} \\ -\alpha_{-1} \otimes \beta_0 & 0 \end{pmatrix}, \begin{pmatrix} 0 & -\alpha_{-1} \otimes \beta_{-1} \otimes \mathcal L \\ \alpha_{0} \otimes \beta_{0} & 0 \end{pmatrix} \right) & r \text{ even},s \text{ odd} \\
                                         \left( \begin{pmatrix} \alpha_{-1} \otimes \beta_0 & 0 \\ 0 & \alpha_0 \otimes \beta_{-1} \end{pmatrix}, \begin{pmatrix} \alpha_0 \otimes \beta_0 & 0 \\ 0 & \alpha_{-1} \otimes \beta_{-1} \otimes \mathcal L \end{pmatrix} \right) & r \text{ odd},s \text{ even} \\
                                         \left( \begin{pmatrix} 0 & - \alpha_{0} \otimes \beta_{-1} \\ \alpha_{-1} \otimes \beta_0 & 0 \end{pmatrix}, \begin{pmatrix} 0 & \alpha_{-1} \otimes \beta_{-1} \otimes \mathcal L \\ -\alpha_{0} \otimes \beta_{0} & 0 \end{pmatrix} \right) & r,s \text{ odd}
                                        \end{cases}
 \end{displaymath}

For a locally-free factorization, $\mathcal V$, the functor,
\begin{displaymath}
 \mathcal V \otimes_{\mathcal O_X} \bullet : [\mathsf{Fact}(X,G,v)] \to [\mathsf{Fact}(X,G,w+v)],
\end{displaymath}
preserves acyclic complexes and descends to a functor. 
\begin{displaymath}
 \mathcal V \otimes_{\mathcal O_X} \bullet : \op{D}^{\op{abs}}[\mathsf{Fact}(X,G,v)] \to \op{D}^{\op{abs}}[\mathsf{Fact}(X,G,w+v)].
\end{displaymath}
For $\mathcal E \in \mathsf{Fact}(X,G,w)$, we define
\begin{displaymath}
 \mathcal E \overset{\mathbf{L}}{\otimes}_{\mathcal O_X} \bullet := \mathcal V \otimes_{\mathcal O_X} \bullet.
\end{displaymath}
where $\mathcal V$ is a locally-free factorization quasi-isomorphic to $\mathcal E$.
\end{definition}

\begin{lemma} \label{lemma: derived tensor is well-defined}
 The functor, 
 \begin{displaymath}
  \mathcal E \overset{\mathbf{L}}{\otimes}_{\mathcal O_X} \bullet : \op{D}^{\op{abs}}[\mathsf{Fact}(X,G,v)] \to \op{D}^{\op{abs}}[\mathsf{Fact}(X,G,w+v)]
 \end{displaymath}
 is well-defined, i.e.\ it does not depend on the choice of representative of the quasi-isomorphism class. 
\end{lemma}

\begin{proof}
 By Proposition~\ref{prop: projective enhancement}, inclusion of $\mathsf{Vect}(X,G,v)$ into $\mathsf{Fact}(X,G,v)$ induces an equivalence
 \begin{displaymath}
  \dabs [\mathsf{Vect}(X,G,v)] \to \dabs [\mathsf{Fact}(X,G,v)].
 \end{displaymath}
 We may therefore view the derived functor on the absolute derived category of locally-free factorizations,
 \begin{displaymath}
  \mathcal E \overset{\mathbf{L}}{\otimes}_{\mathcal O_X} \bullet : \dabs [\mathsf{Vect}(X,G,v)] \to \dabs [\mathsf{Vect}(X,G,w+v)].
 \end{displaymath}
 Since tensoring with a locally-free sheaf is exact, tensoring with a locally-free factorization preserves acyclic factorizations and we have natural quasi-isomorphisms
 \begin{displaymath}
  \mathcal E \otimes_{\mathcal O_X} \mathcal W \cong \mathcal V \otimes_{\mathcal O_X} \mathcal W =: \mathcal E \overset{\mathbf{L}}{\otimes}_{\mathcal O_X} \mathcal W.
 \end{displaymath}
 when $\mathcal W$ is locally-free and $\mathcal V$ is locally-free and quasi-isomorphic to $\mathcal E$.
\end{proof}

\begin{definition}
 Let $X$ be a smooth variety equipped with an action of $G$. Assume we have $w \in \Gamma(X,\mathcal L)^G$. Let $p: X \to \op{Spec} k$ be the structure morphism. Let $(\mathcal C,d)$ be a bounded complex of vector spaces. Let $\mathcal E \in \mathsf{Fact}(X,G,w)$. Define a factorization $\mathcal E \otimes_k \mathcal C$ by
 \begin{displaymath}
  \mathcal E \otimes_k \mathcal C := \mathcal E \otimes_{\mathcal O_X} p^*(\Upsilon \mathcal C).
 \end{displaymath}
 Denote the corresponding functor by
 \begin{displaymath}
  \mathcal E \otimes_k \bullet : \mathsf{Qcoh}^{\op{b}}(\op{Spec} k) \to \mathsf{Fact}(X,G,w).
 \end{displaymath}
 This functor takes an exact chain complex to an acyclic factorization in $\mathsf{Fact}(X,G,w)$. Thus, it descends to a functor
 \begin{displaymath}
  \mathcal E \otimes_k \bullet: \dbqcoh{\op{Spec} k} \to \dabs [\mathsf{Fact}(X,G,w)]. 
 \end{displaymath}
\end{definition}

Next we give a version of sheaf Hom.

\begin{definition}
 Let $X$ be a smooth variety equipped with an action of $G$. Assume we have sections, $w, v \in \Gamma(X,\mathcal L)^G$. We define a dg-functor,
 \begin{displaymath}
  \mathcal Hom_X  : \mathsf{Fact}(X,G,w)^{\op{op}} \otimes_k \mathsf{Fact}(X,G,v) \to \mathsf{Fact}(X,G,v-w),
 \end{displaymath}
 by setting
 \begin{align*}
 \mathcal Hom_X (\mathcal E, \mathcal F)_{-1} & := \mathcal Hom_X (\mathcal E_{-1}, \mathcal F_0)\otimes_{\mathcal O_X} \mathcal L^{-1} \oplus \mathcal Hom_X (\mathcal E_0, \mathcal F_{-1}) \\
 \mathcal Hom_X (\mathcal E, \mathcal F)_{0} & := \mathcal Hom_X (\mathcal E_0, \mathcal F_0) \oplus \mathcal Hom_X (\mathcal E_{-1}, \mathcal F_{-1}) \\
 \phi^{\mathcal Hom_X (\mathcal E, \mathcal F)}_{0} & := \begin{pmatrix} (\bullet) \circ \phi_{-1}^{\mathcal E} & \phi^{\mathcal F}_0 \circ (\bullet) \\  (\phi_{-1}^{\mathcal F} \otimes_{\mathcal O_X} \mathcal L^{-1}) \circ (\bullet) & (\bullet) \circ \phi^{\mathcal E}_{0} \end{pmatrix} \\
 \phi^{\mathcal Hom_X (\mathcal E, \mathcal F)}_{-1} & := \begin{pmatrix} -(\bullet) \circ \phi_{0}^{\mathcal E} & \phi^{\mathcal F}_0 \circ (\bullet) \\ \phi_{-1}^{\mathcal F} \circ (\bullet)
   & -(\bullet) \circ (\phi^{\mathcal E}_{-1} \otimes_{\mathcal O_X} \mathcal L^{-1}) \end{pmatrix}
\end{align*}
 Given $\alpha: \mathcal E \to \mathcal E'[r]$ and $\beta: \mathcal F  \to \mathcal F'[s]$, one has
 \begin{displaymath}
  \mathcal Hom_X(\alpha, \beta) : \mathcal Hom_X(\mathcal E',\mathcal F) \to \mathcal Hom_X(\mathcal E,\mathcal F')[r+s]
 \end{displaymath}
 defined by
 \begin{displaymath}
    \begin{pmatrix} \beta_0 \circ (\bullet) \circ (\alpha_{-1} \otimes \mathcal L^{-l+1}) & 0 \\ 0 & \beta_{-1} \circ (\bullet) \circ (\alpha_0 \otimes \mathcal L^{-l}) \end{pmatrix}, 
    \begin{pmatrix} \beta_0 \circ (\bullet) \circ (\alpha_0 \otimes \mathcal L^{-l}) & 0 \\ 0 & \beta_{-1} \circ (\bullet) \circ (\alpha_{-1} \otimes \mathcal L^{-l}) \end{pmatrix} 
 \end{displaymath}
 if $r=2l,s=2j$,
 \begin{displaymath}
  \begin{pmatrix} 0 & \beta_{-1} \circ (\bullet) \circ (\alpha_{0} \otimes \mathcal L^{-l}) \\ -\beta_{0} \circ (\bullet) \circ (\alpha_{-1} \otimes \mathcal L^{-l+1}) & 0 \end{pmatrix}, 
  \begin{pmatrix} 0 & -\beta_{-1} \circ (\bullet) \circ (\alpha_{-1} \otimes \mathcal L^{-l}) \\ \beta_0 \circ (\bullet) \circ (\alpha_0 \otimes \mathcal L^{-l}) & 0 \end{pmatrix}
 \end{displaymath}
 if $r=2l,s=2j+1$,
 \begin{displaymath}
  \begin{pmatrix} -\beta_0 \circ (\bullet) \circ (\alpha_0 \otimes \mathcal L^{-l}) & 0 \\ 0 & \beta_{-1} \circ (\bullet) \circ (\alpha_{-1} \otimes \mathcal L^{-l}) \end{pmatrix}, 
  \begin{pmatrix} \beta_0 \circ (\bullet) \circ (\alpha_{-1} \otimes \mathcal L^{-l}) & 0 \\ 0 & - \beta_{-1} \circ (\bullet) \circ (\alpha_{0} \otimes \mathcal L^{-l-1}) \end{pmatrix}
 \end{displaymath}
 if $r=2l+1,s=2j$, and
 \begin{displaymath}
  \begin{pmatrix} 0 & \beta_{-1} \circ (\bullet) \circ (\alpha_{-1} \otimes \mathcal L^{-l}) \\ \beta_0 \circ (\bullet) \circ (\alpha_{0} \otimes \mathcal L^{-l}) & 0 \end{pmatrix}, 
  \begin{pmatrix} 0 & \beta_{-1} \circ (\bullet) \circ (\alpha_0 \otimes \mathcal L^{-l-1}) \\ \beta_{0} \circ (\bullet) \circ (\alpha_{-1} \otimes \mathcal L^{-l}) & 0 \end{pmatrix}
 \end{displaymath}
 if $r=2l+1,s=2j+1$.

For a locally-free factorization, $\mathcal V$, the functor,
\begin{displaymath}
 \mathcal Hom_X(\mathcal V, \bullet) : [\mathsf{Fact}(X,G,v)] \to [\mathsf{Fact}(X,G,v-w)],
\end{displaymath}
preserves acyclic complexes and descends to a functor. 
\begin{displaymath}
 \mathcal Hom_X(\mathcal V, \bullet) : \op{D}^{\op{abs}}[\mathsf{Fact}(X,G,v)] \to \op{D}^{\op{abs}}[\mathsf{Fact}(X,G,v-w)].
\end{displaymath}
For $\mathcal E \in \mathsf{Fact}(X,G,w)$, we define
\begin{displaymath}
 \mathbf{R}\mathcal Hom_X(\mathcal E, \bullet) := \mathcal Hom_X(\mathcal V, \bullet).
\end{displaymath}
where $\mathcal V$ is a locally-free factorization quasi-isomorphic to $\mathcal E$.
\end{definition}

\begin{lemma} \label{lemma: derived Hom is well-defined}
 The functor, 
 \begin{displaymath}
  \mathbf{R}\mathcal Hom_X(\mathcal E, \bullet) : \op{D}^{\op{abs}}[\mathsf{Fact}(X,G,v)] \to \op{D}^{\op{abs}}[\mathsf{Fact}(X,G,v-w)]
 \end{displaymath}
 is well-defined, i.e.\ it does not depend on the choice of representative of the quasi-isomorphism class. 
\end{lemma}

\begin{proof}
 The proof is completely analogous to that of Lemma~\ref{lemma: derived tensor is well-defined} and is therefore suppressed. 
\end{proof}

\begin{proposition} \label{proposition: Hom-tensor adjunction}
 Let $X$ be a smooth variety equipped with an action of an affine algebraic group $G$. Let $w,v \in \Gamma(X,\mathcal L)^G$ be invariant sections of an invertible equivariant sheaf, $\mathcal L$. For $\mathcal E \in \mathsf{Fact}(X,G,w), \mathcal F \in \mathsf{Fact}(X,G,v)$ and $\mathcal G \in \mathsf{Fact}(X,G,w+v)$, there are natural isomorphisms
 \begin{displaymath}
  \op{Hom}_{\mathsf{Fact}(X,G,w+v)}(\mathcal E \otimes_{\mathcal O_X} \mathcal F, \mathcal G) \cong \op{Hom}_{\mathsf{Fact}(X,G,w)}(\mathcal E, \mathcal Hom_X(\mathcal F,\mathcal G)).
 \end{displaymath}
\end{proposition}

\begin{proof}
 We first check this for $\op{Hom}^0$. We have
 \begin{gather*}
  \op{Hom}^0_{\mathsf{Fact}(X,G,w+v)}(\mathcal E \otimes_{\mathcal O_X} \mathcal F, \mathcal G) := \\
  \op{Hom}_{\op{Qcoh}_G X}( \left(\mathcal E \otimes_{\mathcal O_X} \mathcal F \right)_{-1}, \mathcal G_{-1} ) \oplus \op{Hom}_{\op{Qcoh}_G X}( \left(\mathcal E \otimes_{\mathcal O_X} \mathcal F \right)_{0}, \mathcal G_{0} ) \\
   := \op{Hom}_{\op{Qcoh}_G X}( \mathcal E_{-1} \otimes_{\mathcal O_X} \mathcal F_{0}, \mathcal G_{-1} ) \oplus \op{Hom}_{\op{Qcoh}_G X}( \mathcal E_{0} \otimes_{\mathcal O_X} \mathcal F_{-1}, \mathcal G_{-1} ) \oplus \\
   \op{Hom}_{\op{Qcoh}_G X}( \mathcal E_0 \otimes_{\mathcal O_X} \mathcal F_0, \mathcal G_{0} ) \oplus \op{Hom}_{\op{Qcoh}_G X}( \mathcal E_{-1} \otimes_{\mathcal O_X} \mathcal F_{-1} \otimes_{\mathcal O_X} \mathcal L, \mathcal G_{0} ).
 \end{gather*}
 Applying Hom-tensor adjunction for $G$-equivariant sheaves, we have an isomorphism
 \begin{gather*}
  \cong \op{Hom}_{\op{Qcoh}_G X}( \mathcal E_{-1}, \mathcal Hom_X(\mathcal F_{0}, \mathcal G_{-1}) ) \oplus \op{Hom}_{\op{Qcoh}_G X}( \mathcal E_{0}, \mathcal Hom_X ( \mathcal F_{-1}, \mathcal G_{-1}) ) \oplus \\
   \op{Hom}_{\op{Qcoh}_G X}( \mathcal E_0, \mathcal Hom_X(\mathcal F_0, \mathcal G_{0}) ) \oplus \op{Hom}_{\op{Qcoh}_G X}( \mathcal E_{-1}, \mathcal Hom_X(\mathcal F_{-1} \otimes_{\mathcal O_X} \mathcal L, \mathcal G_{0}) ) \\
   =: \op{Hom}_{\op{Qcoh}_G X}(\mathcal E_{-1}, \mathcal Hom_X(\mathcal F,\mathcal G)_{-1}) \oplus \op{Hom}_{\op{Qcoh}_G X}(\mathcal E_{0}, \mathcal Hom_X(\mathcal F,\mathcal G)_{0}) \\
   =: \op{Hom}^0_{\mathsf{Fact}(X,G,w+v)}(\mathcal E, \mathcal Hom_X(\mathcal F, \mathcal G)).
 \end{gather*}
 Since $\op{Hom}^0(\bullet, \bullet [n]) = \op{Hom}^n(\bullet,\bullet)$, this defines the natural transformation on the whole morphism space of $\mathsf{Fact}$. It is straightforward to check that these maps commute with the differentials.
\end{proof}

\begin{corollary} \label{corollary: derived Hom-tensor adjunction}
 We have an adjoint pair of derived functors
 \begin{align*}
  \bullet \overset{\mathbf{L}}{\otimes}_{\mathcal O_X} \mathcal F & : \dabs[\mathsf{Fact}(X,G,w)] \to \dabs[\mathsf{Fact}(X,G,w+v)] \\
  \mathbf{R}\mathcal Hom_X(\mathcal F, \bullet) & : \dabs[\mathsf{Fact}(X,G,w+v)] \to \dabs[\mathsf{Fact}(X,G,w)].
 \end{align*}
\end{corollary}

\begin{proof}
 This follows by replacing the first entry in a morphism space by a locally-free factorization and the second by an injective factorization and applying Proposition~\ref{proposition: Hom-tensor adjunction}. 
\end{proof}

\begin{definition}
 We will focus on a particular case of sheaf-Hom. Consider the factorization $\Upsilon \mathcal O_X$ of $0 \in \Gamma(X,\mathcal L)^G$. Denote it by $\mathcal O_X$. We get functors
 \begin{align*}
  (\bullet)^{\vee} & := \mathcal Hom_X(\bullet , \mathcal O_X) : \mathsf{Fact}(X,G,w)^{\op{op}} \to \mathsf{Fact}(X,G,-w) \\
  (\bullet)^{\mathbf{L} \vee} & := \mathbf{R} \mathcal Hom_X(\bullet , \mathcal O_X) : \dabs[\mathsf{Fact}(X,G,w)]^{\op{op}} \to \dabs[\mathsf{Fact}(X,G,-w)].
 \end{align*}
\end{definition}

\begin{lemma} \label{lemma: double dual is ok for coherent factorizations}
 The functor,
 \begin{displaymath}
 (\bullet)^{\mathbf{L} \vee}  : \dabs[\mathsf{fact}(X,G,w)]^{\op{op}} \to \dabs[\mathsf{fact}(X,G,-w)],
 \end{displaymath}
 is an equivalence.
\end{lemma}

\begin{proof}
 It is simple to check that for a locally-free factorization of finite rank, $\mathcal F$, we have a natural isomorphism
 \begin{displaymath}
  \mathcal F \cong \mathcal F^{\vee \vee}.
 \end{displaymath}
 Any object of $\dabs[\mathsf{fact}(X,G,w)^{\op{op}}]$ is quasi-isomorphic to a locally-free factorization of finite rank by Proposition~\ref{prop: projective enhancement}.
\end{proof}

\begin{lemma} \label{lemma: sheaf hom is tensor with dual for finite rank fact}
 Let $\mathcal V \in \mathsf{vect}(X,G,w)$. Then, there is an isomorphism
 \begin{displaymath}
  \mathcal V^{\vee} \otimes_{\mathcal O_X} \bullet \cong \mathcal Hom_X(\mathcal V, \bullet).
 \end{displaymath}
 Similarly, for $\mathcal E \in \mathsf{fact}(X,G,w)$, there is an isomorphism
 \begin{displaymath}
  \mathcal E^{\mathbf{L} \vee} \overset{\mathbf{L}}{\otimes}_{\mathcal O_X} \bullet \cong \mathbf{R}\mathcal Hom_X(\mathcal E, \bullet).
 \end{displaymath}
\end{lemma}

\begin{proof}
 The first isomorphism follows immediately from inspection of the definitions. The second is a quick consequence of the first. 
\end{proof}

Assume we have two smooth varieties, $X$ and $Y$, both carrying a $G$-action, and a morphism, $f: X \to Y$. Let $w \in \Gamma(Y,\mathcal L)^G$. We have pull-back and pushforward functors.

\begin{definition}
 \begin{align*}
  f^*: \mathsf{Fact}(Y,G,w) & \to \mathsf{Fact}(X,G,f^*w) \\
  (\mathcal E_{-1},\mathcal E_0,\phi^{\mathcal E}_{-1},\phi^{\mathcal E}_0) & \mapsto (f^* \mathcal E_{-1},f^* \mathcal E_0,f^* \phi^{\mathcal E}_{-1},f^* \phi^{\mathcal E}_0)
 \end{align*}
 and 
 \begin{align*}
  f_*: \mathsf{Fact}(X,G,f^*w) & \to \mathsf{Fact}(X,G,w) \\
  (\mathcal F_{-1},\mathcal F_0,\phi^{\mathcal F}_{-1},\phi^{\mathcal F}_0) & \mapsto (f_* \mathcal F_{-1},f_* \mathcal F_0,f_* \phi^{\mathcal F}_{-1},f_* \phi^{\mathcal F}_0).
 \end{align*}
 Note that by the projection formula $f_*(\mathcal F \otimes_{\mathcal O_X} f^*\mathcal L) \cong (f_*\mathcal F) \otimes_{\mathcal O_X} \mathcal L $ under which $f_*(f^*w)$ corresponds to $w$ so this is well-defined.
\end{definition}

\begin{definition}
 For a factorization, $\mathcal E$, of $0 \in \Gamma(X,\mathcal L)^G$. We let the \newterm{unfolding} of $\mathcal E$ be the complex $\Game \mathcal E \in \mathsf{Qcoh}_G(X)$ with
 \begin{displaymath}
  (\Game \mathcal E)_j = \begin{cases} \mathcal E_{-1} \otimes \mathcal L^l & j = 2l-1 \\ \mathcal E_0 \otimes \mathcal L^l & j = 2l. \end{cases}
 \end{displaymath}

 We shall also use a slightly different version of the pushforward. Let $X$ be equipped with an action of $G$ and consider the structure morphism, $p: X \to \op{Spec} k$. It is $G$-equivariant if we equip $\op{Spec} k$ with the trivial action. Then, we have a pushforward
 \begin{align*}
  p_*: \mathsf{Fact}(X,G,0) & \to \mathsf{Qcoh}_G (\op{Spec} k) \\
  \mathcal F & \mapsto p_* (\Game \mathcal F)
 \end{align*}
 where $p_*: \mathsf{Qcoh}_G(X) \to \mathsf{Qcoh}_G(\op{Spec} k)$ is the usual pushforward of equivariant sheaves.
\end{definition}

\begin{lemma} \label{lemma: pushdown sheaf Hom = Hom}
 Let $\mathcal E, \mathcal F \in \mathsf{Fact}(X,G,w)$. Then, we have an isomorphism of complexes
 \begin{displaymath}
  \left(p_* \mathcal{H}om_X(\mathcal E, \mathcal F)\right)^G \cong \op{Hom}_{\mathsf{Fact}(X,G,w)}(\mathcal E,\mathcal F).
 \end{displaymath}
\end{lemma}

\begin{proof}
 This is immediate from the definitions.
\end{proof}

\begin{lemma} \label{lemma: pull-push adjunction for factorizations}
 Push-forward, $f_*$, is right adjoint to pull-back, $f^*$. 
\end{lemma}

\begin{proof}
 Applying the standard adjunction between $f^*$ and $f_*$ for equivariant sheaves to the components of the factorization gives the statement.
\end{proof}

We also define their derived analogs.

\begin{definition}
 Define the left-derived functor of $f^*$ by
 \begin{align*}
  \mathbf{L}f^*: \op{D}^{\op{abs}}[\mathsf{Fact}(Y,G,w)] & \to \op{D}^{\op{abs}}[\mathsf{Fact}(X,G,f^*w)] \\
  \mathcal E & \mapsto f^*\mathcal V
 \end{align*}
 where $\mathcal V$ is a factorization with locally-free components quasi-isomorphic to $\mathcal E$. 
 
 Define the right-derived functor of $f_*$ by
 \begin{align*}
  \mathbf{R}f_* : \op{D}^{\op{abs}}[\mathsf{Fact}(X,G,f^*w)] & \to \op{D}^{\op{abs}}[\mathsf{Fact}(X,G,w)] \\
  \mathcal E & \mapsto f_* \mathcal I
 \end{align*}
 where $\mathcal I$ is a factorization with injective components quasi-isomorphic to $\mathcal E$.
\end{definition}

\begin{lemma} \label{lemma: push pull well-defined}
 Both $\mathbf{L}f^*$ and $\mathbf{R}f_*$ are well-defined, i.e.\ they do not depend on the choices of representatives of a quasi-isomorphism class.
\end{lemma}

\begin{proof}
 The derived push-forward is well-defined by Proposition~\ref{prop: injective enhancement} since $[\mathsf{Inj}(X,G,f^*w)] \cong \op{D}^{\op{abs}}[\mathsf{Fact}(X,G,f^*w)]$.
 
 The derived pull-back functor, $f^*$, is well-defined by Proposition~\ref{prop: projective enhancement} since $\op{D}^{\op{abs}}[\mathsf{Vect}(X,G,w)] \cong \op{D}^{\op{abs}}[\mathsf{Fact}(X,G,w)]$ and $f^*$ preserves acyclic complexes of locally-free sheaves.
\end{proof}

\begin{lemma} \label{lemma: derived projection formula for fact}
 For each, $\mathcal E \in \dabs[\mathsf{Fact}(Y,G,w)]$ and $\mathcal F \in \dabs[\mathsf{Fact}(X,G,f^*w)]$, there is a natural isomorphism 
 \begin{displaymath}
  \mathbf{R}f_* \mathcal F \overset{\mathbf{L}}{\otimes}_{\mathcal O_Y} \mathcal E \cong \mathbf{R}f_*( \mathcal F \overset{\mathbf{L}}{\otimes}_{\mathcal O_X} \mathbf{L}f^* \mathcal E).
 \end{displaymath}
\end{lemma}

\begin{proof}
 This follows from replacing $\mathcal E$ by a factorization with locally-free components, $\mathcal F$ by a factorization with injective components, and applying the projection formula, Lemma~\ref{lemma: projection formula for equivariant pushforward}, to the components of the factorizations.
\end{proof}

We also have an extension of pullback to allow for a group homomorphism.

\begin{definition}
 Assume we have two smooth varieties, $X$ and $Y$, and two affine algebraic groups, $G$ and $H$. Let $\psi: G \to H$ be a homomorphism and assume that $G$ acts on $X$ while $H$ acts on $Y$. Let $f: X \to Y$ be a $\psi$-equivariant morphism. Let $w \in \Gamma(Y,\mathcal L)^H$ so that $f^*w \in \Gamma(X, f^*\mathcal L)^G$. We have a functor,
 \begin{align*}
  f^*: \mathsf{Fact}(Y,H,w) & \to \mathsf{Fact}(X,G,f^*w) \\
  (\mathcal E_{-1},\mathcal E_0,\phi^{\mathcal E}_{-1},\phi^{\mathcal E}_0) & \mapsto (f^* \mathcal E_{-1},f^* \mathcal E_0,f^* \phi^{\mathcal E}_{-1},f^* \phi^{\mathcal E}_0).
 \end{align*}
 
 The left-derived functor of $f^*$ is
 \begin{align*}
  \mathbf{L}f^*: \op{D}^{\op{abs}}[\mathsf{Fact}(Y,H,w)] & \to \op{D}^{\op{abs}}[\mathsf{Fact}(X,G,f^*w)] \\
  \mathcal E & \mapsto f^*\mathcal V
 \end{align*}
 where $\mathcal V$ is a factorization with locally-free components quasi-isomorphic to $\mathcal E$.
\end{definition}

\begin{lemma} \label{lemma: general pull back well-defined}
 The functor, $\mathbf{L}f^*$, is well-defined, i.e.\ it does not depend on the choice of representatives of a quasi-isomorphism class.
\end{lemma}

\begin{proof}
 The proof is completely analogous to that of Lemma~\ref{lemma: push pull well-defined}.
\end{proof}

We also extend the restriction and induction functors. 

\begin{definition}
 Let $X$ be a smooth variety equipped with an action of an affine algebraic group, $G$. Let $w \in \Gamma(G,\mathcal L)^G$. Let $\psi: H \to G$ be a closed subgroup of $G$. 
 \begin{align*}
  \op{Res}^G_H: \mathsf{Fact}(X,G,w) & \to \mathsf{Fact}(X,H,w) \\
  (\mathcal E_{-1},\mathcal E_0,\phi^{\mathcal E}_{-1},\phi^{\mathcal E}_0) & \mapsto (\op{Res}^G_H \mathcal E_{-1},\op{Res}^G_H \mathcal E_0,\op{Res}^G_H \phi^{\mathcal E}_{-1},\op{Res}^G_H \phi^{\mathcal E}_0)
 \end{align*}
 and 
 \begin{align*}
  \op{Ind}^G_H: \mathsf{Fact}(X,H,w) & \to \mathsf{Fact}(X,G,w) \\
  (\mathcal F_{-1},\mathcal F_0,\phi^{\mathcal F}_{-1},\phi^{\mathcal F}_0) & \mapsto (\op{Ind}^G_H\mathcal F_{-1},\op{Ind}^G_H\mathcal F_0,\op{Ind}^G_H\phi^{\mathcal F}_{-1},\op{Ind}^G_H\phi^{\mathcal F}_0).
 \end{align*}
 The action on morphisms is clear.
 
 The restriction functor, $\op{Res}^G_H$, is exact so it immediately descends to
 \begin{displaymath}
  \op{Res}^G_H: \op{D}^{\op{abs}}[\mathsf{Fact}(X,G,w)] \to \op{D}^{\op{abs}}[\mathsf{Fact}(X,H,w)].
 \end{displaymath}

 The induction functor, $\op{Ind}^G_H$, is left-exact so we have its right-derived functor,
 \begin{align*}
  \mathbf{R}\!\op{Ind}^G_H : \op{D}^{\op{abs}}[\mathsf{Fact}(X,H,w)] & \to \op{D}^{\op{abs}}[\mathsf{Fact}(X,G,w)] \\
  \mathcal E & \mapsto f_* \mathcal I
 \end{align*}
 where $\mathcal I$ is a factorization with injective components quasi-isomorphic to $\mathcal E$.
\end{definition}

\begin{lemma} \label{lemma: Res-Ind fact adjunction}
 The functor, $\op{Res}^G_H$, is left adjoint to the functor, $\op{Ind}^G_H$.
\end{lemma}

\begin{proof}
 This is an immediate consequence of Lemma~\ref{lemma: adjointness restriction induction}.
\end{proof}

\begin{corollary} \label{corollary: Res-Ind derived fact adjunction}
 We have an adjoint pair of functors,
 \begin{align*}
  \op{Res}^G_H & : \op{D}^{\op{abs}}[\mathsf{Fact}(X,G,w)] \to \op{D}^{\op{abs}}[\mathsf{Fact}(X,H,w)] \\
  \mathbf{R}\!\op{Ind}^G_H & : \op{D}^{\op{abs}}[\mathsf{Fact}(X,H,w)] \to \op{D}^{\op{abs}}[\mathsf{Fact}(X,G,w)].
 \end{align*}
\end{corollary}

\begin{proof}
 This is an immediate consequence of Lemma~\ref{lemma: Res-Ind fact adjunction}.
\end{proof}

\begin{lemma} \label{lemma: derived projection formula for induction on fact}
 For each, $\mathcal E \in \dabs[\mathsf{Fact}(X,G,w)]$ and $\mathcal F \in \dabs[\mathsf{Fact}(X,H,w)]$, there is a natural isomorphism
 \begin{displaymath}
  \mathbf{R}\!\op{Ind}^G_H \mathcal F \overset{\mathbf{L}}{\otimes}_{\mathcal O_Y} \mathcal E \cong \mathbf{R}\!\op{Ind}^G_H( \mathcal F \overset{\mathbf{L}}{\otimes}_{\mathcal O_X} \op{Res}^G_H \mathcal E).
 \end{displaymath}
\end{lemma}

\begin{proof}
 This follows from replacing $\mathcal F$ by a factorization with injective components and applying the projection formula, Lemma~\ref{lemma: facts about restriction induction}, to the components of the factorizations.
\end{proof}

Finally, we extend the functor of invariants.

\begin{definition}
 Let $N$ be a closed normal subgroup of $G$. Let $X$ be a smooth variety equipped with an action of $G$ on which $N$ acts trivially. Let $\mathcal L$ be an invertible $G/N$-equivariant sheaf. Note that $\mathcal L$ inherits a $G$-equivariant structure. Consider a section $w \in \Gamma(X, \mathcal L)^G \cong \Gamma(X, \mathcal L)^{G/N}$. We define
 \begin{align*}
  (\bullet)^N : \mathsf{Fact}(X,G,w) & \to \mathsf{Fact}(X,G/N,w) \\
  (\mathcal E_{-1},\mathcal E_0,\phi^{\mathcal E}_{-1},\phi^{\mathcal E}_0) & \mapsto (\mathcal E_{-1}^N,\mathcal E_0^N,(\phi^{\mathcal E}_{-1})^N,(\phi^{\mathcal E}_0)^N).
 \end{align*}
 The derived functor of invariants is
 \begin{align*}
  (\bullet)^{\mathbf{R} N} : \dabs [\mathsf{Fact}(X,G,w)] & \to \dabs [\mathsf{Fact}(X,G/N,w)] \\
  \mathcal E & \mapsto \mathcal I^N
 \end{align*}
 where $\mathcal I$ is a factorization that has injective components and that is quasi-isomorphic to $\mathcal E$.
\end{definition}

\begin{definition}
 Let $\mathcal L$ be an invertible equivariant sheaf on $X$ and let $w \in \Gamma(X,\mathcal L)^G$. Let 
 \begin{displaymath}
  \op{V}(\mathcal L) := \underline{\op{Spec}}_X( \op{Sym} \mathcal L ) 
 \end{displaymath}
 denote the geometric vector bundle associated to $\mathcal L$. It carries an action of $G \times \mathbb{G}_m$ where $G$ acts via the equivariant structure on $\mathcal L$ and $\mathbb{G}_m$ dilates the fibers of the bundle. The section, $w$, defines a regular function, $f_w \in \Gamma(\op{V}(\mathcal L),\mathcal O_{\op{V}(\mathcal L)}(1))^{G \times \mathbb{G}_m}$ where $(1)$ denotes the projection character, $G \times \mathbb{G}_m \to \mathbb{G}_m$. Finally, let $\op{U}(\mathcal L)$ denote the complement of the zero section in $\op{V}(\mathcal L)$. Let $\pi: \op{U}(\mathcal L) \to X$ denote the projection. It is equivariant with respect to the projection, $G \times \mathbb{G}_m \to \mathbb{G}_m$.
\end{definition}

\begin{lemma} \label{lemma: ascent to trivialize L}
 The pull back functor,
 \begin{displaymath}
  \pi^*: \op{Qcoh}_G X \to \op{Qcoh}_{G \times \mathbb{G}_m} \op{U}(\mathcal L),
 \end{displaymath}
 is an equivalence. Moreover, $\pi^*$ induces equivalences between subcategories of coherent and locally-free equivariant sheaves.
\end{lemma}

\begin{proof}
 The variety, $\op{U}(\mathcal L)$, is a $\mathbb{G}_m$-torsor over $X$. Thus, the fppf quotient of $\op{U}(\mathcal L)$ by $\mathbb{G}_m$ is $X$. The statement of the lemma is a consequence of faithfully-flat descent.  In other words, the global quotient stack $[\op{U}(\mathcal L) / \gm]$ is represented by $X$, and therefore they have the same sheaf theory.
\end{proof}

\begin{lemma} \label{lemma: trivialize L - dg equiv}
 The pull back functor,
 \begin{displaymath}
  \pi^*: \mathsf{Fact}(X,G,w) \to \mathsf{Fact}(\op{U}(\mathcal L),G \times \mathbb{G}_m, f_w),
 \end{displaymath}
 is an equivalence of dg-categories. Moreover, $\pi^*$ restricts to equivalences,
 \begin{align*}
  \pi^* & : \mathsf{Inj}(X,G,w) \to \mathsf{Inj}(\op{U}(\mathcal L),G \times \mathbb{G}_m, f_w), \\
  \pi^* & : \mathsf{Vect}(X,G,w) \to \mathsf{Vect}(\op{U}(\mathcal L),G \times \mathbb{G}_m, f_w), \\
  \pi^* & : \mathsf{fact}(X,G,w) \to \mathsf{fact}(\op{U}(\mathcal L),G \times \mathbb{G}_m, f_w), \\
  \pi^* & : \mathsf{vect}(X,G,w) \to \mathsf{vect}(\op{U}(\mathcal L),G \times \mathbb{G}_m, f_w).
 \end{align*}
\end{lemma}

\begin{proof}
 This is an immediate consequence of Lemma~\ref{lemma: ascent to trivialize L}. 
\end{proof}

The following definitions seem to have no natural extension to the case of general equivariant line bundles. They will be essential later in the paper.

\begin{definition}
 Let $X$ and $Y$ be smooth varieties and let $w \in \Gamma(X,\mathcal O_X)$ and $v \in \Gamma(Y,\mathcal O_Y)$. We set
 \begin{displaymath}
  w \boxplus v := w \otimes 1 + 1 \otimes v \in \Gamma(X,\mathcal O_X) \otimes_k \Gamma(Y,\mathcal O_Y) \cong \Gamma(X \times Y, \mathcal O_{X \times Y}).
 \end{displaymath}
\end{definition}

We will have to deal with two potentials, $w, v \in \Gamma(X,\mathcal O_X)$, that are semi-invariant with respect to different characters of different groups. The largest group for which $w \boxplus v$ is semi-invariant is as follows.

\begin{definition}
 Let $G$ and $H$ be affine algebraic groups and let $\chi: G \to \mathbb{G}_m$ and $\chi': H \to \mathbb{G}_m$ be characters. Define a character of $G \times H$ by
 \begin{align*}
  \chi'-\chi : G \times H & \to \mathbb{G}_m \\
  (g,h) & \mapsto \chi(g)^{-1}\chi'(h).
 \end{align*}
 Let $G\times_{\mathbb G_m} H$ be the kernel of $\chi'-\chi$ or equivalently the fiber product of $G$ and $H$ over $\mathbb G_m$.
\end{definition}

\begin{definition} \label{defn: boxtimes}
 Let $X$ be a smooth variety equipped with an action of an affine algebraic group, $G$, and let $Y$ be a smooth variety equipped with an action of an affine algebraic group, $H$. Let $\chi: G \to \mathbb{G}_m$ and $\chi': H \to \mathbb{G}_m$ be characters. Let $w \in \Gamma(X,\mathcal O_X(\chi))^G$ and $v \in \Gamma(Y,\mathcal O_Y(\chi'))^H$.
 
 We have a dg-functor
\begin{displaymath}
 \boxtimes : \mathsf{Fact}(X,G,w) \otimes_k \mathsf{Fact}(Y,H,v) \to \mathsf{Fact}(X \times Y, G\times_{\mathbb{G}_m} H, w \boxplus v)
\end{displaymath}
 called the \newterm{exterior product}. It is defined as
 \begin{displaymath}
  \mathcal E \boxtimes \mathcal F := \op{Res}^{G \times H}_{G\times_{\mathbb{G}_m} H} (\pi_1^* \mathcal E ) \otimes_{\mathcal O_{X \times Y}} \op{Res}^{G \times H}_{G\times_{\mathbb{G}_m} H}  (\pi_2^* \mathcal F)
 \end{displaymath}
 Explicitly, we have
\begin{align*}
 \left(\mathcal E \boxtimes \mathcal F \right)_{-1} & := \op{Res}_{G\times_{\mathbb{G}_m} H}^{G \times H}(\mathcal E_{-1} \boxtimes \mathcal F_0 \oplus \mathcal E_{0} \boxtimes \mathcal F_{-1}) \\
 \left(\mathcal E \boxtimes \mathcal F \right)_0 & := \op{Res}_{G\times_{\mathbb{G}_m} H}^{G \times H}(\mathcal E_0 \boxtimes \mathcal F_0 \oplus \mathcal E_{-1}(\chi) \boxtimes \mathcal F_{-1}) 
\end{align*} 
\end{definition}

\begin{lemma} \label{lemma: box product is tensor product in the derived category}
 Assume that $\chi'-\chi$ is not torsion. Let $\mathcal E^1 \in \dabs[\mathsf{fact}(X,G,w)], \mathcal F^1 \in \dabs[\mathsf{fact}(Y,H,v)]$ and let $\mathcal E^2 \in \dabs[\mathsf{Fact}(X,G,w)], \mathcal F^2 \in \dabs[\mathsf{Fact}(Y,H,v)]$. Taking exterior products induces a natural isomorphism:
 \begin{gather*}
  \boxtimes: \bigoplus_{t \in \Z} \op{Hom}_{\dabs[\mathsf{Fact}(X, G, w)]}(\mathcal E^1, \mathcal E^2[-t]) \otimes_k \op{Hom}_{\dabs[\mathsf{Fact}(Y, H, v)]}(\mathcal F^1, \mathcal F^2[t]) \to \\ \op{Hom}_{\dabs[\mathsf{Fact}(X \times Y, G\times_{\mathbb{G}_m} H, w \boxplus v)]}(\mathcal E^1 \boxtimes \mathcal F^1, \mathcal E^2 \boxtimes \mathcal F^2).
 \end{gather*}
\end{lemma}

\begin{proof}
 We may assume that that all factorizations are locally-free in order to simplify notation for the derived functors in the proof.  We suppress the subscripts on Hom's and tensor products to help control notational girth. 
 
 We have the following chain of isomorphisms
 \begin{align}
  &  \op{Hom}(\mathcal E^1 \boxtimes \mathcal F^1, \mathcal E^2 \boxtimes \mathcal F^2)  \notag  \\
  := &  \op{Hom}(\op{Res}^{G \times H}_{G\times_{\mathbb{G}_m} H} (\pi^*_1\mathcal E^1 ) \otimes \op{Res}^{G \times H}_{G\times_{\mathbb{G}_m} H} (\pi^*_2 \mathcal F^1), \op{Res}^{G \times H}_{G\times_{\mathbb{G}_m} H} (\pi^*_1 \mathcal E^2 ) \otimes \op{Res}^{G \times H}_{G\times_{\mathbb{G}_m} H} (\pi^*_2 \mathcal F^2)  \notag \\
  \cong & \op{Hom}(\op{Res}^{G \times H}_{G\times_{\mathbb{G}_m} H} (\pi_1^* \mathcal E^1 ), \mathcal Hom (\op{Res}^{G \times H}_{G\times_{\mathbb{G}_m} H} (\pi_2^* \mathcal F^1), \op{Res}^{G \times H}_{G\times_{\mathbb{G}_m} H} (\pi_1^* \mathcal E^2 ) \otimes \op{Res}^{G \times H}_{G\times_{\mathbb{G}_m} H} (\pi_2^* \mathcal F^2) )) \notag  \\
  \cong & \op{Hom}(\op{Res}^{G \times H}_{G\times_{\mathbb{G}_m} H} (\pi_1^* \mathcal E^1 ), \op{Res}^{G \times H}_{G\times_{\mathbb{G}_m} H} (\pi_1^* \mathcal E^2 ) \otimes \mathcal Hom (\op{Res}^{G \times H}_{G\times_{\mathbb{G}_m} H} (\pi_2^* \mathcal F^1), \op{Res}^{G \times H}_{G\times_{\mathbb{G}_m} H} (\pi_2^* \mathcal F^2) ) ) \notag  \\
  \cong & \op{Hom}(\op{Res}^{G \times H}_{G\times_{\mathbb{G}_m} H} (\pi_1^* \mathcal E^1 ), \op{Res}^{G \times H}_{G\times_{\mathbb{G}_m} H} (\pi_1^* \mathcal E^2 ) \otimes \op{Res}^{G \times H}_{G\times_{\mathbb{G}_m} H} \pi_2^* \mathcal Hom ( \mathcal F^1, \mathcal F^2) )  \notag \\
  \cong & \op{Hom}(\pi_1^* \mathcal E^1 , \op{Ind}^{G \times H}_{G\times_{\mathbb{G}_m} H} (\op{Res}^{G \times H}_{G\times_{\mathbb{G}_m} H} (\pi_1^* \mathcal E^2 ) \otimes \op{Res}^{G \times H}_{G\times_{\mathbb{G}_m} H} \pi_2^* \mathcal Hom ( \mathcal F^1, \mathcal F^2) ) ) \notag  \\
  \cong &\op{Hom}(\pi_1^* \mathcal E^1 , \pi_1^* \mathcal E^2 \otimes  \op{Ind}^{G \times H}_{G\times_{\mathbb{G}_m} H} \op{Res}^{G \times H}_{G\times_{\mathbb{G}_m} H} \pi_2^* \mathcal Hom ( \mathcal F^1, \mathcal F^2) ) \notag \\
  \cong & \op{Hom}(\pi_1^* \mathcal E^1 , \pi_1^* \mathcal E^2 \otimes \bigoplus_{l \in \Z} \pi_2^* \mathcal Hom (\mathcal F^1, \mathcal F^2) (l(\chi'-\chi))) \label{eq: box product align}
 \end{align}
 The second line is by definition.  The third line is Corollary~\ref{corollary: derived Hom-tensor adjunction} i.e.\ tensor-Hom adjunction.  The fourth line can be seen by appealing to Lemma~\ref{lemma: sheaf hom is tensor with dual for finite rank fact} and associativity of tensor product using the fact that $\mathcal F^1$ is locally-free of finite rank to pull out a dual and put it back in.  Note that $\op{Res}^{G \times H}_{G \times_{\mathbb{G}_m} H}$ commutes with duals so the order of operations is not germane.  The fifth line follows from the fact that the functors $\op{Res}$ and $\pi^*_i$ are both monoidal, so they commute with $\otimes$ and $\mathcal Hom$.  The sixth line uses the adjunction of Corollary~\ref{corollary: Res-Ind derived fact adjunction}.  
 Since we have assumed that $\chi'-\chi$ is not torsion, we have an isomorphism
 \begin{displaymath}
  G \times H/ G \times_{\mathbb{G}_m} H \cong \mathbb{G}_m.
 \end{displaymath}
 As this quotient is affine, Ind is exact and $\mathbf{R}\!\op{Ind}^G_H \cong \op{Ind}^G_H$.  
 The seventh line is the projection formula for the induction functor, Lemma~\ref{lemma: derived projection formula for induction on fact}.
 The eighth line uses Lemma~\ref{lemma: Res-Ind Abelian quotient}.
 
 Let $q: Y \to \op{Spec} k$ and $p: X \to \op{Spec} k$ be the structure morphisms. Continuing with the isomorphisms from Equation~\eqref{eq: box product align} and using morphism spaces in $\dabs[\mathsf{Fact}(X,G,w)]$, we have
 \begin{align*}
  & \op{Hom}(\mathcal E^1 \boxtimes \mathcal F^1, \mathcal E^2 \boxtimes \mathcal F^2) \\
  \cong & \op{Hom}(\mathcal E^1 , (\mathbf{R}\pi_{1*}\pi_1^* \mathcal E^2 \otimes \bigoplus_{l \in \Z} \pi_2^* \mathcal Hom (\mathcal F^1, \mathcal F^2) (l(\chi'-\chi)))^{\mathbf{R}H}) \notag \\
  \cong & \op{Hom}(\mathcal E^1 , \mathcal E^2 \otimes (\mathbf{R}\pi_{1*}\bigoplus_{l \in \Z} \pi_2^* \mathcal Hom (\mathcal F^1, \mathcal F^2) (l(\chi'-\chi)))^{\mathbf{R}H}) \notag \\
  \cong & \op{Hom}(\mathcal E^1 , \mathcal E^2 \otimes (\mathbf{R}\pi_{1*}\pi_2^* \bigoplus_{l \in \Z} \mathcal Hom (\mathcal F^1, \mathcal F^2) (l(\chi'-\chi)))^{\mathbf{R}H}) \notag \\
  \cong & \op{Hom}(\mathcal E^1 , \mathcal E^2 \otimes (p^* \mathbf{R}q_{*} \bigoplus_{l \in \Z} \mathcal Hom (\mathcal F^1, \mathcal F^2) (l(\chi'-\chi)))^{\mathbf{R}H}) \notag \\
  \cong & \op{Hom}(\mathcal E^1 , \mathcal E^2 \otimes_k \bigoplus_{l \in \Z} \op{Hom}(\mathcal F^1,\mathcal F^2[2l])(-l\chi) \oplus \op{Hom}(\mathcal F^1,\mathcal F^2[2l+1])(-l\chi)[-1]) \\
  \cong & \op{Hom}(\mathcal E^1 , \bigoplus_{t \in \Z} \mathcal E^2[-t] \otimes_k \op{Hom} (\mathcal F^1, \mathcal F^2[t])) \\
  \cong & \bigoplus_{t \in \Z} \op{Hom}(\mathcal E^1 , \mathcal E^2[-t]) \otimes_k \op{Hom} (\mathcal F^1, \mathcal F^2[t]).
 \end{align*}

The first line uses that the right adjoint to $\pi_1^*$ is the composition $(\mathbf{R}\pi_{1*})^{\mathbf{R} H}$ by Corollary~\ref{corollary: right adjoint to full pullback}. The second line morally uses the projection formula.  However, we have not provided a projection formula in this general context.  We can work around this by deriving the two  projection formulas from Lemmas~\ref{lemma: projection formula for equivariant pushforward} and~\ref{lemma: facts about restriction induction} and rewriting the functor $\pi_1^* = (\pi_1^\prime)^* \circ \op{Res}_{r}$ where $r: G \times H \to G$ denotes the projection homomorphism and $\pi_1^\prime: X \times Y \to X$ denotes the $G \times H$ equivariant projection where $H$ acts trivially on $X$. 
The fourth line uses flat base change, Lemma~\ref{lemma: flat base change for equivariant sheaves}.
The fifth line uses Lemma~\ref{lemma: base change for invariants} to pull the invariants inside $p^*$. The sixth line comes from substitution of the isomorphism,
\begin{gather} 
 (\mathbf{R}q_{*} \bigoplus_{l \in \Z} \mathcal Hom (\mathcal F^1, \mathcal F^2) (l(\chi'-\chi)))^{\mathbf{R}H} \cong  \notag \\ \bigoplus_{l \in \Z} \op{Hom}(\mathcal F^1,\mathcal F^2[2l])(-l\chi) \oplus \op{Hom}(\mathcal F^1,\mathcal F^2[2l+1])(-l\chi)[-1]. \label{equation: morphisms spaces}
\end{gather}
Equation~\eqref{equation: morphisms spaces} is a consequence of Lemma~\ref{lemma: pushdown sheaf Hom = Hom} and the identity $(\chi') = [2]$. The sixth line uses that $\mathcal E^2 \otimes \bullet$ commutes with coproducts and a straightforward identification of the twists with shifts using $(\chi) = [2]$. The final line follows since $\mathcal E^1$ is a coherent factorization. By Proposition~\ref{prop: we have a compactly-gen triangulated cat} it is a compact object, and therefore,  $\op{Hom}(\mathcal E^1, \bullet)$ commutes with coproducts. 
The total isomorphism gives an inverse to $\boxtimes$.
\end{proof}

Finally, let us define a version of an integral transformation for factorizations. 

\begin{definition}
 Let $\mathcal P \in \mathsf{Fact}(X \times Y, G\times_{\mathbb{G}_m} H, (-w) \boxplus v)$. Equip $Y$ with the trivial $G$ action to give it a $G \times H$ action in full. View $\pi_2: X \times Y \to Y$ as $G \times H$-equivariant. Set
 \begin{align*}
  \Phi_{\mathcal P}^{X \to Y} : \mathsf{Fact}(X,G,w) & \to \mathsf{Fact}(Y,H,v) \\
  \mathcal E & \mapsto \left( \pi_{2*}(\pi_1^* \mathcal E \otimes_{\mathcal O_{X \times Y}} \op{Ind}^{G \times H}_{G\times_{\mathbb{G}_m} H} \mathcal P) \right)^G.
 \end{align*} 
 We will also denote the associated functor on the derived categories by
 \begin{align*}
  \Phi_{\mathcal P}^{X \to Y} : \dabs[\mathsf{Fact}(X,G,w)] & \to \dabs[\mathsf{Fact}(Y,H,v)] \\
  \mathcal E & \mapsto \left( \mathbf{R} \pi_{2*}(\mathbf{L} \pi_1^* \mathcal E \overset{\mathbf{L}}{\otimes}_{\mathcal O_{X \times Y}} \op{Ind}^{G \times H}_{G\times_{\mathbb{G}_m} H} \mathcal P) \right)^{\mathbf{R}G}.
 \end{align*}
 The object $\mathcal P$ is called the \newterm{kernel} of  $\Phi_{\mathcal P}^{X \to Y}$.
 
 View $\mathcal F$ as a factorization of $0 \in \Gamma(X,\mathcal O_X(\chi))^G$ as $\Upsilon \mathcal F$.  Define the factorization, 
 \begin{displaymath}
  \nabla(\mathcal F) := \op{Ind}^{G\times_{\mathbb{G}_m} G}_G \Delta_* \mathcal F := \Upsilon \op{Ind}^{G\times_{\mathbb{G}_m} G}_{G} \Delta_* \mathcal F.
 \end{displaymath}
 Set
 \begin{displaymath}
  \nabla := \nabla(\mathcal O_X).
 \end{displaymath}
\end{definition}

\begin{lemma} \label{lemma: we have the diagonal factorization}
 There is a natural transformation of dg-functors
 \begin{displaymath}
  \Phi_{\nabla(\mathcal F)} \to \bullet \otimes_{\mathcal O_X} \mathcal F 
 \end{displaymath}
 inducing an isomorphism of derived functors,
 \begin{displaymath}
  \Phi_{\nabla(\mathcal F)} \cong \bullet \overset{\mathbf{L}}{\otimes}_{\mathcal O_X} \mathcal F  : \dabs[\mathsf{Fact}(X,G,w)] \to \dabs[\mathsf{Fact}(X,G,w)].
 \end{displaymath}
 In particular, $\nabla$ is the kernel of the identity functor.
\end{lemma}

\begin{proof}
 For any $\mathcal E \in \mathsf{Fact}(X,G,w)$, we have a natural morphism
\begin{align*}
 \left( \pi_{2*} \left( \pi_1^* \mathcal E \otimes_{\mathcal O_{X \times X}} \op{Ind}^{G \times G}_{G} \Delta_* \mathcal F \right) \right)^G  & \mapsto \left( \pi_{2*} \op{Ind}^{G \times G}_{G} \Delta_* \left( \Delta^* \op{Res}^{G \times G}_G \pi_1^* \mathcal E \otimes_{\mathcal O_X} \mathcal F \right) \right)^G \\
& \cong   \left( \pi_{2*} \op{Ind}^{G \times G}_{G} \Delta_* \left( \mathcal E \otimes_{\mathcal O_X} \mathcal F \right) \right)^G \\
&   \cong   \mathcal E \otimes_{\mathcal O_X} \mathcal F
  \end{align*}
  The first line is from the projection formula for $\Delta^*,\Delta_*$, Lemma~\ref{lemma: projection formula for equivariant pushforward}, and $\op{Res}^{G \times G}_{G}, \op{Ind}^{G \times G}_{G}$, Lemma~\ref{lemma: facts about restriction induction}, applied component-wise to a factorization.  The second line comes from the isomorphism
 \begin{displaymath}
  \Delta^* \op{Res}^{G \times G}_{G} \pi_1^* \cong \Delta^* \pi_1^* \cong (\pi_1 \circ \Delta)^* \cong \op{Id}
 \end{displaymath}
 where for the first isomorphism we view $\pi_1$ as $G$-equivariant with respect to the diagonal action of $G$ on $X \times X$.  For the third line,  we use that
 \begin{displaymath}
  (\pi_{2*} \op{Ind}^{G \times G}_{G} \Delta_*)^G \cong \op{Id}
 \end{displaymath}
 as the functor, $(\pi_{2*} \op{Ind}^{G \times G}_{G} \Delta_*)^G$, is right adjoint to $\Delta^* \op{Res}^{G \times G}_{G} \pi_2^* \cong \op{Id}$.
 Combining the natural morphisms gives the natural transformation
 \begin{displaymath}
  \Phi_{\nabla(\mathcal F)} \to \bullet \otimes_{\mathcal O_X}  \mathcal F.
 \end{displaymath}
 
 The statement for the derived functors follows via the same argument, replacing the usual functors by their derived versions, and noting that derived projection formula is an isomorphism by Lemma~\ref{lemma: derived projection formula for fact}.
 \end{proof}

\begin{lemma} \label{lemma: derived trace is pullback via diagonal}
 Let $p: X \to \op{Spec} k$ be the structure map. There is a natural transformation of dg-functors
 \begin{displaymath}
  (p_* \Delta^* (\mathcal E^{\vee} \boxtimes \mathcal F))^G \to \op{Hom}_{\mathsf{Fact}(X,G,w)}(\mathcal E,\mathcal F)
 \end{displaymath}
 inducing a natural isomorphism
 \begin{displaymath}
  \left( \mathbf{R}p_* \mathbf{L} \Delta^* (\mathcal E^\vee \boxtimes \mathcal F) \right)^{\mathbf{R}G} \cong \mathbf{R}\!\op{Hom}_{\mathsf{Fact}(X,G,w)}(\mathcal E, \mathcal F)
 \end{displaymath}
 if we assume $\mathcal E \in \dabs [\mathsf{fact}(X,G,w)]$.
\end{lemma}

\begin{proof}
 We have
 \begin{align*}
  \left( p_* \Delta^* \mathcal E^\vee \boxtimes \mathcal F \right)^{G}  & =  \left( p_* \Delta^* \left( \op{Res}^{G \times G}_{G\times_{\mathbb{G}_m} G} \pi_1^* \mathcal E^{\vee} {\otimes}_{\mathcal O_{X \times X}} \op{Res}^{G \times G}_{G\times_{\mathbb{G}_m} G} \pi_2^* \mathcal F \right) \right)^{G} \\
  & \cong  \left( p_* (\mathcal E^{\vee} {\otimes}_{\mathcal O_{X}} \mathcal F )\right)^{G} \\
  & \to \left( p_* \mathcal Hom_X(\mathcal E, \mathcal F) \right)^{G} \\
  & \cong \op{Hom}_{\mathsf{Fact}(X,G,w)}(\mathcal E, \mathcal F).
 \end{align*}
 The first line is by definition.  The second line follows from by distributing $\Delta^*$ across the tensor product then observing that we have an isomorphism  $\op{Res}^{G \times G}_{G\times_{\mathbb{G}_m} G} \pi_1^* \cong (\pi_1^\prime)^*$ where $\pi_1^\prime: X \times X \to X$ is equivariant with respect to the first projection $G\times_{\mathbb{G}_m} G \to G$ and similarly,  $\op{Res}^{G \times G}_{G\times_{\mathbb{G}_m} G} \pi_2^* \cong (\pi_2^\prime)^*$.  Finally, $\pi_1 \circ \Delta \cong \pi_2 \circ \Delta \cong 1_X$.  The third line follows from the natural map
 \begin{displaymath}
  \mathcal E^{\vee} \otimes_{\mathcal O_X} \mathcal F \to \mathcal Hom_X(\mathcal E,\mathcal F).
 \end{displaymath}
 The fourth line is induced from the isomorphism of functors 
 \begin{displaymath}
  (p_* \mathcal Hom_X (\mathcal E, \mathcal F))^G \cong \op{Hom}_{\mathsf{Fact}(X,G,w)}(\mathcal E, \mathcal F)
 \end{displaymath}
 of Lemma~\ref{lemma: pushdown sheaf Hom = Hom}.
 
 The statement for the derived functors follows via analogous arguments replacing the usual functors by their derived version and using Lemma~\ref{lemma: sheaf hom is tensor with dual for finite rank fact} to know that the natural map
 \begin{displaymath}
  \mathcal E^{\mathbf{L} \vee} \otimes_{\mathcal O_X} \mathcal F \to \mathbf{R}\mathcal Hom_X(\mathcal E,\mathcal F)
 \end{displaymath}
 is an isomorphism if $\mathcal E$ is coherent.
\end{proof}

\begin{definition}
 The \newterm{trace functor} on $\dabs [\mathsf{Fact}(X \times X ,G\times_{\mathbb{G}_m} G,(-w) \boxplus w)]$ is the functor
 \begin{displaymath}
  \mathbf{L}\!\op{Tr} := \left( \mathbf{R}p_* \mathbf{L} \Delta^* \right)^{\mathbf{R}G} : \dabs [\mathsf{Fact}(X \times X ,G\times_{\mathbb{G}_m} G,(-w) \boxplus w)] \to \dbqcoh{\op{Spec} k}.
 \end{displaymath}
\end{definition}

\begin{lemma} \label{lemma: Tr is representable}
 Assume that $(G\times_{\mathbb{G}_m} G)/G \cong K_{\chi}$ is finite. There is an isomorphism of functors
 \begin{displaymath}
  \op{Tr} \cong \mathbf{R}\!\op{Hom}_{\mathsf{Fact}(X \times X ,G\times_{\mathbb{G}_m} G,(-w) \boxplus w)}(\nabla^{\mathbf{L} \vee}, \bullet )
 \end{displaymath}
 on $\dabs [\mathsf{fact}(X \times X ,G\times_{\mathbb{G}_m} G,(-w) \boxplus w)]$.
\end{lemma}

\begin{proof}
 As $(G\times_{\mathbb{G}_m} G)/G \cong K_{\chi}$ is finite, $\op{Ind}^{G\times_{\mathbb{G}_m} G}_G$ preserves coherent $G$-equivariant sheaves. For coherent factorizations, dualization is an anti-equivalence by Lemma~\ref{lemma: double dual is ok for coherent factorizations}. 

Now, we have 
\begin{align*}
 \mathbf{L}\!\op{Tr} & = \left( \mathbf{R}p_* \mathbf{L} \Delta^* \right)^{\mathbf{R}G} \\
 & \cong \mathbf{R}\!\op{Hom}_{\mathsf{Fact}(X,G,0)}( \mathcal O_X , \mathbf{L} \Delta^* (\bullet)) \\
& \cong  \mathbf{R}\!\op{Hom}_{\mathsf{Fact}(X,G,0)}( \mathbf{L} \Delta^* (\bullet)^{\mathbf{L} \vee} ,\mathcal O_X)  \\
& \cong   \mathbf{R}\!\op{Hom}_{\mathsf{Fact}(X \times X ,G\times_{\mathbb{G}_m} G, w \boxplus (-w))}( (\bullet)^{\mathbf{L} \vee} , \op{Ind}^{G\times_{\mathbb{G}_m} G}_G \Delta_* \mathcal O_X) \\
& \cong   \mathbf{R}\!\op{Hom}_{\mathsf{Fact}(X \times X ,G\times_{\mathbb{G}_m} G,(-w) \boxplus w)}(\nabla^{\mathbf{L} \vee}, \bullet ).
\end{align*}
The first line is a definition.  The second line is from the isomorphism of functors, 
\[
\mathbf{R}\!\op{Hom}_{\mathsf{Fact}(X,G,0)}( \mathcal O_X , \bullet) \cong (\mathbf{R}p_*)^{\mathbf{R}G}.
\]
The third line uses that $\Delta^*$ commutes with duals and $\bullet$ is assumed to be coherent.
The fourth line is adjunction between  $\op{Ind}^{G\times_{\mathbb{G}_m} G}_G \Delta_*$  and $\mathbf{L} \Delta^* $.
The fifth line uses dualization, coherence of $\bullet$, and the definition of $\nabla$.
\end{proof}

In the process of proving a generation statement for categories of factorizations, we will want to make use of some geometry. As such, we need an alternate, more geometric, characterization of these factorization categories. This characterization is due, in various generality, to Eisenbud \cite{EisMF}, Buchweitz \cite{Buc86}, Orlov \cite{Orl09,OrlMF}, Polishchuk-Vaintrob \cite{PV2}, and \cite{Pos2}.

Let us recall the definition of the singularity category.

\begin{definition} \label{defn: sing cat}
 Let $Y$ be a scheme of finite type over $k$ and let $G$ be an affine algebraic group acting on $Y$. Assume that $Y$ has enough locally-free $G$-equivariant sheaves. The $G$-equivariant \newterm{singularity category}, or $G$-equivariant \newterm{stable category}, of $Y$ is the Verdier quotient
 \begin{displaymath}
  \op{D}^{\op{sg}}_G(Y) := \dbcohG{G}{Y} / \op{perf}_G Y
 \end{displaymath}
 where $\op{perf}_G Y$ is the thick subcategory of locally-free $G$-equivariant sheaves of finite rank on $Y$. Let $Z$ be a closed $G$-invariant subset of $Y$, then we let $\op{D}^{\op{sg}}_{Z,G}(Y)$ be the kernel of the functor, $j^*: \op{D}^{\op{sg}}_G(Y) \to \op{D}^{\op{sg}}_G(U)$.
\end{definition}

Assume we have a smooth variety $X$ equipped with an action of an affine algebraic group $G$ and an invariant section $w \in \Gamma(X, \mathcal L)^G$ for an invertible equivariant sheaf, $\mathcal L$. Set $Y = Z_w$ to be the vanishing locus of $w$. Let $i: Y \to X$ denote the inclusion.

\begin{lemma} \label{lemma: enough locally-frees in a hypersurface}
 The scheme, $Y$, has enough locally-free $G$-equivariant sheaves. Moreover, every coherent $G$-equivariant sheaf on $Y$ admits an epimorphism from $i^*\mathcal V$ where $\mathcal V$ is locally-free of finite rank.
\end{lemma}

\begin{proof}
 As $X$ is smooth, it has enough locally-free $G$-equivariant sheaves by Theorem~\ref{theorem: Thomason}. Given any coherent $G$-equivariant sheaf on $Y$, $\mathcal E$, we can find a locally-free $G$-equivariant sheaf of finite rank, $\mathcal V$, and an epimorphism, $\psi: \mathcal V \to i_* \mathcal E$. The morphism, $i^*\psi : i^* \mathcal V \to  i^*i_* \mathcal E \cong \mathcal E$, remains an epimorphism as $i^*$ is right exact.
\end{proof}

Consider the functor,
\begin{align*}
 \op{cok}: [\mathsf{vect}(X,G,w)] & \to \op{D}^{\op{sg}}_G(Y) \\
			 \mathcal E & \mapsto \op{cok} \phi^{\mathcal E}_{0}.
\end{align*}

\begin{lemma} \label{lemma: exactness of cok}
 Assume that $w$ is not identically zero on any component of $X$. The functor, $\op{cok}$, is well-defined and exact.
\end{lemma}

\begin{proof}
 This is a special case of \cite[Lemma 3.12]{PV2}. 
\end{proof}

\begin{lemma} \label{lemma: cok descends to abs derived category}
 Assume that $w$ is not identically zero on any component of $X$. Let $Z$ be a closed $G$-invariant subset of $Y$. The functor, $\op{cok}$, descends to the absolute derived category, 
 \begin{displaymath}
  \op{cok} : \dabs_Z [\mathsf{vect}(X,G,w)] \to \op{D}^{\op{sg}}_{Z,G}(Y).
 \end{displaymath}
\end{lemma}

\begin{proof}
 Let us treat the situation $Z = Y$ first. In the case where $G$ is trivial, this is \cite[Proposition 3.2]{OrlMF}. The same argument works with the inclusion of $G$. We recall the argument for the convenience of the reader. Let 
 \begin{displaymath}
  0 \to \mathcal G \overset{q}{\to} \mathcal E \overset{p}{\to} \mathcal F \to 0
 \end{displaymath}
 be an exact sequence of factorizations and let $\mathcal T$ be the totalization. Recall that
 \begin{align*}
  \mathcal T_{-1} & := \mathcal G_{-1} \otimes_{\mathcal O_X} \mathcal L \oplus \mathcal E_0 \oplus \mathcal F_{-1}   \\
  \mathcal T_0 & := \mathcal G_0 \otimes_{\mathcal O_X} \mathcal L \oplus \mathcal E_{-1} \otimes_{\mathcal O_X} \mathcal L \oplus \mathcal F_0   \\
  \phi^{\mathcal T}_{0} & := \begin{pmatrix}  \phi_0^{\mathcal G} \otimes_{\mathcal O_X} \mathcal L  & 0 & 0 \\ q_{-1} \otimes_{\mathcal O_X} \mathcal L & -\phi_{-1}^{\mathcal E} & 0 \\ 0 & p_0 & \phi_0^{\mathcal F} \end{pmatrix} \\
  \phi^{\mathcal T}_{-1} & := \begin{pmatrix} \phi_{-1}^{\mathcal G} \otimes_{\mathcal O_X} \mathcal L & 0  & 0 \\  q_0 \otimes_{\mathcal O_X} \mathcal L & -\phi_{0}^{\mathcal E} \otimes_{\mathcal O_X} \mathcal L & 0 \\ 0 & p_{-1} \otimes_{\mathcal O_X} \mathcal L  & \phi_{-1}^{\mathcal F} \end{pmatrix}
 \end{align*}
 Consider the associated exact sequence over $\op{coh}_G X$
 \begin{gather*}
  0 \to \mathcal G_0 \overset{\begin{pmatrix} q_0 \\ \phi_{-1}^{\mathcal G} \end{pmatrix}}{\to} \mathcal E_{0} \oplus \mathcal G_{-1} \otimes_{\mathcal O_X} \mathcal L \overset{\begin{pmatrix} p_0 & 0 \\ -\phi_{-1}^{\mathcal E} & q_{-1} \otimes_{\mathcal O_X} \mathcal L \end{pmatrix}}{\to} \\ \mathcal F_0 \oplus \mathcal E_{-1} \otimes_{\mathcal O_X} \mathcal L \overset{\begin{pmatrix} \phi^{\mathcal F}_{-1} & p_1 \otimes_{\mathcal O_X} \mathcal L \end{pmatrix}}{\to} \mathcal F_{-1} \otimes_{\mathcal O_X} \mathcal L \to 0
 \end{gather*}
 and let $\mathcal U$ be the cokernel of the map $\mathcal G_{0} \to \mathcal E_{0} \oplus \mathcal G_{-1}\otimes_{\mathcal O_X} \mathcal L $. Let $(\alpha_0,\alpha_1):  \mathcal G_{-1}\otimes_{\mathcal O_X} \mathcal L \oplus \mathcal E_{0} \to \mathcal U$ be the epimorphism and $\begin{pmatrix} \beta_0 \\ \beta_1 \end{pmatrix} : \mathcal U \to \mathcal F_0 \oplus \mathcal E_{-1}\otimes_{\mathcal O_X} \mathcal L$ be the monomorphism.
 We have a commutative diagram 
 \begin{center}
 \begin{tikzpicture}[description/.style={fill=white,inner sep=2pt}]
  \matrix (m) [matrix of math nodes, row sep=3em, column sep=3em, text height=1.5ex, text depth=0.25ex]
  {  \mathcal E_{0} \oplus \mathcal G_{-1}\otimes_{\mathcal O_X} \mathcal L & \mathcal U \oplus \mathcal G_0\otimes_{\mathcal O_X} \mathcal L \\ 
    \mathcal F_{-1} \oplus \mathcal E_{0} \oplus \mathcal G_{-1}\otimes_{\mathcal O_X} \mathcal L  &  \mathcal F_0 \oplus \mathcal E_{-1}\otimes_{\mathcal O_X} \mathcal L \oplus \mathcal G_{0}\otimes_{\mathcal O_X} \mathcal L\\
     \mathcal F_{-1} & \mathcal F_{-1}\otimes_{\mathcal O_X} \mathcal L \\
  };
  \path[->,font=\scriptsize]
  (m-1-1) edge node[above] {$\begin{pmatrix} \alpha_0 & \alpha_1 \\ 0 & \phi_0^{\mathcal G}\otimes_{\mathcal O_X} \mathcal L \end{pmatrix}$} (m-1-2)
  (m-1-1) edge node[left] {$\begin{pmatrix} 0 & 0 \\ 1_{\mathcal E_0} & 0 \\ 0 &  1_{\mathcal G_{-1}\otimes_{\mathcal O_X} \mathcal L} \end{pmatrix}$} (m-2-1)
  (m-1-2) edge node[right] {$\begin{pmatrix} \beta_0 & 0 \\ \beta_1 & 0 \\ 0 &  1_{\mathcal G_0\otimes_{\mathcal O_X} \mathcal L}  \end{pmatrix}$} (m-2-2)
  (m-2-1) edge node[above] {$\phi^{\mathcal T}_{0}$} (m-2-2)
  (m-2-1) edge node[left] {$\begin{pmatrix} 1_{\mathcal F_{-1}} & 0 & 0 \end{pmatrix}$} (m-3-1)
  (m-2-2) edge node[right] {$\begin{pmatrix} \phi^{\mathcal F}_{-1} & p_1\otimes_{\mathcal O_X} \mathcal L & 0 \end{pmatrix}$} (m-3-2)
  (m-3-1) edge node[above] {$w$} (m-3-2)
  ;
 \end{tikzpicture}
 \end{center}
 with columns being short exact sequences. Thus, we have an exact sequence of cokernels, as coherent sheaves on $Y$,
 \begin{displaymath}
  0 \to \op{cok} \begin{pmatrix} \alpha_0 & \alpha_1 \\ 0 & \phi_0^{\mathcal G}\otimes_{\mathcal O_X} \mathcal L \end{pmatrix} \to \op{cok} \phi^{\mathcal T}_0 \to i^*\mathcal F_{-1}\otimes_{\mathcal O_X} \mathcal L \to 0.
 \end{displaymath}
 As $i^*(\mathcal F_{-1} \otimes_{\mathcal O_X} \mathcal L)$ is trivial in $\op{D}^{\op{sg}}_G(Y)$, we have an isomorphism
 \begin{displaymath}
  \op{cok} \begin{pmatrix} \alpha_0 & \alpha_1 \\ 0 & \phi_0^{\mathcal G}\otimes_{\mathcal O_X} \mathcal L \end{pmatrix} \cong  \op{cok} \phi^{\mathcal T}_0  
 \end{displaymath}
 in $\op{D}^{\op{sg}}_G(Y)$. We also have a commutative diagram
 \begin{center}
 \begin{tikzpicture}[description/.style={fill=white,inner sep=2pt}]
  \matrix (m) [matrix of math nodes, row sep=3em, column sep=3em, text height=1.5ex, text depth=0.25ex]
  {  \mathcal G_{0} & \mathcal G_0\otimes_{\mathcal O_X} \mathcal L \\ 
    \mathcal E_{0} \oplus \mathcal G_{-1}\otimes_{\mathcal O_X} \mathcal L &  \mathcal U \oplus \mathcal G_{0}\otimes_{\mathcal O_X} \mathcal L\\
     \mathcal U & \mathcal U \\
  };
  \path[->,font=\scriptsize]
  (m-1-1) edge node[above] {$w$} (m-1-2)
  (m-1-1) edge node[left] {$\begin{pmatrix} q_0 \\ \phi_{-1}^{\mathcal G} \end{pmatrix}$} (m-2-1)
  (m-1-2) edge node[right] {$\begin{pmatrix} 0 \\ 1_{\mathcal G_0\otimes_{\mathcal O_X} \mathcal L} \end{pmatrix}$} (m-2-2)
  (m-2-1) edge node[above] {$\begin{pmatrix} \alpha_0 & \alpha_1 \\ 0 & \phi_0^{\mathcal G}\otimes_{\mathcal O_X} \mathcal L \end{pmatrix}$} (m-2-2)
  (m-2-1) edge node[left] {$\begin{pmatrix} \alpha_0 & \alpha_1 \end{pmatrix}$} (m-3-1)
  (m-2-2) edge node[right] {$\begin{pmatrix} 1_{\mathcal U} & 0 \end{pmatrix}$} (m-3-2)
  (m-3-1) edge node[above] {$1_{\mathcal U}$} (m-3-2)
  ;
 \end{tikzpicture}
 \end{center}
 with columns being short exact sequences. Thus, we have an isomorphism of coherent sheaves
 \begin{displaymath}
  i^*\mathcal G_0 \cong \op{cok} \begin{pmatrix} \alpha_0 & \alpha_1 \\ 0 & \phi_0^{\mathcal G}\otimes_{\mathcal O_X} \mathcal L \end{pmatrix}.
 \end{displaymath}
 Thus, $\op{cok} \begin{pmatrix} \alpha_0 & \alpha_1 \\ 0 & \phi_0^{\mathcal G}\otimes_{\mathcal O_X} \mathcal L \end{pmatrix} \cong \op{cok}  \phi^{\mathcal T}_0 $ is trivial in $\op{D}^{\op{sg}}_G(Y)$.  This proves the statement when $Z=Y$.
 
 Now the general case follows from the case where $Z=Y$.  Indeed, it is clear that $\op{cok}$ commutes with restriction to open subsets. Thus, we have a commutative diagram of functors
 \begin{center}
 \begin{tikzpicture}[description/.style={fill=white,inner sep=2pt}]
  \matrix (m) [matrix of math nodes, row sep=3em, column sep=3em, text height=1.5ex, text depth=0.25ex]
  {  \dabs [\mathsf{vect}(X,G,w)] & \op{D}^{\op{sg}}_G(Y) \\ 
     \dabs [\mathsf{vect}(U,G,w|_U)] & \op{D}^{\op{sg}}_G(Y \cap U) \\
  };
  \path[->,font=\scriptsize]
  (m-1-1) edge node[above] {$\op{cok}$} (m-1-2)
  (m-1-1) edge node[left] {$j^*$} (m-2-1)
  (m-1-2) edge node[right] {$j^*$} (m-2-2)
  (m-2-1) edge node[above] {$\op{cok}$} (m-2-2)
  ;
 \end{tikzpicture}
 \end{center}
 where $j: U = X \setminus Z \to X$ is the inclusion. Thus, $\op{cok}$ induces a functor between the kernels of $j^*$. On the factorization side, this is $\dabs_Z [\mathsf{vect}(X,G,w)]$ while on the singularity side this is $\op{D}^{\op{sg}}_{Z,G}(Y)$.
\end{proof}

\begin{definition}
 Define the functor
 \begin{align*}
  \mathbf{L}\!\op{cok}: \dabs_Z [\mathsf{fact}(X,G,w)] & \to \op{D}^{\op{sg}}_{Z,G}(Y) \\
  \mathcal E & \mapsto \op{cok} \mathcal V
 \end{align*}
 where $\mathcal V$ is a factorization that has locally-free components and is quasi-isomorphic to $\mathcal E$.
\end{definition}

In the other direction, we use the functor $\Upsilon$. 

\begin{lemma} \label{lemma: upsilon descends to a functor on d-sing}
 Assume that $w$ is not identically zero on any component of $X$. The functor, $\Upsilon$, descends further to a functor
 \begin{displaymath}
  \Upsilon: \op{D}^{\op{sg}}_G(Y) \to \op{D}^{\op{abs}}[\mathsf{fact}(X,G,w)].
 \end{displaymath}
 Moreover, if $Z$ is a closed $G$-invariant subset of $Y$, then $\Upsilon$ induces a functor
 \begin{displaymath}
  \Upsilon: \op{D}^{\op{sg}}_{Z,G}(Y) \to \dabs_Z [\mathsf{fact}(X,G,w)].
 \end{displaymath}
\end{lemma}

\begin{proof}
 We treat the first statement. We need to check that $\Upsilon$ annihilates $\op{perf}_G Y$. By Lemma~\ref{lemma: enough locally-frees in a hypersurface}, it suffices to show that it annihilates $i^* \mathcal V$ for $\mathcal V$ a locally-free $G$-equivariant sheaf of finite rank on $X$. For a coherent $G$-equivariant sheaf on $X$, $\mathcal E$, define a factorization, $\mathcal H_{\mathcal E} := (\mathcal E, \mathcal E, w, 1_{\mathcal E})$. There is a short exact sequence
 \begin{displaymath}
  0 \to \mathcal H_{\mathcal V} \otimes \mathcal L^{-1} \to \mathcal H_{\mathcal V} \to \Upsilon (i^* \mathcal V) \to 0.
 \end{displaymath}
 Thus, $\Upsilon (i^* \mathcal V)$ is quasi-isomorphic to the cone $\mathcal H_{\mathcal V}\otimes \mathcal L^{-1} \to \mathcal H_{\mathcal V}$. It is straightforward to see that any $\mathcal H_{\mathcal V}$ is contractible. Thus, $\Upsilon (i^* \mathcal V)$ is zero in $\op{D}^{\op{abs}}[\mathsf{fact}(X,G,w)]$. 
 
 It is clear that $\Upsilon$ commutes with restriction to open subsets. Thus, we have a commutative diagram of functors
 \begin{center}
 \begin{tikzpicture}[description/.style={fill=white,inner sep=2pt}]
  \matrix (m) [matrix of math nodes, row sep=3em, column sep=3em, text height=1.5ex, text depth=0.25ex]
  {  \op{D}^{\op{sg}}_G(Y) & \dabs [\mathsf{vect}(X,G,w)] &  \\ 
     \op{D}^{\op{sg}}_G(U) & \dabs [\mathsf{vect}(V,G,w|_V)] \\
  };
  \path[->,font=\scriptsize]
  (m-1-1) edge node[above] {$\Upsilon$} (m-1-2)
  (m-1-1) edge node[left] {$j_U^*$} (m-2-1)
  (m-1-2) edge node[right] {$j_V^*$} (m-2-2)
  (m-2-1) edge node[above] {$\Upsilon$} (m-2-2)
  ;
 \end{tikzpicture}
 \end{center}
 where $j_U: U = Y \setminus Z \to X$ and $j_V: V = X \setminus Z \to X$ are the inclusions. Thus, $\Upsilon$ induces a functor between the kernels of $j_U^*$ and $j_V^*$. On the factorization side, this is $\dabs_Z [\mathsf{vect}(X,G,w)]$ while on the singularity side this is $\op{D}^{\op{sg}}_{Z,G}(Y)$.
\end{proof}

\begin{proposition} \label{proposition: upsilon surjective}
 Let $X$ be a smooth variety equipped with an action of an affine algebraic group $G$ and an invariant section $w \in \Gamma(X, \mathcal L)^G$ for an invertible equivariant sheaf, $\mathcal L$. Let $Y$ be the vanishing locus of $w$ and let $Z$ be a closed $G$-invariant subset of $Y$. Assume that $w$ is not identically zero on any component of $X$. The functor,
 \begin{displaymath}
  \Upsilon : \op{D}^{\op{sg}}_{Z,G}(Y) \to \op{D}^{\op{abs}}_Z[\mathsf{fact}(X,G,w)],
 \end{displaymath}
 is essentially surjective.
\end{proposition}

\begin{proof}
 Let us check that $\Upsilon \circ \mathbf{L}\!\op{cok} \cong \op{Id}$. Recall that, for a coherent $G$-equivariant sheaf on $X$, $\mathcal E$, we define a factorization, $\mathcal H_{\mathcal E} := (\mathcal E, \mathcal E, w, 1_{\mathcal E})$. There is a short exact sequence of factorizations
 \begin{displaymath}
  0 \to \mathcal H_{\mathcal V_{-1}} \to \mathcal V \to \Upsilon \op{cok} \mathcal V \to 0
 \end{displaymath}
 for a factorization with locally-free components. As $\mathcal H_{\mathcal V}$ is contractible, $\mathcal V$ is quasi-isomorphic to $\Upsilon \op{cok} \mathcal V$.
\end{proof}

\begin{remark}
 One can prove that $\Upsilon$ is an equivalence by using arguments in the proof \cite[Theorem 2.7]{Pos2} and accounting for a group action, see also \cite[Theorem 3.5]{OrlMF} and \cite[Theorem 3.14]{PV2}.  We skip this, as only essential surjectivity is necessary for the generation arguments of Section~\ref{sec:generation of graded sing cat}.
\end{remark}

We finish by recording an observation concerning how $\Upsilon$ interacts with exterior products.

\begin{lemma} \label{lemma: box products are preserved}
 Let $X$ and $Y$ be smooth varieties and let $G$ and $H$ be affine algebraic groups acting on, respectively, $X$ and $Y$. Let $w \in \Gamma(X,\mathcal O_X(\chi))^G$ and $v \in \Gamma(Y, \mathcal O_Y(\chi'))^H$ for characters $\chi: G \to \mathbb{G}_m$ and $\chi' : H \to \mathbb{G}_m$. Let $i_w: Z_w \to X$ be the zero locus of $w$, $i_v: Z_v \to Y$ be the zero locus of $v$, and $i_{w \boxplus v}: Z_{w \boxplus v} \to X \times Y$ be the zero locus of $w \boxplus v$. 
 
 For any $\mathcal E \in \op{coh}_G Z_w$ and $\mathcal F \in \op{coh}_H Z_v$, there are natural isomorphisms of $G\times_{\mathbb{G}_m} H$-equivariant factorizations of $w \boxplus v$,
 \begin{displaymath}
  (\Upsilon \mathcal E) \boxtimes (\Upsilon \mathcal F) \cong \Upsilon \op{Res}_{G\times_{\mathbb{G}_m} H}^{G \times H} (i_{w*}\mathcal E \boxtimes i_{v*}\mathcal F).
 \end{displaymath}
\end{lemma}
 
\begin{proof}
 It is straightforward to check that both of these factorizations are 
 \begin{displaymath}
  \Upsilon \op{Res}^{G \times H}_{G\times_{\mathbb{G}_m} H} (\pi_1^* i_{w*} \mathcal E \otimes_{\mathcal O_{X \times Y}} \pi_2^* i_{v*} \mathcal F).
 \end{displaymath}
\end{proof}

\section{Generation of equivariant derived categories} \label{sec:generation of graded sing cat}

To identify the internal Hom dg-categories for equivariant factorizations, we will need to prove a generation statement for our candidate categories. In this section, we lay the groundwork and establish results to which we will appeal in Section~\ref{sec: bimod and functor categories for graded MFs}. 

For a singular variety, $X$, equipped with a $G$-action, we want to find a nice set of generators for the bounded derived category of coherent $G$-equivariant sheaves, $\dbcohG{G}{X}$. One natural approach would be to study generation in a compactly-generated triangulated category whose category of compact objects is exactly $\dbcohG{G}{X}$. Such categories do exist. Since $\dbcohG{G}{X}$ admits an enhancement to a dg-category, we could use the derived category of dg-modules over the enhancement. Or, a more geometric construction due to Krause, \cite{Kra2}, uses the homotopy category of injective complexes of quasi-coherent sheaves, in the non-equivariant setting. This could be extended to handle our situation. 

However, this is not the approach we choose. Instead, we follow the method of Rouquier in \cite{Ro2} and focus on $\dbqcohG{G}{X}$, the bounded derived category of all quasi-coherent $G$-equivariant sheaves on $X$. The category, $\dbqcohG{G}{X}$, is not compactly-generated as it does not possess all coproducts. However, the definition of a compact object is still valid and useful for $\dbqcohG{G}{X}$. Indeed \cite[Proposition 6.15]{Ro2} implies that the category of compact objects of $\dbqcohG{G}{X}$ is exactly $\dbcohG{G}{X}$. A further advantage of studying $\dbqcohG{G}{X}$ comes from the fact that local cohomology of a coherent $G$-equivariant, or quasi-coherent sheaf, is always bounded and quasi-coherent, though usually never coherent. 

Let us recall some notions of generation. 

\begin{definition} \label{def: compactly generated}
 Given a triangulated category, $\mathcal T$, we say that a subcategory, $\mathcal S$, is \newterm{thick} if it is triangulated and closed under summands.

 Let $\mathcal S'$ be another subcategory. We say that a subcategory, $\mathcal S$, \newterm{generates} $\mathcal S'$, if the smallest full triangulated subcategory of $\mathcal T$ containing $\mathcal S$, and closed under finite coproducts and summands, contains $\mathcal S'$. If $\mathcal S' = \mathcal T$, we shall often say that $\mathcal S$ generates.

 We say that $\mathcal S$ \newterm{generates $\mathcal S'$ up to infinite coproducts} if the smallest full triangulated subcategory of $\mathcal T$ containing $\mathcal S$, and closed under arbitrary coproducts and summands, contains $\mathcal S'$.  If $\mathcal S' = \mathcal T$, we shall often say that $\mathcal S$ generates up to infinite coproducts.

 In addition, recall that an object $C$ of $\mathcal T$ is called \newterm{compact} if $\op{Hom}_{\mathcal T}(C,\bullet)$ commutes with all coproducts.

 A triangulated category, $\mathcal T$, is \newterm{compactly-generated} if it is co-complete, the compact objects form a set, and $\op{Hom}_{\mathcal T}(C,X) = 0$ for all compact objects, $C$, of $\mathcal T$ implies that $X \cong 0$.
\end{definition}

The following is a now-standard result on compactly-generated triangulated categories.

\begin{lemma} \label{lem: cpt gen is gen up to inf}
 Assume $\mathcal T$ is a co-complete triangulated category and the compact objects in $\mathcal T$ form a set. Then, $\mathcal T$ is compactly-generated if and only if the compact objects generate up to infinite coproducts.
\end{lemma}

\begin{proof}
 See \cite{Nee2} for a proof.
\end{proof}

The following result generalizes one direction of Lemma~\ref{lem: cpt gen is gen up to inf}.

\begin{lemma} \label{lem: big and little generation}
 Let $\mathcal T$ be a triangulated category. Let $\mathcal C, \mathcal C'$ be a subcategory of compact objects of $\mathcal T$. If $\mathcal C$ generates $\mathcal C'$ up to infinite coproducts, then $\mathcal C$ generates $\mathcal C'$. 
\end{lemma}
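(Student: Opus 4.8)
The plan is to use the standard "compact objects form a thick subcategory and are detected by Homs into the big category" machinery due to Neeman, as recorded in \cite{BV}. Write $\mathcal{T}_c$ for the full subcategory of compact objects of $\mathcal{T}$, and let $\langle \mathcal{C} \rangle$ denote the smallest thick (triangulated, idempotent-complete) subcategory of $\mathcal{T}$ containing $\mathcal{C}$. Since $\mathcal{C} \subseteq \mathcal{T}_c$ and $\mathcal{T}_c$ is thick, we automatically have $\langle \mathcal{C} \rangle \subseteq \mathcal{T}_c$; the content is the reverse inclusion.

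First I would invoke the fact that $\mathcal{C}$, being a set (or small category) of compact objects that generates $\mathcal{T}$ up to arbitrary coproducts, exhibits $\mathcal{T}$ as a compactly generated triangulated category. Then I would apply Neeman's localization theorem: the smallest localizing subcategory $\operatorname{Loc}(\mathcal{C})$ containing $\mathcal{C}$ is all of $\mathcal{T}$ (this is the hypothesis), and for a compactly generated category one has $\operatorname{Loc}(\mathcal{C})^c = \langle \mathcal{C} \rangle$, i.e.\ the compact objects of the localizing subcategory generated by a set of compacts are precisely the thick closure of that set. Concretely, given a compact object $X \in \mathcal{T}$, one writes $X$ as a homotopy colimit of objects built from $\mathcal{C}$, and uses compactness of $X$ to see that the identity map $X \to X$ factors through a finite stage, exhibiting $X$ as a summand of an object of $\langle \mathcal{C} \rangle$; idempotent completeness of $\mathcal{T}_c$ (Bökstedt–Neeman) then places $X$ in $\langle \mathcal{C} \rangle$.

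The key steps in order: (1) observe $\langle \mathcal{C}\rangle \subseteq \mathcal{T}_c$ for free; (2) set up the Bousfield localization of $\mathcal{T}$ along $\operatorname{Loc}(\mathcal{C})$, which exists because $\mathcal{C}$ is a set of compact objects; (3) note the hypothesis says this localization is trivial, so $\operatorname{Loc}(\mathcal{C}) = \mathcal{T}$; (4) cite/apply Neeman's theorem that $\operatorname{Loc}(\mathcal{C})^c = \langle \mathcal{C}\rangle$ to conclude $\mathcal{T}_c = \mathcal{T}^c = \operatorname{Loc}(\mathcal{C})^c = \langle \mathcal{C}\rangle$. Steps (2)–(4) are exactly the content of the cited result in \cite{BV} (and Corollary 3.13 of \cite{Ro2}), so in the write-up I would simply quote it.

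The main obstacle is really bookkeeping rather than mathematics: one must be careful that $\mathcal{C}$ is genuinely a \emph{set} of objects (or that the argument survives replacing it by a set), since Neeman's machinery needs a set of compact generators to run Bousfield localization and the small object argument. In our situation $\mathcal{C}$ is a subcategory of compacts, so one passes to a set of representatives of isomorphism classes, which generates the same thick and localizing subcategories; after that, the argument is a direct citation. I would therefore keep the proof short: reduce to a set of generators, then apply \cite{BV}.
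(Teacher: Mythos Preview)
Your proposal is correct and aligns with the paper's approach: the paper does not give an independent proof but simply records this as a known fact from \cite{BV} (with a pointer to Corollary~3.13 of \cite{Ro2}), and your outline is precisely the Neeman argument underlying that citation. Your write-up is in fact more detailed than what the paper provides; trimming it to a one-line citation would match the paper exactly.
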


\begin{proof}
 See \cite[Proposition 2.2.4]{BV} or \cite[Corollary 3.13]{Ro2}.
\end{proof}

Let $X$ be a separated, reduced scheme of finite type over $k$ and let $G$ be an affine algebraic group acting on $X$, $\sigma: G \times X \to X$. We record some generation results about the category, $\dbcohG{G}{X}$. Their statements and proofs are in the style of Rouquier, \cite{Ro2}, see also the arguments in \cite{LP}. 

\begin{definition}
 Let $\mathcal E$ be a quasi-coherent $G$-equivariant sheaf on $X$. Let $Z$ be a $G$-invariant subscheme of $X$ determined by a sheaf of ideals, $\mathcal I_Z$. We say that $E$ is \newterm{scheme-theoretically supported} on $Z$ if $\mathcal I_Z \mathcal E = 0$. We say that $\mathcal E$ is \newterm{set-theoretically supported} on $Z$ if $j^*\mathcal E = 0$ for the inclusion $j: X \setminus Z \to X$. 
 
 Let $\dZbqcohG{G}{X}{Z}$ be the triangulated subcategory of $\dbqcohG{G}{X}$ consisting of complexes whose cohomology sheaves are set-theoretically supported on $Z$. 
\end{definition}

\begin{remark}
 Let $l: Z \to X$ be the inclusion of $Z$ into $X$. Then, a quasi-coherent $G$-equivariant sheaf is scheme-theoretically supported on $Z$ if and only if it is in the essential image of $l_*$.
\end{remark}

\begin{lemma} \label{lem: big and small generators of dbgrmod}
 Let $Z$ be a $G$-invariant closed subscheme of $X$. Let $\mathcal S, \mathcal S'$ be subcategories of $\dZbcohG{G}{X}{Z}$. If $\mathcal S$ generates $\mathcal S'$ up to infinite coproducts in $\dZbqcohG{G}{X}{Z}$, then $\mathcal S$ generates $\mathcal S'$.
\end{lemma}

\begin{proof}
 We apply Lemma~\ref{lem: big and little generation}. The compact objects of $\dZbqcohG{G}{X}{Z}$ are exactly the objects of $\dZbcohG{G}{X}{Z}$ by Proposition 6.15 of \cite{Ro2}. 
\end{proof}

We also record the following useful statement.

\begin{lemma} \label{lem: colimit gen up inf coprod}
 Any quasi-coherent $G$-equivariant sheaf, $\mathcal E$, is generated up to infinite coproducts by its coherent $G$-equivariant subsheaves.
\end{lemma}

\begin{proof}
 Any quasi-coherent $G$-equivariant sheaf, $\mathcal E$, is the colimit of its coherent $G$-equivariant subsheaves, see \cite[Lemma 1.4]{Tho2}. The colimit fits to into an exact sequence,
\begin{displaymath}
 0 \to \bigoplus_{\substack{\mathcal F \subset \mathcal E \\ \mathcal F \text{ coherent } }} \mathcal F \to \bigoplus_{\substack{\mathcal F \subset \mathcal E \\ \mathcal F \text{ coherent } }} \mathcal F \to \op{colim} \mathcal F \cong \mathcal E \to 0.
\end{displaymath}
 Here the morphism, 
\begin{displaymath}
 \bigoplus_{\substack{\mathcal F \subset \mathcal E \\ \mathcal F \text{ coherent } }} \mathcal F \to \bigoplus_{\substack{\mathcal F \subset \mathcal E \\ \mathcal F \text{ coherent } }} \mathcal F, 
\end{displaymath}
 is defined as follows. Given two coherent equivariant subsheaves, $\mathcal F$ and $\mathcal F^{\prime}$, the morphism $\mathcal F \to \mathcal F^{\prime}$ equals
\begin{displaymath}
 \begin{cases} 0 & \text{ if } \mathcal F \not \subseteq \mathcal F^{\prime} \\
               -i & \text{ if } i: \mathcal F \hookrightarrow \mathcal F^{\prime} \text{ is a proper inclusion } \\
               1 & \text{ if } \mathcal F = \mathcal F^{\prime}.
 \end{cases}
\end{displaymath}
 As such, $\mathcal E$ is isomorphic to a cone over an endomorphism of a coproduct of coherent equivariant sheaves. Thus, $\mathcal E$ is generated, up to infinite coproducts, by its coherent $G$-equivariant subsheaves.
\end{proof}

Let $Z$ be a $G$-invariant closed subset of $X$ and $l: Z \to X$ be the inclusion.

\begin{lemma} \label{lem: scheme-theoretic support to set-theoretic support}
 The category, $\dZbqcohG{G}{X}{Z}$, is generated up to infinite coproducts by the image of $l_* : \dbcohG{G}{Z} \to \dbcohG{G}{X}$.
\end{lemma}

\begin{proof}
 If we can generate the cohomology sheaves of a bounded complex, then we can generate said complex. So we may reduce to generating all quasi-coherent $G$-equivariant sheaves that are set-theoretically supported on $Z$.  By Lemma~\ref{lem: colimit gen up inf coprod}, it suffices to generate all coherent $G$-equivariant sheaves that are set-theoretically supported on $Z$. However, for a coherent sheaf set-theoretically supported on $Z$, there is an $n$ such that $\mathcal I_Z^n \mathcal E = 0$. Thus, we have a filtration
 \begin{displaymath}
  0 = \mathcal I_Z^n \mathcal E \subset \mathcal I_Z^{n-1} \mathcal E \subset \cdots \subset \mathcal I_Z \mathcal E \subset \mathcal E. 
 \end{displaymath}
 There are exact triangles 
 \begin{displaymath}
  \mathcal I_Z^s \mathcal E \to \mathcal I_Z^{s-1} \mathcal E \to \mathcal F_s \to \mathcal I_Z^s \mathcal E[1]
 \end{displaymath}
 with $\mathcal F_s$ scheme-theoretically supported on $Z$. Thus, we see can generate a coherent $G$-equivariant sheaf using coherent $G$-equivariant sheaves scheme-theoretically supported on $Z$ finishing the argument.
\end{proof}

Before continuing with the course of the argument, let us recall the definition of local cohomology for equivariant sheaves. For the arguments of this section, local cohomology complexes provide an efficient means of chopping complexes up with respect to their support.

Let $Z$ be a $G$-invariant closed subset of $X$ and $\mathcal E$ be a quasi-coherent $G$-equivariant sheaf on $X$. Set
\begin{align*}
 \mathcal H_Z \mathcal E(U) & := \{ e \in \Gamma(U, \mathcal E) \mid \exists~ n,~ \mathcal I_Z^ne = 0 \} \\
 \mathcal Q_Z \mathcal E & := j_*j^* \mathcal E
\end{align*}
where $j: X \setminus Z \to X$ is the inclusion of the complement of $Z$. There is a left exact sequence
\begin{displaymath}
 0 \to \mathcal H_Z \mathcal E \to \mathcal E \to \mathcal Q_Z \mathcal E.
\end{displaymath}
Moreover, if $\mathcal E$ is flasque, there is a short exact sequence
\begin{displaymath}
 0 \to \mathcal H_Z \mathcal E \to \mathcal E \to \mathcal Q_Z \mathcal E \to 0.
\end{displaymath}
The quasi-coherent sheaf, $\mathcal H_Z \mathcal E$, inherits the $G$-equivariant structure of $\mathcal E$. Let
\begin{align*}
 \mathbf{R} \mathcal H_Z  & : \dbqcohG{G}{X} \to \dbqcohG{G}{X} \\
 \mathbf{R} \mathcal Q_Z  & : \dbqcohG{G}{X} \to \dbqcohG{G}{X}
\end{align*}
be the associated right-derived functors. Note that there is a triangle of exact functors
\begin{equation} \label{eq: local cohomology triangle}
 \mathbf{R}\mathcal H_Z \to \op{Id} \to \mathbf{R}\mathcal Q_Z \to \mathbf{R}\mathcal H_Z [1].
\end{equation}

We now use the above discussion to  reduce generation arguments to  the $G$-invariant irreducible case. 

\begin{lemma} \label{lemma: generate irreducible components}
 Let $X = Z_1 \cup Z_2$ be the decomposition of $X$ into two $G$-invariant closed subsets, $Z_1$ and $Z_2$. Let $l_i: Z_i \to X$ denote the inclusion of $Z_i$ into $X$. The objects in the essential image of the pushforward, $l_{i*}: \dbcohG{G}{Z_i} \to \dbcohG{G}{X}$, for $i=1,2$ generate $\dbqcohG{G}{X}$ up to infinite coproducts.
\end{lemma}

\begin{proof}
 We appeal to the exact triangle in Equation~\eqref{eq: local cohomology triangle} to see that $\mathcal E$ is generated by $\mathbf{R} \mathcal Q_{Z_1} \mathcal E$ and $\mathbf{R}\mathcal H_{Z_1} \mathcal E$. Note that $\mathbf{R} \mathcal Q_{Z_1}\mathcal E$ is supported on the complement of $Z_1$. As $X = Z_1 \cup Z_2$, $\mathbf{R}\mathcal Q_{Z_1}\mathcal E$ is set-theoretically supported on $Z_2$ while $\mathbf{R}\mathcal H_{Z_1} \mathcal E$ is set-theoretically supported on $Z_1$. Applying Lemma~\ref{lem: scheme-theoretic support to set-theoretic support}, finishes the argument.
\end{proof}

We will need to pass to the singular locus so we record a simple lemma.

\begin{lemma} \label{lemma: group action preserves singular locus}
 Let $\sigma: G \times X \to X$ be an action of an affine algebraic group, $G$, on a reduced, separated scheme of finite type, $X$. Let $\op{Sing} X$ denote the closed subset of $X$ defined by
 \begin{displaymath}
  \op{Sing} X := \{ x \in X \mid \mathcal O_{X,x} \text{ is not regular}\}.
 \end{displaymath}
 Equip $\op{Sing} X$ with the reduced, induced structure sheaf. Then, the action of $G$ on $X$ restricts to $\op{Sing} X$.
\end{lemma}

\begin{proof}
 It suffices to verify that $\sigma_g := \sigma(g,\bullet) : X \to X$ preserves $\op{Sing} X$ for each $g \in G$. However, $\sigma_g$ is an automorphism of $X$ and hence must preserve $\op{Sing} X$. 
\end{proof}

We will need to use normality of a variety which is not guaranteed by the assumptions of the proceeding lemmas. We take a moment to comment on lifting the action of $G$ to the normalization in an equivariant manner. 

\begin{lemma} \label{lemma: normalization is equivariant}
 Let $\nu: \widetilde{X} \to X$ be the normalization of $X$. There is a unique action of $G$ on $\widetilde{X}$ making $\nu$ $G$-equivariant.
\end{lemma}

\begin{proof}
 Since $G$ is smooth, $G \times \widetilde{X}$ is normal. The map $\sigma \circ (1 \times \nu): G \times \widetilde{X} \to X$ is dominant and therefore factors uniquely through $\nu$. Let $\tilde{\sigma}: G \times \widetilde{X} \to \widetilde{X}$ be the unique lift. The uniqueness of the lift also allows one to verify that $\widetilde{\sigma}$ is an action of $G$ on $X$. With this lift, $\nu: \widetilde{X} \to X$ becomes $G$-equivariant.
\end{proof}

\begin{lemma} \label{lemma: generation for maps with ample family}
 Let $f: X \to Y$ be a $G$-equivariant morphism such that $X$ possesses an $f$-ample family of equivariant line bundles, $\mathcal L_{\alpha}, \alpha \in A$. The full subcategory of $\dbcohG{G}{X}$ consisting of objects of the form
 \begin{displaymath}
  \mathcal L_{\alpha} \otimes f^*\mathcal E
 \end{displaymath}
 for $\mathcal E \in \op{coh}_G Y$ and $\alpha \in A$ generates all coherent $G$-equivariant sheaves of locally-finite projective dimension in $\op{Qcoh} X$. Moreover, if $Y$ possesses enough locally-free $G$-equivariant sheaves of finite rank, then the full subcategory of $\dbcohG{G}{X}$ consisting of objects of the form
 \begin{displaymath}
  \mathcal L_{\alpha} \otimes f^*\mathcal V
 \end{displaymath}
 for $\mathcal V \in \op{coh}_G Y$ locally-free and $\alpha \in A$ generates all coherent $G$-equivariant sheaves of locally-finite projective dimension in $\op{Qcoh} X$.
\end{lemma}

\begin{proof}
 Let $\mathcal E$ be a coherent $G$-equivariant sheaf of locally-finite projective dimension in $\op{Qcoh} X$. There is a finite set $A' \subseteq A$ such that 
 \begin{displaymath}
  \bigoplus_{\alpha \in A'} \mathcal L_{\alpha} \otimes f^*f_*(\mathcal L_{\alpha}^{-1} \otimes \mathcal E) \to \mathcal E 
 \end{displaymath}
 is an epimorphism as $\L_{\alpha}$ is an $f$-ample family. For each $\alpha$, there exists a coherent $G$-equivariant subsheaf, $\mathcal F_{\alpha}$, of $f_*(\mathcal L_{\alpha}^{-1} \otimes \mathcal E)$ such that the restriction of the co-unit morphism remains an epimorphism
 \begin{displaymath}
  \bigoplus_{\alpha \in A'} \mathcal L_{\alpha} \otimes f^* \mathcal F_{\alpha} \to \mathcal E.
 \end{displaymath}
 If we assume that $Y$ possesses enough locally-free $G$-equivariant sheaves of finite rank, there is a locally-free $G$-equivariant sheaf, $\mathcal V_{\alpha}$, on $Y$ and an epimorphism, $\mathcal V_{\alpha} \to \mathcal F_{\alpha}$. Pulling back and composing, we have an epimorphism
 \begin{displaymath}
  \bigoplus_{\alpha \in A'} \mathcal L_{\alpha} \otimes f^* \mathcal V_{\alpha} \to \mathcal E.
 \end{displaymath}
 
 Taking kernels and iterating this process we may construct an exact sequence 
 \begin{displaymath}
  \cdots \to \mathcal G_s \to \cdots \to \mathcal G_1 \to \mathcal E \to 0
 \end{displaymath}
 where each $\mathcal G_i$ is a sum of objects of the $\mathcal L_{\alpha} \otimes f^*\mathcal F_\alpha$ for some finite set of $\alpha \in A'$. Moreover, if $Y$ possesses enough locally-free equivariant sheaves, we may take $\mathcal E$ to be locally-free.
 
 Let $\mathcal K_s$ be the kernel of $\mathcal G_s \to \mathcal G_{s-1}$. We have a short exact sequence 
 \begin{displaymath}
  0 \to \mathcal G_s \to \cdots \to \mathcal G_1 \to \mathcal E \to 0. 
 \end{displaymath}
 This represents an element of 
 \begin{displaymath}
  \op{Ext}^s_{\op{Qcoh}_G X}(\mathcal E, \mathcal K_s).
 \end{displaymath}
 As $\mathcal E$ is an object of locally-finite projective dimension in $\op{Qcoh} X$, from Lemma~\ref{lem:graded gldim = gldim}, there is an $s_0$ such that 
 \begin{displaymath}
  \op{Ext}^s_{\op{Qcoh}_G X}(\mathcal E, \mathcal K_s) = 0
 \end{displaymath}
 for $s \geq s_0$. Take $s$ larger than $s_0$. Then, there is a quasi-isomorphism,
 \begin{displaymath}
  \mathcal K_s[s] \oplus \mathcal E \simeq \mathcal G_s \to \cdots \to \mathcal G_1.
 \end{displaymath}
 Thus, $\mathcal E$ is generated by objects of the form
 \begin{displaymath}
  \mathcal L_{\alpha} \otimes f^*\mathcal E
 \end{displaymath}
 for $\mathcal E \in \op{coh}_G Y$ and $\alpha \in A$.
  If $Y$ has enough equivariant locally-free sheaves, then $\mathcal E$ is generated by objects of the form
 \begin{displaymath}
 \mathcal L_{\alpha} \otimes f^*\mathcal E
 \end{displaymath}
 for $\mathcal E \in \op{coh}_G Y$ locally-free and $\alpha \in A$.
\end{proof}

Next, we demonstrate how to produce a set of generators from a set of generators of the singular locus of $X$. 

\begin{lemma} \label{lem: gen dbgrmod}
Let $X$ be a divisorial variety. Let $\op{Sing} X$ be the singular locus of $X$ with its reduced, induced structure sheaf. Let $l: \op{Sing} X \to X$ denote the inclusion. Let $Y$ be a closed subset of $X$ that is $G$-invariant. Then, the subcategory, whose objects are 
 \begin{itemize}
  \item $\nu_*\mathcal V$ where $\mathcal V$ is a locally-free $G$-equivariant sheaves of finite rank on $\widetilde{X}$ plus
    \item the objects in the essential image of the pushforward,
  \begin{displaymath}
   l_*: \dbcohG{G}{Y \cap \op{Sing} X} \to \dbcohG{G}{X},
  \end{displaymath}
 \end{itemize}
 generate the subcategory $\dZbqcohG{G}{X}{Y}$ up to infinite coproducts.
 
 Moreover, if one assumes that $X$ has enough locally-free $G$-equivariant sheaves, then the subcategory, whose objects are 
 \begin{itemize}
  \item locally-free $G$-equivariant sheaves of finite rank on $X$, plus
  \item the objects in the essential image of the pushforward,
  \begin{displaymath}
   l_*: \dbcohG{G}{Y \cap \op{Sing} X} \to \dbcohG{G}{X},
  \end{displaymath}
 \end{itemize}
 generates $\dZbqcohG{G}{X}{Y}$ up to infinite coproducts.
\end{lemma}

\begin{proof} 
 To generate a bounded complex, it suffices to generate its cohomology sheaves. Therefore, we may reduce to generating quasi-coherent $G$-equivariant sheaves set-theoretically supported on $Y$ up to infinite coproducts. By Lemma~\ref{lem: colimit gen up inf coprod}, it then suffices to generate coherent $G$-equivariant subsheaves set-theoretically supported on $Y$ up to infinite coproducts. Let $\mathcal E$ be a coherent $G$-equivariant sheaf. Complete the unit of the adjunction, $\mathcal E \to \nu_* \nu^* \mathcal E$ to an exact triangle
 \begin{displaymath}
  \mathcal E \to \nu_* \nu^* \mathcal E \to \mathcal D \to \mathcal E[1].
 \end{displaymath}
 Since $\nu$ is an isomorphism on $U$, $\mathcal D$ is set-theoretically supported on $\op{Sing} X \cap Y$. Since $\mathcal D$ is coherent  it is generated by the essential image of $l_*$ by Lemmas~\ref{lem: big and small generators of dbgrmod} and~\ref{lem: scheme-theoretic support to set-theoretic support}. Thus, to generate $\mathcal E$ it suffices to generate $\nu_* \nu^* \mathcal E$. Note also that if $\mathcal E$ is a locally-free sheaf of finite rank, then we generate $\nu_*\nu^* \mathcal E$ as we are allowed to use $\mathcal E$. 
 
 Set $Z = \nu^{-1}(\op{Sing} X)$ and $U = \widetilde{X} \setminus Z$. If $\mathcal V$ is a locally-free $G$-equivariant sheaf of finite rank on $\widetilde{X}$, then we have an exact triangle,
 \begin{displaymath}
  \mathbf{R} \mathcal H_{Z \cap \nu^{-1}(Y)} \mathcal V \to \mathcal V \to \mathbf{R} \mathcal Q_{Z \cap \nu^{-1}(Y)} \mathcal V \to \mathbf{R} \mathcal H_{Z \cap \nu^{-1}(Y)} \mathcal V[1].
 \end{displaymath}
 Applying $\nu_*$, we have another exact triangle,
 \begin{displaymath}
  \nu_* \mathbf{R} \mathcal H_{Z \cap \nu^{-1}(Y)} \mathcal V \to  \nu_*\mathcal V \to  \nu_*\mathbf{R} \mathcal Q_{Z \cap \nu^{-1}(Y)} \mathcal V \to  \nu_*\mathbf{R} \mathcal H_{Z \cap \nu^{-1}(Y)} \mathcal V[1].
 \end{displaymath}
 The set-theoretic support of $ \nu_* \mathbf{R} \mathcal H_{Z \cap \nu^{-1}(Y)} \mathcal V$ is contained in $\op{Sing} X \cap Y$ as $\nu(Z) = \op{Sing} X$. By Lemma~\ref{lem: scheme-theoretic support to set-theoretic support}, $ \nu_* \mathbf{R} \mathcal H_{Z \cap \nu^{-1}(Y)} \mathcal V$ is generated up to infinite coproducts by the essential image of $l_*$. Thus, $ \nu_* \mathbf{R} \mathcal Q_{Z \cap \nu^{-1}(Y)} \mathcal V$ is generated up to infinite coproducts by the full subcategory consisting of $\nu_*\mathcal V$ where $\mathcal V$ is a locally-free $G$-equivariant  on $\widetilde{X}$ and the essential image of $l_*$.

 Let $\mathcal E$ be a coherent $G$-equivariant sheaf on $X$ supported on $Y$. We have a triangle,
 \begin{displaymath}
  \mathbf{R} \mathcal H_{Z \cap \nu^{-1}(Y)}\nu^*\mathcal E \to \nu^*\mathcal E \to \mathbf{R} \mathcal Q_{Z \cap \nu^{-1}(Y)} \nu^*\mathcal E \to \mathbf{R} \mathcal H_{Z \cap \nu^{-1}(Y)} \nu^*\mathcal E[1],
 \end{displaymath}
 Applying $\nu_*$, we get another triangle,
 \begin{displaymath}
  \nu_*\mathbf{R} \mathcal H_{Z \cap \nu^{-1}(Y)} \nu^*\mathcal E \to \nu_*\nu^*\mathcal E \to \nu_*\mathbf{R} \mathcal Q_{Z \cap \nu^{-1}(Y)} \nu^*\mathcal E \to \nu_*\mathbf{R} \mathcal H_{Z \cap \nu^{-1}(Y)} \nu^*\mathcal E[1],
 \end{displaymath}
 we see that to generate $\nu_*\nu^*\mathcal E$ it suffices to generate $\nu_*\mathbf{R} \mathcal H_{Z \cap \nu^{-1}(Y)} \nu^*\mathcal E$ and $\nu_*\mathbf{R} \mathcal Q_{Z \cap \nu^{-1}(Y)} \nu^*\mathcal E$. The complex, $\nu_*\mathbf{R} \mathcal H_{Z \cap \nu^{-1}(Y)} \nu^*\mathcal E$, is set-theoretically supported on $\op{Sing} X \cap Y$. By Lemma~\ref{lem: scheme-theoretic support to set-theoretic support}, $\nu_*\mathbf{R} \mathcal H_{Z \cap \nu^{-1}(Y)} \nu^*\mathcal E$ is generated up to infinite coproducts by the essential image of $l_*$. Thus, we reduce to generating $\nu_*\mathbf{R} \mathcal Q_{Z \cap \nu^{-1}(Y)} \nu^*\mathcal E$. 
 
 As $\nu$ is an affine morphism, the pullback of an ample family remains an ample family. Using Theorem \ref{theorem: Thomason}, we may construct an exact complex
 \begin{displaymath}
  \cdots \to \mathcal V_s \to \cdots \to \mathcal V_2 \to \mathcal V_1 \to \nu^*\mathcal E \to 0.
 \end{displaymath}
 where each $\mathcal V_i$ is a locally-free $G$-equivariant sheaf of finite rank. Apply $j^*$ where $j: U = \widetilde{X} \setminus (Z \cap \nu^{-1}(Y)) \to \widetilde{X}$ is the inclusion. As $j^*$ is exact, the complex
 \begin{displaymath}
  \cdots \to j^*\mathcal V_s \to \cdots \to j^*\mathcal V_2 \to j^*\mathcal V_1 \to j^*\nu^*\mathcal E \to 0
 \end{displaymath}
 remains exact. Let $\mathcal K_s$ be the kernel of the map $j^*\mathcal V_s \to j^*\mathcal V_{s-1}$. The exact sequence
 \begin{displaymath}
  0 \to \mathcal K_s \to j^*\mathcal V_s \to \cdots \to j^*\mathcal V_2 \to j^*\mathcal V_1 \to j^*\nu^*\mathcal E \to 0
 \end{displaymath}
 represents an element of $\op{Ext}^{s}_{\op{Qcoh}_G  U}(j^*\nu^* \mathcal E, \mathcal K_s)$. As $j^*\nu^*\mathcal E$ is supported on the smooth subset, $U \supset \tilde{X} \setminus Z$, this vanishes for $s \geq s_0$, for some $s_0$, by Lemma~\ref{lem:graded gldim = gldim}. Consequently, there is a quasi-isomorphism
 \begin{displaymath}
  j^*\nu^*\mathcal E \oplus \mathcal K_s[s] \simeq j^*\mathcal V_s \to \cdots \to j^*\mathcal V_2 \to j^*\mathcal V_1.
 \end{displaymath}
 Applying $\nu_*\mathbf{R}j_*$, we see that $\nu_*\mathbf{R}\mathcal Q_{Z \cap \nu^{-1}(Y)} \nu^*\mathcal E$ is generated by  $\nu_*\mathbf{R}\mathcal Q_{Z \cap \nu^{-1}(Y)} \mathcal V_i$ for $1 \leq i \leq s$. We have already observed that we can generate $\nu_* \mathbf{R} \mathcal Q_{Z \cap \nu^{-1}(Y)} \mathcal V$ up to infinite coproducts when $\mathcal V$ is locally-free of finite rank. We conclude that we can generate $\nu_*\mathbf{R}\mathcal Q_{Z \cap \nu^{-1}(Y)} \nu^*\mathcal E$ using the subcategory consisting of $\nu$-pushforwards of $G$-equivariant invertible sheaves on $\widetilde{X}$ and the essential image of $l_*$ up to infinite coproducts finishing the argument.
 
 If we assume that $X$ has enough locally-free $G$-equivariant sheaves, then we can repeat the previous argument replacing $\widetilde{X}$ by $X$.
\end{proof} 

\begin{corollary} \label{corollary: generate with line bundles and singular locus}
 Assume that $X$ possesses enough locally-free $G$-equivariant sheaves. Let $\op{Sing} X$ be the singular locus of $X$ with its reduced, induced structure sheaf. Let $l: \op{Sing} X \to X$ denote the inclusion. Let $Y$ be a closed subset. The subcategory, $\dZbcohG{G}{X}{Y}$, is generated by all locally-free $G$-equivariant sheaves of finite rank on $X$ and all objects in the essential image of $l_*: \dbcohG{G}{\op{Sing} X \cap Y} \to \dbcohG{G}{X}$.
\end{corollary}

\begin{proof}
The second part of Lemma~\ref{lem: gen dbgrmod} states that the subcategory consisting of all locally-free coherent $G$-equivariant sheaves and the essential image of $l_*$ generates $\dZbqcohG{G}{X}{Y}$ up to infinite coproducts. Thus, by Lemma~\ref{lem: big and small generators of dbgrmod}, the subcategory consisting of all locally-free $G$-equivariant sheaves of finite rank on $X$ and the essential image of $l_*$ generates $\dZbcohG{G}{X}{Y}$.
\end{proof}

\begin{remark}
 One may use induction by iteratively passing to singular loci to produce a slightly smaller generating subcategory for $\dbcohG{G}{X}$.
\end{remark}

\begin{definition}
 Assume that $X$ has enough $G$-equivariant locally-free sheaves. Let $U$ be an open $G$-invariant subset of $X$ and let $\op{Perf}_{U,G} X$ be the full subcategory of $\dbqcohG{G}{X}$ whose restriction to $\dbqcohG{G}{U}$ is quasi-isomorphic to a bounded complex of locally-free $G$-equivariant sheaves. Let $\op{perf}_{U,G} X$ be the subcategory of $\op{Perf}_{U,G} X$ consisting of complexes quasi-isomorphic to bounded complexes of coherent sheaves. 
\end{definition}

\begin{lemma} \label{lemma: generate complexes perfect on U}
 Assume that $X$ has enough $G$-equivariant locally-free sheaves. The category, $\op{Perf}_{U,G} X$, is generated up to infinite coproducts by locally-free $G$-equivariant sheaves of finite rank and the image of
 \begin{displaymath}
  l_*: \dbcohG{G}{Y} \to \dbcohG{G}{X}
 \end{displaymath}
 where $Y = X \setminus U$. 
\end{lemma}

\begin{proof}
 Let $\mathcal E \in \op{Perf}_{U,G} X$. Using the assumption that $G$ has enough locally-free $G$-equivariant sheaves and a standard argument (see for the example the proof of Lemma \ref{lemma: generation for maps with ample family}), we may construct a bounded complex of locally-free sheaves $\mathcal P$ and a morphism
 \begin{displaymath}
  \mathcal P \to \mathcal E
 \end{displaymath}
 whose cone is a quasi-coherent sheaf that is locally-free on $U$. Since, by Lemma~\ref{lem: colimit gen up inf coprod},  we may generate bounded complexes of locally-free sheaves $\mathcal P$ up to infinite coproducts with locally-free coherent sheaves, it suffices to generate this cone. We continue with the assumption that $\mathcal E$ is a quasi-coherent sheaf.

 There is an exact triangle
 \begin{displaymath}
  \mathbf{R}\mathcal H_Z \mathcal E \to \mathcal E \to \mathbf{R}\mathcal Q_Z \mathcal E \to \mathbf{R}\mathcal H_Z \mathcal E[1].
 \end{displaymath}
 It suffices to generate $\mathbf{R}\mathcal H_Z \mathcal E$ and $\mathbf{R}\mathcal Q_Z \mathcal E$. We can generate $\mathbf{R}\mathcal H_Z \mathcal E$ up to infinite coproducts by the image of $l_*$ by Lemma~\ref{lem: scheme-theoretic support to set-theoretic support}. Thus, we reduce to generating $\mathbf{R}\mathcal Q_Z \mathcal E$. 
 
 Using the assumption of having enough $G$-equivariant locally-free sheaves, we may construct an exact complex
 \begin{displaymath}
  \cdots \to \mathcal V_s \to \cdots \to \mathcal V_1 \to \mathcal E \to 0
 \end{displaymath}
 with $\mathcal V_i$ being locally-free $G$-equivariant sheaves. Apply $j^*$ to get an exact complex
 \begin{displaymath}
  \cdots \to j^*\mathcal V_s \to \cdots \to j^*\mathcal V_1 \to j^*\mathcal E \to 0.
 \end{displaymath}
 Let $\mathcal K_s$ be the kernel of the map $j^*\mathcal V_s \to j^*\mathcal \mathcal V_{s-1}$. The exact sequence
 \begin{displaymath}
  0 \to \mathcal K_s \to j^*\mathcal V_s \to \cdots \to j^*\mathcal V_1 \to j^*\mathcal E \to 0
 \end{displaymath}
 represents an element of $\op{Ext}^{s}_{\op{Qcoh}_G  U}(j^* \mathcal E, \mathcal K_s)$. As $j^*\mathcal E$ is perfect, this vanishes for $s \geq s_0$, for some $s_0$, by Lemma~\ref{lem:graded gldim = gldim}. Assuming $s \geq s_0$, there is a quasi-isomorphism
 \begin{displaymath}
  j^*\mathcal E \oplus \mathcal K_s[s] \simeq j^*\mathcal V_s \to \cdots \to j^*\mathcal V_2 \to j^*\mathcal V_1.
 \end{displaymath}
 Pushing this forward via $\mathbf{R}j_*$ shows that $\mathbf{R}\mathcal Q_Z \mathcal E$ is generated by $\mathbf{R}\mathcal Q_Z \mathcal V$ for $\mathcal V$ locally-free. Thus, we reduce to generating $\mathbf{R}\mathcal Q_Z \mathcal V$ for $\mathcal V$ locally-free. But, for such a $\mathcal V$, there is an exact triangle, 
 \begin{displaymath}
  \mathbf{R}\mathcal H_Z \mathcal V \to \mathcal V \to \mathbf{R}\mathcal Q_Z \mathcal V \to \mathbf{R}\mathcal H_Z \mathcal V[1].
 \end{displaymath}
 and we may generate $\mathbf{R}\mathcal H_Z \mathcal V$ and $\mathcal V$ up to infinite coproducts by locally-free $G$-equivariant sheaves of finite rank and the image of $l_*$ by Lemma~\ref{lem: scheme-theoretic support to set-theoretic support}.
\end{proof}

\begin{corollary} \label{corollary: generate complexes perfect on U}
 Assume that $X$ has enough $G$-equivariant locally-free sheaves. The category, $\op{perf}_{U,G} X$, is generated by locally-free $G$-equivariant sheaves of finite rank and the image of
 \begin{displaymath}
  l_*: \dbcohG{G}{Y} \to \dbcohG{G}{X}
 \end{displaymath}
 where $Y = X \setminus U$. 
\end{corollary}

\begin{proof}
 This follows from Lemma~\ref{lemma: generate complexes perfect on U} by applying Lemma~\ref{lem: big and small generators of dbgrmod}.
\end{proof}

The following lemma shows that generators restrict under changing of the group.

\begin{lemma} \label{lem: generation under change of grading}
 Let $X$ be a separated, reduced, divisorial scheme of finite type equipped with a $G$ action. Assume that $G/H$ is affine. Then, $\dbcohG{H}{X}$ is generated by the essential image of $\op{Res}_H^G: \dbcohG{G}{X} \to \dbcohG{H}{X}$. 
\end{lemma}

\begin{proof}
 Recall that $\op{Res}^G_H$ factors as $\iota^* \circ \alpha^*$ where $\alpha: G \overset{H}{\times} X \to X$ is induced by the action of $G$ on $X$ and $\iota: X \to G \overset{H}{\times} X$ is induced by the unit element of $G$. The functor, $\iota^*: \dbcohG{G}{G \overset{H}{\times} X} \to \dbcohG{H}{X}$, is an equivalence by Lemma~\ref{lem: equivalence pullback} so it suffices to show that the image of $\alpha^*: \dbcohG{G}{X} \to \dbcohG{G}{G \overset{H}{\times} X}$ generates. We factor $\alpha$ as
 \begin{center}
 \begin{tikzpicture}[description/.style={fill=white,inner sep=2pt}]
  \matrix (m) [matrix of math nodes, row sep=2em, column sep=2em, text height=1.5ex, text depth=0.25ex]
  {  G \overset{H}{\times} X & G/H \times X \\ 
   X &  \\ };
  \path[->,font=\scriptsize]
  (m-1-1) edge node[left] {$\alpha$} (m-2-1)
  (m-1-1) edge node[above] {$\Phi$} (m-1-2)
  (m-1-2) edge node[below] {$p$} (m-2-1)
  ;
 \end{tikzpicture}
 \end{center} 
 where
 \begin{align*}
  \Phi: G \overset{H}{\times} X & \to G/H \times X \\
  (g,x) & \mapsto (gH,\sigma(g,x))
 \end{align*}
 and $p$ is the projection. The morphism, $\Phi$, is an isomorphism so we reduce to checking that the image of $p^*: \dbcohG{G}{X} \to \dbcohG{G}{G/H \times X}$ generates.
 
 Let us handle the case that $\op{dim} X = 0$. Under our standing assumptions $X$ is reduced, therefore $X$ is smooth. Since $G/H$ is affine, $\mathcal O_{G/H}$ is ample and is naturally equivariant. Lemma~\ref{lemma: generation for maps with ample family} applies directly to show that the essential image of $p^*$ generates 
  
 Now assume we have proven the statement for $X$ with all components of $X$ having dimension $<n$ and assume that $\op{dim} X = n$. By Corollary~\ref{corollary: generate with line bundles and singular locus}, $\dbcohG{G}{G/H \times X}$ is generated by $\nu'_*\mathcal V$ where $\mathcal V$ are locally-free $G$-equivariant sheaves of finite rank, $\nu': \widetilde{G/H \times X} \to G/H \times X$ is the normalization, and the essential image of
 \begin{displaymath}
  l_* : \dbcohG{G}{\op{Sing} G/H \times X} \to \dbcohG{G}{G/H \times X}.
 \end{displaymath}
 Since $G/H$ is smooth $\op{Sing} (G/H \times X) = G/H \times \op{Sing} X$. Applying the induction hypothesis, the essential image of 
 \begin{displaymath}
  p^* : \dbcohG{G}{\op{Sing} X} \to \dbcohG{G}{\op{Sing} G/H \times X}
 \end{displaymath}
 generates. Thus, the essential image of
 \begin{displaymath}
  p^*: \dbcohG{G}{X} \to \dbcohG{G}{G/H \times X}
 \end{displaymath}
 generates the essential image of $l_*$. We are left to generate the coherent $G$-equivariant sheaves, $\nu'_* \mathcal V$, for $\mathcal V$ locally-free $G$-equivariant sheaves of finite rank on the normalization. 
 
 Since $G/H$ is smooth, $\widetilde{G/H \times X} \cong G/H \times \widetilde{X}$. We have a commutative diagram.
 \begin{center}
 \begin{tikzpicture}[description/.style={fill=white,inner sep=2pt}]
  \matrix (m) [matrix of math nodes, row sep=2em, column sep=2em, text height=1.5ex, text depth=0.25ex]
  {  G/H \times \widetilde{X} & G/H \times X \\ 
     \widetilde{X} & X \\ };
  \path[->,font=\scriptsize]
  (m-1-1) edge node[above] {$1 \times \nu$} (m-1-2)
  (m-1-1) edge node[left] {$\tilde{p}$} (m-2-1)
  (m-1-2) edge node[right] {$p$} (m-2-2)
  (m-2-1) edge node[above] {$\nu$} (m-2-2)
  ;
 \end{tikzpicture}
 \end{center} 
 
 Applying Lemma~\ref{lemma: generation for maps with ample family}, since $\mathcal O_{G/H \times \widetilde{X}}$ is $\widetilde{p}$-ample, any locally-free $G$-equivariant sheaf of finite rank, $\mathcal V$, is generated by the essential image of $\widetilde{p}^*$. 
 
 Therefore, $\nu'_* \mathcal V = (1 \times \nu)_* \mathcal V$ is generated by the essential image of $(1 \times \nu)_* \circ \widetilde{p}^*$. As $p$ is flat, 
 \begin{displaymath}
  (1 \times \nu)_* \circ \widetilde{p}^* \cong p^* \circ \nu_*.
 \end{displaymath}
 Thus, $\nu'_* \mathcal V$ is generated by the essential image of $p^*$ finishing the proof.
\end{proof}

The next proposition demonstrates that exterior products generate in the equivariant setting. 

\begin{proposition} \label{prop:box gen dsing}
 Let $G$ and $H$ be affine algebraic groups, and $X$ and $Y$ be separated, reduced, divisorial schemes of finite type equipped with actions $G \times X \to X$ and $H \times Y \to Y$.  The subcategory consisting of $\mathcal E \boxtimes \mathcal F$ for $\mathcal E \in \op{coh}_G X$ and $\mathcal F \in \op{coh}_H Y$ generates $\dbqcohG{G \times H}{X \times Y}$ up to infinite coproducts.
\end{proposition}

\begin{proof}
 By Lemma~\ref{lem: colimit gen up inf coprod}, it suffices to generate all coherent $G \times H$-equivariant sheaves up to infinite coproducts. 

 We proceed by induction on the dimension of $X \times Y$. Assume that $\op{dim} X \times Y = 0$. The morphism, 
 \begin{displaymath}
  h:= f \times g: X \times Y \to \op{Spec} k \times \op{Spec} k \cong \op{Spec} k,
 \end{displaymath}
 coming from the product of the structure maps, $f: X \to \op{Spec} k$ and $g: Y \to \op{Spec} k$, is affine and $G \times H$-equivariant therefore $\O_{X \times Y}$ is ample. By Lemma~\ref{lem:graded gldim = gldim}, any object of $\op{coh} X \times Y$ has locally-finite projective dimension since $X \times Y$ is smooth. Applying Lemma~\ref{lemma: generation for maps with ample family}, we see that the essential image of $h^*$ generates $\dbcohG{G \times H}{X \times Y}$. Moreover,
 \begin{displaymath}
  h^*(\mathcal E \boxtimes \mathcal F) \cong f^*\mathcal E \boxtimes g^* \mathcal F.
 \end{displaymath}
 So validity of the claim in the case $X = Y = \op{Spec} k$ implies validity of the claim for all $X \times Y$ of dimension zero. For a finite dimensional $G \times H$ representation, the evaluation morphism
 \begin{displaymath}
  \op{Hom}_{\op{Qcoh}_H \op{Spec} k}(\op{Res}^{G \times H}_H V,V) \otimes_k \op{Res}^{G \times H}_H V \to V
 \end{displaymath}
 is an epimorphism. Here, $\op{Hom}_{\op{Qcoh}_H}(\op{Res}^{G \times H}_H V,V)$ is a $G$-representation. By Lemma~\ref{lem:graded gldim = gldim} the category of $G$-representations has finite global dimension.  Thus, there are enough exterior products to resolve any $G \times H$-representation finishing the base case of the induction. 
 
 Assume we have proven the statement whenever $\op{dim} X \times Y < n$ and let us treat a product with $\op{dim} X \times Y = n$. From Lemma~\ref{lem: gen dbgrmod}, $\dbqcohG{G \times H}{X \times Y}$ is generated up to infinite coproducts by $\nu_* \mathcal V$ for locally-free $G \times H$-equivariant sheaves of finite rank on the normalization $\widetilde{X \times Y}$ and the essential image of 
 \begin{displaymath}
  l_*: \dbcohG{G \times H}{\op{Sing} X \times Y} \to \dbcohG{G \times H}{X \times Y}.
 \end{displaymath}
 The singular locus of $X \times Y$ is the union of two closed subsets: $(\op{Sing} X) \times Y$ and $X \times \op{Sing} Y$. From Lemma~\ref{lemma: generate irreducible components}, $\dbqcohG{G \times H}{\op{Sing} (X \times Y)}$ is generated up to infinite coproducts by the images of $\dbcohG{G \times H}{(\op{Sing} X) \times Y}$ and $\dbcohG{G\times H}{X \times \op{Sing} Y}$ under pushforward. Using the induction hypothesis, exterior products generate both $\dbcohG{G \times H}{(\op{Sing} X) \times Y}$ and $\dbcohG{G\times H}{X \times \op{Sing} Y}$. Thus, the essential image of $l_*$ is generated up to infinite coproducts by exterior products. Next, we turn to locally-free equivariant sheaves pushed forward from the normalization.

 The normalization of $X \times Y$ is the product of the normalizations, $\widetilde{X} \times \widetilde{Y}$, \cite[Corollary 6.14.3]{EGA4.2}. We have assumed that $X$ and $Y$ have ample families. Since normalization is affine, $\widetilde{X}$ and $\widetilde{Y}$ have ample families given by the pullbacks from $X$ and $Y$, respectively. The exterior product of ample families is again an ample family. Since $\widetilde{X} \times \widetilde{Y}$ is normal, taking sufficient powers of each line bundle, we get an ample family where all the line bundles admit equivariant structures, \cite[Lemma 2.10]{Tho2}. Thus, for any locally-free $G$-equivariant sheaf, $\mathcal V$, there is an exact sequence of equivariant sheaves
 \begin{displaymath}
  \cdots \to \mathcal F_s \to \cdots \to \mathcal F_1 \to \mathcal V \to 0
 \end{displaymath}
 where each $\mathcal F_i$ is an exterior product. The locally-free sheaf $\mathcal V$ has locally-finite projective dimension, and thus is a summand of the complex
 \begin{displaymath}
  \mathcal F_s \to \cdots \to \mathcal F_1
 \end{displaymath}
 for $s$ sufficiently large. We see that exterior products generate all locally-free equivariant sheaves on $\widetilde{X} \times \widetilde{Y}$. Pushing forward an exterior product under the normalization, map $\widetilde{X} \times \widetilde{Y} \to X \times Y$, yields another exterior product via the projection formula and flat base change. Thus, exterior products also generate $\nu_* \mathcal V$ for locally-free $G \times H$-equivariant sheaves of finite rank, $\mathcal V$, on the normalization. This finishes the proof. 
\end{proof}

\begin{corollary} \label{cor:box gen dsing}
 Let $G$ and $H$ be affine algebraic groups, $X$ and $Y$ separated, reduced schemes of finite type equipped with actions $G \times X \to X$ and $H \times Y \to Y$. The subcategory consisting of $\mathcal E \boxtimes \mathcal F$ for $\mathcal E \in \op{coh}_G X$ and $\mathcal F \in \op{coh}_H Y$ generates $\dbcohG{G \times H}{X \times Y}$.
\end{corollary}

\begin{proof}
 This follows from Proposition~\ref{prop:box gen dsing} by applying Lemma~\ref{lem: big and small generators of dbgrmod}.
\end{proof}

Next, we turn our attention to showing that exterior products of factorizations generate the appropriate category. We will demonstrate such generation for exterior products in the singularity category and then use that to pass to factorizations.

\begin{lemma} \label{lemma: generate d-sing by box products}
 Let $X$ and $Y$ be smooth varieties and let $G$ and $H$ be affine algebraic groups acting on, respectively, $X$ and $Y$. Let $w \in \Gamma(X,\mathcal O_X(\chi))^G$ and $v \in \Gamma(Y, \mathcal O_Y(\chi'))^H$ for characters $\chi: G \to \mathbb{G}_m$ and $\chi' : H \to \mathbb{G}_m$. Let $i_w: Z_w \to X$ be the zero locus of $w$, $i_v: Z_v \to Y$ be the zero locus of $v$, and $i_{w \boxplus v}: Z_{w \boxplus v} \to X \times Y$ be the zero locus of $w \boxplus v$. Let $l: \op{Sing} Z_w \times \op{Sing} Z_v \to Z_{w \boxplus v}$ be the inclusion.
 
 Objects of the form $l_* \op{Res}^{G \times H}_{G\times_{\mathbb{G}_m} H} (\mathcal E \boxtimes \mathcal F)$ for $\mathcal E \in \op{coh}_G \op{Sing} Z_w$ and $\mathcal F \in \op{coh}_H \op{Sing} Z_v$ generate $\op{D}^{\op{sg}}_{Z_w \times Z_v, G\times_{\mathbb{G}_m} H}(Z_{w \boxplus v})$.
\end{lemma}

\begin{proof}
 By Corollary~\ref{corollary: generate complexes perfect on U}, the inverse image of $\op{D}^{\op{sg}}_{Z_w \times Z_v, G\times_{\mathbb{G}_m} H}(Z_{w \boxplus v})$ in $\dbcohG{G\times_{\mathbb{G}_m} H}{Z_{w \boxplus v}}$ is generated by locally-free $G$-equivariant sheaves and objects of $\dbcohG{G\times_{\mathbb{G}_m} H}{Z_{w \boxplus v}}$ set-theoretically supported on $Z_w \times Z_v$. By Corollary~\ref{corollary: generate with line bundles and singular locus}, locally-free $G$-equivariant sheaves of finite rank on $Z_{w \boxplus v}$ and objects in the image of $l_*$ for the inclusion $l: \op{Sing} Z_{w \boxplus v} \cap (Z_w \times Z_v) \to Z_{w \boxplus v}$ generate $\dZbcohG{G\times_{\mathbb{G}_m} H}{Z_{w \boxplus v}}{Z_w \times Z_v}$.  So, in combination, we can generate $\op{D}^{\op{sg}}_{Z_w \times Z_v, G\times_{\mathbb{G}_m} H}(Z_{w \boxplus v})$ using the essential image of $l_*$.  It remains to check that exterior products generate $\dbcohG{G \times H}{\op{Sing} Z_{w \boxplus v} \cap (Z_w \times Z_v)}$.
 
 Note that $\op{Sing} Z_{w \boxplus v} \cap (Z_w \times Z_v) = \op{Sing} Z_w \times \op{Sing} Z_v$. By Lemma~\ref{lem: generation under change of grading}, the essential image of 
 \begin{displaymath}
  \op{Res}^{G \times H}_{G\times_{\mathbb{G}_m} H} : \dbcohG{G \times H}{  \op{Sing} Z_w \times  \op{Sing} Z_v} \to \dbcohG{G\times_{\mathbb{G}_m} H}{ \op{Sing} Z_w \times  \op{Sing} Z_v}
 \end{displaymath}
 generates. Notice also that $\op{Sing} Z_w \times \op{Sing} Z_v$ is divisorial simply by pulling back the ample family.  Hence, we may apply Corollary~\ref{cor:box gen dsing}, to see that $\dbcohG{G\times_{\mathbb{G}_m} H}{\op{Sing} Z_w \times \op{Sing} Z_v}$ is generated by $\mathcal E \boxtimes \mathcal F$ for $\mathcal E \in \op{coh}_G Z_w$ and $\mathcal F \in \op{coh}_H Z_v$.
\end{proof}

\begin{lemma} \label{lemma: generate factorizations by box products}
 Let $X$ and $Y$ be smooth varieties and let $G$ and $H$ be affine algebraic groups acting on, respectively, $X$ and $Y$. Let $w \in \Gamma(X,\mathcal O_X(\chi))^G$ and $v \in \Gamma(Y, \mathcal O_Y(\chi'))^H$ for characters $\chi: G \to \mathbb{G}_m$ and $\chi' : H \to \mathbb{G}_m$. Let $i_w: Z_w \to X$ be the zero locus of $w$ and let $i_v: Z_v \to Y$ be the zero locus of $v$. The derived category of coherent factorizations supported on $Z_w \times Z_v$, $\dabs_{Z_w \times Z_v} [\mathsf{fact}(X \times Y,G \times_{\mathbb{G}_m} H,w \boxplus v)]$, is generated by exterior products. 
\end{lemma}

\begin{proof}
 By Lemma~\ref{lemma: generate d-sing by box products}, objects of the form $l_* \op{Res}^{G \times H}_{G\times_{\mathbb{G}_m} H} \mathcal E \boxtimes \mathcal F$ for $\mathcal E \in \op{coh}_G \op{Sing} Z_w$ and $\mathcal F \in \op{coh}_H \op{Sing} Z_v$ generate $\op{D}^{\op{sg}}_{Z_w \times Z_v, G\times_{\mathbb{G}_m} H}(Z_{w \boxplus v})$. By Lemma~\ref{lemma: box products are preserved}, for any $\mathcal E \in \op{coh}_G Z_w$ and $\mathcal F \in \op{coh}_H Z_v$, there are natural isomorphisms of $G\times_{\mathbb{G}_m} H$-equivariant factorizations of $w \boxplus v$,
 \begin{displaymath}
  (\Upsilon \mathcal E) \boxtimes (\Upsilon \mathcal F) \cong \Upsilon \op{Res}_{G\times_{\mathbb{G}_m} H}^{G \times H} (i_{w*}\mathcal E \boxtimes i_{v*}\mathcal F).
 \end{displaymath} 
 Finally, by Proposition~\ref{proposition: upsilon surjective}, $\Upsilon$ is essentially surjective. Thus, $(\Upsilon \mathcal E) \boxtimes (\Upsilon \mathcal F)$ for $\mathcal E \in \op{coh}_G Z_w$ and $\mathcal F \in \op{coh}_H Z_v$ generate $\dabs_{Z_w \times Z_v} [\mathsf{fact}(X \times Y,G\times_{\mathbb{G}_m} H, w \boxplus v)]$.
\end{proof}

\section{Bimodule and functor categories for equivariant factorizations} \label{sec: bimod and functor categories for graded MFs}

\subsection{Morita products and functor categories for factorization categories} \label{section: morita products}

We now turn to studying tensor products and internal-homomorphism dg-categories of factorization categories in the homotopy category of dg-categories, $\hodgcat$. The main references for background are \cite{Kel,Toe}.

\begin{definition}
 A dg-functor, $f: \mathsf{C} \to \mathsf{D}$, is a \newterm{quasi-equivalence} if 
 \begin{displaymath}
  \op{H}^{\bullet}(f): \op{H}^{\bullet}(\op{Hom}_{\mathsf{C}}(c,c')) \to \op{H}^{\bullet}(\op{Hom}_{\mathsf{D}}(f(c),f(c')))
 \end{displaymath}
 is an isomorphism for all $c,c' \in \mathsf{C}$ and $[f]: [\mathsf{C}] \to [\mathsf{D}]$ is essentially surjective. 

 Let $\op{Ho(dg-cat)}_k$ denote the localization of $\op{dg-cat}_k$ at the class of quasi-equivalences. This category is called  \newterm{the homotopy category of dg-categories}. If $\mathsf{C}$ and $\mathsf{D}$ are quasi-equivalent, we shall write $\mathsf{C} \simeq \mathsf{D}$.
\end{definition}

\begin{definition}
 Let $\mathsf D$ be a dg-category. The category of left $\mathsf D$-modules, denoted $\mathsf D\op{-Mod}$, is the dg-category of dg-functors, $\mathsf D \to \mathsf{C}(k)$ where $\mathsf{C}(k)$ is the dg-category of chain complexes of vector spaces over $k$. The category of right $\mathsf D$-modules is the category of left $\mathsf D^{\op{op}}$-modules. 
 
 Each object $d \in \mathsf D$ provides a representable right module
 \begin{align*}
  h_d : \mathsf{D}^{\op{op}} & \to \mathsf{C}(k) \\
  d' & \mapsto \op{Hom}_{\mathsf{D}}(d',d).
 \end{align*}
 We denote the dg-Yoneda embedding by $h: \mathsf{D} \to \mathsf{D}^{\op{op}}\op{-Mod}$.
 
 The Verdier quotient of $[\mathsf{D}\op{-Mod}]$ by the subcategory of acyclic modules is called the \newterm{derived category of $\mathsf{D}$-modules} and is denoted by $\op{D}[\mathsf{D}\op{-Mod}]$. The smallest thick subcategory of $\op{D}[\mathsf{D}^{\op{op}}\op{-Mod}]$ containing the image of $[h]$ is called the \newterm{category of perfect $\mathsf{D}$-modules} and is denoted by $\op{perf}(\mathsf{D})$.
\end{definition}

\begin{remark}
 Throughout the paper, with the exception of the proof of Corollary~\ref{corollary: morita product of factorizations}, we will take $\mathsf C$ to be a \newterm{quasi-small} dg-category. A dg-category $\mathsf{D}$ is quasi-small if $[\mathsf{D}]$ is essentially small. In this case, we can choose a small full subcategory of $\mathsf{D}$ quasi-equivalent to $\mathsf{D}$ and work with that subcategory to define categories of modules and bimodules. This sidesteps certain set-theoretic issues in the quasi-small case. However, doing this in each example is tedious and not edifying. So we will suppress these arguments throughout the paper.
 
 When $\mathsf C$ is not quasi-small, but only $\mathbb{U}$-small, one only considers $\mathbb{U}$-small dg-modules. We suppress any of the set-theoretic issues as we do not ascend to a higher universe in the proof of Corollary~\ref{corollary: morita product of factorizations}.
\end{remark}

\begin{definition}
 Let $\mathsf{C}$ and $\mathsf{D}$ be two dg-categories. A \newterm{quasi-functor} $a: \mathsf{C} \to \mathsf{D}$ is a dg-functor
 \begin{displaymath}
  a: \mathsf{C} \to \mathsf{D}^{\op{op}}\op{-Mod}
 \end{displaymath}
 such that for each $c \in \mathsf{C}$,  $a(c)$ is quasi-isomorphic to $h_d$ for some $d \in \mathsf{D}$. Note that a quasi-functor corresponds to a bimodule $a \in \mathsf{C} \otimes_k \mathsf{D}^{\op{op}}\op{-Mod}$. Also note, that any quasi-functor induces a functor on homotopy categories which we denote by $[a]: [\mathsf{C}] \to [\mathsf{D}]$. In particular, it makes sense to extend the definition of quasi-equivalence to quasi-functors.
\end{definition}

\begin{lemma} \label{lemma: morphisms in hodgcat}
 The isomorphism classes of morphisms from $\mathsf{C}$ to $\mathsf{D}$ in $\op{Ho(dg-cat)}_k$ are in bijection with isomorphism classes of quasi-functors from $\mathsf{C}$ to $\mathsf{D}$ viewed as objects of $\op{D}[\mathsf{C} \otimes_k \mathsf{D}^{\op{op}}\op{-Mod}]$. In particular, two dg-categories are quasi-equivalent if and only if they are related by a quasi-functor that is a quasi-equivalence.
\end{lemma}

\begin{proof}
 This is an immediate consequence of the internal Hom constructed by T\"oen for $\op{Ho(dg-cat)}_k$, \cite[Theorem 6.1]{Toe}.
\end{proof}

The following provides a useful language to keep track of dg-categories.

\begin{definition}
 Let $\mathcal T$ be a triangulated category. An \newterm{enhancement} of $\mathcal T$ is a dg-category, $\mathsf C$, and an exact equivalence
 \begin{displaymath}
  \epsilon: [\mathsf C] \to \mathcal T.
 \end{displaymath}
 \end{definition}

We recall the following result concerning dg-quotients.

\begin{theorem} \label{theorem: drinfeld's dg-quotient}
 Let $\mathsf C$ be a small dg-category and let $\mathsf D$ be a full dg-subcategory. There exists a dg-category $\mathsf C /\mathsf D$, unique in $\op{Ho}(\op{dg-cat}_k)$, and dg-functor $\xi: \mathsf C \to \mathsf C/\mathsf D$ such that for any morphism $\eta: \mathsf C \to \mathsf A$ in $\op{Ho}(\op{dg-cat}_k)$ with $\eta|_{\mathsf{D}} = 0$ there exists a morphism $\lambda: \mathsf{C}/\mathsf{D} \to \mathsf{A}$ with $\eta \cong \lambda \circ \xi$.
\end{theorem}

\begin{proof}
 This is \cite[Theorem 3.4]{Drin}. The objects of $\mathsf C/\mathsf D$ in \cite[Section 3]{Drin} are exactly the objects of $\mathsf C$. Note that we use that $k$ is a field here.
\end{proof}

Let $X$ be a smooth variety, $G$ be an affine algebraic group acting on $X$, $\mathcal L$ be an invertible $G$-equivariant sheaf on $X$, and $w \in \Gamma(X,\mathcal L)^G$.

\begin{definition}
 Let $\dabs \mathsf{vect}(X,G,w)$ denote the dg-quotient as in Theorem~\ref{theorem: drinfeld's dg-quotient} of $\mathsf{vect}(X,G,w)$ by $\mathsf{acycvect}(X,G,w)$.
\end{definition}

\begin{corollary} \label{corollary: enhancements via dg-quotients}
 The dg-quotient $\dabs \mathsf{vect}(X,G,w)$ is an enhancement of $\dabs [\mathsf{fact}(X,G,w)]$.
\end{corollary}

\begin{proof}
 The result is an immediate consequence of Theorem~\ref{theorem: drinfeld's dg-quotient} and Proposition~\ref{prop: projective enhancement}.
\end{proof}

\begin{definition}
 We will need the following factorization of $0$. Let $\mathcal J$ be an injective resolution of $\mathcal O_X$ and consider the factorization, $\mathcal I^{\mathcal O} := \Upsilon \mathcal J$, of $0$.
\end{definition}

\begin{proposition} \label{proposition: equivalent enhancements}
 The dg-category $\mathsf{Inj}(X,G,w)$ is an enhancement of $\dabs [\mathsf{Fact}(X,G,w)]$. The dg-category $\mathsf{Inj}_{\op{coh}}(X,G,w)$ is an enhancement of $\dabs [\mathsf{fact}(X,G,w)]$. There is an isomorphism in $\hodgcat$ between $\mathsf{Inj}_{\op{coh}}(X,G,-w)$ and $\dabs \mathsf{vect}(X,G,w)^{\op{op}}$.
 
 If $X$ is affine and $G$ is reductive, then, additionally, $\mathsf{Vect}(X,G,w)$ is an enhancement of $\dabs [\mathsf{Fact}(X,G,w)]$ and $\mathsf{vect}(X,G,w)$ is an enhancement of $\dabs [\mathsf{fact}(X,G,w)]$. 
\end{proposition}

\begin{proof}
 The first two statements follow from Proposition~\ref{prop: injective enhancement}. While the final two follow from Proposition~\ref{prop: projective enhancement}. 
 For the third statement, consider the dg-functor,
 \begin{align*}
  \mathcal Hom_X(\bullet, \mathcal I^{\mathcal O}) & : \mathsf{vect}(X,G,w)^{\op{op}} \to \mathsf{Inj}_{\op{coh}}(X,G,-w),
 \end{align*}
which sends the subcategory $\mathsf{acycvect}(X,G,w)^{\op{op}}$ to acyclic factorizations with injective components. Thus, the induced functor
 \begin{displaymath}
  \mathcal Hom_X(\bullet, \mathcal I^{\mathcal O}) : \mathsf{acycvect}(X,G,w)^{\op{op}} \to \mathsf{Inj}_{\op{coh}}(X,G,-w)
 \end{displaymath}
 vanishes on homotopy categories. By \cite[Theorem 1.6.2]{Drin} and Lemma~\ref{lemma: double dual is ok for coherent factorizations}, $\mathsf{Inj}_{\op{coh}}(X,G,-w)$ is a dg-quotient of $\mathsf{vect}(X,G,w)^{\op{op}}$ by $\mathsf{acycvect}(X,G,w)^{\op{op}}$. By uniqueness, there is an isomorphism in $\hodgcat$ between $\mathsf{Inj}_{\op{coh}}(X,G,-w)$ and $\dabs \mathsf{vect}(X,G,w)^{\op{op}}$.
\end{proof}

\begin{corollary} \label{corollary: opposite category}
 There is an isomorphism in $\hodgcat$,
 \begin{displaymath}
  \mathsf{Inj}_{\op{coh}}(X,G,-w) \cong \mathsf{Inj}_{\op{coh}}(X,G,w)^{\op{op}}.
 \end{displaymath}
\end{corollary}

\begin{proof}
 The dg-functor 
 \begin{displaymath}
  \mathcal Hom_X(\bullet, \mathcal O_X) : \mathsf{vect}(X,G,w)^{\op{op}} \to \mathsf{vect}(X,G,-w)
 \end{displaymath}
 is an equivalence of dg-categories that preserves the subcategories of acyclic locally-free factorizations. Thus, it induces a quasi-equivalence
 \begin{displaymath}
  \dabs \mathsf{vect}(X,G,w)^{\op{op}} \cong \dabs \mathsf{vect}(X,G,-w).
 \end{displaymath}
 Applying Proposition~\ref{proposition: equivalent enhancements} finishes the argument.
\end{proof}

\begin{definition}
 Let $\mathsf{Inj}_Z(X,G,w)$ be the full subcategory of $\mathsf{Inj}(X,G,w)$ consisting of factorizations acyclic off of $Z$. Let $\mathsf{Inj}_{\op{coh},Z}(X,G,w)$ be the full subcategory of $\mathsf{Inj}_{\op{coh}}(X,G,w)$ consisting of factorizations acyclic off of $Z$. Let $\overline{\mathsf{Inj}}_{\op{coh},Z}(X,G,w)$ be the full subcategory of $\mathsf{Inj}(X,G,w)$ consisting of factorizations acyclic off of $Z$ and compact in $[\mathsf{Inj}_Z(X,G,w)]$.
 
 Let $\mathsf{Vect}_Z(X,G,w)$ be the full subcategory of $\mathsf{Vect}(X,G,w)$ consisting factorizations acyclic off of $Z$. Let $\mathsf{vect}_Z(X,G,w)$ be the full subcategory of $\mathsf{vect}(X,G,w)$ consisting factorizations acyclic off of $Z$. Let $\overline{\mathsf{Vect}}_{Z}(X,G,w)$ be the full subcategory of $\mathsf{Vect}(X,G,w)$ consisting of factorizations acyclic off of $Z$ and compact in $\dabs [\mathsf{Vect}_Z(X,G,w)]$.
\end{definition}

\begin{corollary}
 The dg-category $\mathsf{Inj}_Z(X,G,w)$ is an enhancement of $\dabs_Z [\mathsf{Fact}(X,G,w)]$. The dg-category $\mathsf{Inj}_{\op{coh},Z}(X,G,w)$ is an enhancement of $\dabs_Z [\mathsf{fact}(X,G,w)]$. 
 
 If $X$ is affine and $G$ is reductive, then, additionally, $\mathsf{Vect}_Z(X,G,w)$ is an enhancement of $\dabs_Z [\mathsf{Fact}(X,G,w)]$ and $\mathsf{vect}_Z(X,G,w)$ is an enhancement of $\dabs_Z [\mathsf{fact}(X,G,w)]$. Moreover, $\mathsf{Inj}_{\op{coh},Z}(X,G,w)$ is quasi-equivalent to $\mathsf{vect}_Z(X,G,w)$. 
\end{corollary}

\begin{proof}
 This is an immediate consequence of Proposition~\ref{proposition: equivalent enhancements} given the definitions above.
\end{proof}

\begin{theorem} \label{theorem: boxproduct of factorization dg-cats is a factorization dg-cat}
 Let $X$ and $Y$ be smooth varieties and let $G$ and $H$ be affine algebraic groups acting on, respectively, $X$ and $Y$. Let $w \in \Gamma(X,\mathcal O_X(\chi))^G$ and $v \in \Gamma(Y, \mathcal O_Y(\chi'))^H$ for characters $\chi: G \to \mathbb{G}_m$ and $\chi' : H \to \mathbb{G}_m$. Let $i_w: Z_w \to X$ be the zero locus of $w$ and let $i_v: Z_v \to Y$ be the zero locus of $v$.

 Assume that $\chi'-\chi$ is not torsion. The dg-functor,
 \begin{align*}
  \lambda_{w \boxplus v}: \mathsf{Inj}_{Z_w \times Z_v}(X \times Y, G\times_{\mathbb{G}_m} H, w \boxplus v) & \to (\mathsf{Inj}_{\op{coh}}(X,G,w) \otimes_k \mathsf{Inj}_{\op{coh}}(Y,H,v))^{\op{op}}\op{-Mod} \\
  \mathcal I & \mapsto \op{Hom}_{\mathsf{Fact}(X \times Y, G\times_{\mathbb{G}_m} H, w \boxplus v)}( \bullet \boxtimes \bullet , \mathcal I).
 \end{align*}
 induces an equivalence
 \begin{displaymath}
  \epsilon_{w \boxplus v} : \dabs_{Z_w \times Z_v} [\mathsf{Fact}(X \times Y, G\times_{\mathbb{G}_m} H, w \boxplus v)] \to \op{D}( (\mathsf{Inj}_{\op{coh}}(X,G,w) \otimes_k \mathsf{Inj}_{\op{coh}}(Y,H,v))^{\op{op}}\op{-Mod})
 \end{displaymath}
 satisfying
 \begin{displaymath}
  \epsilon_{w \boxplus v}(\mathcal E \boxtimes \mathcal F) \cong h_{\mathcal E \otimes_k \mathcal F}.
 \end{displaymath}
 
 If, in addition, $X$ and $Y$ are affine and $G$ and $H$ are reductive, then the dg-functor
 \begin{align*}
  \lambda_{w \boxplus v}: \mathsf{Vect}_{Z_w \times Z_v}(X \times Y, G\times_{\mathbb{G}_m} H, w \boxplus v) & \to (\mathsf{vect}(X,G,w) \otimes_k \mathsf{vect}(Y,H,v))^{\op{op}}\op{-Mod}) \\
  \mathcal V & \mapsto \op{Hom}_{\mathsf{Fact}(X \times Y, G\times_{\mathbb{G}_m} H, w \boxplus v)}( \bullet \boxtimes \bullet ,\mathcal V).
 \end{align*}
 induces an equivalence
 \begin{displaymath}
  \epsilon_{w \boxplus v} : \dabs_{Z_w \times Z_v} [\mathsf{Fact}(X \times Y, G\times_{\mathbb{G}_m} H, w \boxplus v)] \to \op{D}( (\mathsf{vect}(X,G,w) \otimes_k \mathsf{vect}(Y,H,v))^{\op{op}}\op{-Mod})
 \end{displaymath}
 satisfying
 \begin{displaymath}
  \epsilon_{w \boxplus v}(\mathcal E \boxtimes \mathcal F) \cong h_{\mathcal E \otimes_k \mathcal F}.
 \end{displaymath}
\end{theorem}

\begin{proof}
 We just need to check that the induced functor,
 \begin{gather*}
  \epsilon_{w \boxplus v} : \dabs_{Z_w \times Z_v} [\mathsf{Fact}(X \times Y, G\times_{\mathbb{G}_m} H, w \boxplus v)] \cong [\mathsf{Inj}_{Z_w \times Z_v}(X \times Y, G\times_{\mathbb{G}_m} H, w \boxplus v)] \\ \overset{[\lambda_{w \boxplus v}]}{\to} [\mathsf{Inj}_{\op{coh}}(X,G,w) \otimes_k \mathsf{Inj}_{\op{coh}}(Y,H,v))^{\op{op}}\op{-Mod}] \\ \to \op{D}((\mathsf{Inj}_{\op{coh}}(X,G,w) \otimes_k \mathsf{Inj}_{\op{coh}}(Y,H,v))^{\op{op}}\op{-Mod})
 \end{gather*}
 is an equivalence. Note that $\epsilon_{w \boxplus v}$ commutes with coproducts since the exterior products, $\mathcal E \boxtimes \mathcal F$, are compact in $\dabs [\mathsf{Fact}(X \times Y, G\times_{\mathbb{G}_m} H, w \boxplus v)]$ when $\mathcal E \in \mathsf{Inj}_{\op{coh}}(X,G,w)$ and $\mathcal F \in \mathsf{Inj}_{\op{coh}}(Y,H,v)$. The triangulated category, $[\mathsf{Inj}_{Z_w \times Z_v}(X \times Y, G\times_{\mathbb{G}_m} H, w \boxplus v)]$, is compactly generated by Proposition~\ref{prop: we have a compactly-gen triangulated cat} and the objects, $h_{\mathcal E \otimes \mathcal F}$, for a $\mathcal E \in \mathsf{Inj}_{\op{coh}}(X,G,w)$ and $\mathcal F \in \mathsf{Inj}_{\op{coh}}(Y,H,v)$, form a compact set of generators for the category, $\op{D}((\mathsf{Inj}_{\op{coh}}(X,G,w) \otimes_k \mathsf{Inj}_{\op{coh}}(Y,H,v))^{\op{op}}\op{-Mod})$.
 
 Thus to check that $\epsilon_{w \boxplus v}$ is an equivalence it suffices to check that it takes a compact generating set to a compact generating set and is fully-faithful on those sets. Let us first show that there is a quasi-isomorphism of bimodules
 \begin{displaymath}
  h_{\mathcal E \otimes \mathcal F} := \op{Hom}_{\mathsf{Inj}_{\op{coh}(X,G,w)}}(\bullet, \mathcal E) \otimes_k \op{Hom}_{\mathsf{Inj}_{\op{coh}(Y,H,v)}}(\bullet, \mathcal F) \simeq \op{Hom}_{\mathsf{Fact}(X \times Y, G\times_{\mathbb{G}_m} H, w \boxplus v)}( \bullet \boxtimes \bullet, \mathcal I_{\mathcal E \boxtimes \mathcal F})
 \end{displaymath}
 where we have a morphism of factorizations $\mathcal E \boxtimes \mathcal F \to \mathcal I_{\mathcal E \boxtimes \mathcal F}$ whose cone is acyclic and where the components of $\mathcal I_{\mathcal E \boxtimes \mathcal F}$ have injective components. We have the natural morphism
 \begin{gather*}
  \op{Hom}_{\mathsf{Inj}_{\op{coh}(X,G,w)}}(\bullet, \mathcal E) \otimes_k \op{Hom}_{\mathsf{Inj}_{\op{coh}(Y,H,v)}}(\bullet, \mathcal F) \overset{ \boxtimes }{\to} \op{Hom}_{\mathsf{Fact}(X \times Y, G\times_{\mathbb{G}_m} H, w \boxplus v)}( \bullet \boxtimes \bullet, \mathcal E \boxtimes \mathcal F) \\ \to \op{Hom}_{\mathsf{Fact}(X \times Y, G\times_{\mathbb{G}_m} H, w \boxplus v)}( \bullet \boxtimes \bullet, \mathcal I_{\mathcal E \boxtimes \mathcal F}).
 \end{gather*}
 where the later morphism is given by composing with $\mathcal E \boxtimes \mathcal F \to \mathcal I_{\mathcal E \boxtimes \mathcal F}$. By Lemma~\ref{lemma: box product is tensor product in the derived category}, this is a quasi-isomorphism. Again, appealing to Lemma~\ref{lemma: box product is tensor product in the derived category} shows that $\epsilon_{w \boxplus v}$ is fully-faithful on exterior products. It remains to check that exterior products are generators for $\dabs_{Z_w \times Z_v} [\mathsf{Fact}(X \times Y, G\times_{\mathbb{G}_m} H, w \boxplus v)]$, but this is Lemma~\ref{lemma: generate factorizations by box products}.
 
 The statements with $X$ and $Y$ affine and $G$ and $H$ reductive follow via an analogous argument.  Indeed, in this case, taking $G$ invariants is exact and locally-free objects are projective so we can work with locally-free objects in the exact same manner.
\end{proof}

\begin{definition}
 Let $\mathsf{C}$ be a dg-category. The category $\mathsf{C}\op{-Mod}$ possesses the structure of a model category with $f: F \to G$ being a fibration, respectively a weak equivalence, if $f(c): F(c) \to G(c)$ is an epimorphism in each degree, respectively a quasi-isomorphism, for each $c \in \mathsf{C}$. This determines the cofibrations: they are those morphisms satisfying the left lifting property with respect to all acyclic fibrations, i.e. those maps that are fibrations and weak equivalences.
 
 Any object of $\mathsf{C}\op{-Mod}$ is fibrant. We let $\widehat{\mathsf{C}}$ be the subcategory of cofibrant objects in $\mathsf{C}^{\op{op}}\op{-Mod}$. The dg-category $\widehat{\mathsf{C}}$ is an enhancement of $\op{D}[\mathsf{C}^{\op{op}}\op{-Mod}]$. We let $\widehat{\mathsf{C}}_{\op{pe}}$ be the full sub-dg-category of $\widehat{\mathsf{C}}$ consisting of all objects that are compact in $\op{D}[\mathsf{C}^{\op{op}}\op{-Mod}]$. As any representable dg-module is cofibrant, we have a dg-functor \begin{displaymath}
  h: \mathsf{C} \to \widehat{\mathsf{C}}_{\op{pe}}.
 \end{displaymath}
 
 Following the lead of T\"oen, we introduce the following product. Assume that $\mathsf{C}$ is small and let $\mathsf{D}$ be another small dg-category over $k$. The \newterm{Morita product} of $\mathsf{C}$ and $\mathsf{D}$ is
 \begin{displaymath}
  \mathsf{C} \circledast \mathsf{D} := \widehat{( \mathsf{C} \otimes_k \mathsf{D} )}_{\op{pe}}
 \end{displaymath}
 viewed as an object of $\hodgcat$. Because we view it as an object of $\hodgcat$, it is unique up to quasi-equivalence.
\end{definition}

\begin{remark}
 The cofibrant objects of $\mathsf{C}^{\op{op}}\op{-Mod}$ are exactly the summands of semi-free dg-modules \cite{FHT}. One can check that summands of semi-free dg-modules have the appropriate lifting property. Furthermore, for any dg-module, $M$, there exists a semi-free dg-module, $F$, and an acyclic fibration, $F \to M$. If we assume that $M$ is cofibrant, this must split.
\end{remark}

\begin{corollary} \label{corollary: morita product of factorizations}
 Let $X$ be a smooth variety, $G$ be an affine algebraic group acting on $X$, $\mathcal L$ be an invertible $G$-equivariant sheaf on $X$, and $w \in \Gamma(X,\mathcal L)^G$. Let $Y$ be a smooth variety, $H$ be an affine algebraic group acting $X$, $\mathcal L'$ be an invertible $H$-equivariant sheaf on $Y$, and $v \in \Gamma(Y,\mathcal L')^H$. There are isomorphisms in $\hodgcat$
 \begin{displaymath}
  \mathsf{Inj}(\op{U}(\mathcal L) \times \op{U}(\mathcal L'), G \times H \times \mathbb{G}_m , f_w \boxplus f_v) \cong \widehat{ \mathsf{Inj}_{\op{coh}}(X,G,w) \otimes_k \mathsf{Inj}_{\op{coh}}(Y,H,v) }
 \end{displaymath}
 and
 \begin{displaymath}
  \mathsf{Inj}_{\op{coh}}(X,G,w) \circledast \mathsf{Inj}_{\op{coh}}(Y,H,v) \cong \overline{\mathsf{Inj}}_{\op{coh}}(\op{U}(\mathcal L) \times \op{U}(\mathcal L'), G \times H \times \mathbb{G}_m, f_w \boxplus f_v).
 \end{displaymath}
 
 Assume in addition that $X$ and $Y$ are affine and $G$ and $H$ are reductive. Then, there are isomorphisms in $\hodgcat$
 \begin{displaymath}
  \mathsf{Vect}(\op{U}(\mathcal L) \times \op{U}(\mathcal L'), G \times H \times \mathbb{G}_m, f_w \boxplus f_v) \cong \widehat{ \mathsf{vect}(X,G,w) \otimes_k \mathsf{vect}(Y,H,v)}
 \end{displaymath}
 and
 \begin{displaymath}
  \mathsf{vect}(X,G,w) \circledast \mathsf{vect}(Y,H,v) \cong \overline{\mathsf{vect}}(\op{U}(\mathcal L) \times \op{U}(\mathcal L'), G \times H \times \mathbb{G}_m, f_w \boxplus f_v).
 \end{displaymath}
 
 In the special case that $\mathcal L = \mathcal O_X(\chi)$ and $\mathcal L' = \mathcal O_Y(\chi')$ for characters $\chi: G \to \mathbb{G}_m$ and $\chi': H \to \mathbb{G}_m$, if we assume that $\chi$ or $\chi'$ is not torsion, then there are isomorphisms in $\hodgcat$
 \begin{displaymath}
  \mathsf{Inj}_{Z_w \times Z_v}(X \times Y, G \times_{\mathbb{G}_m} H, w \boxplus v) \cong \widehat{\mathsf{Inj}_{\op{coh}}(X,G,w) \otimes_k \mathsf{Inj}_{\op{coh}}(Y,H,v) }
 \end{displaymath}
 and
 \begin{displaymath}
  \mathsf{Inj}_{\op{coh}}(X,G,w) \circledast \mathsf{Inj}_{\op{coh}}(Y,H,v) \cong \overline{\mathsf{Inj}}_{\op{coh} Z_w \times Z_v}(X \times Y , G \times_{ \mathbb{G}_m} H, w \boxplus v).
 \end{displaymath}
 
 Assume in addition that $X$ and $Y$ are affine and $G$ and $H$ are reductive. Then, there are isomorphisms in $\hodgcat$
 \begin{displaymath}
  \mathsf{Vect}_{Z_w \times Z_v}(X \times Y, G \times_{ \mathbb{G}_m} H, w \boxplus v) \cong \widehat{ \mathsf{vect}(X,G,w) \otimes_k \mathsf{vect}(Y,H,v)}
 \end{displaymath}
 and
 \begin{displaymath}
  \mathsf{vect}(X,G,w) \circledast \mathsf{vect}(Y,H,v) \cong \overline{\mathsf{vect}}_{Z_w \times Z_v}(X \times Y, G \times_{ \mathbb{G}_m} H, w \boxplus v).
 \end{displaymath}
\end{corollary}

\begin{proof}
 By Lemma~\ref{lemma: trivialize L - dg equiv}, we have equivalences of dg-categories
 \begin{align*}
  \mathsf{Fact}(X,G,w) & \cong \mathsf{Fact}(\op{U}(\mathcal L),G \times \mathbb{G}_m, f_w) \\
  \mathsf{Fact}(Y,H,w) & \cong \mathsf{Fact}(\op{U}(\mathcal L'),H \times \mathbb{G}_m, f_v).
 \end{align*}
 Replacing $(X,G,\mathcal L,w)$ and $(Y,H,\mathcal L',v)$ by $(\op{U}(\mathcal L),G \times \mathbb{G}_m, \mathcal O_{\op{U}(\mathcal L)}(1), f_w)$ and $(\op{U}(\mathcal L'),H \times \mathbb{G}_m, \mathcal O_{\op{U}(\mathcal L')}(1), f_v)$, we may assume that $\mathcal L$ and $\mathcal L'$ are (non-equivariantly) trivial as sheaves on $X$ and $Y$, respectively, and continue the argument.   Finally, as $f_w$ and $f_v$ are both linear along the fibers, Euler's formula using the fiber coordinates shows that $f_w$ vanishes only along the singular locus of $f_w$ and similarly for $f_v$. Thus, $f_w$ and $f_v$ both vanish along the singular locus of $f_w \boxplus f_v$. Consequently, factorizations supported away from $Z_{f_w} \times Z_{f_v}$ are automatically acyclic.  Thus, we are reduced to proving the special case of the statement.

 We have a dg-functor,
 \begin{displaymath}
  \lambda_{w \boxplus v}: \mathsf{Inj}_{Z_w \times Z_v}(X \times Y, G\times_{\mathbb{G}_m} H, w \boxplus v) \to (\mathsf{Inj}_{\op{coh}}(X,G,w) \otimes_k \mathsf{Inj}_{\op{coh}}(Y,H,v))^{\op{op}}\op{-Mod}
 \end{displaymath}
 and an inclusion
 \begin{displaymath}
  \widehat{ \mathsf{Inj}_{\op{coh}}(X,G,-w) \otimes_k \mathsf{Inj}_{\op{coh}}(Y,H,v) } \to (\mathsf{Inj}_{\op{coh}}(X,G,w) \otimes_k \mathsf{Inj}_{\op{coh}}(Y,H,v))^{\op{op}}\op{-Mod}.
 \end{displaymath}
 We then have a dg-functor
 \begin{align*}
  a: \mathsf{Inj}_{Z_w \times Z_v}(X \times Y, G\times_{\mathbb{G}_m} H, w \boxplus v) & \to (\widehat{ \mathsf{Inj}_{\op{coh}}(X,G,-w) \otimes_k \mathsf{Inj}_{\op{coh}}(Y,H,v) })^{\op{op}}\op{-Mod} \\
  M & \mapsto \op{Hom}_{(\mathsf{Inj}_{\op{coh}}(X,G,w) \otimes_k \mathsf{Inj}_{\op{coh}}(Y,H,v))^{\op{op}}\op{-Mod}}(\bullet, M). 
 \end{align*}
 For any $N \in \mathsf{Inj}_{Z_w \times Z_v}(X \times Y, G\times_{\mathbb{G}_m} H, w \boxplus v)$, there exists an $M \in \widehat{ \mathsf{Inj}_{\op{coh}}(X,G,-w) \otimes_k \mathsf{Inj}_{\op{coh}}(Y,H,v) }$ and a quasi-isomorphism $f: M \to N$. The induced natural transformation
 \begin{displaymath}
  \op{Hom}(\bullet, f) : \op{Hom}(\bullet, M) \to \op{Hom}(\bullet, N)
 \end{displaymath}
 is a quasi-isomorphism if we restrict the argument to lie in $\widehat{ \mathsf{Inj}_{\op{coh}}(X,G,-w) \otimes_k \mathsf{Inj}_{\op{coh}}(Y,H,v) }$. Thus, $a(N)$ is quasi-isomorphic to $h_M$. Given $M$ quasi-isomorphic to $N$ and $M'$ quasi-isomorphic to $N'$, we have natural isomorphisms
 \begin{align*}
  \op{Hom}_{\op{D}[(\widehat{ \mathsf{Inj}_{\op{coh}}(X,G,-w) \otimes_k \mathsf{Inj}_{\op{coh}}(Y,H,v) })^{\op{op}}\op{-Mod}]}& (a(N),a(N')) \\ 
  & \cong \op{Hom}_{\op{D}[(\widehat{ \mathsf{Inj}_{\op{coh}}(X,G,-w) \otimes_k \mathsf{Inj}_{\op{coh}}(Y,H,v) })^{\op{op}}\op{-Mod}]}(h_M,h_{M'}) \\
  & \cong \op{Hom}_{[\widehat{ \mathsf{Inj}_{\op{coh}}(X,G,-w) \otimes_k \mathsf{Inj}_{\op{coh}}(Y,H,v) }]}(M,M') \\
  & \cong \op{Hom}_{\op{D}[(\mathsf{Inj}_{\op{coh}}(X,G,w) \otimes_k \mathsf{Inj}_{\op{coh}}(Y,H,v))^{\op{op}}\op{-Mod}]}(M,M') \\
  & \cong \op{Hom}_{\op{D}[(\mathsf{Inj}_{\op{coh}}(X,G,w) \otimes_k \mathsf{Inj}_{\op{coh}}(Y,H,v))^{\op{op}}\op{-Mod}]}(N,N') \\
  & \cong \op{Hom}_{[\mathsf{Inj}_{Z_w \times Z_v}(X \times Y, G\times_{\mathbb{G}_m} H, w \boxplus v]}(N,N')
 \end{align*}
 where the first isomorphism is due to the fact that $a(N)$ is quasi-isomorphic to $h_M$ and $a(N')$ is quasi-isomorphic to $h_{M'}$, the second uses the Yoneda embedding, the third uses that $\widehat{ \mathsf{Inj}_{\op{coh}}(X,G,-w) \otimes_k \mathsf{Inj}_{\op{coh}}(Y,H,v) }$ is an enhancement of $\op{D}[(\mathsf{Inj}_{\op{coh}}(X,G,w) \otimes_k \mathsf{Inj}_{\op{coh}}(Y,H,v))^{\op{op}}\op{-Mod}]$, the fourth uses the assumed quasi-isomorphisms, and the final isomorphism uses that $\mathsf{Inj}_{Z_w \times Z_v}(X \times Y, G\times_{\mathbb{G}_m} H, w \boxplus v)$ is an enhancement of $\op{D}[(\mathsf{Inj}_{\op{coh}}(X,G,w) \otimes_k \mathsf{Inj}_{\op{coh}}(Y,H,v))^{\op{op}}\op{-Mod}]$, i.e.~ Theorem~\ref{theorem: boxproduct of factorization dg-cats is a factorization dg-cat}. 
 
 Thus, $a$ is a quasi-functor inducing a quasi-equivalence
 \begin{displaymath}
  \mathsf{Inj}_{Z_w \times Z_v}(X \times Y, G\times_{\mathbb{G}_m} H, w \boxplus v) \simeq \widehat{ \mathsf{Inj}_{\op{coh}}(X,G,-w) \otimes_k \mathsf{Inj}_{\op{coh}}(Y,H,v) }.
 \end{displaymath} 
 
 The isomorphism in $\hodgcat$ 
 \begin{displaymath}
  \mathsf{Inj}_{Z_w \times Z_v}(X \times Y, G\times_{\mathbb{G}_m} H, (-w) \boxplus v) \cong \widehat{ \mathsf{Inj}_{\op{coh}}(X,G,-w) \otimes_k \mathsf{Inj}_{\op{coh}}(Y,H,v) }
 \end{displaymath}
 induces an isomorphism between the compact objects,
 \begin{displaymath}
  \mathsf{Inj}_{\op{coh}}(X,G,w) \circledast \mathsf{Inj}_{\op{coh}}(Y,H,v) \cong \overline{\mathsf{Inj}}_{\op{coh},Z_w \times Z_v}(X \times Y, G\times_{\mathbb{G}_m} H, w \boxplus v).
 \end{displaymath}

 In the case that $X$ and $Y$ are affine and $G$ and $H$ are reductive, an analogous argument suffices.  Indeed, as noted before, taking $G$ invariants is exact and locally-free objects are projective so we can work with locally-free objects in the exact same manner.
\end{proof}

\begin{remark}
 In the case that $\mathcal L = \mathcal O_X(\chi)$ and $\mathcal L' = \mathcal O_Y(\chi')$, the quotient stack $[\op{U}(\mathcal O_X(\chi)) \times \op{U}(\mathcal O_Y(\chi'))/(G \times H \times \mathbb{G}_m)]$ is isomorphic to $[X \times Y \times \mathbb{G}_m / (G \times H)]$ via the morphism
 \begin{align*}
  \phi: \op{U}(\mathcal O_X(\chi)) \times \op{U}(\mathcal O_Y(\chi')) \cong \mathbb{G}_m \times X \times \mathbb{G}_m \times Y & \to X \times Y \times \mathbb{G}_m \\ 
  (\alpha, x, \beta, y) & \mapsto (x,y, \alpha^{-1} \beta).
 \end{align*}
 The quotient stack $[X \times Y \times \mathbb{G}_m / (G \times H)]$ is isomorphic to $[X \times Y / G \times_{\mathbb{G}_m} H]$ as the map 
 \begin{align*}
  (G \times H) \overset{G \times_{\mathbb{G}_m} H}{\times} (X \times Y) & \to X \times Y \times \mathbb{G}_m \\
  (g,h,x,y) & \mapsto (x,y, \chi(g)^{-1}\chi'(h))
 \end{align*}
 is an isomorphism assuming that $\chi'-\chi: G \times H \to \mathbb{G}_m$ is not torsion. This gives a direct comparison for the two LG models describing the Morita product in the case $\mathcal L = \mathcal O_X(\chi)$ and $\mathcal L' = \mathcal O_Y(\chi')$.
\end{remark}

One of the many great results of \cite{Toe} is the following. It provides a description of the continuous internal Hom dg-category in $\op{Ho}(\op{dg-cat}_k)$.

\begin{theorem} \label{thm: Toen's Morita}
 Let $\mathsf{C}$ and $\mathsf{D}$ be small dg-categories over $k$. Then, there is an isomorphism in $\hodgcat$ 
\begin{displaymath}
 \mathbf{R}\!\op{Hom}_c(\widehat{\mathsf{C}},\widehat{\mathsf{D}}) \cong \widehat{\mathsf{C}^{\op{op}} \otimes_k \mathsf{D}}.
\end{displaymath}
 Given a module, $F \in \widehat{\mathsf{C}^{\op{op}} \otimes_k \mathsf{D}}$, the corresponding dg-functor, $\Psi_F: \mathsf{C} \to \widehat{\mathsf{D}}$, sends $c \in \mathsf{C}$ to $F(c,\bullet) \in \widehat{\mathsf{D}}$. This uniquely determines a dg-functor, $\Psi_F: \widehat{\mathsf{C}} \to \widehat{\mathsf{D}}$, for which $[\Psi_F]$ commutes with coproducts.
\end{theorem}
\begin{proof}
 As stated, this result is \cite[Corollary 7.6]{Toe}. 
\end{proof}

\begin{remark}
 T\"oen's result is more general. The field, $k$, can be replaced by a commutative ring. The derivation of the tensor product, $\otimes_k$, is then required.
\end{remark}

Applying Theorem~\ref{thm: Toen's Morita}, we can give the following description of the continuous internal Hom dg-category for equivariant factorizations.

\begin{theorem} \label{theorem: continuous homs are factorizations}
 Let $X$ and $Y$ be smooth varieties and let $G$ and $H$ be affine algebraic groups. Assume that $G$ acts on $X$ and $H$ acts on $Y$. Let $\chi: G \to \mathbb{G}_m$ and $\chi': H \to \mathbb{G}_m$ be characters and let $w \in \Gamma(X, \mathcal O_X(\chi))^G$ and $v \in \Gamma(Y,\mathcal O_Y(\chi'))^H$.
 
 There is an isomorphism in $\hodgcat$ 
 \begin{displaymath}
  \mathbf{R}\!\op{Hom}_c(\widehat{\mathsf{Inj}_{\op{coh}}(X,G,w)},\widehat{\mathsf{Inj}_{\op{coh}}(Y,H,v)}) \cong \mathsf{Inj}_{Z_w \times Z_v}(X \times Y, G\times_{\mathbb{G}_m} H, (-w) \boxplus v)
 \end{displaymath}
 such that the induced map on homotopy categories corresponding to $\mathcal I \in \mathsf{Inj}_{Z_w \times Z_v}(X \times Y, G\times_{\mathbb{G}_m} H, (-w) \boxplus v)$ is $\Phi_{\mathcal I}$.
 
 If $X$ is affine and $G$ is reductive, then there is an isomorphism in $\hodgcat$ 
 \begin{displaymath}
  \mathbf{R}\!\op{Hom}_c(\widehat{\mathsf{vect}(X,G,w)},\widehat{\mathsf{vect}(Y,H,v)}) \cong \mathsf{Vect}_{Z_w \times Z_v}(X \times Y, G\times_{\mathbb{G}_m} H, (-w) \boxplus v)
 \end{displaymath}
 such that the induced map on homotopy categories corresponding to $\mathcal P \in \mathsf{Vect}_{Z_w \times Z_v}(X \times Y, G\times_{\mathbb{G}_m} H, (-w) \boxplus v)$ is $\Phi_{\mathcal P}$.
\end{theorem}

\begin{proof}
We have isomorphisms in $\hodgcat$,
\begin{align*}
  \mathbf{R}\!\op{Hom}_c(\widehat{\mathsf{Inj}_{\op{coh}}(X,G,w)},\widehat{\mathsf{Inj}_{\op{coh}}(Y,H,v)}) & \cong \widehat{ \mathsf{Inj}_{\op{coh}}(X,G,w)^{\op{op}} \otimes_k \mathsf{Inj}_{\op{coh}}(Y,H,v) }\\
  & \cong \widehat{ \mathsf{Inj}_{\op{coh}}(X,G,-w) \otimes_k \mathsf{Inj}_{\op{coh}}(Y,H,v) } \\
  & \cong    \mathsf{Inj}_{Z_w \times Z_v}(X \times Y, G\times_{\mathbb{G}_m} H, (-w) \boxplus v).
  \end{align*}
The first line follows from Theorem~\ref{thm: Toen's Morita}.  The second line comes from Proposition~\ref{proposition: equivalent enhancements} which states that $\mathsf{Inj}_{\op{coh}}(X,G,w)^{\op{op}}$ is quasi-equivalent to $\mathsf{Inj}_{\op{coh}}(X,G,-w)$.  
 The third line is an application of Corollary~\ref{corollary: morita product of factorizations}.
 
 Next, we need to check that the induced functor on homotopy categories for a given $\mathcal I \in \mathsf{Inj}_{Z_w \times Z_v}(X \times Y, G\times_{\mathbb{G}_m} H, (-w) \boxplus v)$ is $\Phi_{\mathcal I}$ up to isomorphism. Recall that the isomorphism of $\mathsf{Inj}_{\op{coh}}(X,G,w)^{\op{op}}$ and $\mathsf{Inj}_{\op{coh}}(X,G,-w)$ follows from the diagram of dg-functors
 \begin{center}
 \begin{tikzpicture}[description/.style={fill=white,inner sep=2pt}]
  \matrix (m) [matrix of math nodes, row sep=2em, column sep=3em, text height=1.5ex, text depth=0.25ex]
  { \mathsf{Inj}_{\op{coh}}(X,G,w)^{\op{op}} & \mathsf{vect}(X,G,-w) \\ 
    \mathsf{Inj}_{\op{coh}}(X,G,-w) & \mathsf{vect}(X,G,w)^{\op{op}}  \\
  };
  \path[->,font=\scriptsize]
  (m-1-2) edge node[above] {$\mathcal Hom_X(\bullet, \mathcal I^{\mathcal O})$} (m-1-1)
  (m-2-2) edge node[above] {$\mathcal Hom_X(\bullet, \mathcal I^{\mathcal O})$} (m-2-1)
  (m-1-2) edge node[right] {$\mathcal Hom_X(\bullet, \mathcal O_X)$} (m-2-2)
  (m-2-2) edge (m-1-2)
  ;
 \end{tikzpicture}
 \end{center}
 
 The induced dg-functor on the image of
 \begin{displaymath}
  \mathcal Hom_X(\bullet, \mathcal I^{\mathcal O}): \mathsf{vect}(X,G,-w) \to \mathsf{Inj}_{\op{coh}}(X,G,w)^{\op{op}}
 \end{displaymath}
 is
 \begin{align*}
 &  \mathcal Hom_X (\mathcal E, \mathcal I^{\mathcal O}) \otimes \mathcal J  & \mapsto & \op{Hom}_{\mathsf{Fact}}(\mathcal Hom_X(\mathcal E^{\vee}, \mathcal I^{\mathcal O}) \boxtimes \mathcal J, \mathcal I) \\
\cong & & \mapsto &  \op{Hom}_{\mathsf{Fact}}(\op{Res}^{G \times H}_{G\times_{\mathbb{G}_m} H} \pi_2^* \mathcal J,\mathcal Hom_{X \times Y}(\op{Res}^{G \times H}_{G\times_{\mathbb{G}_m} H} \pi_1^*\mathcal Hom_X(\mathcal E^{\vee}, \mathcal I^{\mathcal O}), \mathcal I)) \\
\cong &    & \mapsto & \op{Hom}_{\mathsf{Fact}}(\op{Res}^{G \times H}_{G\times_{\mathbb{G}_m} H} \pi_2^* \mathcal J, \op{Res}^{G \times H}_{G\times_{\mathbb{G}_m} H} \pi^*_1 \mathcal E^{\vee} \otimes_{\mathcal O_{X \times Y}} \mathcal I) \\ 
 \cong &  & \mapsto & \op{Hom}_{\mathsf{Fact}}(\mathcal J, \pi_{2*}\op{Ind}^{G \times H}_{G\times_{\mathbb{G}_m} H} \left( \op{Res}^{G \times H}_{G\times_{\mathbb{G}_m} H} \pi^*_1 \mathcal E^{\vee} \otimes_{\mathcal O_{X \times Y}} \mathcal I \right)) \\
 \cong &   &\mapsto &\op{Hom}_{\mathsf{Fact}}(\mathcal J, \pi_{2*}\left( \pi^*_1 \mathcal E^{\vee} \otimes_{\mathcal O_{X \times Y}} \op{Ind}^{G \times H}_{G\times_{\mathbb{G}_m} H} \mathcal I \right)). 
 \end{align*}
The first line uses tensor-Hom adjunction, Proposition~\ref{proposition: Hom-tensor adjunction}.
 The second line uses the natural isomorphism, $\op{Res}^{G \times H}_{G\times_{\mathbb{G}_m} H}\pi^*_1 \mathcal E^{\vee} \otimes_{\mathcal O_{X \times Y}} \mathcal I  \to \mathcal Hom_{X \times Y}(\op{Res}^{G \times H}_{G\times_{\mathbb{G}_m} H} \pi_1^*\mathcal Hom_X(\mathcal E^{\vee}, \mathcal I^{\mathcal O}), \mathcal I))$.
 The third line uses the adjunctions, $\pi_{2}^* \dashv \pi_{2*}$ and $\op{Res}^{G \times H}_{G\times_{\mathbb{G}_m} H} \dashv \op{Ind}^{G \times H}_{G\times_{\mathbb{G}_m} H}$.  The fourth line applies the projection formula, Lemma~\ref{lemma: facts about restriction induction}.
 
 As $\mathcal Hom_X (\mathcal E, \mathcal I^{\mathcal O})$ is quasi-isomorphic to $\mathcal E^{\vee}$, from the aligned display, we see that the induced map on the homotopy categories is $\Phi_{\mathcal I}$. The case of $X$ affine and $G$ reductive is handled in an analogous, even simpler, manner.
\end{proof}

\begin{proof}[Proof of Theorem~\ref{thm: intro thm}]
 By Lemma~\ref{lemma: trivialize L - dg equiv}, we have equivalences of dg-categories
 \begin{align*}
  \mathsf{Fact}(X,G,w) & \cong \mathsf{Fact}(\op{U}(\mathcal L),G \times \mathbb{G}_m, f_w) \\
  \mathsf{Fact}(Y,H,w) & \cong \mathsf{Fact}(\op{U}(\mathcal L'),H \times \mathbb{G}_m, f_v).
 \end{align*}
 Theorem~\ref{theorem: continuous homs are factorizations} applied to $(\op{U}(\mathcal L),G \times \mathbb{G}_m, f_w)$ and $(\op{U}(\mathcal L'),H \times \mathbb{G}_m, f_v)$ gives the statement.
\end{proof}

\subsection{Hochschild invariants} \label{section: HH}

In this section, we compute the Hochschild invariants in a simple case: $G$ acting linearly on $\mathbb{A}^n$. We start out a bit more generally. Let $G$ act on $X$ and let $w \in \Gamma(X, \mathcal O_{X}(\chi))^{G}$. For the whole of this section, we assume
\begin{displaymath}
 \op{Sing} Z_{(-w) \boxplus w} \subseteq Z_w \times Z_w
\end{displaymath}
so we may remove the support restrictions in the results of Section~\ref{section: morita products}.

\begin{definition}
 Let $\mathsf{C}$ be a small dg-category. The \newterm{Hochschild cohomology} of $\mathsf{C}$ is the graded vector space
 \begin{displaymath}
  \bigoplus_{t \in \Z} \op{Hom}_{\op{D}(\mathsf{C}^{\op{op}} \otimes \mathsf{C}\op{-Mod})}(\mathsf{C},\mathsf{C}[t]).
 \end{displaymath}
 where $\mathsf{C}$ is the bimodule given by
 \begin{displaymath}
  \mathsf{C}(c,c') = \op{Hom}_{\mathsf{C}}(c,c').
 \end{displaymath}
 When $\mathsf{C} = \mathsf{Inj}_{\op{coh}}(X,G,w)$, we denote the Hochschild cohomology by $\op{HH}^{\bullet}(X,G,w)$.
 
 We have a trace functor
 \begin{align*}
  \op{Tr}: \mathsf{C}^{\op{op}} \otimes \mathsf{C} & \to \mathsf{C}(k) \\
  (c,c') & \mapsto \op{Hom}_{\mathsf{C}}(c,c').
 \end{align*}
 This admits an extension to $\mathsf{C} \otimes \mathsf{C}^{\op{op}}\op{-Mod}$ by 
 \begin{displaymath}
  F \mapsto F {\otimes}_{\mathsf{C} \otimes \mathsf{C}^{\op{op}}} \mathsf{C}.
 \end{displaymath}
 The \newterm{Hochschild homology} of $\mathsf{C}$ is defined to be the homology of
 \begin{displaymath}
  \mathsf{C} \overset{\mathbf{L}}{\otimes}_{\mathsf{C}^{\op{op}} \otimes \mathsf{C}} \mathsf{C}.
 \end{displaymath}
 When $\mathsf{C} = \mathsf{Inj}_{\op{coh}}(X,G,w)$, we denote the Hochschild cohomology by $\op{HH}_{\bullet}(X,G,w)$.
\end{definition}

\begin{lemma} \label{lemma: Hochschild cohomology of a general factorization}
 Let $X$ and $Y$ be smooth varieties and let $G$ and $H$ be affine algebraic groups acting on, respectively, $X$ and $Y$. Let $w \in \Gamma(X,\mathcal O_X(\chi))^G$ and $v \in \Gamma(Y, \mathcal O_Y(\chi'))^H$ for characters $\chi: G \to \mathbb{G}_m$ and $\chi' : H \to \mathbb{G}_m$. Assume that $\op{Sing} Z_{(-w) \boxplus w} \subseteq Z_w \times Z_w$. 

 We have isomorphisms
 \begin{displaymath}
  \op{HH}^t(X,G,w) \cong \op{Hom}_{\dabs [\mathsf{Fact}(X \times Y, G \times_{\mathbb{G}_m} G, w \boxplus (-w)]}(\nabla, \nabla[t]).
 \end{displaymath}
 
 We also have isomorphisms
 \begin{displaymath}
  \op{HH}_t(X,G,w) \cong \op{H}^t (\mathbf{L}\!\op{Tr} \nabla)
 \end{displaymath}
 where $\mathbf{L}\!\op{Tr}$ is trace functor on $\dabs [\mathsf{Fact}(X \times Y, G \times_{\mathbb{G}_m} G,(-w) \boxplus w]$.
\end{lemma}

\begin{proof}
 By Theorem~\ref{theorem: boxproduct of factorization dg-cats is a factorization dg-cat}, we have an equivalence
 \begin{align*}
  \dabs [\mathsf{Fact}(X \times Y, G \times_{\mathbb{G}_m} G, w \boxplus(-w)] & \to \op{D}(\mathsf{Inj}_{\op{coh}}(X,G,w)^{\op{op}} \otimes \mathsf{Inj}_{\op{coh}}(X,G,w)\op{-Mod}) \\
  \mathcal P & \mapsto \mathbf{R}\!\op{Hom}(\bullet \boxtimes \bullet^{\mathbf{L}\vee},\mathcal P). 
 \end{align*}
 The assumption on the singular support of $Z_{(-w) \boxplus w}$ allows us to remove the support condition.
 
 We have natural quasi-isomorphisms
 \begin{align*}
  \mathbf{R}\!\op{Hom}(\mathcal E\boxtimes \mathcal F^{\mathbf{L}\vee} ,\nabla) & = \mathbf{R}\!\op{Hom}(\mathcal E \boxtimes \mathcal F^{\mathbf{L}\vee} ,\op{Ind}^{G \times_{\mathbb{G}_m} G}_G \Delta_* \mathcal O_X) \\
  & \simeq \mathbf{R}\!\op{Hom}(\mathbf{L}\Delta^* \op{Res}_G^{G \times_{\mathbb{G}_m} G}\mathcal E \boxtimes \mathcal F^{\mathbf{L}\vee} ,\mathcal O_X) \\
  & \simeq \mathbf{R}\!\op{Hom}(\mathcal E \otimes \mathcal F^{\mathbf{L}\vee},\mathcal O_X) \\
  & \simeq \mathbf{R}\!\op{Hom}(\mathcal E,\mathcal F).
 \end{align*}
 The first line is the definition of $\nabla$. The second line is an application of the adjunctions $\op{Res}^{G \times_{\mathbb{G}_m} G}_G \dashv \op{Ind}^{G \times_{\mathbb{G}_m} G}_G$, Corollary~\ref{corollary: Res-Ind derived fact adjunction}, and $\mathbf{L} \Delta^* \dashv \Delta_*$, derived from Lemma~\ref{lemma: pull-push adjunction for factorizations}. The third line comes from the identity $\mathbf{L}\Delta^* \circ \pi_i^* \cong \op{Id}$ for $i=1,2$. The final line is tensor-Hom adjunction, Corollary~\ref{corollary: derived Hom-tensor adjunction}, and the assumption that $\mathcal F$ is quasi-isomorphic to a coherent factorization so 
 \begin{displaymath}
  \mathcal F^{\mathbf{L} \vee \mathbf{L} \vee} \cong \mathcal F.
 \end{displaymath}
 
 We turn to the statement concerning Hochschild homology. Under the equivalence
 \begin{displaymath}
  \dabs [\mathsf{Fact}(X \times Y, G \times_{\mathbb{G}_m} G, (-w) \boxplus w] \to \op{D}(\mathsf{Inj}_{\op{coh}}(X,G,w) \otimes \mathsf{Inj}_{\op{coh}}(X,G,w)^{\op{op}}\op{-Mod})
 \end{displaymath}
 the categorical trace corresponds to the trace functor
 \begin{displaymath}
  (\mathbf{R}p_* \mathbf{L}\Delta^*)^{\mathbf{R}G}
 \end{displaymath}
 by Lemma~\ref{lemma: derived trace is pullback via diagonal}.
\end{proof}

\begin{remark}
 As transposing the two copies of $X$ induces an equivalence 
 \begin{displaymath}
  \dabs [\mathsf{Fact}(X \times X, G \times_{\mathbb{G}_m} G, w \boxplus(-w)] \cong \dabs [\mathsf{Fact}(X \times X, G \times_{\mathbb{G}_m} G, (-w) \boxplus w]
 \end{displaymath}
 which preserves the diagonal, we can compute Hochschild invariants in either derived category of factorizations.
\end{remark}

The Hochschild cohomology is a subalgebra of a larger algebra.

\begin{definition}
 The \newterm{extended Hochschild cohomology} of $(X,G,w)$ is the $\widehat{G} \times \Z$-graded $k$-algebra
 \begin{displaymath}
  \bigoplus_{\rho \in \widehat{G}, t \in \Z} \op{Hom}_{\dabs [\mathsf{fact}(X \times X, G \times_{\mathbb{G}_m} G, (-w) \boxplus w)]}( \nabla, \nabla(\rho)[t]).
 \end{displaymath}
 We denote the extended Hochschild cohomology by $\op{HH}_e^{\bullet}(X,G,w)$.
\end{definition}

\begin{remark}
 The ring $\op{HH}_e^{\bullet}(X,G,w)$ is a factorization analog of generalized Hochschild cohomology of a variety $X$ with support in $T \in \dbcoh{X \times X}$ and coefficients in $E \in \dbcoh{X \times X}$, $\op{HH}^{\bullet}_T(X,E)$ defined by Kuznetsov \cite{Kuz09b}. Here, we take $E$ to be the diagonal and $T$ to be the kernels of twist functors.
\end{remark}

\begin{lemma} \label{lemma: extended HH contains H cohom}
 There is a natural isomorphism,
 \begin{displaymath}
  \op{HH}^{t}(X,G,w) \to \op{HH}_e^{(0,t)}(X,G,w).
 \end{displaymath}
\end{lemma}

\begin{proof}
 This is clear.
\end{proof}

To compute $\op{HH}_e^{\bullet}(X,G,w)$, we first must identify the complex 
\begin{displaymath}
 \mathbf{L} \Delta^* \op{Ind}^{G \times_{\mathbb{G}_m} G}_{G} \Delta_* \mathcal O_{X}
\end{displaymath}
of coherent $G$-equivariant sheaves on $X$. Let $K_{\chi}$ be the kernel of $\chi$.

\begin{lemma} \label{lemma: identity fiber product}
 There is a $G \times_{\mathbb{G}_m} G$-equivariant isomorphism,
 \begin{align*}
  \Sigma: G \times_{\mathbb{G}_m} G \overset{G}{\times} X \times X & \to K_{\chi} \times X \times X \\
  (g_1,g_2,x_1,x_2) & \mapsto (g_1g_2^{-1},\sigma(g_1,x_1),\sigma(g_2,x_2)),
 \end{align*}
 where $G \times_{\mathbb{G}_m} G$ acts on $K_{\chi}$ via
 \begin{displaymath}
  (g_1,g_2) \cdot g := g_1 g g_2^{-1}.
 \end{displaymath}
\end{lemma}

\begin{proof}
 The inverse morphism is 
 \begin{align*}
  K_{\chi} \times X \times X & \to G \times_{\mathbb{G}_m} G \overset{G}{\times} X \times X \\
  (g,x_1,x_2) & \mapsto (g,e,\sigma(g^{-1},x_1),x_2).
 \end{align*}
\end{proof}

Consider the $G \times_{\mathbb{G}_m} G$-equivariant subvariety defined by 
\begin{displaymath}
 O(\Delta) := \{ (g,x_1,x_2) \mid  \sigma(g,x_2) = x_1 \} \subset K_{\chi} \times X \times X.
\end{displaymath}

\begin{lemma} \label{lemma: Push-Ind diagonal}
 Under the composition of the equivalence of Lemma~\ref{lem: equivalence pullback} and the equivalence $\Sigma_*$, the $G$-equivariant sheaf $\Delta_* \mathcal O_X$ corresponds to the structure sheaf of $O(\Delta)$ in $K_{\chi} \times X \times X$ i.e.\ 
 \[
  \iota^* \Sigma^* \mathcal O_{O(\Delta)} \cong \Delta_* \mathcal O_X.
 \]
\end{lemma}

\begin{proof}
 Recall that the equivalence of Lemma~\ref{lem: equivalence pullback} is induced by $\iota^*$ where $\iota: X \times X \to G \times_{\mathbb{G}_m} G \overset{G}{\times} X \times X$ is the inclusion along the identity.
  Note that $\Sigma \circ \iota$ remains the inclusion along the identity, but now of $X \times X$ into $K_{\chi} \times X \times X$. Since both $\Sigma^*$ and $\iota^*$ are equivalences before deriving, they are exact. Thus, the statement of the lemma is equivalent to checking that the equation defining $O(\Delta)$ restricts to the diagonal when we restrict to $\{e \} \times X \times X$. This is clear.
\end{proof}

From now on, we assume that $K_{\chi}$ is finite.  Consider the coherent sheaf 
\begin{displaymath}
 \bigoplus_{ g \in K_{\chi} } \mathcal O_{\Gamma^t(\sigma_g)}
\end{displaymath}
where 
\begin{displaymath}
 \Gamma^t(\sigma_g) := \{ (x_1,x_2) \in X \times X \mid \sigma(g,x_2) = x_1 \}
\end{displaymath}
is the transpose of the graph of $\sigma_g$. 

\begin{lemma} \label{lemma: finite kernel Ind Delta}
The coherent sheaf $\bigoplus_{ g \in K_{\chi} } \mathcal O_{\Gamma^t(\sigma_g)}$ possesses a natural $G \times_{\mathbb{G}_m} G$-equivariant structure such that there is an isomorphism of coherent $G \times_{\mathbb{G}_m} G$-equivariant sheaves
 \begin{displaymath}
  \op{Ind}^{G \times_{\mathbb{G}_m} G}_G \Delta_* \mathcal O_X \cong p_*(\mathcal O_{O(\Delta)} ) \cong \bigoplus_{ g \in K_{\chi} } \mathcal O_{\Gamma^t(\sigma_g)}.
 \end{displaymath}
 where $p: K_{\chi} \times X \times X \to X \times X$ is the projection.
\end{lemma}

\begin{proof}
The second isomorphism is clear from the (now) standing assumption that $K_{\chi}$ is finite and induces the natural equivariant structure on $\bigoplus_{ g \in K_{\chi} } \mathcal O_{\Gamma^t(\sigma_g)}$.
 
  For the first isomorphism, we recall that, in general, $\op{Ind}^G_H$ is the composition $\alpha_* \circ (\iota^*)^{-1}$ where $\iota: X \to G \overset{H}{\times} X$ is the inclusion along the identity and $\alpha: G \overset{H}{\times} X \to X$ is the morphism induced by the action of $G$ on $X$. In our case, we have the commutative diagram
 \begin{center}
 \begin{tikzpicture}[description/.style={fill=white,inner sep=2pt}]
  \matrix (m) [matrix of math nodes, row sep=3em, column sep=3em, text height=1.5ex, text depth=0.25ex]
  {  G \times_{\mathbb{G}_m} G \overset{G}{\times} X \times X & & K_\chi \times X \times X \\
   & X \times X & \\ };
  \path[->,font=\scriptsize]
  (m-1-1) edge node[above]{$\Sigma$} (m-1-3)
  (m-1-1) edge node[below]{$\alpha$} (m-2-2)
  (m-1-3) edge node[below]{$p$} (m-2-2)
  ;
 \end{tikzpicture}
 \end{center}
 
 Now, by  Lemma~\ref{lemma: Push-Ind diagonal}, we have 
  \begin{displaymath}
  (\iota^*)^{-1}\Delta_* \mathcal  O_X \cong \Sigma^* \mathcal O_{O(\Delta)}.
 \end{displaymath}
 Applying $\alpha_*$ to both sides we get
 \begin{displaymath}
  \op{Ind}^{G \times_{\mathbb{G}_m} G}_G \Delta_* \mathcal O_X \cong p_*(\mathcal O_{O(\Delta)} )
 \end{displaymath}
where the simplification on the right hand side comes either by flat base change for the isomorphism $\Sigma$ or by using the isomorphism  $\Sigma^{-1}_* = \Sigma^*$.
\end{proof}

From this point forward, we restrict our attention to $X = \mathbb{A}^n$ equipped with a linear action of $G$ such that $K_\chi$ is finite.   It is easy to see that this implies that $G$ is reductive.  Write $\mathbb{A}^n = \op{Spec} \op{Sym}(V)$. Then, we have a right exact sequence
\begin{displaymath}
 V \otimes_k \mathcal O_{\mathbb{A}^n \times \mathbb{A}^n}  \overset{s}{\to} \mathcal O_{\mathbb{A}^n \times \mathbb{A}^n} \to \Delta_* \mathcal O_{\mathbb{A}^n} \to 0
\end{displaymath}
where the first morphism is
\begin{displaymath}
 v \otimes f \mapsto f (v \otimes 1 - 1 \otimes v).
\end{displaymath}
The potential $(-w) \boxplus w$ vanishes on $\Delta_* \mathcal O_{\mathbb{A}^n}$. Since $X$ is affine and $G$ is reductive, locally-free coherent equivariant sheaves are projective objects. Thus, there exists a morphism
\begin{displaymath}
 t: \mathcal O_{\mathbb{A}^n \times \mathbb{A}^n} \to V \otimes_k \mathcal O_{\mathbb{A}^n \times \mathbb{A}^n}
\end{displaymath}
making the diagram
\begin{center}
\begin{tikzpicture}[description/.style={fill=white,inner sep=2pt}]
 \matrix (m) [matrix of math nodes, row sep=3em, column sep=3em, text height=1.5ex, text depth=0.25ex]
 { V \otimes_k \mathcal O_{\mathbb{A}^n \times \mathbb{A}^n} & \mathcal O_{\mathbb{A}^n \times \mathbb{A}^n} \\
   V \otimes_k \mathcal O_{\mathbb{A}^n \times \mathbb{A}^n} & \mathcal O_{\mathbb{A}^n \times \mathbb{A}^n} \\ };
 \path[->,font=\scriptsize]
 (m-1-1) edge node[above]{$s$} (m-1-2)
 (m-1-1) edge node[left]{$(-w) \boxplus w$} (m-2-1)
 (m-1-2) edge node[right]{$(-w) \boxplus w$} (m-2-2)
 (m-2-1) edge node[above]{$s$} (m-2-2)
 (m-1-2) edge node[above]{$t$} (m-2-1)
 ;
\end{tikzpicture}
\end{center}
commute. 

Similarly, given $g \in G$, we can twist this diagram by $\sigma_g$ as follows. We have a right exact sequence
\begin{displaymath}
 V \otimes_k \mathcal O_{\mathbb{A}^n \times \mathbb{A}^n}  \overset{s_g}{\to} \mathcal O_{\mathbb{A}^n \times \mathbb{A}^n} \to \mathcal O_{\Gamma^t(\sigma_g)} \to 0
\end{displaymath}
where the first morphism is
\begin{displaymath}
 v \otimes f \mapsto f (g^{-1} \cdot v \otimes 1 - 1 \otimes v).
\end{displaymath}
Here $g^{-1} \cdot v$ is the element of $\op{Sym} V$ given by the automorphism of rings dual to $\sigma_g : \mathbb{A}^n \to \mathbb{A}^n$. For $g \in K_{\chi}$, $(-w) \boxplus w$ vanishes on $\mathcal O_{\Gamma^t(\sigma_g)}$ so there exists a 
\begin{displaymath}
 t_g: \mathcal O_{\mathbb{A}^n \times \mathbb{A}^n} \to V \otimes_k \mathcal O_{\mathbb{A}^n \times \mathbb{A}^n}
\end{displaymath}
making the diagram
\begin{center}
\begin{tikzpicture}[description/.style={fill=white,inner sep=2pt}]
 \matrix (m) [matrix of math nodes, row sep=3em, column sep=3em, text height=1.5ex, text depth=0.25ex]
 { V \otimes_k \mathcal O_{\mathbb{A}^n \times \mathbb{A}^n} & \mathcal O_{\mathbb{A}^n \times \mathbb{A}^n} \\
   V \otimes_k \mathcal O_{\mathbb{A}^n \times \mathbb{A}^n} & \mathcal O_{\mathbb{A}^n \times \mathbb{A}^n} \\ };
 \path[->,font=\scriptsize]
 (m-1-1) edge node[above]{$s_g$} (m-1-2)
 (m-1-1) edge node[left]{$(-w) \boxplus w$} (m-2-1)
 (m-1-2) edge node[right]{$(-w) \boxplus w$} (m-2-2)
 (m-2-1) edge node[above]{$s_g$} (m-2-2)
 (m-1-2) edge node[above]{$t_g$} (m-2-1)
 ;
\end{tikzpicture}
\end{center}
commute.

\begin{lemma} \label{lemma: resolution of the diagonal}
 There are quasi-isomorphisms of $G \times_{\mathbb{G}_m} G$-equivariant factorizations,
 \begin{displaymath}
  \bigoplus_{g \in K_{\chi}} \mathcal K(s_g,t_g) \cong \bigoplus_{g \in K_{\chi}} \mathcal O_{\Gamma^t(\sigma_g)} \cong \op{Ind}^{G \times_{\mathbb{G}_m} G}_{G} \Delta_* \mathcal O_{\mathbb{A}^n}.
 \end{displaymath}
\end{lemma}

\begin{proof}
 The second isomorphism is already stated in Lemma~\ref{lemma: finite kernel Ind Delta}. The first quasi-isomorphism follows from an immediate application of Proposition~\ref{prop: Eisenbud stabilization}.
\end{proof}

Since each $\mathcal K(s_g,t_g)$ is a factorization with locally-free components, to compute
\begin{displaymath}
 \mathbf{L} \Delta^* \op{Ind}^{G \times_{\mathbb{G}_m}G}_{G} \Delta_* \mathcal O_{\mathbb{A}^n}
\end{displaymath}
we may compute 
\begin{displaymath}
 \Delta^* \left( \bigoplus_{g \in K_{\chi}} \mathcal K(s_g,t_g) \right).
\end{displaymath}

We record the following lemma as a reminder of the structure of $\Delta^* \mathcal K(s_g,t_g)$.

\begin{lemma} \label{lemma: restricting the twisted factorizations to the diagonal}
 The factorization $\Delta^* \mathcal K(s_g,t_g)$ has components
 \begin{align*}
  \Delta^* \mathcal K(s_g,t_g)_{-1} & = \bigoplus_{l \geq 0} \Lambda^{2l+1} V \otimes_k \mathcal O_{\mathbb{A}^n}(l\chi) \\
  \Delta^* \mathcal K(s_g,t_g)_{0} & = \bigoplus_{l \geq 0} \Lambda^{2l} V \otimes_k \mathcal O_{\mathbb{A}^n}(l\chi)
 \end{align*}
 and morphisms given by
 \begin{displaymath}
  \bullet \ \lrcorner \ \Delta^*s_g + \bullet \wedge \Delta^*t_g
 \end{displaymath}
 where 
 \begin{align*}
  \Delta^* s_g : V \otimes_k \mathcal O_{\mathbb{A}^n} & \to \mathcal O_{\mathbb{A}^n} \\
  v \otimes f & \mapsto f(g^{-1} \cdot v - v). 
 \end{align*}
\end{lemma}

\begin{proof}
 This is clear from the definition of the Koszul factorization, $\mathcal K(s_g,t_g)$.
\end{proof}

\begin{definition}
 Let $g \in G$. Set 
 \begin{displaymath}
  V_g := \{ v \in V \mid g^{-1} \cdot v = v\}.
 \end{displaymath}
 The ideal sheaf of $(\mathbb{A}^n)^g$ corresponds to $\{ g^{-1} \cdot f - f \mid f \in \op{Sym} V\}$. This determines a subspace $W_g \subseteq V$. Note that there is an equivariant splitting $V = V_g \oplus W_g$. 
 
 Let $\kappa_g : G \to \mathbb{G}_m$ be the character corresponding to $\Lambda^{\op{dim} W_g} W_g$. More precisely, $\mathcal O_{\mathbb{A}^n}(\kappa_g)$ is the invertible sheaf corresponding to the free graded module of rank $1$, $\Lambda^{\op{dim} W_g} W_g \otimes_k \op{Sym} V$. 
\end{definition}

\begin{lemma} \label{lemma: identifying the twisted sectors}
 There is a quasi-isomorphism between $\Delta^* \mathcal K(s_g,t_g)$ and the Koszul factorization $i_{g*}\mathcal K(0,\op{d} \! w_g)$ where
 \begin{displaymath}
  i_g: (\mathbb{A}^n)^g \to \mathbb{A}^n
 \end{displaymath}
 is the inclusion, $0$ is the morphism
 \begin{displaymath}
  V_g \otimes_k \mathcal O_{(\mathbb{A}^n)^g} \overset{0}{\to} \mathcal O_{(\mathbb{A}^n)^g},
 \end{displaymath}
 and $w_g$ is the restriction of $w$ to $(\mathbb{A}^n)^g$.
\end{lemma}

\begin{proof}
 Consider the pullback of $s_g$ and $t_g$ to $(\mathbb{A}^n)^g \times (\mathbb{A}^n)^g$ via 
 \begin{displaymath}
  i_g \times i_g : (\mathbb{A}^n)^g \times (\mathbb{A}^n)^g \to \mathbb{A}^n \times \mathbb{A}^n.
 \end{displaymath}
 We have 
 \begin{displaymath}
  (i_g \times i_g)^*s_g(v) = v \otimes 1 - 1 \otimes v
 \end{displaymath}
 and a commutative diagram
 \begin{center}
 \begin{tikzpicture}[description/.style={fill=white,inner sep=2pt}]
  \matrix (m) [matrix of math nodes, row sep=3em, column sep=3em, text height=1.5ex, text depth=0.25ex]
  { V_g \otimes_k \mathcal O_{(\mathbb{A}^n)^g \times (\mathbb{A}^n)^g} & \mathcal O_{(\mathbb{A}^n)^g \times (\mathbb{A}^n)^g} \\
    V_g \otimes_k \mathcal O_{(\mathbb{A}^n)^g \times (\mathbb{A}^n)^g} & \mathcal O_{(\mathbb{A}^n)^g \times (\mathbb{A}^n)^g} \\ };
  \path[->,font=\scriptsize]
  (m-1-1) edge node[above]{$(i_g \times i_g)^*s_g$} (m-1-2)
  (m-1-1) edge node[left]{$(-w_g) \boxplus w_g$} (m-2-1)
  (m-1-2) edge node[right]{$(-w_g) \boxplus w_g$} (m-2-2)
  (m-2-1) edge node[above]{$(i_g \times i_g)^*s_g$} (m-2-2)
  (m-1-2) edge node[above]{$(i_g \times i_g)^*t_g$} (m-2-1)
  ;
 \end{tikzpicture}
 \end{center}
 Let $\Delta_g: (\mathbb{A}^n)^g \to (\mathbb{A}^n)^g \times (\mathbb{A}^n)^g$ be the diagonal embedding. Then, $\Delta_g^* (i_g \times i_g)^*t_g = \op{d} \! w_g$. As the diagram
 \begin{center}
 \begin{tikzpicture}[description/.style={fill=white,inner sep=2pt}]
  \matrix (m) [matrix of math nodes, row sep=3em, column sep=3em, text height=1.5ex, text depth=0.25ex]
  { (\mathbb{A}^n)^g & (\mathbb{A}^n)^g \times (\mathbb{A}^n)^g \\
    \mathbb{A}^n & \mathbb{A}^n \times \mathbb{A}^n \\ };
  \path[->,font=\scriptsize]
  (m-1-1) edge node[above]{$\Delta_g$} (m-1-2)
  (m-1-1) edge node[left]{$i_g$} (m-2-1)
  (m-1-2) edge node[right]{$i_g \times i_g$} (m-2-2)
  (m-2-1) edge node[above]{$\Delta $} (m-2-2)
  ;
 \end{tikzpicture}
 \end{center}
 commutes, we have $i_g^*\Delta^* t_g = \Delta_g^* (i_g \times i_g)^* t_g = \op{d} \! w_g$ while $i_g^*\Delta^*s_g = \Delta_g^* (i_g \times i_g)^* s_g =  0$. Thus,
 \begin{displaymath}
  i_g^* \Delta^* \mathcal K(s_g,t_g) \cong \mathcal K(0,\op{d} \! w_g).
 \end{displaymath}
 
 Now,  associated to the adjunction $i_g^* \dashv i_{g*}$, we have a morphism
 \begin{displaymath}
  \pi: \Delta^* \mathcal K(s_g,t_g) \to i_{g*}i_g^*\Delta^* \mathcal K(s_g,t_g) \cong i_{g*}  \mathcal K(0,\op{d} \! w_g)
 \end{displaymath}
 which we claim is a quasi-isomorphism.

To verify this claim, we check that the kernel of $\pi$, $\op{ker}(\pi)$, is acyclic.  The components of $\op{ker}(\pi)$ are
 \begin{align*}
  \op{ker}(\pi)_{-1} & = \bigoplus_{\substack{l \geq 0, a > 0 \\ a+b = 2l+1}} \Lambda^a W_g \otimes_k \Lambda^b V_g \otimes_k \mathcal O_{\mathbb{A}^n}(l\chi) \\ 
  \op{ker}(\pi)_0 & = \mathcal I_{(\mathbb{A}^n)^g} \oplus \bigoplus_{\substack{l \geq 0, a > 0 \\ a+b = 2l}} \Lambda^a W_g \otimes_k \Lambda^b V_g \otimes_k \mathcal O_{\mathbb{A}^n}(l\chi).
 \end{align*}
 Let 
 \begin{displaymath}
  \mathcal J^j := \op{ker}(\bullet \ \lrcorner \ \Delta^*s_g): \Lambda^j W_g \otimes_k \mathcal O_{\mathbb{A}^n} \to \Lambda^{j-1} W_g \otimes_k \mathcal O_{\mathbb{A}^n}
 \end{displaymath}
 and $\mathcal J^0 := \mathcal I_{(\mathbb{A}^n)^g}$. As $\Delta^* s_g$ vanishes on $V_g$ and $\Delta^* t_g$ has image in $V_g$, we have a filtration $F^j \op{ker}(\pi)$. In the case $j=2u$, it is
 \begin{align*}
  F^j \op{ker}(\pi)_{-1} & = \bigoplus_{\substack{b \geq j, a > 0 \\ a+b = 2l+1}} \Lambda^a W_g \otimes_k \Lambda^b V_g \otimes_k \mathcal O_{\mathbb{A}^n}(l\chi) \\ 
  F^j \op{ker}(\pi)_0 & = \Lambda^j V_g \otimes_k \mathcal J^j(u\chi) \oplus \bigoplus_{\substack{b \geq j, a > 0 \\ a+b = 2l}} \Lambda^a W_g \otimes_k \Lambda^b V_g \otimes_k \mathcal O_{\mathbb{A}^n}(l\chi).
 \end{align*}
 In the case $j=2u+1$, it is 
 \begin{align*}
  F^j \op{ker}(\pi)_{-1} & = \Lambda^j V_g \otimes_k \mathcal J^j(u\chi) \oplus \bigoplus_{\substack{b \geq j, a > 0 \\ a+b = 2l+1}} \Lambda^a W_g \otimes_k \Lambda^b V_g \otimes_k \mathcal O_{\mathbb{A}^n}(l\chi) \\ 
  F^j \op{ker}(\pi)_0 & = \bigoplus_{\substack{b \geq j, a > 0 \\ a+b = 2l}} \Lambda^a W_g \otimes_k \Lambda^b V_g \otimes_k \mathcal O_{\mathbb{A}^n}(l\chi).
 \end{align*}
 The associated graded factorization, $F^j \op{ker}(\pi)/ F^{j+1} \op{ker}(\pi)$, is the totalization of the exact sequence
 \begin{displaymath}
  0 \to \Lambda^j V_g \otimes_k \Lambda^{\op{dim} W_g} W_g \otimes_k \mathcal O_{\mathbb{A}^n} \overset{\bullet \ \lrcorner \ \Delta^*s_g}{\to} \cdots \overset{\bullet \ \lrcorner \ \Delta^*s_g}{\to} \Lambda^j V_g \otimes_k \Lambda^{j+1} W_g \otimes_k \mathcal O_{\mathbb{A}^n} \overset{\bullet \ \lrcorner \ \Delta^*s_g}{\to} \Lambda^j V_g \otimes_k \mathcal J^j \to 0
 \end{displaymath}
 where the final term is in degree $-\op{dim} W_g$. Thus, $\op{ker}(\pi)$ is filtered by acyclic complexes and hence acyclic.  This implies that $\pi$ is a quasi-isomorphism as desired.
\end{proof}

\begin{definition}
 Let $\kappa: G \to \mathbb{G}_m$ be the character corresponding to $\Lambda^n V$. 
\end{definition}

\begin{lemma} \label{lemma: extended HH contains H hom}
 Assume that $K_{\chi}$ is finite. Then, there is an isomorphism
 \begin{displaymath}
  \op{HH}_{t}(\mathbb{A}^n,G,w) \cong \op{HH}_e^{(\kappa,n+t)}(\mathbb{A}^n,G,w).
 \end{displaymath}
\end{lemma}

\begin{proof}
 We have,
 \begin{align*}
  \op{HH}_{t}(\mathbb{A}^n,G,w) &  \cong \op{Hom}( (\op{Ind}^{G \times_{\mathbb{G}_m} G}_G \Delta_* \mathcal O_X)^{\vee}, \op{Ind}^{G \times_{\mathbb{G}_m} G}_G \Delta_* \mathcal O_X[t]) \\
  & \cong    \op{Hom}( \bigoplus_{g \in K_{\chi}} \mathcal K(s_g,t_g)^{\vee}, \op{Ind}^{G \times_{\mathbb{G}_m} G}_G \Delta_* \mathcal O_X[t]) \\ 
  & \cong    \op{Hom}( \bigoplus_{g \in K_{\chi}} \mathcal K(t_g^{\vee},s_g^{\vee}), \op{Ind}^{G \times_{\mathbb{G}_m} G}_G \Delta_* \mathcal O_X[t]) \\
  & \cong    \op{Hom}( \bigoplus_{g \in K_{\chi}} \mathcal O_{\Gamma^t(\sigma_g)} \otimes_k \Lambda^n V^{\vee}[-n], \op{Ind}^{G \times_{\mathbb{G}_m} G}_G \Delta_* \mathcal O_X[t]) \\
  & \cong    \op{Hom}( \op{Ind}^{G \times_{\mathbb{G}_m} G}_G \Delta_* \mathcal O_X, \op{Ind}^{G \times_{\mathbb{G}_m} G}_G \Delta_* \mathcal O_X \otimes_k \Lambda^n V [t+n]) \\
  &  =   \op{Hom}( \op{Ind}^{G \times_{\mathbb{G}_m} G}_G \Delta_* \mathcal O_X,\op{Ind}^{G \times_{\mathbb{G}_m} G}_G \Delta_* \mathcal O_X (\kappa) [t+n]) \\
  &  =   \op{HH}_e^{(\kappa,n+t)}(\mathbb{A}^n,G,w).
 \end{align*}
 All morphisms are computed in $\dabs [\mathsf{fact}(\mathbb{A}^n \times \mathbb{A}^n, G \times_{\mathbb{G}_m} G, (-w) \boxplus w)]$.
 
 The first line follows from Lemma~\ref{lemma: Tr is representable}. The second line follows from Lemma~\ref{lemma: resolution of the diagonal}.  The third line is Lemma~\ref{lemma: duality of Koszul factorizations}. The fourth line comes from Proposition~\ref{prop: Eisenbud stabilization}.  The fifth line is another application of Lemma~\ref{lemma: resolution of the diagonal}.  The six line is by definition as is the seventh line.
\end{proof}

\begin{definition}
 Let $(r_1,\ldots,r_c)$ be a sequence of elements of a commutative ring, $R$. We let 
 \begin{displaymath}
  \op{H}^{\bullet} (\mathbf{r}) 
 \end{displaymath}
 denote the cohomology of the Koszul complex for $(r_1,\ldots,r_c)$. We call $\op{H}^{\bullet} (\mathbf{r})$ the \newterm{Koszul cohomology} of $(r_1,\ldots,r_c)$. 
 
 In the case, $(r_1,\ldots,r_c) = (\partial_1 w,\ldots,\partial_n w)$ for $R = k[x_1,\ldots,x_n]$, we denote the Koszul cohomology by $\op{H}^{\bullet} (\op{d} \! w)$. The \newterm{Jacobian algebra} of $w$ is $\op{H}^{0} (\op{d} \! w)$ but we denote it by $\op{Jac}(w)$ for transparency.
\end{definition}

\begin{theorem} \label{thm: twisted HH*}
 Let $G$ act linearly on $\mathbb{A}^n$ and let $w \in \Gamma(\mathbb{A}^n, \mathcal O_{\mathbb{A}^n}(\chi))^{G}$. Assume that $K_{\chi}$ is finite and $\chi: G \to \mathbb{G}_m$ is surjective. Then,
 \begin{gather*}
  \op{HH}^{(\rho,t)}_e(\mathbb{A}^n,G,w) \cong \\ \left( \bigoplus_{\substack{g \in K_{\chi}, l \geq 0 \\ t - \op{dim} W_g = 2u }} \op{H}^{2l}(\op{d} \! w_g)(\rho-\kappa_g+(u-l)\chi) \oplus \bigoplus_{\substack{g \in K_{\chi}, l \geq 0 \\ t - \op{dim} W_g = 2u+1 }} \op{H}^{2l+1}(\op{d} \! w_g)(\rho-\kappa_g+(u-l)\chi) \right)^G
 \end{gather*}
 If, additionally, we assume the support of $(\op{d} \! w)$ is $\{0\}$, then we have
 \begin{displaymath}
  \op{HH}^{(\rho,t)}_e(\mathbb{A}^n,G,w) \cong \left( \bigoplus_{\substack{g \in K_{\chi} \\  t - \op{dim} W_g = 2u }} \op{Jac}(w_g)(\rho-\kappa_g+u\chi) \oplus \bigoplus_{\substack{g \in K_{\chi} \\  t- \op{dim} W_g = 2u+1 }} \op{Jac}(w_g)(\rho-\kappa_g+u\chi) \right)^G.
 \end{displaymath}
\end{theorem}

\begin{proof}
 We have
 \begin{align*}
  \op{HH}^{(\rho,t)}_e(\mathbb{A}^n,G,w) & := \op{Hom}_{\dabs [\mathsf{fact}(\mathbb{A}^n \times \mathbb{A}^n, G \times_{\mathbb{G}_m} G, (-w) \boxplus w)]}( \op{Ind}^{G \times_{\mathbb{G}_m} G}_{G} \Delta_* \mathcal O_{\mathbb{A}^n}, \op{Ind}^{G \times_{\mathbb{G}_m} G}_{G} \Delta_* \mathcal O_{\mathbb{A}^n}(\rho)[t]) \\
  & \cong \op{Hom}_{\dabs [\mathsf{fact}(\mathbb{A}^n \times \mathbb{A}^n, G, (-w) \boxplus w)]}( \op{Res}^{G \times_{\mathbb{G}_m} G}_{G} \op{Ind}^{G \times_{\mathbb{G}_m} G}_{G} \Delta_* \mathcal O_{\mathbb{A}^n},  \Delta_* \mathcal O_{\mathbb{A}^n}(\rho)[t]) \\
  & \cong \op{Hom}_{\dabs [\mathsf{fact}(\mathbb{A}^n, G, 0)]}(\mathbf{L} \Delta^* \op{Res}^{G \times_{\mathbb{G}_m} G}_{G} \op{Ind}^{G \times_{\mathbb{G}_m} G}_{G} \Delta_* \mathcal O_{\mathbb{A}^n}, \mathcal O_{\mathbb{A}^n}(\rho)[t]) \\
  & \cong \op{Hom}(\mathbf{L} \Delta^* \op{Ind}^{G \times_{\mathbb{G}_m} G}_{G} \Delta_* \mathcal O_{\mathbb{A}^n}, \mathcal O_{\mathbb{A}^n}(\rho)[t])\\
  &  \cong \op{Hom}(\bigoplus_{g \in K_{\chi}} i_{g*}\mathcal K(0,\op{d} \! w_g), \mathcal O_{\mathbb{A}^n}(\rho)[t]) \\
  & \cong \op{Hom}(\mathcal O_{\mathbb{A}^n}, \bigoplus_{g \in K_{\chi}} i_{g*}\mathcal K(0,\op{d} \! w_g)^{\vee}(\rho)[t]). \\
  & \cong \op{Hom}(\mathcal O_{\mathbb{A}^n}, \bigoplus_{g \in K_{\chi}} i_{g*}\mathcal K(\op{d} \! w_g,0)(\rho-\kappa_g)[t-\op{dim} W_g]).
\end{align*}

 The first line is by definition.  The second line is adjunction for $\op{Res}$ and $\op{Ind}$, Lemma~\ref{lemma: Res-Ind fact adjunction}.  The third line applies the adjunction, $\mathbf{L} \Delta^* \dashv \Delta_*$, Lemma~\ref{lemma: pull-push adjunction for factorizations}.  The fourth line is a slight notational respite obtained by viewing $\Delta$ as an equivariant for the diagonal embedding of $G$ into $G \times_{\mathbb{G}_m} G$.  The fifth line is Lemma~\ref{lemma: identifying the twisted sectors}.  The sixth line is just the equivalence $(-)^\vee$. We justify the seventh line in the next paragraph.

 Let $\widetilde{K}(0,\op{d} \! w_g)$ be the Koszul factorization on $\mathbb{A}^n$ associated to 
 \begin{align*}
  V_g \otimes_k \mathcal O_{\mathbb{A}^n} \overset{0}{\to} \mathcal O_{\mathbb{A}^n}
 \end{align*}
 and
 \begin{displaymath}
  \mathcal O_{\mathbb{A}^n} \overset{\op{d} \! w_g}{\to} V_g \otimes_k \mathcal O_{\mathbb{A}^n}(\chi).
 \end{displaymath}
 Using contraction with morphism,
 \begin{align*}
  W_g \otimes_k \mathcal O_{\mathbb{A}^n} & \to \mathcal O_{\mathbb{A}^n} \\
  w \otimes_k f & \mapsto fw,
 \end{align*}
 we have a exact sequence of Koszul factorizations, 
  \begin{displaymath}
  0 \to \Lambda^{\op{dim} W_g} W_g \otimes_k \widetilde{K}(0,\op{d} \! w_g) \to \cdots \to  W_g \otimes_k \widetilde{K}(0,\op{d} \! w_g) \to \widetilde{K}(0,\op{d} \! w_g) \to i_{g*}\mathcal K(0,\op{d} \! w_g) \to 0
 \end{displaymath}
Hence, $\mathcal K(0,\op{d} \! w_g)^{\vee}$ is quasi-isomorphic to the totalization of the complex 
 \begin{displaymath}
  0 \leftarrow \Lambda^{\op{dim} W_g} W_g^{\vee} \otimes_k \widetilde{K}(0,\op{d} \! w_g)^{\vee} \leftarrow \cdots \leftarrow  W_g^{\vee} \otimes_k \widetilde{K}(0,\op{d} \! w_g)^{\vee} \leftarrow 0
 \end{displaymath}
 This is, in turn quasi-isomorphic to $i_{g*}\mathcal K(\op{d} \! w_g,0) \otimes_k \Lambda^{\op{dim} W_g} W_g^{\vee} [-\op{dim} W_g]$. 
 
 The factorization, $\mathcal K(\op{d} \! w_g,0)(\rho-\kappa_g)$, has components
 \begin{align*}
  \mathcal K(\op{d} \! w_g,0)(\rho-\kappa_g)_{-1} & = \bigoplus_{l \geq 0} \Lambda^{2l+1} V_g^{\vee} \otimes_k \mathcal O_{(\mathbb{A}^n)^g}(\rho-\kappa_g-(l+1)\chi) \\
  \mathcal K(\op{d} \! w_g,0)(\rho-\kappa_g)_{0} & = \bigoplus_{l \geq 0} \Lambda^{2l} V_g^{\vee} \otimes_k \mathcal O_{(\mathbb{A}^n)^g}(\rho-\kappa_g-l\chi)
 \end{align*}
 with morphisms given by contraction with $\op{d} \! w_g$. The cohomology of $\mathcal K(\op{d} \! w_g,0)(\rho-\kappa_g)$ is
 \begin{align*}
  \op{H}^{2u} ( \mathcal K(\op{d} \! w_g,0)(\rho-\kappa_g) ) & \cong \bigoplus_{l \geq 0} \op{H}^{2l} (\op{d} \! w_g)(\rho-\kappa_g+(u-l)\chi) \\
  \op{H}^{2u+1} ( \mathcal K(\op{d} \! w_g,0)(\rho-\kappa_g) ) & \cong \bigoplus_{l \geq 0} \op{H}^{2l+1} (\op{d} \! w_g)(\rho-\kappa_g+(u-l)\chi).
 \end{align*}
 Thus, we have
 \begin{gather*}
  \op{Hom}(\mathcal O_{\mathbb{A}^n}, \bigoplus_{g \in K_{\chi}} i_{g*}\mathcal K(\op{d} \! w_g,0)(\rho-\kappa_g)[t-\op{dim} W_g]) \cong  \\ \left( \bigoplus_{\substack{g \in K_{\chi}, l \geq 0 \\ t - \op{dim} W_g = 2u }} \op{H}^{2l}(\op{d} \! w_g)(\rho-\kappa_g+(u-l)\chi) \oplus \bigoplus_{\substack{g \in K_{\chi}, l \geq 0 \\ t - \op{dim} W_g = 2u+1 }} \op{H}^{2l+1}(\op{d} \! w_g)(\rho-\kappa_g+(u-l)\chi) \right)^G
 \end{gather*}
 
 If $(\op{d} \! w)$ has support $\{0\}$, then so does $(\op{d} \! w_g)$ for all $g$. So all Koszul complexes only have cohomology in homological degree zero. 
\end{proof}

\begin{remark}
 By specializing to appropriate graded pieces, one can use Theorem~\ref{thm: twisted HH*} to extract both $\op{HH}^{\bullet}(\mathbb{A}^n,G,w)$ and $\op{HH}_{\bullet}(\mathbb{A}^n,G,w)$.
\end{remark}

\begin{corollary} \label{corollary: projective hypersurface case}
 Let $\mathbb{A}^n = \op{Spec} (\op{Sym} V)$ carry a $\mathbb{G}_m$ action with weight $(-1)$. Let $w \in \op{Sym} V$ be homogeneous of degree $d$. Then, we have isomorphisms
 \begin{displaymath}
  \op{HH}_t(\mathbb{A}^n, \mathbb{G}_m, w) \cong \begin{cases} \op{Jac}(w)_{d(\frac{n+t}{2})-n} & t \not = 0 \\
                                                  \op{Jac}(w)_{d(\frac{n}{2})-n} \oplus k^{\oplus d-1} & t = 0. 
                                                 \end{cases}
 \end{displaymath}
\end{corollary}

\begin{proof}
 We have $\kappa = -n$. In this case, $K_{\chi} \cong \Z/d\Z$. If $g \not = e$, then $V_g = \{0\}$, thus $\kappa_g = -n$ and $\op{dim} W_g = n$. For $g=e$, we have $\kappa_e = 0$ and $\op{dim} W_e = 0$. Applying Lemma~\ref{lemma: extended HH contains H hom} and Theorem~\ref{thm: twisted HH*}, we have
 \begin{displaymath}
  \op{HH}_{t}(\mathbb{A}^n,\mathbb{G}_m,w) \cong \op{Jac}(w)_{-n+d(\frac{n+t}{2})} \oplus \bigoplus_{g \not = e} \op{Jac}(w_g)_{d\frac{t}{2}}.
 \end{displaymath}
 We have $\op{Jac}(w_g) \cong k(0)$ so the latter term only contributes to $t=0$. 
\end{proof}

\begin{remark}
 This computation was first done by C\u{a}ld\u{a}raru and Tu, \cite[Example 6.4]{CT}. It is also performed, independently, by Polishchuk and Vaintrob \cite{PVnew}.
\end{remark}

\section{Implications for Hodge Theory} \label{section: implications for Hodge theory}

In this section, we give two applications of the ideas and computations of the previous sections to Hodge theory. To fully state the results, we recall some of the functoriality of Hochschild homology. Recall that $\op{perf}(\mathsf{C})$ consists of all compact objects in $\op{D}(\mathsf{C}^{\op{op}}\op{-Mod})$.

\begin{proposition} \label{prop: functorial push}
 Let $\mathsf C$ and $\mathsf D$ be saturated dg-categories over $k$. Let $F$ be an object of $\op{perf}(\mathsf C^{\op{op}} \otimes \mathsf D)$. Then, there is a homomorphism of vector spaces,
 \begin{displaymath}
  F_{\bullet} : \op{HH}_{\bullet}(\mathsf C) \to \op{HH}_{\bullet}(\mathsf D).
 \end{displaymath}
 Moreover, the assignment, $F \mapsto F_{\bullet}$, is natural in the following sense. Let $F_1 \in \op{perf}(\mathsf B^{\op{op}} \otimes \mathsf C)$ and $F_2 \in \op{perf}(\mathsf C^{\op{op}} \otimes \mathsf D)$ and let $F_2 \circ F_1$ denote the $\mathsf{B}$-$\mathsf{D}$ bimodule corresponding to the tensor product $F_1 \overset{\mathbf{L}}{\otimes}_{\mathsf C} F_2$. Then, $(F_2 \circ F_1)_{\bullet} \cong F_{2 \bullet} \circ F_{1 \bullet}$. 
\end{proposition}

\begin{proof}
 This is \cite[Lemma 1.2.1]{PV}.
\end{proof}

\begin{definition}
 Let $\mathsf C$ and $\mathsf D$ be saturated dg-categories over $k$. Let $F$ be an object of $\op{perf}(\mathsf C^{\op{op}} \otimes_k \mathsf D)$. We will call the linear map, $F_{\bullet}$, the \newterm{pushforward} by $F$.
 
 For an object $E \in \op{perf}(\mathsf C)$, we get an induced map,
\begin{displaymath}
 E_{\bullet}: k[0] \cong \op{HH}_{\bullet}(k) \to \op{HH}_{\bullet}(\mathsf C).
\end{displaymath}
 The map, $E_{\bullet}$, is called the  \newterm{Chern character map} and the element $E_{\bullet}(1)$ is called the \newterm{Chern character} of $E$. The map 
 \begin{displaymath}
  E \mapsto E_{\bullet}(1)
 \end{displaymath}
 is denoted by $\op{ch}$.
\end{definition}

There is also a natural pairing on Hochschild homology.

\begin{proposition} \label{prop: categorical pairing on HH}
 Let $\mathsf{C}$ be saturated dg-category over $k$. There is a natural pairing
 \begin{displaymath}
  \langle \cdot, \cdot \rangle : \op{HH}_{\bullet}(\mathsf C) \otimes_k \op{HH}_{\bullet}(\mathsf C) \to k
 \end{displaymath}
 satisfying
 \begin{displaymath}
  \chi\left(\oplus_{i \in \Z} \op{Hom}_{\op{perf}(\mathsf{C})}(E_1,E_2[i]) \right) = \langle \op{ch}(E_1),\op{ch}(E_2) \rangle
 \end{displaymath}
 for $E_1,E_2 \in \op{perf}(\mathsf{C})$.
\end{proposition}

\begin{proof}
 This pairing is constructed for smooth and proper dg-algebras in \cite[Section 1.2]{Shk2}. In this case, the equality
 \begin{displaymath}
  \chi\left(\oplus_{i \in \Z} \op{Hom}_{\op{perf}(\mathsf{C})}(E_1,E_2[i]) \right) = \langle \op{ch}(E_1),\op{ch}(E_2) \rangle
 \end{displaymath}
 is a special case of \cite[Theorem 1.3.1]{Shk2}. The pairing is also defined for a general saturated dg-category in \cite[Section 1.2]{PV}. As any saturated dg-category is Morita equivalent to a smooth and proper dg-algebra, the naturality of the pairing extends the result from algebras to categories. 
\end{proof}

\begin{definition}
 Let $\mathsf{C}$ be a saturated dg-category. We shall call the pairing
 \begin{displaymath}
  \langle \cdot, \cdot \rangle : \op{HH}_{\bullet}(\mathsf C) \otimes_k \op{HH}_{\bullet}(\mathsf C) \to k
 \end{displaymath}
 the \textsf{categorical pairing} on Hochschild homology.
\end{definition}

We will also need the following result due to Polishchuk and Vaintrob.

\begin{theorem} \label{thm: PV}
 Let $\mathbb{A}^n$ carry a linear action of $G$, an algebraic group, and let $w \in \Gamma(\mathbb{A}^n,\mathcal O_{\mathbb{A}^n}(\chi))^G$. Assume that $K_{\chi}$ is finite and $\chi: G \to \mathbb{G}_m$ is surjective. Furthermore, assume that $(\op{d} \! w)$ is supported at $\{0\} \in \mathbb{A}^n$. For a character, $\rho : G \to \mathbb{G}_m$, the twist functor,
\begin{displaymath}
 (\rho): \dabs \mathsf{fact}(\mathbb{A}^n,G,w) \to \dabs \mathsf{fact}(\mathbb{A}^n,G,w), 
\end{displaymath}
 induces a pushforward map,
 \[ 
 (\rho)_{\bullet}: \op{HH}_{\bullet}(\mathbb{A}^n,G,w) \to \op{HH}_{\bullet}(\mathbb{A}^n,G,w)
 \]
 which is multiplication by $\rho(g)^{-1}$ on $\op{Jac}(w_g)$ for $g \in K_{\chi}$. In other words, the decomposition of Theorem~\ref{thm: twisted HH*} is exactly the eigenspace decomposition for the action of $\widehat{G}$ on $\op{HH}_{\bullet}(\mathbb{A}^n,G,w)$.
\end{theorem}

\begin{proof}
 This is part of \cite[Theorem 2.6.1]{PVnew}, albeit stated in the notation used in this paper.
\end{proof}

\subsection{Another look at Griffiths' Theorem} \label{section: Griffiths}
 
In this section, we recall a celebrated result of Griffiths, reproved and understood in categorical language as a combination of Theorem~\ref{thm: twisted HH*}, the Hochschild-Kostant-Rosenberg isomorphism, and a theorem of Orlov \cite{Orl09}.

\begin{definition}
 Let $Z$ be a smooth complex projective hypersurface in $\P^{n-1}_{\C}$ defined by $w \in \C[x_1,\ldots,x_n]$. An element of $\text{H}^{2(n-2-k)}(Z;\C)$ is called \newterm{primitive} if it cups trivially with $H^k$, where $H$ is the class of a hyperplane section. We write
 \begin{displaymath}
  \op{H}_{\op{prim}}^{\bullet}(Z;\C)
 \end{displaymath}
 for the subspace of primitive classes. We will write
 \begin{displaymath}
  \op{H}_{\op{prim}}^{\bullet,\bullet}(Z)
 \end{displaymath}
 for the intersections of $\op{H}_{\op{prim}}^{\bullet}(Z;\C)$ with each bi-graded piece of the Dolbeault cohomology of $Z$.
\end{definition}

In our context, by the Lefschetz Hyperplane Theorem, all primitive cohomology classes lie in the middle dimensional cohomology, $\text{H}^{n-2}(Z;\C)$.  Furthermore, all elements are primitive when $n$ is odd. When $n$ is even, all Dolbeault classes of type $(p,n-2-p)$, $\op{H}^{p,n-2-p}(Z)$, with $p \neq \frac{n-2}{2}$ are primitive, while $\op{H}_{\op{prim}}^{\frac{n-2}{2},\frac{n-2}{2}}(Z)$ are just those classes lying in the kernel of the cup product with $H$. The following description is due to Griffiths.

\begin{theorem} \label{theorem: Griffiths}
 There is an isomorphism,
\begin{displaymath}
 \op{H}^{p,n-2-p}_{\op{prim}}(Z) \cong \op{Jac}(w)_{d(n-1-p)-n}.
\end{displaymath}
\end{theorem}

\begin{proof}
 This is \cite[Theorem 8.1]{Gri}.
\end{proof}

Comparing Griffiths' result with Theorem~\ref{thm: twisted HH*} we see a striking similarity.  Indeed, $\op{Jac}(w)_{d(n-1-p)-n}$, is also the summand of $ \op{HH}_{n-2-2p}(\mathbb A^n, \gm, w)$ corresponding to $g=e$. This is not a coincidence. To give a precise comparison, we will need to recall two results.

\begin{definition}
 Let $Z$ be a smooth, projective variety. Let $\mathsf{Inj}_{\op{coh}}(Z)$ denote the dg-category of bounded below chain complexes of injective sheaves on $Z$ with bounded and coherent cohomology. We denote the Hochschild homology of $\mathsf{Inj}_{\op{coh}}(Z)$ by $\op{HH}_{\bullet}(Z)$.
\end{definition}

\begin{definition}
 The \textsf{Mukai pairing} on $\op{H}^{\bullet}(Z; \C)$ is 
 \begin{displaymath}
  ( v, v' )_M := \int_Z v^{\vee} \cdot v' \cdot \op{td}(Z)
 \end{displaymath}
 where $v^{\vee} = \sum_{p,q} (-1)^p v_{p,q}$ if $v = \sum_{p,q} v_{p,q}$ is the Hodge decomposition. 
\end{definition}

The first result we use is the Hochschild-Kostant-Rosenberg isomorphism.  It allows one to reinterpret Dolbeault cohomology categorically.

\begin{theorem} \label{theorem: HKR}
 Let $Z$ be smooth projective variety. There are natural isomorphisms,
 \[
  \op{HH}_t(Z) \cong \bigoplus_{q-p=t} \op{H}^q(Z,\Omega_Z^p) \cong \bigoplus_{q-p=t} \op{H}^{p,q}(Z).
 \]
 We denote the isomorphism by $\phi_{\op{HKR}}: \op{HH}_{\bullet}(Z) \to \op{H}^{\bullet}(Z ; \C)$. Under the HKR isomorphism, we have 
 \begin{displaymath}
  \langle \alpha , \alpha' \rangle = ( \phi_{\op{HKR}}(\alpha) , \phi_{\op{HKR}}(\alpha') )_M. 
 \end{displaymath}
 The Chern character and classical Chern character agree under the HKR isomorphism
 \begin{displaymath}
  \phi_{\op{HKR}}(\op{ch}(\mathcal E)) = \op{ch}_{class}(\mathcal E).
 \end{displaymath}
 Furthermore, for an integral functor, $\Phi_{\mathcal K}: \dbcoh{X} \to \dbcoh{X}$, the action of $\Phi_{\mathcal K \bullet}$ under the HKR isomorphism is the cohomological integral transform, $\Phi^H_{\mathcal K}$, associated to $\op{ch}_{class}(\mathcal K) \in \op{H}^{\bullet}(X \times Y ; \C)$. 
\end{theorem}

\begin{proof}
 The HKR isomorphism in the affine case is due to \cite{HKR}. In this generality, it is due to Swan \cite[Corollary 2.6]{Swan2} and Kontsevich \cite{Kon}, see also \cite{Yek}. The preservation of the Chern character was stated in \cite{Markarian} and proven as \cite[Theorem 4.5]{Cal}. The equality of the pairings is \cite[Theorem 1]{Ram}. The equality
 \begin{displaymath}
  \phi_{\op{HKR}} \circ \Phi_{\mathcal K \bullet} = \Phi^H_{\mathcal K} \circ \phi_{\op{HKR}}
 \end{displaymath}
 is a consequence of \cite[Theorem 2]{Ram} and the definition of $\Phi^{muk}_*$ in \cite{Ram}.
\end{proof}

\begin{definition}
 Let $Z$ be a smooth, projective variety. Define the endofunctor,
 \begin{displaymath}
  \{1\} := L_{\mathcal O_Z} \circ T_{\mathcal O(1)}: \mathsf{Inj}_{\op{coh}}(Z) \to \mathsf{Inj}_{\op{coh}}(Z),
 \end{displaymath}
 where 
 \begin{displaymath}
  T_{\mathcal O(1)} (\mathcal E) := \mathcal E \otimes_{\mathcal O_Z} \mathcal O_Z(1)
 \end{displaymath}
 and, for $i \in \Z$,
 \begin{displaymath}
  L_{\mathcal O_Z(i)}(\mathcal E) := \op{Cone}\left( \op{Hom}(\widetilde{\mathcal O}_Z(i), \mathcal E) \otimes_k \widetilde{\mathcal O}_Z(i) \to \mathcal E \right)
 \end{displaymath}
 where $\widetilde{\mathcal O}_Z(i)$ is an injective resolution of $\mathcal O_Z(i)$. Let 
 \begin{displaymath}
  \varsigma(\mathcal O_Z(i)): \op{Id} \to L_{\mathcal O_Z(i)}
 \end{displaymath}
 denote the induced natural transformation.
\end{definition}

The second result we use is a theorem of Orlov \cite{Orl09}, generalized mildly to account for a larger grading group. Let $G$ be an Abelian affine algebraic group acting on $\mathbb{A}^n$. We assume that $G$ has rank one so that
\begin{displaymath}
 G \cong \mathbb{G}_m \times G_{\op{tors}}
\end{displaymath}
for $ G_{\op{tors}}$ a finite Abelian group.

\begin{definition}
 We say that $G$ acts \newterm{positively} on $\mathbb{A}^n$ if with respect to the induced $\mathbb{G}_m$-action all nonzero linear functions on $\mathbb{A}^n$ have positive degree. 
\end{definition}

We have a $\mathbb{G}_m$-equivariant isomorphism $\omega_{\mathbb{A}^n} \cong \mathcal O_{\mathbb{A}^n}(N)$ for $N$ equal to the sum of the degrees of $x_i$ if $\mathbb{A}^n = \op{Spec} k[x_1,\ldots,x_n]$.

\begin{theorem} \label{theorem: Orlov}
 Let $w \in \Gamma(\mathbb{A}^n,\mathcal O_{\mathbb{A}^n}(\chi))^G$ for a character $\chi: G \to \mathbb{G}_m$ with $\chi|_{\mathbb{G}_m} = d > 0$. Let $Y$ be the zero locus of $w$ on punctured affine space $\mathbb{A}^n\setminus \{0\}$. If $G = \mathbb{G}_m$ and $N=n$, let $Z$ denote the projective hypersurface determined by $w$. 
 
 Assume $w$ is not zero and that $Y$ is smooth. Further, assume that $G$ acts positively.
 
 \begin{itemize}
  \item If $d  < N$, then there exists morphisms in $\op{Ho}(\op{dg-cat}_k)$
  \begin{align*}
   \Phi & : \mathsf{Inj}_{\op{coh}}(\mathbb{A}^n,G,w) \to \mathsf{Inj}_{\op{coh}_G}(Y) \\
   \Phi^! & : \mathsf{Inj}_{\op{coh}_G}(Y) \to \mathsf{Inj}_{\op{coh}}(\mathbb{A}^n,G,w)
  \end{align*}
  and a semi-orthogonal decomposition
  \begin{displaymath}
   \op{D}^{\op{b}}(\op{coh}_G Y) = \left \langle \bigoplus_{\alpha|_{\mathbb{G}_m} = d - N} \mathcal O_Y (\alpha), \ldots, \bigoplus_{\alpha|_{\mathbb{G}_m} = -1} \mathcal O_Y (\alpha), [\Phi]\left( [\mathsf{Inj}_{\op{coh}}(\mathbb{A}^n,G,w)]  \right) \right \rangle.
  \end{displaymath}
  
  Moreover, if $G = \mathbb{G}_m$ and $N=n$, there are quasi-isomorphisms of bimodules
  \begin{align*}
   \Phi^! \circ \{1\} \circ \Phi & \cong (1) \\
   \Phi^! \circ \Phi & \cong \nabla
  \end{align*}
  and 
  \begin{displaymath}
   [\Phi^!] \mathcal O_Z(i) \cong 0
  \end{displaymath}
  for $d-N \leq i \leq -1$.
  
  \item If $d = N$, then there exists inverse morphisms in $\op{Ho}(\op{dg-cat}_k)$
  \begin{align*}
   \Phi & : \mathsf{Inj}_{\op{coh}}(\mathbb{A}^n,G,w) \to \mathsf{Inj}_{\op{coh}_G}(Y) \\
   \Psi & : \mathsf{Inj}_{\op{coh}_G}(Y) \to \mathsf{Inj}_{\op{coh}}(\mathbb{A}^n,G,w).
  \end{align*}
  If, in addition, $G = \mathbb{G}_m$ and $N=n$, there is a quasi-isomorphism of bimodules
  \begin{displaymath}
   \{1\} \circ \Phi \cong \Phi \circ (1).
  \end{displaymath}
  Moreover, for each $s \in k[x_1,\ldots,x_n]$ homogeneous of degree $i$, the natural transformations of exact functors,
  \begin{align*}
   s: \op{Id}_{\dabs[\mathsf{fact}(\mathbb{A}^n,\mathbb{G}_m,w)]} \to (i) \\
   s: \op{Id}_{\dbcoh{Z}} \to T_{\mathcal O_Z(i)}
  \end{align*}
  satisfy the identity
  \begin{displaymath}
   \Phi(s) = \varsigma(\mathcal O_Z) \circ \cdots \circ \varsigma(\mathcal O_Z(i-1)) \circ s : \op{Id} \to \Phi \circ (i) \circ \Phi^{-1} \cong L_{\mathcal O_Z} \circ \cdots L_{\mathcal O_Z(i-1)} \circ T_{\mathcal O_Z(i)}.
  \end{displaymath}
  
  \item If $d > N$, then there exists morphisms in $\op{Ho}(\op{dg-cat}_k)$
  \begin{align*}
   \Psi^! & : \mathsf{Inj}_{\op{coh}}(\mathbb{A}^n,G,w) \to \mathsf{Inj}_{\op{coh}_G}(Y) \\
   \Psi & : \mathsf{Inj}_{\op{coh}_G}(Y) \to \mathsf{Inj}_{\op{coh}}(\mathbb{A}^n,G,w)
  \end{align*}
  and a semi-orthogonal decomposition
  \begin{displaymath}
   \dabs [\mathsf{fact}(\mathbb{A}^n,G,w)]  = \left \langle \bigoplus_{ \alpha|_{\mathbb{G}_m} = -1 + d - N} k(\alpha), \ldots, \bigoplus_{ \alpha|_{\mathbb{G}_m} = 0} k(\alpha), [\Psi]\left( [\mathsf{Inj}_{\op{coh}_G}(Y)]  \right) \right \rangle.
  \end{displaymath}
  
   Moreover, if $G = \mathbb{G}_m$ and $N=n$, there are quasi-isomorphisms of bimodules
  \begin{align*}
   \Psi^! \circ (1) \circ \Psi & \cong \{1\} \\
   \Psi^! \circ \Psi & \cong \Delta_* \mathcal O_Z.
  \end{align*}
  and 
  \begin{displaymath}
   [\Psi^!] k(j) \cong 0
  \end{displaymath}
  for $d-N-1 \geq j \geq 0$.
 \end{itemize}
\end{theorem}

\begin{proof}
 In the case that $\mathbb{G}_m = G$, in \cite[Theorem 2.13]{Orl09}, Orlov constructs the triangulated functors and the semi-orthogonal decompositions of the triangulated categories.  The isomorphisms on the level of triangulated functors were constructed in \cite[Proposition 5.8]{BFK}.  Cald\u{a}r\u{a}ru and Tu \cite[Theorem 5.9]{CT} lifted these functors to dg-functors between appropriate enhancements. We indicate the extension to $G$ as in the statement of the theorem.
 
 Consider the following diagram of dg-categories:
 \begin{center}
 \begin{tikzpicture}
   [scale=1, cone/.style={->,thick,>=stealth}]
   \node (a) at (0,0) {$\mathsf{Inj}_{\op{coh}_G, \geq i}(U)$};
   \node (b) at (-3,-1) {$\mathsf{vect}(\mathbb{A}^n,G,w)^{\op{op}}\op{-Mod}$};
   \node (c) at (3,-1) {$\mathsf{Inj}_{\op{coh}_G}(Y)$};
   \draw[cone] (a) -- node[above] {$\Upsilon_i$} (b);
   \draw[cone] (a.south) -- node[below] {$\pi_i$} (c.west);
   \draw[cone] (c.north) -- node[above] {$\omega_i$} (a.east);
 \end{tikzpicture}
 \end{center}
 Here $U$ is zero locus of $w$ in $\mathbb{A}^n$. The dg-category $\mathsf{Inj}_{\op{coh}_G, \geq i}(U)$ consists of bounded below complexes of injective $G$-equivariant sheaves on $U$ whose cohomology lies in $\mathbb{G}_m$-degrees $\geq i$, is bounded, and finitely-generated. Let $\Upsilon_i$ denote the restriction of $\Upsilon$ to $\mathsf{Inj}_{\op{coh}_G, \geq i}(U)$ which is then naturally a dg-module for $\mathsf{vect}_{\op{coh}}(\mathbb{A}^n,G,w)$. It is easy to see that $\Upsilon_i$ is a quasi-functor.
 
 Finally, let $\pi$ the restriction along the inclusion $Y \to U$, $\pi_i$ the restriction of $\pi$ to $\mathsf{Inj}_{\op{coh}_G, \geq i}(U)$, and let $\omega_i$ denote the functor,
 \begin{displaymath}
  \omega_i(\mathcal F) := \bigoplus_{\substack {\alpha \in \widehat{G} \\ \alpha|_{\mathbb{G}_m} \geq i}} \op{H}^0(Y, \mathcal F(\alpha)). 
 \end{displaymath}
 Note that, as $\omega_i$ is right adjoint to $\pi_i$ at the level of the Abelian category of equivariant sheaves, the corresponding dg-functors are also adjoint. 
 
 Next, define $\mathsf{D}_i$ to be the quasi-essential image of $\omega_i$, in particular $\mathsf{D}_i$ is closed under quasi-isomorphism, and $\mathsf{P}_{\geq i}$ to be the full dg-subcategory of $\mathsf{Inj}_{\op{coh}_G, \geq i}(U)$ containing the injective resolutions of $\mathcal O_U(\alpha)$ for $\alpha|_{\mathbb{G}_m} \leq i$. Finally, let $\mathsf{T}_i$ be the full dg-subcategory containing all $\mathcal F$ that satisfy
 \begin{displaymath}
  \op{H}^{\bullet}\left (\op{Hom}_{\mathsf{Inj}_{\op{coh}_G, \geq i}(U)}(\mathcal F, \mathcal P) \right) = 0
 \end{displaymath}
 for all $\mathcal P \in \mathsf{P}_{\geq i}$. 
 
 As $\pi \circ \omega_i = \op{Id}$, the restriction of $\pi_i$ to $\mathsf{D}_i$ is a quasi-equivalence and $\omega_i$ its inverse. Following arguments of \cite{Orl09}, which we suppress, the restriction of $\Upsilon$ to $\mathsf{T}_i$ is a quasi-equivalence. Let $\nu_i$ be the inverse to $\Upsilon|_{\mathsf{T}_i}$ in $\op{Ho}(\op{dg-cat}_k)$. 
  One then sets 
 \begin{align*}
  \Phi_i & := \pi \circ \nu_i, \Phi := \Phi_1 \\
  \Phi_i^! & := \Upsilon \circ \omega_i, \Phi^! := \Phi^!_1 \\
  \Psi_i & := \Upsilon \circ \omega_i, \Psi := \Psi_1  \\
  \Psi_i^! & := \pi \circ \nu_{i-d+n}, \Psi^! := \Psi^!_1. 
 \end{align*}
 
 The proofs of the existence of the semi-orthogonal decompositions follow along the same arguments of \cite{Orl09} using the fact that
 \begin{displaymath}
  \mathbf{R}\!\op{Hom}_{\mathsf{Qcoh} U}(\bullet, \mathcal O_U) : \op{D}^{\op{b}}(\op{coh}_G U)^{\op{op}} \to \op{D}^{\op{b}}(\op{coh}_G U)
 \end{displaymath}
 is an equivalence satisfying
 \begin{displaymath}
  \mathbf{R}\!\op{Hom}_{\mathsf{Qcoh} U}(k,\mathcal O_U) \cong k(\nu)[-n]
 \end{displaymath}
 for $\nu \in \widehat{G}$ with $\nu|_{\mathbb{G}_m} = N$.
 
 In the case $\mathbb{G}_m = G$ and $n=N$, we have an equivalence $\op{Qcoh}_G Y \cong \op{Qcoh} Z$. The statements that 
 \begin{displaymath}
  [\Phi^!] \mathcal O_Z(i) \cong 0
 \end{displaymath}
 for $d-N \leq i \leq -1$ and 
 \begin{displaymath}
  [\Psi^!] k(j) \cong 0
 \end{displaymath}
 for $d-N-1 \geq j \geq 0$ follow immediately from \cite{Orl09}. 
 
 The only remaining statement to check is that concerning the existence of quasi-isomorphisms between the stated bimodules. It suffices to show that the corresponding dg-functors are naturally quasi-isomorphic.  

 Now, consider the following dg-functor,
 \begin{align*}
  M: \mathsf{Inj}_{\op{coh}_G, \geq i}(U) & \to \mathsf{Inj}_{\op{coh}_G, \geq 0}(U) \\
  \mathcal E & \mapsto \op{Cone} \left(\op{Hom}_{\mathsf{Inj}_{\op{coh}_G, \geq 1}(U)}(\widetilde{\mathcal O}_U,\mathcal E(1)) \otimes_k (\widetilde{\mathcal O}_U \overset{ev}{\to} \mathcal E(1) \right)
 \end{align*}
 where $(\widetilde{\mathcal O}_U$ is an injective resolution of $\mathcal O_U$. Note that we have a natural transformation $\eta: (1) \to M$. 
 
 Consider the diagram
 \begin{center}
  \begin{tikzpicture}[description/.style={fill=white,inner sep=2pt}]
   \matrix (m) [matrix of math nodes, row sep=3em, column sep=4em, text height=1.5ex, text depth=0.25ex]
   { \mathsf{Inj}_{\op{coh}_G, \geq i}(U)  & \mathsf{Inj}_{\op{coh}_G, \geq 0}(U)  \\ 
     \mathsf{Inj}_{\op{coh}}(Z) & \mathsf{Inj}_{\op{coh}}(Z) \\ };
   \path[->,font=\scriptsize]
   (m-1-1) edge node[above] {$M$} (m-1-2)
   (m-2-1) edge node[left] {$\omega_1$} (m-1-1)
   (m-1-2) edge node[right] {$\pi$} (m-2-2)
   (m-2-1) edge node[above] {$L_{\mathcal O_Z} \circ T_{\mathcal O_Z(1)}$} (m-2-2)
   ;
  \end{tikzpicture}
 \end{center}
 The composition equals
 \begin{displaymath}
  (\pi \circ M \circ \omega_i)(\mathcal E) := \op{Cone} \left( \op{Hom}_{\mathsf{Inj}_{\op{coh}_G, \geq 0}(U)}(\widetilde{\mathcal O}_U,\omega_i \mathcal E(1)) \otimes_k \pi \widetilde{\mathcal O}_U \overset{\op{ev}}{\to} (\pi \circ \omega_i)(\mathcal E(1))\right).
 \end{displaymath}
 Using the adjunction, $\pi \dashv \omega_i$, and the identity, $\pi \circ \omega_i \cong \op{Id}$, the composition is isomorphic to
 \begin{displaymath}
  \op{Cone} \left( \op{Hom}_{\mathsf{Inj}_{\op{coh}}(Z)}(\widetilde{\mathcal O}_Z, \mathcal E(1)) \otimes_k \widetilde{\mathcal O}_Z \overset{\op{ev}}{\to} \mathcal E(1) \right) = \mathcal E\{1\}.
 \end{displaymath}
 Thus, we have a natural isomorphism
 \begin{equation} \label{eq: formula for M}
  \pi \circ M \circ \omega_i \cong \{1\}.
 \end{equation}
 We will use this equation in both cases.

 Now, assume that $d \leq n$ and consider the composition
 \begin{displaymath}
  \Phi^! \circ \{1\} \circ \Phi = \Upsilon \circ \omega_1 \circ \{1\} \circ \pi \circ \nu_1.
 \end{displaymath}
 We can substitute 
 \begin{displaymath}
  \Upsilon \circ \omega_1 \circ \{1\} \circ \pi \circ \nu_1 \cong \Upsilon \circ \omega_1 \circ \pi \circ M \circ \omega_1 \circ \pi \circ \nu_1.
 \end{displaymath}
 Since the image of $\nu_1$ lies in $\mathsf{D}_1$ by \cite{Orl09}, we have
 \begin{displaymath}
  \omega_1 \circ \pi \circ \nu_1 \cong \nu_1.
 \end{displaymath}
 One can check, as in \cite[Lemma 5.7]{BFK}, that $M \circ \nu_1$ has quasi-essential image in $\mathsf{D}_1$, thus we have a natural quasi-isomorphism
 \begin{displaymath}
  M \circ \nu_1 \to \omega_1 \circ \pi \circ M \circ \nu_1.
 \end{displaymath}
 This gives a natural quasi-isomorphism
 \begin{displaymath}
  \Phi^! \circ \{1\} \circ \Phi \simeq \Upsilon \circ M \circ \nu_1.
 \end{displaymath}
 The composition
 \begin{displaymath}
  \Upsilon \circ (1) \circ \nu_1 \overset{\Upsilon(\eta_{\nu_1})}{\to} \Upsilon \circ M \circ \nu_1 
 \end{displaymath}
 is a quasi-isomorphism for all objects as $\Upsilon (\widetilde{\mathcal O}_U)$ is acyclic. Thus, using the above and Equation~\eqref{eq: formula for M}, we have a quasi-isomorphism
 \begin{displaymath}
  \Phi^! \circ \{1\} \circ \Phi \simeq \Upsilon \circ M \circ \nu_1 \simeq \Upsilon \circ (1) \circ \nu_1 = (1).
 \end{displaymath}
 
 Now, assume that $d \geq n$ and consider the composition
 \begin{displaymath}
  \Psi^! \circ (1) \circ \Psi = \pi \circ \nu_{1-d+n} \circ (1) \circ \Upsilon \circ \omega_{1}.
 \end{displaymath}
 One has a natural quasi-isomorphism
 \begin{displaymath}
  (1) \circ \Upsilon = \Upsilon \circ (1) \overset{\Upsilon(\eta)}{\to} \Upsilon \circ M.
 \end{displaymath}
 Thus, 
 \begin{displaymath}
  \pi \circ \nu_{1-d+n} \circ (1) \circ \Upsilon \circ \omega_1 \cong \pi \circ \nu_{1-d+n} \circ \Upsilon \circ M \circ \omega_1.
 \end{displaymath}
 As $\mathsf{D}_1 \subset \mathsf{T}_{1-d+n}$ by \cite{Orl09} and $M(\mathsf{D}_1)$ lies in $\mathsf{D}_1$, we have
 \begin{displaymath}
  \pi \circ \nu_{1-d+n} \circ \Upsilon \circ M \circ \omega_1 \cong \pi \circ M \circ \omega_1 \cong \{1\}
 \end{displaymath}
 where the last quasi-isomorphism is Equation~\eqref{eq: formula for M}.

 Finally, let us assume that $d=N=n$ and $G = \mathbb{G}_m$. Let $s \in k[x_1,\ldots,x_n]$ be homogeneous of degree $1$, the natural transformations of exact functors,
 \begin{align*}
  s: \op{Id}_{\dabs[\mathsf{fact}(\mathbb{A}^n,\mathbb{G}_m,w)]} \to (1) \\
  s: \op{Id}_{\dbcoh{Z}} \to T_{\mathcal O_Z(1)}.
 \end{align*}
 Let $\mathcal E$ be an object of $\op{Inj}_{\op{coh}}(Z)$ and consider $s: \mathcal E \to T_{\mathcal O_Z(1)}(\mathcal E)$. Applying $\omega_1$ gives a morphism 
 \begin{displaymath}
  \omega_1(s): \omega_1(\mathcal E) \to \omega_1(\mathcal E)_{\geq 2}(1).
 \end{displaymath}
 Composing with the  inclusion
 \begin{displaymath}
   \omega_1(\mathcal E)_{\geq 2}(1) \to \omega_1(\mathcal E)(1)
 \end{displaymath}
 equals 
 \begin{displaymath}
  s: \omega_1(\mathcal E) \to \omega_1(\mathcal E)(1).
 \end{displaymath}
 Apply the dg-functor
 \begin{displaymath}
  L_{\mathcal O_U}(\mathcal I) := \op{Cone}(\op{Hom}(\widetilde{\mathcal O}_U, \mathcal I) \otimes_k \widetilde{\mathcal O}_U \to \mathcal I).
 \end{displaymath}
 We get a map
 \begin{displaymath}
  \varsigma(\mathcal O_U)_{\omega_1(\mathcal E)} \circ s: \omega_1(\mathcal E) \to L_{\mathcal O_U}(\omega_1(\mathcal E)(1)).
 \end{displaymath}
 Since $\omega_1(\mathcal E)_{\geq 2}(1)$ is concentrated in homogeneous degrees $\geq 1$, we have
 \begin{displaymath}
  \op{H}^{\bullet}(\op{Hom}(\widetilde{\mathcal O}_U, \omega_1(\mathcal E)_{\geq 2}(1))) = \op{H}^{\bullet}(\op{Hom}(\mathcal O_U, \omega_1(\mathcal E)_{\geq 2}(1))) = 0.
 \end{displaymath}
 Thus, $\varsigma(\mathcal O_U)_{\omega_1(\mathcal E)_{\geq 2}(1)}: \omega_1(\mathcal E)_{\geq 2}(1) \to L_{\mathcal O_U}(\omega_1(\mathcal E)_{\geq 2}(1))$, is a quasi-isomorphism. Applying $\pi$, gives 
 \begin{displaymath}
  \Phi(s) = \varsigma(\mathcal O_Z)_{\mathcal E} \circ s: \mathcal E \to L_{\mathcal O_Z} \circ T_{\mathcal O_Z(1)}(\mathcal E)
 \end{displaymath}
 on $\dbcoh{Z}$. It is straightforward to check there are isomorphisms
 \begin{displaymath}
  \Phi \circ (i) \circ \Phi^{-1} \cong \{i\} \cong L_{\mathcal O_Z} \circ \cdots \circ L_{\mathcal O_Z(i-1)} \circ T_{\mathcal O_Z(i)}.
 \end{displaymath}
 We have two algebra homomorphisms 
 \begin{displaymath}
  S \to \bigoplus_{i \in \Z} \op{Nat}(\op{Id},\{i\})
 \end{displaymath}
 where $\op{Nat}$ denotes natural transformations. The first is given by conjugation by $\Phi$ while the second is
 \begin{displaymath}
  s \mapsto \varsigma(\mathcal O_Z) \circ \cdots \circ \varsigma(\mathcal O_Z(i-1)) \circ s.
 \end{displaymath}
 for $s \in S_i$. These agree on generators for $S$ and hence agree overall. 
\end{proof}

\begin{remark}
 From the arguments above, it is clear that in the case $G = \mathbb{G}_m$ and $d=N=n$, that $\Phi(k[1]) \cong \mathcal O_Z$.
\end{remark}

\begin{remark}
 One could also apply the results in \cite{BFK12} on VGIT for equivariant factorizations. Or, one could directly lift the statements of \cite{Orl09} using the results of \cite{Elagin}.
\end{remark}

\begin{remark}
 The case $G \not = \mathbb{G}_m$ will be used in \cite{BFKthesequel}. Henceforth, we will only apply Theorem~\ref{theorem: Orlov} under the assumption that $G = \mathbb{G}_m$ act in the usual manner on $\mathbb{A}^n$.
\end{remark}

\begin{corollary} \label{corollary: Orlov on HH}
 Let $w$ be a degree $d$ homogeneous polynomial in $k[x_1,\ldots,x_n]$ with its standard grading. Let $Z$ be the projective hypersurface defined by $w$. Assume that $Z$ is smooth.
 \begin{itemize}
  \item If $d < n$, we have a commutative diagram of vector spaces
   \begin{center}
   \begin{tikzpicture}[description/.style={fill=white,inner sep=2pt}]
    \matrix (m) [matrix of math nodes, row sep=3em, column sep=3em, text height=1.5ex, text depth=0.25ex]
    { \op{HH}_{\bullet}(Z) & \op{HH}_{\bullet}(Z) \\ 
      \op{HH}_{\bullet}(\mathbb{A}^n, \mathbb{G}_m, w) & \op{HH}_{\bullet}(\mathbb{A}^n, \mathbb{G}_m, w) \\ };
    \path[->,font=\scriptsize]
    (m-1-1) edge node[above] {$\{ 1 \}_{\bullet}$} (m-1-2)
    (m-2-1) edge node[left] {$\Phi_{\bullet}$} (m-1-1)
    (m-1-2) edge node[right] {$\Phi^!_{\bullet}$} (m-2-2)
    (m-2-1) edge node[above] {$(1)_{\bullet}$} (m-2-2)
    ;
   \end{tikzpicture}
   \end{center} 
   Moreover, 
   \begin{align*}
    \Phi^!_{\bullet} \circ \Phi_{\bullet} & = 1, 
   \end{align*}
  the functor $\Phi^!_{\bullet}$ is right adjoint to $\Phi_{\bullet}$ under the categorical pairing,
   and we have an orthogonal decomposition
   \begin{displaymath}
    \op{HH}_{\bullet}(Z) = \op{Im} \Phi_{\bullet} \oplus \bigoplus_{j=d-n-1}^{-1} \C \cdot \op{ch}(\mathcal O_Z(j)). 
   \end{displaymath}

  \item If $d=n$, we have a commutative diagram of vector spaces
   \begin{center}
   \begin{tikzpicture}[description/.style={fill=white,inner sep=2pt}]
    \matrix (m) [matrix of math nodes, row sep=3em, column sep=3em, text height=1.5ex, text depth=0.25ex]
    { \op{HH}_{\bullet}(Z) & \op{HH}_{\bullet}(Z) \\ 
      \op{HH}_{\bullet}(\mathbb{A}^n, \mathbb{G}_m, w) & \op{HH}_{\bullet}(\mathbb{A}^n, \mathbb{G}_m, w) \\ };
    \path[->,font=\scriptsize]
    (m-1-1) edge node[above] {$\{ 1 \}_{\bullet}$} (m-1-2)
    (m-2-1) edge node[left] {$\Phi_{\bullet}$} (m-1-1)
    (m-2-2) edge node[right] {$\Phi_{\bullet}$} (m-1-2)
    (m-2-1) edge node[above] {$(1)_{\bullet}$} (m-2-2)
    ;
   \end{tikzpicture}
   \end{center}
   and $\Phi_{\bullet}$ is an isomorphism.
   
  \item If $d > n$, we have a commutative diagram of vector spaces
   \begin{center}
   \begin{tikzpicture}[description/.style={fill=white,inner sep=2pt}]
    \matrix (m) [matrix of math nodes, row sep=3em, column sep=3em, text height=1.5ex, text depth=0.25ex]
    { \op{HH}_{\bullet}(Z) & \op{HH}_{\bullet}(Z) \\ 
      \op{HH}_{\bullet}(\mathbb{A}^n, \mathbb{G}_m, w) & \op{HH}_{\bullet}(\mathbb{A}^n, \mathbb{G}_m, w) \\ };
    \path[->,font=\scriptsize]
    (m-1-1) edge node[above] {$\{ 1 \}_{\bullet}$} (m-1-2)
    (m-1-1) edge node[left] {$\Psi_{\bullet}$} (m-2-1)
    (m-2-2) edge node[right] {$\Psi^!_{\bullet}$} (m-1-2)
    (m-2-1) edge node[above] {$(1)_{\bullet}$} (m-2-2)
    ;
   \end{tikzpicture}
   \end{center} 
   Moreover, 
   \begin{align*}
    \Psi^!_{\bullet} \circ \Psi_{\bullet} & = 1,
   \end{align*}
   the functor  $\Psi^!_{\bullet}$ is right adjoint to $\Psi_{\bullet}$ under the categorical pairing, and we have an orthogonal decomposition
    \begin{displaymath}
     \op{HH}_{\bullet}(\mathbb{A}^n, \mathbb{G}_m, w) = \op{Im} \Psi_{\bullet} \oplus \bigoplus_{j = 0}^{d-n-1} \C \cdot \op{ch}(k (j) ).
    \end{displaymath}
 \end{itemize}
\end{corollary}

\begin{proof}
 All statements but the adjunction and orthogonal decomposition are immediate consequences of Theorem~\ref{theorem: Orlov} and the functoriality for pushforwards, \cite[Section 1]{PV}. We check the adjunctions. 
 
 We only provide an argument for the case $d > n$. The case $d < n$ is analogous. We have a splitting 
 \begin{equation} \label{equation: splitting}
  \op{HH}_{\bullet}(\mathbb{A}^n, \mathbb{G}_m, w) = \op{Im} \Psi_{\bullet} \oplus \op{ker} \Psi^!_{\bullet}.
 \end{equation}
 Counting dimensions, we also have an orthogonal decomposition
 \begin{displaymath}
  \op{HH}_{\bullet}(\mathbb{A}^n, \mathbb{G}_m, w) = \op{Im} \Psi_{\bullet} \oplus \bigoplus_{j = 0}^{d-n-1} \C \cdot \op{ch}(k (j) ).
 \end{displaymath}
 Thus, 
 \begin{displaymath}
  \op{ker} \Psi^!_{\bullet} = \bigoplus_{j = 0}^{d-n-1} \C \cdot \op{ch}(k (j) )
 \end{displaymath}
 and the splitting of Equation~\eqref{equation: splitting} is orthogonal with respect to the Mukai pairing. The adjunction now follows via a straightforward linear algebra argument.
\end{proof}

\begin{remark}
 For the case, $d \leq n$, the argument can be significantly simplified using \cite[Theorem 7.1]{KuzBC}.  This result guarantees a splitting of $\op{HH}_{\bullet}(X)$  for any semi-orthogonal decomposition of $\dbcoh{X}$ at the triangulated level without having to prove anything at the level of dg-categories.   For the sake of this utility, we will appeal to this result in Section~\ref{section: cycles}.
\end{remark}

\begin{definition}
 Let $T: V \to V$ be a linear endomorphism of a vector space, $V$, over $\C$, and let $\lambda \in \C$. We denote the $\lambda$-eigenspace of $T$ by $E_{\lambda}(T)$.
\end{definition}

\begin{lemma} \label{lemma: eigenspace of monodromy}
 Under the HKR isomorphism, Theorem~\ref{theorem: HKR}, there is an equality
 \begin{displaymath}
  \phi_{\op{HKR}}\left( E_1(\{1\}_{\bullet}) \right) = \op{H}^{\bullet}_{\op{prim}}(Z ; \C).
 \end{displaymath}
\end{lemma}

\begin{proof}
 Let us first observe that 
 \begin{displaymath}
   \phi_{\op{HKR}}^{-1}\left( \op{H}^{\bullet}_{\op{prim}}(Z ; \C) \right) \subseteq E_1(\{1\}_{\bullet}).
 \end{displaymath}
 It easy to check, cf. \cite[Exercise 5.37]{H}, that, for $v \in \op{H}^{\bullet}(Z ; \C)$,
 \begin{displaymath}
  T_{\mathcal O_Z(1)}^H (v) = v \cdot \op{ch}_{class}(\mathcal O_Z(1)).
 \end{displaymath}
 If we assume that $v$ is primitive, then
 \begin{displaymath}
  v \cdot \op{ch}_{class}(\mathcal O_Z(1)) = v. 
 \end{displaymath}
 It is also easy to verify, cf.\cite[Exercise 8.15]{H}, that 
 \begin{displaymath}
  L_{\mathcal{O}_Z}^H(v) = v - ( \op{ch}_{class}(\mathcal O_Z), v )_M \op{ch}_{class}(\mathcal O_Z).
 \end{displaymath} 
 By definition, the pairing is expressed as 
 \begin{displaymath}
  ( \op{ch}_{class}(\mathcal O_Z), v )_M = \int_Z \op{ch}_{class}(\mathcal O_Z)^{\vee} \cdot v \cdot \op{td}(Z) = \int_Z v \cdot \op{td}(Z).
 \end{displaymath}
 As the Todd class, $\op{td}(Z)$, is of the form $1 + Hp(H)$ for some polynomial $p$, and $v$ is primitive, we have
 \begin{displaymath}
  \int_Z v \cdot \op{td}(Z) = \int_Z v.
 \end{displaymath}
 However, by the Lefschetz Hyperplane Theorem, primitive classes cannot have top dimensional components. Hence,
 \begin{displaymath}
  \int_Z v = 0
 \end{displaymath}
 and
 \begin{displaymath}
  L_{\mathcal{O}_Z}^H(v) = v.
 \end{displaymath}
 As the cohomological integral transform $\{1\}^H$ corresponds to $\{1\}_{\bullet}$ under the HKR isomorphism, Theorem~\ref{theorem: HKR}, we see that $\phi^{-1}_{\op{HKR}}(v) \in E_1(\{1\}_{\bullet})$.
 
 Next, let $\bigoplus_{i=0}^{n-2} \C \cdot H^i$ be the subspace of $\op{H}^{\bullet}(Z ; \C)$ corresponding to powers of the hyperplane class, $H$. Assume that $v = \sum_{i=0}^{n-2} a_i H^i$ lies in $\phi_{\op{HKR}}\left( E_1(\{1\}_{\bullet}) \right) = E_1(\{1\}^H)$. Then, 
 \begin{align*}
  a_0 & = a_0 - ( \op{ch}_{class}(\mathcal O_Z), v )_M \\
  a_i & = \sum_{j=0}^i \frac{a_j}{(i-j)!}, i > 0.
 \end{align*}
 This immediately implies that $v = 0$. As the induced map on cohomology $\{1\}^H$ preserves the splitting
 \begin{displaymath}
  \op{H}^{\bullet}(Z ; \C) = \op{H}^{\bullet}_{\op{prim}}(Z ; \C) \oplus \bigoplus_{i=0}^{n-2} \C \cdot H^i,
 \end{displaymath}
 we see that 
 \begin{displaymath}
  E_1(\{1\}^H) = \op{H}_{\op{prim}}^{\bullet}(Z ; \C)
 \end{displaymath}
 and
 \begin{displaymath}
  \phi^{-1}_{\op{HKR}}( \op{H}^{\bullet}_{\op{prim}}(Z ; \C) ) = E_1(\{1\}_{\bullet}).
 \end{displaymath}
\end{proof}

\begin{theorem} \label{theorem: eigenspaces preserved} 
 Let $w$ be a homogeneous polynomial of degree $d$ in $\C[x_1,\ldots,x_n]$. Assume that $w$ defines a smooth projective hypersurface, $Z$. 
 \begin{itemize}
  \item If we assume $d \leq n$, then the linear map, $\Phi_{\bullet}$, induces an isomorphism,
  \begin{displaymath}
   \Phi_{\bullet}: E_1(\{1\}_{\bullet}) \to E_1((1)_{\bullet}). 
  \end{displaymath}
  \item If we assume $d \geq n$, then the linear map, $\Psi^!_{\bullet}$, induces an isomorphism,
  \begin{displaymath}
   \Psi^!_{\bullet}: E_1(\{1\}_{\bullet}) \to E_1((1)_{\bullet}). 
  \end{displaymath}
 \end{itemize}
In particular, Orlov's theorem and the HKR isomorphism provide isomorphisms,
 \begin{displaymath}
  \op{H}^{p,n-2-p}_{\op{prim}}(Z) \cong \op{Jac}(w)_{d(n-1-p)-n}.
  \end{displaymath}
\end{theorem}

\begin{proof}
 Let us treat the case $d \leq n$ first.  Let $v \in E_1(\{1\}_{\bullet})$. By Lemma~\ref{lemma: eigenspace of monodromy}, $\phi_{\op{HKR}}(v) \in \op{H}_{\op{prim}}^{\bullet}(Z;\C)$. Thus, $v$ is orthogonal to $\op{ch}(\mathcal O_Z(j))$ under the Mukai pairing for each $j \in \Z$. By Corollary~\ref{corollary: Orlov on HH}, we have an orthogonal decomposition
 \begin{displaymath}
  \op{HH}_{\bullet}(Z) = \Phi_{\bullet} \op{HH}_{\bullet}(\mathbb{A}^n,\mathbb{G}_m,w) \oplus \bigoplus_{j = d-n}^{-1} \C \cdot \op{ch}(\mathcal O_Z(j)).
 \end{displaymath}
 Write $v = \Phi_{\bullet}v' \oplus v''$ with respect to this decomposition. Thus, for $j \in \Z$,
 \begin{displaymath}
  0 = (\phi_{\op{HKR}}(v), \phi_{\op{HKR}}(\op{ch}(\mathcal O_Z(j)) )_M = \langle v, \op{ch}(\mathcal O_Z(j)) \rangle = \langle v'', \op{ch}(\mathcal O_Z(j)) \rangle
 \end{displaymath}
 as $\phi_{\op{HKR}}(v) \in \op{H}_{\op{prim}}^{\bullet}(Z;\C)$ and $\phi_{\op{HKR}}(\op{ch}(\mathcal O_Z(j)) = \op{ch}_{class}(\mathcal O_Z(j)) \in \bigoplus_{i=0}^{n-2} \C \cdot H^i$ are orthogonal with respect to the Mukai pairing. Due to their exceptionality, the set of vectors $\op{ch}(\mathcal O_Z(d-n)), \ldots, \op{ch}(\mathcal O_Z(-1))$ forms an orthonormal basis for $\bigoplus_{j = d-n}^{-1} \C \cdot \op{ch}(\mathcal O_Z(j))$. Consequently, $v'' = 0$.
 
 Using Corollary~\ref{corollary: Orlov on HH} repeatedly, we have
 \[
 (1)_{\bullet}(v') =  \Phi^!_{\bullet} \{1\}_\bullet \Phi_{\bullet} v' =  \Phi^!_{\bullet}  \Phi_{\bullet} v' = v'
 \]
 i.e.\ $v' \in E_1((1)_{\bullet})$. Thus, $\Phi_{\bullet}$ maps $ E_1((1)_{\bullet})$ monomorphically into $E_1(\{1\}_{\bullet})$. Counting dimensions, we see this is an isomorphism.

 Now, let us turn our attention to $d \geq n$. By Theorem~\ref{theorem: Orlov}, we have an orthogonal decomposition
 \begin{displaymath}
  \op{HH}_{\bullet}(\mathbb{A}^n,\mathbb{G}_m,w)  = \Psi_{\bullet} \op{HH}_{\bullet}(Z) \oplus \bigoplus_{j=0}^{d-n-1} \C \cdot \op{ch}(k(j)).
 \end{displaymath}
 Assume that $v \in E_1(\{1\}_{\bullet})$. Write
 \begin{displaymath}
  ((1)_{\bullet} \circ \Psi_{\bullet})(v) = \Psi_{\bullet} v \oplus v'
 \end{displaymath}
 with respect to this decomposition. Let us compute 
 \begin{displaymath}
  \langle \op{ch}(k(j)), v' \rangle 
 \end{displaymath}
 for some $j \in \Z$. By orthogonality, we have
 \begin{displaymath}
  \langle \op{ch}(k(j)), v' \rangle = \langle \op{ch}(k(j)), ((1)_{\bullet} \circ \Psi_{\bullet})(v) \rangle.
 \end{displaymath}
 Since $(-1)$ is inverse to $(1)$ and $\Psi_{\bullet} \dashv \Psi^!_{\bullet}$, from Corollary~\ref{corollary: Orlov on HH}, we have
 \begin{displaymath}
  \langle \op{ch}(k(j)), ((1)_{\bullet} \circ \Psi_{\bullet})(v) \rangle = \langle (\Psi^!_{\bullet} \circ (-1)_{\bullet})(\op{ch}(k(j))), v \rangle.
 \end{displaymath}
 Using the functorial properties of pushforwards, we have
 \begin{displaymath}
  (\Psi^!_{\bullet} \circ (-1)_{\bullet})(\op{ch}(k(j))) = \op{ch}\left( \Psi^!(k(j-1)) \right).
 \end{displaymath}
 It is easy to check, in Orlov's equivalence, that $\Psi^! k(j-1)$ lies in the smallest triangulated subcategory of $\dbcoh{Z}$ generated by the objects $ \mathcal O_Z(j) $, $j \in \Z$. Note that we do \textit{not} need to pass to direct summands. Thus, 
 \begin{displaymath}
   \op{ch}(\Psi^! k(j-1)) \in \bigoplus_{j=0}^{n-2} \C \cdot \op{ch}(\mathcal O_Z(j)) = \phi^{-1}_{\op{HKR}}\left( \bigoplus_{j=0}^{n-2} \C \cdot H^j \right).
 \end{displaymath}
 By Lemma~\ref{lemma: eigenspace of monodromy}, $\phi_{\op{HKR}}(v) \in \op{H}^{\bullet}_{\op{prim}}(Z ; \C)$. Thus, $v$ is orthogonal to $\op{ch}(\Psi^! k(j-1))$ for any $j \in \Z$. Therefore, $v'= 0$ and we have a well-defined monomorphism
 \begin{displaymath}
  \Psi_{\bullet} : E_1( \{ 1 \}_{\bullet} ) \to E_1( (1)_{\bullet} ).
 \end{displaymath}
 Counting dimensions finishes the argument.
 
 Now, to see that
 \begin{displaymath}
  \op{H}^{p,n-2-p}_{\op{prim}}(Z) \cong \op{Jac}(w)_{d(n-1-p)-n},
 \end{displaymath}
 notice that  by Corollary~\ref{corollary: projective hypersurface case} and Theorem~\ref{thm: PV}, we have an isomorphism
 \begin{displaymath}
  E_1((1)_{\bullet}) \cap \op{HH}_{t}(\mathbb{A}^n,\mathbb{G}_m,w) \cong \op{Jac}_{d(\frac{n+t}{2}) - n}.
 \end{displaymath}
 By Theorem~\ref{theorem: eigenspaces preserved}, we have an isomorphism
 \begin{displaymath}
  E_1(\{1\}_{\bullet}) \cap \op{HH}_{t}(Z) \cong E_1((1)_{\bullet}) \cap \op{HH}_{t}(\mathbb{A}^n,\mathbb{G}_m,w).
 \end{displaymath}
 From Lemma~\ref{lemma: eigenspace of monodromy} and Theorem~\ref{theorem: HKR}, we have an isomorphism
 \begin{displaymath}
  \op{H}^{\bullet}_{\op{prim}}(Z) \cap \bigoplus_{q-p=t} \op{H}^{p,q}(Z) \cong E_1(\{1\}_{\bullet}) \cap \op{HH}_{t}(Z).
 \end{displaymath}
 Since we only have primitive cohomology in the middle degree, we must have $p+q = n-2$. Solving for $t$ gives $t = n-2-2p$. Plugging in gives the statement.
\end{proof}

\begin{remark}
 One can also define $\op{H}_{\op{prim}}^{\bullet}(Z)$ as the orthogonal to $\sum_{i \in \Z} k \cdot \op{ch}(\mathcal O_Z(i))$ with respect to the categorical pairing. This extends Theorem~\ref{theorem: eigenspaces preserved} to other algebraically closed fields of characteristic zero. \end{remark}

\begin{remark}
In addition to having interesting Eigenspaces, the determinant of $\{1\}_\bullet$ is the geometric genus of the hypersurface.
\end{remark}

\begin{definition}
 Let $Z$ be a smooth, projective hypersurface. Let $\mathcal K$ be the object
 \begin{displaymath}
  \mathcal K := \mathcal I_{\Delta} \otimes_{\mathcal O_{Z \times Z}} \pi_1^*\mathcal O_{Z}(1)[1].
 \end{displaymath}
 Define the graded ring
 \begin{displaymath}
  S(Z) := \bigoplus_{i \geq 0} \op{Hom}_{\dbcoh{Z \times Z}}(\Delta_* \mathcal O_Z, \mathcal K^{\ast i})
 \end{displaymath}
 where $K^{\ast i}$ denotes $i$-th self-convolution $\mathcal K$, cf. \cite[Section 5.1]{H}.
\end{definition}

\begin{lemma} \label{lemma: kernel of monodromy}
 Assume that $Z$ is Calabi-Yau. There is an isomorphism of functors
 \begin{displaymath}
  \{1\} \cong \Phi_{\mathcal K}: \dbcoh{Z} \to \dbcoh{Z}
 \end{displaymath}
 and an injective homomorphism of graded rings
 \begin{displaymath}
  \op{Jac}(w) \to S(Z)
 \end{displaymath}
 where $w$ is the defining polynomial of $Z$.
\end{lemma}

\begin{proof}
 It is straightforward to check that we have a quasi-isomorphism of kernels, $\mathcal K \cong \{1\}$. Using Orlov's equivalence from Theorem~\ref{theorem: Orlov}, we get a isomorphism of graded rings
 \begin{displaymath}
  \bigoplus_{i \geq 0} \op{Hom}_{\dabs [\mathsf{fact}(\mathbb{A}^n \times \mathbb{A}^n, \mathbb{G}_m \times_{\mathbb{G}_m} \mathbb{G}_m, (-w) \boxplus w)]}(\nabla, \nabla(i)) \to \bigoplus_{i \geq 0} \op{Hom}_{\dbcoh{Z \times Z}}(\Delta_* \mathcal O_Z, \mathcal K^{\ast i}).
 \end{displaymath}
 There is a natural homomorphism of graded rings
 \begin{displaymath}
  k[x_1,\ldots,x_n] \to \bigoplus_{i \geq 0} \op{Hom}_{\dabs [\mathsf{fact}(\mathbb{A}^n \times \mathbb{A}^n, \mathbb{G}_m \times_{\mathbb{G}_m} \mathbb{G}_m, (-w) \boxplus w)]}(\nabla, \nabla(i))
 \end{displaymath}
 given by multiplying by a polynomial. By Theorem~\ref{thm: twisted HH*}, this induces a monomorphism
 \begin{displaymath}
  \op{Jac}(w) \to \bigoplus_{i \geq 0} \op{Hom}_{\dabs [\mathsf{fact}(\mathbb{A}^n \times \mathbb{A}^n, \mathbb{G}_m \times_{\mathbb{G}_m} \mathbb{G}_m, (-w) \boxplus w)]}(\nabla, \nabla(i)).
 \end{displaymath}
 The total composition is the desired homomorphism $\op{Jac}(w) \to S(Z)$.
\end{proof}

\begin{remark} 
 A natural question to ask of Griffiths' Residue Theorem is: where do all the other graded pieces of the Jacobian algebra go? Lemma~\ref{lemma: kernel of monodromy} provides the answer in terms of the derived category of $Z$ for a Calabi-Yau hypersurface. The whole Jacobian algebra sits as a graded subring of morphisms in $\dbcoh{Z \times Z}$ from the identity functor to powers of $\{1\}$. Certain powers of $\{1\}$ are shifts of the Serre functor. Those graded pieces of the Jacobian algebra then appear in $\op{HH}_{\bullet}(Z) \cong \op{H}^{\bullet}(Z ; \C)$. 
 
 In the Fano case, we have to replace $S(Z)$ with the graded algebra
 \begin{displaymath}
  \bigoplus_{i \geq 0} \op{Hom}_{\dbcoh{Z \times Z}}(\mathcal P, \mathcal P \ast \{i\} \ast \mathcal P)
 \end{displaymath}
 where $\mathcal P = \Phi \circ \Phi^!$ is the kernel associated to the inclusion of $\dabs [\mathsf{fact}(\mathbb{A}^n,\mathbb{G}_m,w)] \to \dbcoh{Z}$ as an admissible subcategory, \cite{KuzBC}.
 
 In the general type case, we have different kernels, $\mathcal K_i = \Psi^! \circ (i) \circ \Psi$, for each $i$. The natural repository for the Jacobian algebra is the graded vector space
 \begin{displaymath}
  \bigoplus_{i \geq 0} \op{Hom}_{\dbcoh{Z \times Z}}(\Delta_* \mathcal O_Z, \mathcal K_i).
 \end{displaymath}

 In each situation, we have a categorical realization of Griffiths' fundamental result that sees the entire Jacobian algebra.
\end{remark}

\subsection{Using equivariant factorizations to study algebraic cycles} \label{section: cycles}

In this section we examine how algebraic classes behave under variation of the group action. Using Theorem~\ref{thm: PV}, the induction functor, and functoriality of push-forwards, Proposition~\ref{prop: functorial push}, one can precisely relate the algebraic classes under induction and restriction of the group action. The following is essentially due to Polishchuk and Vaintrob.

\begin{proposition} \label{cor: compare algebraic cycles}
 Let $\mathbb{A}^n$ carry a linear action of $G$, an Abelian algebraic group, and let $w \in \Gamma(\mathbb{A}^n,\mathcal O_{\mathbb{A}^n}(\chi))^G$. Assume that $K_{\chi}$ is finite and $\chi: G \to \mathbb{G}_m$ is surjective. Furthermore, assume that $(\op{d} \! w)$ is supported at $\{0\} \in \mathbb{A}^n$. Let $\phi: H \to G$ be an injective homomorphism of affine algebraic groups and assume that $\chi \circ \phi$ is surjective. Consider the functors,
\begin{align*}
 \op{Ind}_H^G  & : \mathsf{vect}(\mathbb{A}^n,H,w) \to \mathsf{vect}(\mathbb{A}^n,G,w) \\ 
 \op{Res}_H^G & : \mathsf{vect}(\mathbb{A}^n,G,w) \to \mathsf{vect}(\mathbb{A}^n,H,w),
\end{align*}
 and the induced maps,
\begin{align*}
 {\op{Ind}_H^G}_{\bullet} & : \op{HH}_{\bullet}(\mathbb{A}^n,H,w) \to \op{HH}_{\bullet}(\mathbb{A}^n,G,w) \\ 
 {\op{Res}_H^G}_{\bullet} & : \op{HH}_{\bullet}(\mathbb{A}^n,G,w) \to \op{HH}_{\bullet}(\mathbb{A}^n,H,w).
\end{align*}
 The composition is the linear map satisfying
\begin{align*}
 {\op{Ind}_H^G}_{\bullet} \circ {\op{Res}_H^G}_{\bullet}: \op{HH}_{\bullet}(\mathbb{A}^n,G,w) & \to \op{HH}_{\bullet}(\mathbb{A}^n,G,w) \\
 v & \mapsto \begin{cases} |G/H| v & v \in \op{Jac}(w_g) \text{ with } g \in K_{\chi \circ \phi} \\ 0 & v \in \op{Jac}(w_g) \text{ with } g \not\in K_{\chi \circ \phi}. \end{cases}
\end{align*}
\end{proposition}

\begin{proof}
 Let $K$ denote the kernel of $\widehat{\phi}: \widehat{G} \to \widehat{H}$. For $c \in K$, $c(g) = 1$ if and only if $g \in K_{\chi \circ \phi}$. From Lemma~\ref{lemma: facts about restriction induction}, we have an isomorphism of functors, $\op{Ind}_H^G \circ \op{Res}_H^G \cong p_*p^*$, where $p: G/H \times \mathbb{A}^n \to \mathbb{A}^n$ is the projection. Therefore, $\op{Ind}_H^G \circ \op{Res}_H^G \cong \bigoplus_{c \in K}(c)$. Note that $\bigoplus_{c \in K}(c)$ can be factored as a composition
\begin{displaymath}
 \mathsf{vect}(\mathbb{A}^n,G,w) \overset{\kappa}{\to} \coprod_{c \in K} \mathsf{vect}(\mathbb{A}^n,G,w) \overset{\oplus}{\to} \mathsf{vect}(\mathbb{A}^n,G,w)
\end{displaymath}
 where $\kappa$ maps to the factor corresponding to $c$ by the autoequivalence, $(c)$, and $\oplus$ is the functor that takes $\coprod\mathcal E_c$ to $\oplus\mathcal E_c$. Here $\coprod_{c \in K} \mathsf{vect}(\mathbb{A}^n,G,w)$ denotes the category whose objects are $|K|$-tuples of objects from $\mathsf{vect}(\mathbb{A}^n,G,w)$ and whose morphisms are $|K|$-tuples of morphisms $\mathsf{vect}(\mathbb{A}^n,G,w)$. Denote an object of $\coprod_{c \in K} \mathsf{vect}(\mathbb{A}^n,G,w)$ by $\oplus_{c \in K} \mathcal E_ce_c$ where we think of $e_c$ as orthogonal idempotents.

 A generator of $\mathsf{vect}(\mathbb{A}^n,G,w)$ exists by Lemma~\ref{corollary: generate with line bundles and singular locus}, Proposition~\ref{proposition: upsilon surjective}, and the assumption that the support of $(\op{d} \! w)$ is $\{0\}$. Choose a generator, $\mathcal G$, and let $A$ denote its dg-endomorphism complex. If we take $\oplus \mathcal G e_c$ as our generator of $\coprod_{c \in K} \mathsf{vect}(\mathbb{A}^n,G,w)$, we see its dg-endomorphism complex is $\tilde{A} = Ae_1 \oplus \cdots \oplus Ae_c$ where $e_c$ are (closed) orthogonal idempotents. It is easy to see that $\tilde{A} \overset{\mathbf{L}}{\otimes}_{\tilde{A}^e} \tilde{A} \cong \oplus_{c \in K} (A \overset{\mathbf{L}}{\otimes}_{A^e} A) e_c$. Thus, $\op{HH}_{\bullet}(\coprod_{c \in K} \mathsf{vect}(\mathbb{A}^n,G,w))$ is isomorphic to $\oplus_{c \in K} \op{HH}_{\bullet}(\mathbb{A}^n,G,w)e_c$. 

 Theorem~\ref{thm: PV} says that the action on the component of $\op{HH}_{\bullet}(\coprod_{c \in K} \mathsf{vect}(\mathbb{A}^n,G,w))$ corresponding to $\op{Jac}(w_g)$ is multiplication by $c(g)^{-1}$. 

 In terms of $\tilde{A}$ and $A$, $\oplus: \coprod_{c \in K} \mathsf{vect}(\mathbb{A}^n,G,w) \to \mathsf{vect}(\mathbb{A}^n,G,w)$ corresponds to the summing map $\tilde{A} \to A$ which takes $\oplus a_ce_c$ to $\sum a_c$. It is easy to see the induced action on Hochschild homology is again summation. 

 Now, we see that if $g \in K_{\chi \circ \phi}$, then each $c$ acts trivially and the summand corresponding to $\op{Jac}(w_g)$ gets multiplied by $|K| = |G/H|$. If $g \not \in K_{\chi \circ \phi}$, then $c(g)$ is nonzero and $\sum_{c \in K} c(g) = 0$. 
\end{proof}

Next, we prove a lemma that allows us to lift algebraic cycles via induction.

\begin{lemma} \label{lem: cycles under grading change}
 Let $\mathbb{A}^n$ carry a linear action of $G$, an Abelian algebraic group, and let $w \in \Gamma(\mathbb{A}^n,\mathcal O_{\mathbb{A}^n}(\chi))^G$. Assume that $K_{\chi}$ is finite and $\chi: G \to \mathbb{G}_m$ is surjective. Furthermore, assume that $(\op{d} \! w)$ is supported at $\{0\} \in \mathbb{A}^n$. Assume that the image of the Chern character,
\begin{displaymath}
 \op{ch}: K_0(\mathbb{A}^{nr},G,w^{\boxplus r}) \to \op{HH}_0(\mathbb{A}^{nr},G,w^{\boxplus r}),
\end{displaymath}
 spans, over $\C$, for all $r \geq 1$. Furthermore, assume that 
\begin{displaymath}
 \left( \op{Jac}(w_g)(-\kappa_e-\kappa_g-u\chi) \right)^G = 0 
\end{displaymath}
 for $t \not =0$ and $g \not = e$ where either $2u = \op{dim} W_g + t - n$ or $2u+1 = \op{dim} W_g + t - n$. Then, the image of the Chern character,
\begin{displaymath}
 \op{ch}: K_0(\mathbb{A}^{nr},G^{\times_{\mathbb{G}_m} r},w^{\boxplus r}) \to \op{HH}_0(\mathbb{A}^{nr},G^{\times_{\mathbb{G}_m} r},w^{\boxplus r}),
\end{displaymath}
 also spans, over $\C$. 
\end{lemma}

\begin{proof}
 If $\mathsf C_1, \ldots, \mathsf C_n$ are saturated dg-categories, then it is straightforward to verify 
 \begin{displaymath}
  \bigotimes_{i=1}^r \op{HH}_{\bullet}(\mathsf C_i) \cong \op{HH}_{\bullet}(\mathsf C_1 \circledast  \cdots \circledast \mathsf C_r) 
 \end{displaymath}
 where the isomorphism is given by taking tensor products over $k$. Thus, by Corollary~\ref{corollary: morita product of factorizations},
 \begin{displaymath}
  \op{HH}_{\bullet}(\mathbb{A}^{nr},G^{\times_{\mathbb{G}_m} r},w^{\boxplus r}) \cong \op{HH}_{\bullet}(\mathbb{A}^n,G,w)^{\otimes r}.
 \end{displaymath}
 In particular,
 \begin{equation}\label{eq: Morita product HH to tensor}
  \op{HH}_{0}(\mathbb{A}^{nr},G^{\times_{\mathbb{G}_m} r},w^{\boxplus r}) \cong \bigoplus_{i_1+\cdots+i_r=0} \op{HH}_{i_1}(\mathbb{A}^n,G,w) \otimes_k \cdots \otimes_k  \op{HH}_{i_r}(\mathbb{A}^n,G,w).
 \end{equation}
 To verify the claim, we need to find a basis of $\op{HH}_{\bullet}(\mathbb{A}^{nr},G^{\times_{\mathbb{G}_m} r},w^{\boxplus r})$ which are Chern characters of objects of $\dabs [\mathsf{fact}(\mathbb{A}^{nr},G^{\times_{\mathbb{G}_m} r},w^{\boxplus r})]$. We proceed by induction on $r$. 

 The base case, $r=1$, is covered under the assumptions of the lemma. Assume the lemma is true for all products of size $< r$, and consider the case of $r$. Under the isomorphism of Equation~\eqref{eq: Morita product HH to tensor}, it is enough to find a basis of decomposable vectors, i.e.\ those expressible as tensor products of elements of $\op{HH}_{\bullet}(\mathbb{A}^n,G,w)$. Let 
\begin{displaymath}
 v:= v_1 \otimes_k \cdots \otimes_k v_n \in \op{HH}_{0}(\mathbb{A}^{nr},G^{\times_{\mathbb{G}_m} r},w^{\boxplus r})
\end{displaymath}
 be a decomposable vector.  We have two cases: one, 
\begin{displaymath}
 \text{some } v_i \in \op{HH}_{0}(\mathbb{A}^n,G,w),
\end{displaymath}
 and, two, 
\begin{displaymath}
 \text{no } v_i \in \op{HH}_{0}(\mathbb{A}^n,G,w).
\end{displaymath}

 Let us consider case one first. In this case,
\begin{displaymath}
 v_1 \otimes_k \cdots \otimes_k \widehat{v}_i \otimes_k \cdots \otimes_k v_n \in \op{HH}_0((\mathbb{A}^n)^{\times r-1},G^{\times_{\mathbb{G}_m} r-1},w^{\boxplus r-1}),
\end{displaymath}
 under the isomorphism of Equation~\eqref{eq: Morita product HH to tensor}. By induction, there exists a factorization, $\mathcal E \in \dabs [\mathsf{fact}(\mathbb{A}^{n(r-1)},G^{\times_{\mathbb{G}_m} r-1},w^{\boxplus r-1})]$, with 
\begin{displaymath}
 \op{ch}(\mathcal E) = v_1 \otimes_k \cdots \otimes_k \widehat{v}_i \otimes_k \cdots \ok v_n
\end{displaymath}
 and $\mathcal E^{\prime} \in \dabs [\mathsf{fact}(\mathbb{A}^n,G,w)]$ with $\op{ch}(\mathcal E^{\prime}) = v_i$. Then, 
\begin{displaymath}
 \op{ch}(\mathcal E \boxtimes \mathcal E^{\prime}) = v_1 \otimes_k \cdots \otimes_k v_n.
\end{displaymath}
 This covers the first case. 

 Let us move to the second case. Note that, since we have assumed 
 \begin{displaymath}
  \left( \op{Jac}(w_g)(-\kappa_e-\kappa_g-u\chi) \right)^G = 0 
 \end{displaymath}
 for $t \not = 0$ and $g \not = e$, all of the $v_i \in \op{HH}_0(\mathbb{A}^n,G,w)$ lie in the untwisted sector corresponding to $g=e$. Consider, the diagonal homomorphism, $\phi: G \to G^{\times_{\mathbb{G}_m r}}$. By Proposition~\ref{cor: compare algebraic cycles}, we know the map, 
 \begin{displaymath}
  \left(\op{Ind}^{G^{\times_{\mathbb{G}_m}} r}_G \circ \op{Res}^{G^{\times_{\mathbb{G}_m}} r}_G\right)_{\bullet}: \op{HH}_{0}(\mathbb{A}^{nr},G^{\times_{\mathbb{G}_m} r},w^{\boxplus r}) \to \op{HH}_{0}(\mathbb{A}^{nr},G^{\times_{\mathbb{G}_m} r},w^{\boxplus r}),
 \end{displaymath}
 applied to $v$ is
 \begin{displaymath}
  \left(\op{Ind}^{G^{\times_{\mathbb{G}_m} r}}_{G \bullet} \circ \op{Res}^{G^{\times_{\mathbb{G}_m} r}}_{G \bullet} \right)(v) = |(G^{\times_{\mathbb{G}_m} r})/G| v.
 \end{displaymath}
 By assumption, we can find an $\mathcal E \in \dabs [\mathsf{fact}(A^{\otimes n},M,w^{\boxplus n})]$ with $\op{ch}(\mathcal E) = \op{Res}^{G^{\times_{\mathbb{G}_m} r}}_{G \bullet}(v)$. By Proposition~\ref{prop: functorial push}, we get
 \begin{displaymath}
  \op{ch}(\op{Ind}^{G^{\times_{\mathbb{G}_m} r}}_G \mathcal E) = \op{Ind}^{G^{\times_{\mathbb{G}_m} r}}_{G \bullet}(\op{ch}(\mathcal E)) = \left( \op{Ind}^{G^{\times_{\mathbb{G}_m} r}}_{G \bullet} \circ \op{Res}^{G^{\times_{\mathbb{G}_m} r}}_{G \bullet} \right)(v) = |(G^{\times_{\mathbb{G}_m} r})/G|v.
 \end{displaymath}
 Thus, over $\C$, we can find a spanning set of decomposable vectors in the image of the Chern class map. 
\end{proof}

\begin{remark} \label{rmk: HC}
 If we could define an appropriate rational structure on the Hochschild homology of $\mathsf{vect}(\mathbb{A}^n,G,w)$, the arguments of Lemma~\ref{lem: cycles under grading change} would generalize to show the following statement. Assume that
 \begin{displaymath}
  \op{ch}: K_0(\mathbb{A}^{nr},G,w^{\boxplus r}) \to \op{HH}_0(\mathbb{A}^{nr},G,w^{\boxplus r})_{\mathbb{Q}},
 \end{displaymath}
 spans, over $\Q$, for all $r \geq 1$. Furthermore, assume that 
\begin{displaymath}
 \left( \op{Jac}(w_g)(-\kappa_e-\kappa_g-u\chi) \right)^G = 0 
\end{displaymath}
 for $t \not =0$ and $g \not = e$. Then, the image of the Chern character,
\begin{displaymath}
 \op{ch}: K_0(\mathbb{A}^{nr},G^{\times_{\mathbb{G}_m} r},w^{\boxplus r}) \to \op{HH}_0(\mathbb{A}^{nr},G^{\times_{\mathbb{G}_m} r},w^{\boxplus r})_{\Q},
\end{displaymath}
 also spans, over $\Q$. As such, this gives a bootstrap procedure for proving the Hodge conjecture for Morita products of factorization categories by proving it for simpler grading groups.   In fact, recent work of Blanc \cite{Antony} may yield the appropriate rational structure.
\end{remark}

\begin{corollary} \label{cor: HC for MF}
 Consider $\mathbb{A}^n_{\C}$ with the standard $\mathbb{G}_m$-action. Let $w$ be the Fermat cubic or quartic polynomial. Then, the image of 
 \begin{displaymath}
  \op{ch}: K_0(\mathbb{A}^{nr}_{\C},\mathbb{G}_m^{\times_{\mathbb{G}_m} r},w^{\boxplus r}) \to \op{HH}_0(\mathbb{A}^{nr}_{\C},\mathbb{G}_m^{\times_{\mathbb{G}_m} r},w^{\boxplus r})
 \end{displaymath}
 spans over $\C$.
\end{corollary}

\begin{proof}
 The result is a consequence of the splitting result for Hochschild homology of derived categories under semi-orthogonal decomposition, \cite[Theorem 7.3]{Kuz09a}.

 We do this by applying Lemma~\ref{lem: cycles under grading change} for $G = \mathbb{G}_m$. To do so, we must check that
\begin{displaymath}
 \op{ch}: K_0(\mathbb{A}^{nr}_{\C},\mathbb{G}_m^{\times_{\mathbb{G}_m} r},w^{\boxplus r}) \to \op{HH}_0(\mathbb{A}^{nr}_{\C},\mathbb{G}_m^{\times_{\mathbb{G}_m} r},w^{\boxplus r})
\end{displaymath}
 spans. Appealing to Theorem~\ref{theorem: Orlov}, we have a semi-orthogonal decomposition,
\begin{displaymath}
 \dbcoh{Z_{w^{\boxplus r}}} = \langle \mathcal O_{Z_{w^{\boxplus r}}}(-rn+d),\ldots,\mathcal O_{Z_{w^{\boxplus r}}}(-1), \dabs [\mathsf{fact}(\mathbb{A}^{nr}_{\C},\mathbb{G}_m,w^{\boxplus r})] \rangle,
\end{displaymath}
 where $Z_{w^{\boxplus r}}$ is the associated projective hypersurface. Kuznetsov's result then states we have a decomposition
 \begin{displaymath}
  \op{HH}_0(Z_{w^{\boxplus r}}) = \bigoplus_{i=-rn+d}^{-1} \C \cdot \op{ch}(\mathcal O_{Z_{w^{\boxplus r}}}(i)) \oplus \op{HH}_0(\mathbb{A}^{nr}_{\C},\mathbb{G}_m,w^{\boxplus r}).
 \end{displaymath}
 Ran \cite{Ran} proved that for $d=3,4$, the image of 
\begin{displaymath}
 \op{ch}: K_0(\dbcoh{Z_{w^{\boxplus r}}}) \to \op{HH}_0(Z_{w^{\boxplus r}})
\end{displaymath}
 spans $\op{HH}_0(Z_{w^{\boxplus r}})$ over $\C$. Using Proposition~\ref{prop: functorial push}, we deduce that the image of 
\begin{displaymath}
 \op{ch}: K_0(\mathbb{A}^{nr}_{\C},\mathbb{G}_m,w^{\boxplus r}) \to \op{HH}_0(\mathbb{A}^{nr}_{\C},\mathbb{G}_m,w^{\boxplus r})
\end{displaymath}
 spans over $\C$. The vanishing condition on the twisted sectors of the Hochschild homology follows as the fixed locus of any $g \notin \mathbb{G}_m$ is the origin of $\mathbb{A}^n$. This verifies the hypotheses of Lemma~\ref{lem: cycles under grading change} so we may conclude that the image of 
\begin{displaymath}
 \op{ch}: K_0(\mathbb{A}^{nr}_{\C},\mathbb{G}_m^{\times_{\mathbb{G}_m} r},w^{\boxplus r}) \to \op{HH}_0(\mathbb{A}^{nr}_{\C},\mathbb{G}_m^{\times_{\mathbb{G}_m} r},w^{\boxplus r})
\end{displaymath}
 spans over $\C$ for all $r \geq 1$.
\end{proof}

\begin{remark}
 One may rephrase the conclusion of Corollary~\ref{cor: HC for MF} as: the Hodge conjecture over $\Q$ is true for $\dabs \mathsf{fact}(\mathbb{A}^{nr}_{\C},\mathbb{G}_m^{\times_{\mathbb{G}_m} r},w^{\boxplus r})$.
\end{remark}

We can apply Lemma~\ref{lem: cycles under grading change} to reprove the Hodge conjecture for arbitrary self-products of a certain K3 surface closely related to the Fermat cubic fourfold. We first recall a result of Kuznetsov.

\begin{proposition} \label{prop: Kuz Fermat cubic}
 Let $X$ be the Fermat cubic fourfold in $\mathbb{P}^5$. There exists a unique K3 surface, $Y$, such that there is a semi-orthogonal decomposition,
\begin{displaymath}
 \dbcoh{X} = \langle \mathcal O_X(-3),\mathcal O_X(-2),\mathcal O_X(-1), \dbcoh{Y} \rangle.
\end{displaymath}
\end{proposition}

\begin{proof}
 The Fermat cubic fourfold is a Pfaffian cubic. Thus, the existence of $Y$ is consequence of Kuznetsov's results on Homological Projective Duality, see \cite{Kuz09b} for the statement. As mentioned previously, Ran proved that the image of 
\begin{displaymath}
 \op{ch}: K_0(X) \to \op{HH}_0(X)
\end{displaymath}
 spans over $\Q$, \cite{Ran}. Using the splitting of Hochschild homology and naturality of pushforwards in Hochschild homology, we deduce that the image of 
\begin{displaymath}
 \op{ch}: K_0(Y) \to \op{HH}_0(Y)
\end{displaymath}
 spans over $\Q$. In particular, since $Y$ is a K3 surface, it must have Picard rank $20$. If we have two such K3's surfaces, $Y_1$ and $Y_2$, with 
\begin{align*}
 \dbcoh{X} & = \langle  \mathcal O_X(-3),\mathcal O_X(-2),\mathcal O_X(-1), \dbcoh{Y_1} \rangle \\ & = \langle  \mathcal O_X(-3),\mathcal O_X(-2),\mathcal O_X(-1), \dbcoh{Y_2} \rangle.
\end{align*}
 Then, we must have an equivalence,
\begin{displaymath}
 \dbcoh{Y_1} \cong \dbcoh{Y_2}.
\end{displaymath}
 However, K3 surfaces with Picard rank more than $11$ do not have non-trivial Fourier-Mukai partners \cite[Corollary 2.7.1]{HLOY}.
\end{proof}

\begin{corollary} \label{cor: HC for K3}
 Let $Y$ be the K3 surface appearing in Proposition~\ref{prop: Kuz Fermat cubic}. The Hodge conjecture holds for all self-products, $Y^{\times r}$, $r \geq 1$. 
\end{corollary}

\begin{proof}
 By \cite[Theorem 7.1]{KuzBC}, the projection functor, $\dbcoh{X} \to \dbcoh{Y}$, lifts to a dg-functor between enhancements. It is then straightforward to check that Theorem~\ref{theorem: Orlov} induces a quasi-equivalence between $\mathsf{Inj}_{\op{coh}}(\mathbb{A}^6_{\C},\mathbb{G}_m,w)$ and $\mathsf{Inj}_{\op{coh}}(Y)$, where
 \begin{displaymath}
  w = x_1^3 + \cdots + x_6^3.
 \end{displaymath}
 Thus, we have quasi-equivalences,
\begin{displaymath}
 \mathsf{Inj}_{\op{coh}}(Y)^{\circledast r} \simeq \mathsf{Inj}_{\op{coh}}(\mathbb{A}^6_{\C},\mathbb{G}_m,w)^{\circledast r} \simeq \mathsf{Inj}_{\op{coh}}(\mathbb{A}^{6r}_{\C},\mathbb{G}_m^{\times_{\mathbb{G}_m} r},w^{\boxplus r}).
\end{displaymath}
 The final quasi-equivalence is Corollary~\ref{corollary: morita product of factorizations}. To\"en, \cite[Section 8]{Toe}, proves that there is a quasi-equivalence
 \begin{displaymath}
  \mathsf{Inj}_{\op{coh}}(Y)^{\circledast r} \simeq \mathsf{Inj}_{\op{coh}}(Y^{\times r}).
 \end{displaymath}
 We know the Hodge conjecture for $\mathsf{Inj}_{\op{coh}}(\mathbb{A}^{6r}_{\C},\mathbb{G}_m^{\times_{\mathbb{G}_m} r},w^{\boxplus r})$ is true by Corollary~\ref{cor: HC for MF}. 
\end{proof}

\begin{remark}
 In the initial version of this paper, we claimed that Corollary~\ref{cor: HC for K3} was a new case of the Hodge conjecture. After the first version was released, we were informed by P. Stellari that this case is already known, see \cite{RM08}. We happily thank Stellari for this communication.
\end{remark}

\begin{remark}
 Ran's work was extended by N. Aoki, \cite{Aok}. Aoki's work relies on that of T. Shioda, \cite{Shi}. Shioda proves that the Hodge conjecture holds for Fermat hypersurfaces as long as a certain arithmetic condition is satisfied. Aoki gives a reinterpretation of this arithmetic condition.
 One can directly construct factorizations whose Chern characters span the classes, $\mathfrak D_d^{m-1}$, studied by Shioda-Aoki. Using their arithmetic argument, one can then prove \textit{directly} that the Hodge conjecture holds for categories of $\mathbb{G}_m$-equivariant factorizations of Fermat potentials of degree $d$ in $\mathbb{A}_{\C}^{2m}$ when $d$ is prime, $d=4$, or every prime divisor of $d$ is greater than $m+1$. We omit the details, though they can be found in the initial arXived version of this article.
\end{remark}

\end{document}